\numberwithin{equation}{section}
\newcommand{\R}{\mathbb{R}}
\renewcommand{\theequation}{\arabic{section}.\arabic{equation}}
\newtheorem{Thm}{Theorem}[section]
\newtheorem{Lem}[Thm]{Lemma}
\newtheorem{Cor}[Thm]{Corollary}
\newtheorem{Prop}[Thm]{Proposition}
\newtheorem{Rem}[Thm]{Remark}
\begin{document}
\title[Schr\"odinger-Newton problem]
{\small{Uniqueness of positive solutions with Concentration for the Schr\"odinger-Newton problem}}

\author[Peng Luo]{Peng Luo}

\address[Peng Luo]{School of Mathematics and Statistics and Hubei Key Laboratory of Mathematical Sciences, Central China Normal University,
              Wuhan 430079, China}
\email{pluo@mail.ccnu.edu.cn}

\author[Shuangjie Peng]{Shuangjie Peng}

\address[Shuangjie Peng]{School of Mathematics and Statistics and Hubei Key Laboratory of Mathematical Sciences, Central China Normal University, Wuhan 430079, China}
\email{sjpeng@mail.ccnu.edu.cn}

\author[Chunhua Wang]{Chunhua Wang}

\address[Chunhua Wang]{School of Mathematics and Statistics and Hubei Key Laboratory of Mathematical Sciences, Central China Normal University, Wuhan 430079, China}
\email{chunhuawang@mail.ccnu.edu.cn}
%\subjclass{Primary 35B40, 35B45; Secondary 35J40}

\keywords {
Schr\"odinger-Newton equation, Concentration, Uniqueness, Pohozaev identity}

\date{\today}

%\maketitle

\begin{abstract}
We are concerned with the following  Schr\"odinger-Newton problem
\begin{flalign*}
 -\varepsilon^2\Delta u+V(x)u=\frac{1}{8\pi \varepsilon^2}
 \big(\int_{\R^3}\frac{u^2(\xi)}{|x-\xi|}d\xi\big)u,~x\in \R^3.
 \end{flalign*}
For $\varepsilon$ small enough,  we show the uniqueness of positive solutions concentrating  at the nondegenerate critical points of $V(x)$. The main tools are a local Pohozaev type of  identity, blow-up analysis and the maximum principle.
%The key point is to estimate the nonlocal term in local Pohozaev identity.
Our results also show that the asymptotic behavior of  concentrated points to Schr\"odinger-Newton problem is quite different from those of Schr\"odinger equations.
\end{abstract}

\maketitle
%\baselineskip 18p

\section{Introduction and main results}\label{s1}
\setcounter{equation}{0}
 The Schr\"odinger-Newton problem  appeared in \cite{Penrose} and can be used to describe the quantum mechanics of a polaron at rest.
 It was also used by Choquard to describe an electron trapped in its own hole in a certain approximating to
 Hartree-Fock theory of one component plasma in \cite{Lieb}. Penrose in \cite{Penrose} also derived it as a model of self-gravitating matter, in which quantum state reduction is understood as a gravitational phenomenon. Specifically, if $m$ is the mass of the point, the interaction leads to the system in $\R^3$
 \begin{equation}\label{ll}
 \begin{cases}
 \frac{\varepsilon^2}{2m}\Delta u-V(x)u+\psi u=0,\,&x\in \R^{3},\\
 \Delta \psi +4\pi \tau |u|^2=0,\,&x\in \R^{3},
 \end{cases}
 \end{equation}
 where $u$ is the wave function, $\psi$ is the gravitational potential energy, $V(x)$ is a given Schr\"odinger potential, $\varepsilon$ is the Planck constant, $\tau=Gm^2$ and $G$ is  the Newton's constant of gravitation.

 Let
 \begin{equation*}
 u(x)=\frac{\hat u}{4\varepsilon \sqrt{\pi \tau m}}, ~~V(x)=\frac{1}{2m}\hat V(x),~~\psi(x)=\frac{1}{2m}\hat \psi(x).
 \end{equation*}
 Then system \eqref{ll} can be written, maintaining the original notations, as
  \begin{equation}\label{ll1}
 \begin{cases}
 {\varepsilon^2}\Delta u-V(x)u+\psi u=0,\,&x\in \R^{3},\\
 \varepsilon^2\Delta \psi +\frac{|u|^2}{2}=0,\,& x\in \R^{3}.
 \end{cases}
 \end{equation}
 The second equation in \eqref{ll1} can be explicitly solved with respect to $\psi$, so that
 the system turns into the following  single nonlocal equation
 \begin{flalign}\label{1.2}
 -\varepsilon^2\Delta u+V(x)u=\frac{1}{8\pi \varepsilon^2}
 \big(\int_{\R^3}\frac{u^2(\xi)}{|x-\xi|}d\xi\big)u,~x\in \R^3.
 \end{flalign}
 Also, \eqref{1.2} appears in the study of standing waves for the following nonlinear Hartree equations
\begin{flalign*}
i\varepsilon\frac{\partial \varphi}{\partial t}=-\varepsilon^2\Delta_x\varphi +(V(x)+E)\varphi-\frac{1}{8\pi \varepsilon^2}
 \big(\int_{\R^3}\frac{\varphi^2(\xi)}{|x-\xi|}d\xi\big)\varphi,~(x,t)\in \R^3\times\R^+,
\end{flalign*}
with the form $\varphi(x,t)=e^{-iEt/\varepsilon}u(x)$, where $i$ is the imaginary unit and $\varepsilon$ is the Planck constant.

In recent decades,  problem \eqref{1.2} has been extensively investigated.
When $\varepsilon=1$ and $V(x)=1$, \eqref{1.2} changes into
 \begin{flalign}\label{l1.2}
 - \Delta u+ u=\frac{1}{8\pi \varepsilon^2}
 \big(\int_{\R^3}\frac{u^2(\xi)}{|x-\xi|}d\xi\big)u,~x\in \R^3.
 \end{flalign}
The existence and uniqueness of ground states for \eqref{l1.2} was obtained with variational methods by Lieb \cite{Lieb}, Lions \cite{Lions} and Menzala \cite{Menzala}.
Later, the nondegeneracy of the ground states for \eqref{l1.2} was proved by  Tod-Moroz \cite{Tod} and Wei-Winter \cite{Wei}, which can be stated as follows:

\smallskip

\noindent\textbf{Theorem A.} (c.f \cite{Lieb,Wei}) \emph{
There exists a unique radial solution $U_a$ of the problem
\begin{flalign*}
\begin{cases}
-\Delta u+V(a)u=\displaystyle\frac{1}{8\pi}
 \big(\displaystyle\int_{\R^3}\frac{u^2(\xi)}{|x-\xi|}d\xi\big)u,~ \mbox{in} ~\R^3,\\
u(x)>0,~~ \mbox{in} ~\R^3, ~~~~
u(0)=\displaystyle\max_{x\in \R^3}u(x).
\end{cases}
\end{flalign*}
The solution $U_a$ is strictly decreasing and
\begin{flalign*}
\lim_{|x|\rightarrow \infty}U_a(x)e^{|x|}|x|=\lambda_0>0,~
\lim_{|x|\rightarrow \infty}\frac{U'_a(x)}{U_a(x)}=-1,
\end{flalign*}
for some constant $\lambda_0>0$. Moreover, if $\phi(x)\in H^1(\R^3)$ solves the linearized equation
\begin{flalign*}
-\Delta \phi(x) +V(a)\phi(x)=\displaystyle\frac{1}{8\pi}
 \big(\int_{\R^3}\frac{U_a^2(\xi)}{|x-\xi|}d\xi\big)\phi(x)+
 \frac{1}{4\pi}
 \big(\int_{\R^3}\frac{U_a(\xi)\phi(\xi)}{|x-\xi|}d\xi\big)U_a(x),
\end{flalign*}
then $\phi(x)$ is a linear combination of $\partial U_a/\partial x_j$, $j=1,2,3$.
}

If $\varepsilon$ is small and  $V(x)$ is not a constant, the existence of solutions with ground states for \eqref{1.2}  under some conditions on $V(x)$  was proved  by \cite{Lions1}
 since problem \eqref{1.2} has a variational structure.
 Moreover,  the solution with  ground states concentrates at certain  point.
Later, Wei-Winter \cite{Wei} proved that \eqref{1.2} has a solution concentrating at $k$ points which are the local minimum points of $V(x)$. This also means the existence of multiple solutions.
Concerning  the existence of solutions with  concentration  in other cases, we can refer to \cite{Cingolani, Secchi}  and the references therein.

On the other hand, the Schr\"odinger-Newton problem \eqref{1.2} is a special type of following Choquard equation:
 \begin{flalign}\label{ll2}
 -\varepsilon^2\Delta u+V(x)u=\frac{1}{8\pi \varepsilon^2}
 \big(\int_{\R^3}\frac{u^p(\xi)}{|x-\xi|^{N-\alpha}}d\xi\big)|u|^{p-2}u,~x\in \R^N,
 \end{flalign}
 where $\alpha\in (0,N)$ and $p>1$.
For the existence and concentration of positive solutions to the Choquard equation \eqref{ll2}, one can refer to \cite{Moroz,Moroz1} and the references therein.

As far as we know, for nonlinear Schr\"{o}dinger equations, the results on the uniqueness of solutions concentrating at some points are few.
To obtain uniqueness of concentrating solutions, the classical moving plane method does not work.
The main tools are the topological degree and local Pohozaev identity which can be found in \cite{Cao,Cao1,Deng,Glangetas}. However, for problem~\eqref{1.2},  whether the solution with concentration  is unique is still open.
In this paper, we intend to solve this type of  problems partially  by using  local Pohozaev type of identity and  blow-up analysis which was recently developed in \cite{Cao1,Deng,GPY}. However, we should point out that, compared with \cite{Cao1,Deng,GPY},  to handle the nonlocal term in \eqref{1.2}, there are many new difficulties, which will  be discussed  in more details later.

\smallskip

We assume  that  $V(x)$ is a bounded $C^1$ function satisfying $\displaystyle\inf_{x\in \R^3}V(x)>0$. Define the following Sobolev space $H_{\varepsilon}$
$$H_{\varepsilon}:=\left\{u(x) \in H^1(\R^3), \int_{\R^3}\big(\varepsilon^2|\nabla u(x) |^2+V(x)u^2(x)\big)dx<\infty\right\},$$ and the corresponding  norm
\begin{flalign*}
\|u\|_{\varepsilon}=\big(u(x),u(x)\big)^{1/2}_{\varepsilon}=\big(\int_{\R^3}(\varepsilon^2|\nabla u(x)|^2+V(x)u^2(x))dx\big)^{1/2}.
\end{flalign*}
\noindent\textbf{Definition A.} (c.f. \cite{Cao})\emph{
We call a family of nonnegative functions $\{u_\varepsilon\}_{\varepsilon>0}$ concentrate at a set of different points $\{a_1,\cdots,a_k\}\subset \R^3$ if there exist $\{x_{i,\varepsilon}\}_{\varepsilon>0}\subset \R^3$, $|x_{i,\varepsilon}-a_i|=o(1)$ for $i=1,\cdots,k$ and $k$ nonnegative functions $U_i\in H^1(\R^3)$ $(1\leq i\leq k)$ satisfying $U_i(x)\not\equiv 0$ and $U_i(0)=\displaystyle\max_{x\in\R^3}U_i(x)$ such that
\begin{flalign}\label{1-6}
\|u_\varepsilon-\sum^k_{i=1}
U_i(\frac{x-x_{i,\varepsilon}}{\varepsilon})\|_{\varepsilon}=o(\varepsilon^{3/2}).
\end{flalign}
}

\noindent\textbf{Remark A.}\emph{
Here the solutions in \textbf{Definition A} are consistent with those obtained by  Secchi \cite{Secchi}
and Wei-Winter \cite{Wei}.}

\smallskip

Our main results are as follows.

\begin{Thm}\label{th1--1}
Let $\{u_\varepsilon^{(1)}(x)\}_{\varepsilon>0}, \{u_\varepsilon^{(2)}(x)\}_{\varepsilon>0}$ be two families of positive solutions of \eqref{1.2} concentrating at a nondegenerate critical point  $a_1 \in \R^3$ of $V(x)$.  Then for $\varepsilon$ small enough, $u_\varepsilon^{(1)}(x)\equiv u_\varepsilon^{(2)}(x)$
 must be of the form
\begin{flalign}\label{aaa5}
U_{a_1}(\frac{x-x_{1,\varepsilon}}{\varepsilon})+w_\varepsilon(x),
\end{flalign}
with $x_{1,\varepsilon},w_{\varepsilon}(x)$ satisfying,
 as $\varepsilon\rightarrow 0$,
\begin{flalign}\label{aaa6}
|x_{1,\varepsilon}-a_1|=o(\varepsilon)
,~\mbox{and}~\|w_{\varepsilon}\|_{\varepsilon}=O(\varepsilon^{7/2}).
\end{flalign}
\end{Thm}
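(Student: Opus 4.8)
The plan is to argue by contradiction: suppose $u_\varepsilon^{(1)} \not\equiv u_\varepsilon^{(2)}$ along a sequence $\varepsilon \to 0$, normalize the difference by setting $\xi_\varepsilon(x) = \big(u_\varepsilon^{(1)}(x) - u_\varepsilon^{(2)}(x)\big)/\|u_\varepsilon^{(1)} - u_\varepsilon^{(2)}\|_{L^\infty}$, so that $\|\xi_\varepsilon\|_{L^\infty} = 1$, and derive the contradiction by showing $\xi_\varepsilon \to 0$ uniformly. The first block of work is a careful description of each solution family: using \textbf{Definition A}, the equation \eqref{1.2}, and the nondegeneracy from \textbf{Theorem A}, I would establish that each $u_\varepsilon^{(\ell)}$ has the form $U_{a_1}\big((x - x_{\ell,\varepsilon})/\varepsilon\big) + w_{\ell,\varepsilon}$ with $x_{\ell,\varepsilon} \to a_1$ and good decay and smallness estimates on $w_{\ell,\varepsilon}$; this is the standard Lyapunov--Schmidt-type a priori analysis, but with the nonlocal Newtonian potential it requires estimates on $\int_{\R^3} \frac{u_\varepsilon^2(\xi)}{|x-\xi|}\,d\xi$ and its derivatives, controlled via the decay of $U_{a_1}$ stated in \textbf{Theorem A} together with Hardy--Littlewood--Sobolev. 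A local Pohozaev identity applied to each solution (multiplying \eqref{1.2} by $\nabla u_\varepsilon \cdot (x - x_{\ell,\varepsilon})$ or by $\partial_j u_\varepsilon$ and integrating over a fixed ball $B_d(a_1)$) should then pin down the location: it yields $\nabla V(x_{\ell,\varepsilon}) = o(\varepsilon^2)\cdot(\text{stuff})$ and, combined with nondegeneracy of $a_1$, the refined rate $|x_{\ell,\varepsilon} - a_1| = o(\varepsilon)$, which is part of \eqref{aaa6}.

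Next I would set up the linearized analysis for $\xi_\varepsilon$. Subtracting the two equations \eqref{1.2} and dividing by the $L^\infty$ norm of the difference gives an equation of the schematic form
\begin{flalign*}
-\varepsilon^2 \Delta \xi_\varepsilon + V(x)\xi_\varepsilon = \frac{1}{8\pi\varepsilon^2}\Big(\int_{\R^3}\frac{(u_\varepsilon^{(1)})^2(\eta)}{|x-\eta|}\,d\eta\Big)\xi_\varepsilon + \frac{1}{8\pi\varepsilon^2}\Big(\int_{\R^3}\frac{\big(u_\varepsilon^{(1)}(\eta)+u_\varepsilon^{(2)}(\eta)\big)\xi_\varepsilon(\eta)}{|x-\eta|}\,d\eta\Big)u_\varepsilon^{(2)}(x).
\end{flalign*}
Rescaling $x = x_{1,\varepsilon} + \varepsilon y$ and writing $\tilde\xi_\varepsilon(y) = \xi_\varepsilon(x_{1,\varepsilon}+\varepsilon y)$, a blow-up argument shows that $\tilde\xi_\varepsilon \to \xi_0$ in $C^1_{loc}(\R^3)$, where $\xi_0$ solves the linearized equation from \textbf{Theorem A}; hence $\xi_0 = \sum_{j=1}^3 c_j \,\partial U_{a_1}/\partial x_j$ for some constants $c_j$. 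The task then splits into two: first, show all $c_j = 0$ (so there is no concentrated mass in the limit), and second, upgrade this to $\|\xi_\varepsilon\|_{L^\infty} \to 0$ globally, contradicting the normalization. For the first part I would insert $u_\varepsilon^{(1)}, u_\varepsilon^{(2)}$ into the local Pohozaev identities associated with translations, take the difference, divide by $\|u_\varepsilon^{(1)} - u_\varepsilon^{(2)}\|_{L^\infty}$, and pass to the limit: the boundary terms are negligible by the exponential decay of $U_{a_1}$, the nondegeneracy of $a_1$ makes the Hessian term $D^2 V(a_1)$ invertible, and one reads off $c_j = 0$. For the second part, away from $x_{1,\varepsilon}$ the operator $-\varepsilon^2\Delta + V(x) - \frac{1}{8\pi\varepsilon^2}\int \frac{(u_\varepsilon^{(1)})^2}{|x-\cdot|}$ is, for $\varepsilon$ small, coercive (since the potential-type term is $O(1)$ away from the concentration point while $V > 0$ is bounded below), so the maximum principle forces $\xi_\varepsilon$ to be small there too; combined with $c_j = 0$ near the concentration point, this gives $\|\xi_\varepsilon\|_{L^\infty} = o(1)$, the desired contradiction. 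This proves $u_\varepsilon^{(1)} \equiv u_\varepsilon^{(2)}$.

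Finally, with uniqueness in hand, the representation \eqref{aaa5}--\eqref{aaa6} follows from a sharpened version of the a priori analysis: one knows the common solution is $U_{a_1}\big((x-x_{1,\varepsilon})/\varepsilon\big) + w_\varepsilon$; the local Pohozaev identities give $|x_{1,\varepsilon} - a_1| = o(\varepsilon)$; and a more careful energy/decay estimate on the error — plugging the improved location bound back into the equation for $w_\varepsilon$ and using the invertibility of the linearized operator on the orthogonal complement of $\{\partial_j U_{a_1}\}$ — yields $\|w_\varepsilon\|_\varepsilon = O(\varepsilon^{7/2})$.

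I expect the main obstacle to be controlling the nonlocal Newtonian term throughout, and in particular in the blow-up step and in the local Pohozaev identities. Unlike the local-nonlinearity case treated in \cite{Cao1,Deng,GPY}, the term $\int_{\R^3}\frac{u_\varepsilon^2(\eta)}{|x-\eta|}\,d\eta$ does not localize: its value at a point $x$ depends on $u_\varepsilon$ everywhere, so when I rescale and pass to the limit I must show that the mass outside a large ball contributes negligibly (this is where the precise decay $U_{a_1}(x)e^{|x|}|x| \to \lambda_0$ from \textbf{Theorem A} is essential), and when I integrate by parts in the Pohozaev identity the convolution structure produces additional double integrals $\iint \frac{u_\varepsilon^2(x)u_\varepsilon^2(\eta)}{|x-\eta|^2}$-type terms whose size and sign must be tracked carefully. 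A second, related delicate point is the coercivity/maximum-principle step for $\xi_\varepsilon$ away from the concentration point: the ``potential'' there involves the slowly decaying convolution $\frac{1}{8\pi\varepsilon^2}\int \frac{(u_\varepsilon^{(1)})^2}{|x-\cdot|}$, which is only $O(\varepsilon)$ (not exponentially small) far from $x_{1,\varepsilon}$, so one needs a quantitative comparison argument rather than a soft one.
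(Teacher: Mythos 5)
Your proposal follows essentially the same route as the paper: normalize the difference, decompose each solution around $U_{a_1}$, use the local Pohozaev identity plus nondegeneracy of $a_1$ to get $|x_{1,\varepsilon}-a_1|=o(\varepsilon)$, blow up the normalized difference to land in the kernel described by Theorem A, kill the kernel coefficients with a second Pohozaev identity for the difference, and handle the region away from the peak by a maximum-principle/coercivity argument exploiting that the Newtonian term is only $O(1/R)$ there — all of which matches the paper's Propositions 3.1--3.5 and the appendix estimates (coercivity of the linearized form and $\|w_\varepsilon\|_\varepsilon=O(\varepsilon^{7/2})$). The technical obstacles you flag (Hardy--Littlewood--Sobolev control of the convolution, the extra double integrals in the Pohozaev identity, the quantitative maximum-principle step) are precisely the ones the paper addresses.
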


\begin{Thm}\label{th1.1}
Let $\{u_\varepsilon^{(1)}(x)\}_{\varepsilon>0}, \{u_\varepsilon^{(2)}(x)\}_{\varepsilon>0}$ be two families of positive solutions of \eqref{1.2} concentrating at  $k~(k\geq 2)$ different nondegenerate critical  points $\{a_1,\cdots,a_k\}\subset \R^3$ of $V(x)$.  Then for $\varepsilon$ small enough, $u_\varepsilon^{(1)}(x)\equiv u_\varepsilon^{(2)}(x)$
 must be of the form
\begin{flalign}
\sum_{j=1}^{k}U_{a_j}(\frac{x-x_{j,\varepsilon}}{\varepsilon})+w_\varepsilon(x),
\end{flalign}
with $x_{j,\varepsilon},w_{\varepsilon}(x)$ satisfying,
for $j=1,\cdots,k$, as $\varepsilon\rightarrow 0$,
\begin{flalign}\label{lll}
|x_{j,\varepsilon}-a_j|=O(\varepsilon) ~\mbox{and}~
\|w_{\varepsilon}\|_\varepsilon=O(\varepsilon^{7/2}).
\end{flalign}
Furthermore, there exist  $j_0\in \{1,\cdots,k\}$, $C_1>0$ and $C_2>0$  such that
\begin{flalign}\label{2a6}
C_1\varepsilon \leq |x_{j_0,\varepsilon}-a_{j_0}|\leq C_2 \varepsilon.
\end{flalign}
\end{Thm}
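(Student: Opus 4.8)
The plan is to prove Theorem~\ref{th1.1} in two stages: first I pin down the precise shape of an \emph{arbitrary} family of concentrating solutions — obtaining the stated decomposition and the estimates \eqref{lll}--\eqref{2a6} — and then, by a blow-up/contradiction argument anchored on a local Pohozaev identity, I show that two such families must coincide. \textbf{Step 1 (refined description of one family).} Given $\{u_\varepsilon\}$ as in \textbf{Definition A}, rescaling \eqref{1.2} around each $x_{i,\varepsilon}$ and passing to the limit shows that every limit profile $U_i$ solves the Schr\"odinger--Newton equation of \textbf{Theorem A} with coefficient $V(a_i)$, hence $U_i=U_{a_i}$ after absorbing a translation into $x_{i,\varepsilon}$. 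Writing
\begin{equation*}
u_\varepsilon(x)=\sum_{j=1}^{k}U_{a_j}\Big(\frac{x-x_{j,\varepsilon}}{\varepsilon}\Big)+w_\varepsilon(x),\qquad \Big(w_\varepsilon,\ \partial_{y_\ell}U_{a_j}\big((x-x_{j,\varepsilon})/\varepsilon\big)\Big)_\varepsilon=0\ \ (\ell=1,2,3,\ j=1,\dots,k),
\end{equation*}
with the orthogonality imposed through the choice of the $x_{j,\varepsilon}$ (possible since the $a_j$ are nondegenerate), and feeding this ansatz into \eqref{1.2}, the error splits into: (i) the local part $\big(V(x)-V(a_j)\big)U_{a_j}\big((x-x_{j,\varepsilon})/\varepsilon\big)$, of order $\varepsilon^2$ on the concentration scale because $\nabla V(a_j)=0$ and $|x_{j,\varepsilon}-a_j|=O(\varepsilon)$ (see Step 2); (ii) exponentially small bump--bump overlaps; and (iii) the genuinely nonlocal cross terms coming from the Newtonian potential of a far bump, which near $a_j$ are of order $\tfrac{\varepsilon}{8\pi}\big(\sum_{m\ne j}\|U_{a_m}\|_{L^2}^2/|x-a_m|\big)U_{a_j}\big((x-x_{j,\varepsilon})/\varepsilon\big)$ and are the dominant contribution to the error. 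Since, by the nondegeneracy in \textbf{Theorem A}, the linearized operator at $U_{a_j}$ is invertible on the orthogonal complement of its kernel with a bound uniform in $\varepsilon$, a contraction‑mapping argument together with the choice of the $x_{j,\varepsilon}$ produces $w_\varepsilon$ with the stated smallness.

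\textbf{Step 2 (local Pohozaev identities and the location of the $x_{j,\varepsilon}$).} Multiplying \eqref{1.2} by $\partial_{x_\ell}u_\varepsilon$ and integrating over a fixed ball $B_\delta(x_{j,\varepsilon})$ yields, after integration by parts,
\begin{equation*}
\int_{B_\delta(x_{j,\varepsilon})}\partial_{x_\ell}V(x)\,u_\varepsilon^2\,dx=\int_{B_\delta(x_{j,\varepsilon})}\partial_{x_\ell}\Big(\frac{1}{8\pi\varepsilon^2}\int_{\R^3}\frac{u_\varepsilon^2(\xi)}{|x-\xi|}\,d\xi\Big)u_\varepsilon^2\,dx+\mathcal B_{j,\ell},
\end{equation*}
with boundary term $\mathcal B_{j,\ell}=O(e^{-c/\varepsilon})$ by the exponential decay in \textbf{Theorem A}. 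Inserting the expansion of Step 1 and using $\nabla V(a_j)=0$, the left side is $\varepsilon^3\|U_{a_j}\|_{L^2}^2\big(D^2V(a_j)(x_{j,\varepsilon}-a_j)\big)_\ell+o(\varepsilon^4)$; on the right, the self‑interaction part of the double integral vanishes by the antisymmetry of $(x-\xi)_\ell/|x-\xi|^3$, leaving only $-\tfrac{\varepsilon^4}{8\pi}\|U_{a_j}\|_{L^2}^2\sum_{m\ne j}\|U_{a_m}\|_{L^2}^2\,(a_j-a_m)_\ell/|a_j-a_m|^3+o(\varepsilon^4)$. Hence, for each $j$,
\begin{equation*}
D^2V(a_j)\,(x_{j,\varepsilon}-a_j)=-\frac{\varepsilon}{8\pi}\sum_{m\ne j}\|U_{a_m}\|_{L^2}^2\,\frac{a_j-a_m}{|a_j-a_m|^3}+o(\varepsilon),
\end{equation*}
so $|x_{j,\varepsilon}-a_j|=O(\varepsilon)$ for all $j$. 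Writing $c_m:=\|U_{a_m}\|_{L^2}^2>0$ and $F_j:=\sum_{m\ne j}c_m(a_j-a_m)/|a_j-a_m|^3$, pairing the identity with $c_j a_j$ and summing over $j$, antisymmetrization gives $\sum_j c_j\,a_j\cdot F_j=\tfrac12\sum_{m\ne j}c_mc_j/|a_j-a_m|>0$; hence $F_{j_0}\ne0$ for some $j_0$, and then $C_1\varepsilon\le|x_{j_0,\varepsilon}-a_{j_0}|\le C_2\varepsilon$, which is \eqref{2a6}. This Coulomb‑type forcing is exactly what is absent for the local Schr\"odinger equation, where one instead gets $o(\varepsilon)$; it explains the contrast with Theorem~\ref{th1--1}.

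\textbf{Step 3 (uniqueness via blow‑up).} Suppose $u_\varepsilon^{(1)}\not\equiv u_\varepsilon^{(2)}$ along some $\varepsilon\to0$. Applying the identity of Step 2 to both families and subtracting, one first gets $|x_{j,\varepsilon}^{(1)}-x_{j,\varepsilon}^{(2)}|=o(\varepsilon)$. Put $\eta_\varepsilon=(u_\varepsilon^{(1)}-u_\varepsilon^{(2)})/\|u_\varepsilon^{(1)}-u_\varepsilon^{(2)}\|_{L^\infty(\R^3)}$, so $\|\eta_\varepsilon\|_{L^\infty}=1$; subtracting the two equations gives
\begin{equation*}
-\varepsilon^2\Delta\eta_\varepsilon+V(x)\eta_\varepsilon=\frac{1}{8\pi\varepsilon^2}\Big(\int_{\R^3}\frac{(u_\varepsilon^{(1)})^2(\xi)}{|x-\xi|}d\xi\Big)\eta_\varepsilon+\frac{1}{8\pi\varepsilon^2}\Big(\int_{\R^3}\frac{\big(u_\varepsilon^{(1)}+u_\varepsilon^{(2)}\big)(\xi)\,\eta_\varepsilon(\xi)}{|x-\xi|}\,d\xi\Big)u_\varepsilon^{(2)}.
\end{equation*}
Outside $\bigcup_j B_\delta(a_j)$ the $u_\varepsilon^{(m)}$ are exponentially small and, because $\int(u_\varepsilon^{(m)})^2=O(\varepsilon^3)$, the Newtonian potentials there are $O(\varepsilon)$, so the maximum principle shows $\|\eta_\varepsilon\|_{L^\infty}$ is attained, up to exponentially small error, inside some $B_\delta(a_j)$. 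Rescaling $\widetilde\eta_{j,\varepsilon}(y)=\eta_\varepsilon(x_{j,\varepsilon}^{(1)}+\varepsilon y)$ and passing to the limit in the nonlocal right‑hand side — using the exponential decay of $U_{a_j}$ together with the mass bound to control the convolution, and checking that the far bumps and the far field drop out — elliptic estimates give $\widetilde\eta_{j,\varepsilon}\to\eta_j$ in $C^1_{\mathrm{loc}}(\R^3)$, where $\eta_j\in L^\infty$ solves the linearized equation of \textbf{Theorem A} at $U_{a_j}$; since bounded solutions of that equation decay and hence lie in $H^1$, nondegeneracy forces $\eta_j=\sum_{\ell=1}^3 b_{j,\ell}\,\partial_{y_\ell}U_{a_j}$. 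To show every $b_{j,\ell}=0$, I subtract the local Pohozaev identities of Step 2 for $u_\varepsilon^{(1)}$ and $u_\varepsilon^{(2)}$, divide by $\|u_\varepsilon^{(1)}-u_\varepsilon^{(2)}\|_{L^\infty}$ and let $\varepsilon\to0$: using $\int y_s\,\partial_{y_t}U_{a_j}\,U_{a_j}\,dy=-\tfrac12\delta_{st}\|U_{a_j}\|_{L^2}^2$, the leading term is, up to a positive multiple, $\varepsilon^4\big(D^2V(a_j)b_j\big)_\ell+o(\varepsilon^4)$, the nonlocal and boundary contributions being of lower order, so nondegeneracy of $D^2V(a_j)$ gives $b_j=0$. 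Then $\eta_\varepsilon\to0$ in $C^1_{\mathrm{loc}}$ near every $a_j$, which together with the maximum‑principle bound away from the $a_j$ contradicts $\|\eta_\varepsilon\|_{L^\infty}=1$. Hence $u_\varepsilon^{(1)}\equiv u_\varepsilon^{(2)}$ for $\varepsilon$ small, and the asserted form with \eqref{lll}--\eqref{2a6} follows from Steps 1--2.

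\textbf{Main obstacle.} All the new difficulty is caused by the nonlocal term, at two precise points. In Step 3 the linear equation for $\eta_\varepsilon$ contains the convolution of $\eta_\varepsilon$ \emph{itself} against $(u_\varepsilon^{(1)}+u_\varepsilon^{(2)})/|x-\xi|$, which is not localized near the concentration points; its passage to the limit under the blow‑up must be extracted from the exponential decay of $U_{a_j}$ together with the mass bound $\int(u_\varepsilon^{(m)})^2=O(\varepsilon^3)$, and one has to verify that the contributions of the distant bumps and of the far field genuinely vanish. In Step 2 the same term produces the Coulomb‑type cross‑interaction, which sits at exactly the same order $\varepsilon$ as the Hessian term $D^2V(a_j)(x_{j,\varepsilon}-a_j)$; isolating it cleanly in the Pohozaev identity and extracting the virial identity that keeps the forcing vector nonzero for at least one index is the delicate computational heart of the proof, with no analogue in the local treatments of \cite{Cao1,Deng,GPY}.
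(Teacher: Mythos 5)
Your overall strategy coincides with the paper's: decompose the solutions, use the local Pohozaev identity \eqref{2.5} around each $x_{j,\varepsilon}$ to get \eqref{lll} and the two-sided bound \eqref{2a6}, then blow up the normalized difference $\eta_\varepsilon$, kill the kernel coefficients with a Pohozaev identity for the difference, and finish with the maximum principle. Your antisymmetrization argument $\sum_j c_j a_j\cdot F_j=\tfrac12\sum_{j\neq m}c_jc_m/|a_j-a_m|>0$, which forces $F_{j_0}\neq0$, is a genuinely nicer route to the lower bound in \eqref{2a6} than the coordinate-choice argument in Proposition \ref{prop2.3}. However, the sketch has concrete gaps at exactly the points the paper identifies as the new difficulties. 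In Step 2 the boundary term is not $O(e^{-c/\varepsilon})$: it contains $w_\varepsilon$ and $\nabla w_\varepsilon$ on the sphere, which are only controlled in $H_\varepsilon$, so one needs the averaged choice of radius as in Lemma \ref{lem-A-4} (giving $O(\|w_\varepsilon\|_\varepsilon^2)$), not the decay of $U_{a_j}$. More seriously, you keep only the $U^2\times U^2$ part of the volume integral: the terms linear in $w_\varepsilon$ (of type $U_{a_j}^2(x)\,U_{a_j}(\xi)w_\varepsilon(\xi)$ and $U_{a_j}(x)w_\varepsilon(x)\,U_{a_j}^2(\xi)$) are, by \eqref{aa5} and $\|w_\varepsilon\|_\varepsilon=O(\varepsilon^{7/2})$, of size $O(\varepsilon^4)$ --- the same order as the Coulomb interaction you want to isolate --- so they cannot be discarded; they must be cancelled against each other using the antisymmetry of the kernel (the content of Appendix D in the paper). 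Relatedly, your Step 1 is internally inconsistent: you identify the cross Newtonian potential, of size $\varepsilon$ against each bump, as the dominant error, but inverting the linearized operator against an error of dual size $\varepsilon\cdot\varepsilon^{3/2}$ yields only $\|w_\varepsilon\|_\varepsilon=O(\varepsilon^{5/2})$; the claimed $O(\varepsilon^{7/2})$ does not follow from the contraction argument as written, and your later $o(\varepsilon^4)$ bookkeeping relies on it.

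In Step 3 the assertion that ``the nonlocal and boundary contributions are of lower order'' is precisely the crux of the multi-bump case and is left unproved. After dividing by $J_\varepsilon=\|u^{(1)}_\varepsilon-u^{(2)}_\varepsilon\|_{L^\infty}$, the difference of the nonlocal volume terms is a priori of order $\varepsilon^4$, i.e. of the same order as the Hessian term $\varepsilon^4 D^2V(a_j)b_j$; the paper needs, in order, $|x^{(1)}_{j,\varepsilon}-x^{(2)}_{j,\varepsilon}|=o(\varepsilon)$ (Proposition \ref{prop3--1}), $\|w^{(1)}_\varepsilon-w^{(2)}_\varepsilon\|_\varepsilon=o(\varepsilon^{7/2})$ (Proposition \ref{prop3--3}), the pointwise Nash--Moser estimate $w^{(1)}_\varepsilon-w^{(2)}_\varepsilon=o(\varepsilon^2)$ (Proposition \ref{lp1}), and the dichotomy of Proposition \ref{prop3.3} comparing $|x^{(1)}_{j,\varepsilon}-x^{(2)}_{j,\varepsilon}|$ with $\varepsilon^3$ and bounding $J_\varepsilon$ from below in the second case, to conclude $F_1+F_2=o(\varepsilon^4)$. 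None of this machinery (nor a substitute for it) appears in your argument, so the step $b_{j,\ell}=0$ is unjustified for $k\geq2$. Finally, the maximum principle must be applied on $\R^3\setminus\bigcup_j B_{R\varepsilon}(x^{(1)}_{j,\varepsilon})$, where $V-E_1\geq0$ for $R$ large; working on the complement of fixed balls $B_\delta(a_j)$ leaves the annuli $B_\delta(a_j)\setminus B_{R\varepsilon}(x^{(1)}_{j,\varepsilon})$ covered by neither the blow-up limit nor the barrier, so the contradiction with $\|\eta_\varepsilon\|_{L^\infty}=1$ is not yet obtained.
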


\begin{Rem}
 For the existence of positive solutions to \eqref{1.2} concentrating at $k$ different points, one can refer to Wei-Winter's paper \cite{Wei}. Also, we can prove that if the positive solutions to \eqref{1.2} concentrating at $k$ different points, then these points must be the critical points of $V(x)$ by  Pohozaev identity.
 Theorem \ref{th1.1} shows the uniqueness of the solutions obtained by Wei-Winter in \cite{Wei}.

For Schr\"odinger equations, it is proved  in \cite{Cao1} that  $|x_{j,\varepsilon}-a_{j}|=o(\varepsilon)$ for $j=1,\cdots,k$.
For the Schr\"odinger-Newton problem, we  can also prove that $|x_{1,\varepsilon}-a_1|=o(\varepsilon)$  in the single peak case. However, in the multi-bump case,
we can only deduce  that the order of  $|x_{j_0,\varepsilon}-a_{j_0}|$ is the same as $\varepsilon$ for some $j_0\in \{1,\cdots,k\}$. This means  that the asymptotic behavior of concentrated points to Schr\"odinger-Newton problem is quite different from those of Schr\"odinger equations.
\end{Rem}

\begin{Rem}
Recently, Xiang \cite{Xiang} proved the uniqueness and nondegeneracy of ground states to the above Choquard equation \eqref{ll2} with $V(x)=a>0$ when $p\rightarrow 2$.
However for general $p$, the uniqueness and nondegeneracy of ground states in Wei-Winter \cite{Wei}  is still an open problem. Correspondingly, our results can not generalize to the Choquard equation for general $p$. However, our methods to handle the nonlocal term is useful to study the Schr\"odinger-Possion problem \cite{Li}.
\end{Rem}

Our main idea is inspired by Cao-Li-Luo \cite{Cao1}, Deng-Lin-Yan \cite{Deng} and Guo-Peng-Yan \cite{GPY}.
Let $u^{(1)}_{\varepsilon}(x)$, $u^{(2)}_{\varepsilon}(x)$ be two different positive solutions concentrating at $\{a_1,\cdots, a_k\}$ for $k\geq 1$.
Set
\begin{flalign}\label{3.1}
\eta_{\varepsilon}(x)=\frac{u_{\varepsilon}^{(1)}(x)-u_{\varepsilon}^{(2)}(x)}
{\|u_{\varepsilon}^{(1)}-u_{\varepsilon}^{(2)}\|_{L^{\infty}(\R^3)}}.
\end{flalign}
Then we prove $
\eta_{\varepsilon}(x)=o(1)$ for $x\in \R^3$,
which is incompatible with the fact $\|\eta_{\varepsilon}\|_{L^{\infty}(\R^3)}=1$.
For the estimate near the nondegenerate critical  points, we will use the blow-up analysis and local Pohozaev type of  identity. But for the estimate away from the nondegenerate critical  points, we will use the maximum principle.
\begin{Rem}
 Problem  \eqref{1.2} is nonlocal, which will cause  many differences compared with   \cite{Cao1,Deng}.

First, to apply the blow-up analysis, we need to prove that \begin{equation}\label{ln1}
|x^{(1)}_{j,\varepsilon}-x^{(2)}_{j,\varepsilon}|=o(\varepsilon),
\end{equation}
 which cannot be obtained by
$|x^{(1)}_{j,\varepsilon}-a_j|=O(\varepsilon)$ and $|x^{(2)}_{j,\varepsilon}-a_j|=O(\varepsilon)$ for $j=1,2,\cdots,k$ ($k\geq 2$).
To obtain \eqref{ln1}, we will apply local Pohozaev identity carefully in Proposition \ref{prop3--1} below.

Next, after using the blow-up analysis, we will apply local Pohozaev identity again
to obtain $\eta_{\varepsilon}(x)=o(1)$ near the nondegenerate critical points. To this aim,
we need to estimate the error between the two solutions precisely in Proposition \ref{lp1} below, where the classical Nash-Moser iteration will be used.

On the other hand,  we would like to point out that the corresponding local Pohozaev identity will have two terms involving
volume integral. Then to calculate the two integrals precisely, we need to use some symmetries skillfully by some observations. We will also use the maximum principle carefully due to the nonlocal term.
\end{Rem}

\smallskip

This paper will be organized as follows. In Section \ref{s2} and Section 3, we first establish some basic estimates of the solutions with concentration.
Then we give the detailed proofs of  Theorem \ref{th1--1} by using a local Pohozaev identity, blow-up analysis and the maximum principle.
In Section 4, we obtain a precise estimate on the  errors of the two solutions and the concentrated points. Combining these estimates and applying the methods in the proof of Theorem \ref{th1--1}, we prove
Theorem \ref{th1.1} in Section 5. To make the main clue clear, some important but tedious estimates and inequalities will be delayed  to the Appendix. In the sequel, we will use $C$  to denote various generic positive constants.
 $O(t)$, $o(t)$ mean $|O(t)|\leq C|t|$, $o(t)/t\rightarrow 0$ as $t\rightarrow 0$. $o(1)$ denotes quantities that tend to $0$ as $\varepsilon\rightarrow 0$.

\section{The basic estimates}\label{s2}
\setcounter{equation}{0}
%\subsection{The structure of solutions with k-bump}

\begin{Prop}\label{pr2.1}
Let $\{u_\varepsilon (x)\}_{\varepsilon>0}$ be a family of positive solutions of \eqref{1.2}
 concentrating at  different points $a_1,\cdots,a_k$ with $k\geq 1$.  Then
%$a_i(1\leq i\leq k)$ is a critical point of $V(x)$ and
$u_{\varepsilon}(x)$ is of the form
\begin{flalign}\label{2-1}
u_\varepsilon(x)=\sum_{j=1}^{k}U_{a_j}(\frac{x-x_{j,\varepsilon}}{\varepsilon})+w_\varepsilon(x),
\end{flalign}
with $x_{j,\varepsilon}$ and $w_{\varepsilon}(x)$  satisfying,
for $j=1,\cdots,k$, as $\varepsilon\rightarrow 0$,
\begin{flalign}\label{2--2}
|x_{j,\varepsilon}-a_j|=o(1) ~\mbox{and}~
\|w_{\varepsilon}\|_\varepsilon=o(\varepsilon^{3/2}),
\end{flalign}
and
\begin{flalign}\label{2--3}
\big(w_{\varepsilon}(x),U_{a_j}(\frac{x-x_{j,\varepsilon}}{\varepsilon})\big)_{\varepsilon}=o(\varepsilon^3),
~\big(w_{\varepsilon}(x),\frac{U_{a_j}(\frac{x-x_{j,\varepsilon}}{\varepsilon})}{\partial{x^i}}\big)_{\varepsilon}=0, ~i=1,2,3.
\end{flalign}
\end{Prop}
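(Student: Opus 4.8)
The plan is to prove the statement in two stages: first a blow-up (rescaling) argument that forces each limiting profile $U_j$ of Definition~A to be the canonical profile $U_{a_j}$ of Theorem~A, which yields the decomposition \eqref{2-1} together with the size estimate for $w_\varepsilon$ in \eqref{2--2}; and then a small perturbation of the centres $x_{j,\varepsilon}$ that produces the exact orthogonality relations \eqref{2--3} while preserving \eqref{2--2}. For the first stage, I would fix $j$ and set $\tilde u_{j,\varepsilon}(y):=u_\varepsilon(x_{j,\varepsilon}+\varepsilon y)$; changing variables in \eqref{1.2} shows that $\tilde u_{j,\varepsilon}$ solves $-\Delta v+V(x_{j,\varepsilon}+\varepsilon y)v=\frac{1}{8\pi}\big(\int_{\R^3}v^2(z)|y-z|^{-1}\,dz\big)v$ on $\R^3$, the essential point being that the factor $\varepsilon^{-2}$ in \eqref{1.2} is exactly calibrated to the scalings $dx\mapsto\varepsilon^{3}dz$ and $|x-\xi|\mapsto\varepsilon|y-z|$ so that all powers of $\varepsilon$ cancel in the nonlocal term. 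Rescaling \eqref{1-6} around $x_{j,\varepsilon}$, and using that the other bumps $U_i\big(\cdot+\varepsilon^{-1}(x_{j,\varepsilon}-x_{i,\varepsilon})\big)$ with $i\neq j$ translate off to infinity (because $a_i\neq a_j$) and hence tend to $0$ in $H^1_{loc}$, I get $\tilde u_{j,\varepsilon}\to U_j$ in $H^1_{loc}(\R^3)$ and weakly in $H^1(\R^3)$. Since $V(x_{j,\varepsilon}+\varepsilon y)\to V(a_j)$ locally uniformly and the Newtonian potential $|y|^{-1}*\tilde u_{j,\varepsilon}^2$ stays bounded in $L^\infty$ (as $\tilde u_{j,\varepsilon}^2$ is bounded in $L^1\cap L^3$), elliptic/Schauder bootstrap upgrades this to $C^2_{loc}$ convergence, so $U_j$ is a nonnegative, nontrivial classical solution of the equation in Theorem~A with $U_j(0)=\max_{\R^3}U_j$. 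The strong maximum principle then gives $U_j>0$, and the uniqueness part of Theorem~A forces $U_j=U_{a_j}$. Writing $w_\varepsilon:=u_\varepsilon-\sum_jU_{a_j}\big(\frac{x-x_{j,\varepsilon}}{\varepsilon}\big)$, \eqref{1-6} becomes $\|w_\varepsilon\|_\varepsilon=o(\varepsilon^{3/2})$.

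For the second stage, a change of variables gives $\big\|U_{a_j}\big(\frac{\cdot-x_{j,\varepsilon}}{\varepsilon}\big)\big\|_\varepsilon=O(\varepsilon^{3/2})$ and $\big\|\frac{\partial U_{a_j}}{\partial x^i}\big(\frac{\cdot-x_{j,\varepsilon}}{\varepsilon}\big)\big\|_\varepsilon=O(\varepsilon^{3/2})$ (since $V$ is bounded and $U_{a_j}$ together with its derivatives lies in $H^1$), so by Cauchy--Schwarz both inner products in \eqref{2--3} are already $o(\varepsilon^3)$; the first relation is thus proved automatically. To turn the second into an exact identity I would move the centres: consider, for $\mathbf y=(y_1,\dots,y_k)$ near $(x_{1,\varepsilon},\dots,x_{k,\varepsilon})$, the $C^1$ function $g_\varepsilon(\mathbf y):=\big\|u_\varepsilon-\sum_lU_{a_l}\big(\frac{\cdot-y_l}{\varepsilon}\big)\big\|_\varepsilon^2$. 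One has $g_\varepsilon(x_{1,\varepsilon},\dots,x_{k,\varepsilon})=\|w_\varepsilon\|_\varepsilon^2$, while on the boundary of a polydisc $\prod_l\overline{B_{\rho_\varepsilon}(x_{l,\varepsilon})}$ with a suitably small $\rho_\varepsilon=o(\varepsilon)$, the disjointness of the rescaled bumps and the coercivity bound $\|U_{a_l}(\cdot)-U_{a_l}(\cdot-se)\|_{H^1}^2\gtrsim|s|^2$ for small $s$ give $g_\varepsilon(\mathbf y)\gtrsim\varepsilon^3(\rho_\varepsilon/\varepsilon)^2\gg\|w_\varepsilon\|_\varepsilon^2$; hence $g_\varepsilon$ has an interior minimiser $(\tilde x_{1,\varepsilon},\dots,\tilde x_{k,\varepsilon})$ with $|\tilde x_{l,\varepsilon}-x_{l,\varepsilon}|=o(\varepsilon)$, and vanishing of $\partial g_\varepsilon/\partial y_l$ there is precisely the second relation in \eqref{2--3} for the shifted centres. (Equivalently one solves the $3k\times3k$ system $\big(u_\varepsilon-\sum_lU_{a_l}(\frac{\cdot-y_l}{\varepsilon}),\frac{\partial U_{a_j}}{\partial x^i}(\frac{\cdot-y_j}{\varepsilon})\big)_\varepsilon=0$ by the implicit function theorem, its differential at $(x_{l,\varepsilon})$ being $\varepsilon^2$ times an invertible block-diagonal matrix by the radial symmetry and nondegeneracy of $U_{a_j}$.) Replacing $w_\varepsilon$ by $u_\varepsilon-\sum_lU_{a_l}\big(\frac{\cdot-\tilde x_{l,\varepsilon}}{\varepsilon}\big)$ and using $\big\|U_{a_l}\big(\frac{\cdot-\tilde x_{l,\varepsilon}}{\varepsilon}\big)-U_{a_l}\big(\frac{\cdot-x_{l,\varepsilon}}{\varepsilon}\big)\big\|_\varepsilon=O\big(\frac{|\tilde x_{l,\varepsilon}-x_{l,\varepsilon}|}{\varepsilon}\big)O(\varepsilon^{3/2})=o(\varepsilon^{3/2})$ keeps $\|w_\varepsilon\|_\varepsilon=o(\varepsilon^{3/2})$; the first relation in \eqref{2--3} holds again by Cauchy--Schwarz, and $|\tilde x_{l,\varepsilon}-a_l|\le|\tilde x_{l,\varepsilon}-x_{l,\varepsilon}|+|x_{l,\varepsilon}-a_l|=o(1)$. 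Renaming $\tilde x_{l,\varepsilon}$ as $x_{l,\varepsilon}$ concludes.

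The main obstacle is the blow-up step: running the rescaling while keeping control of the nonlocal term — verifying that the Newtonian potentials of the faraway bumps give a vanishing contribution near $x_{j,\varepsilon}$, that the rescaled equation is genuinely the limiting one (this is exactly where the precise power $\varepsilon^{-2}$ is used), and that the rescaled solutions are regular enough to pass to the $C^2_{loc}$ limit so that the uniqueness/nondegeneracy of Theorem~A can be invoked. The centre correction in the last step is routine bookkeeping; the only point to watch is keeping it of size $o(\varepsilon)$, so that the estimate \eqref{2--2} survives the adjustment.
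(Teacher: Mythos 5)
Your argument is correct and, in its first half, coincides with the paper's proof: the paper likewise rescales $u_{i,\varepsilon}(x)=u_\varepsilon(\varepsilon x+x_{i,\varepsilon})$, passes to the limit in the weak formulation of \eqref{1.2} (using \eqref{1-6} and the fact that the other bumps translate to infinity), and invokes the uniqueness statement of Theorem A to identify the limit profile with $U_{a_i}$, which immediately gives \eqref{2-1} and $\|w_\varepsilon\|_\varepsilon=o(\varepsilon^{3/2})$. Where you genuinely diverge is the orthogonality step \eqref{2--3}: the paper simply quotes the decomposition lemma (Lemma \ref{lem2.5}, due to Bahri--Coron/Cao--Heinz), which produces a decomposition with extra coefficients $\alpha_{j,\varepsilon}$ and exact orthogonality of the remainder to $U_{a_j}$ and its derivatives, and then the near-unity coefficients must be absorbed into $w_\varepsilon$; you instead reprove the relevant part of that lemma directly, by minimizing $\mathbf y\mapsto\|u_\varepsilon-\sum_l U_{a_l}(\frac{\cdot-y_l}{\varepsilon})\|_\varepsilon^2$ over a polydisc of radius $\rho_\varepsilon=o(\varepsilon)$ chosen so that the boundary values, bounded below by the coercivity $\|U_{a_l}-U_{a_l}(\cdot-se)\|_{H^1}\gtrsim|s|$, exceed the value $\|w_\varepsilon\|_\varepsilon^2$ at the original centres, so an interior minimizer exists and its Euler equation is exactly the second relation in \eqref{2--3}. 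Your route is self-contained, keeps the coefficient of each bump equal to $1$ (matching the form \eqref{2-1} literally), and makes transparent why the statement only asserts $o(\varepsilon^3)$ orthogonality against $U_{a_j}$ itself (Cauchy--Schwarz) while the derivative orthogonality is exact; the paper's route is shorter but leans on the cited lemma and on absorbing the $(\alpha_{j,\varepsilon}-1)$-terms. The price of your approach is the extra bookkeeping you already flag (choice of $\rho_\varepsilon$ with $\varepsilon^{1/2}\rho_\varepsilon\gg\|w_\varepsilon\|_\varepsilon$ and $\rho_\varepsilon=o(\varepsilon)$, smallness of cross terms via exponential decay), all of which is sound as written; note also that at the minimizer you even get $\|w_\varepsilon\|_\varepsilon$ non-increasing, so the triangle-inequality step is not really needed.
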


\begin{proof}
Let
$u_{i,\varepsilon}(x)=u_{\varepsilon}(\varepsilon x+x_{i,\varepsilon})$. Then
\begin{flalign}\label{2-2}
-\Delta u_{i,\varepsilon}(x)+V(\varepsilon x+x_{i,\varepsilon})u_{i,\varepsilon}(x)=
\frac{1}{8\pi }
 \big(\int_{\R^3}\frac{u_{i,\varepsilon}^2(\xi)}{|x-\xi|}d\xi\big)
 u_{i,\varepsilon}(x),~x \in \R^{3}.
\end{flalign}
Suppose that $\varphi(x)$ is an arbitrarily fixed function in $H^1(\R^3)$. By \eqref{2-2} we get
\begin{flalign}\label{2-3}
\int_{\R^3}\nabla u_{i,\varepsilon}(x)\nabla \varphi(x) +V(\varepsilon x+x_{i,\varepsilon})u_{i,\varepsilon}(x)\varphi(x)dx=
\frac{1}{8\pi }
 \int_{\R^3}\int_{\R^3}\frac{u_{i,\varepsilon}^2(\xi)}{|x-\xi|}u_{i,\varepsilon}(x)\varphi(x)d\xi dx.
\end{flalign}
By \eqref{1-6} and passing to a subsequence if necessary $u_{i,\varepsilon}(x)\rightarrow U_i(x)$ weakly in $H^1(\R^3)$ as $\varepsilon\rightarrow 0$. Then
taking $\varepsilon \rightarrow 0$ in \eqref{2-3} we get
\begin{flalign*}
\int_{\R^3}\nabla U_i(x)\nabla \varphi(x) +V(a_i)U_i(x)\varphi(x)dx=
\frac{1}{8\pi}
 \int_{\R^3}\int_{\R^3}\frac{U^2_i(\xi)}{|x-\xi|}U_i(x)\varphi(x)d\xi dx.
\end{flalign*}
Therefore $U_i(x)$ is a nonnegative weak solution of
\begin{flalign*}
\begin{cases}
-\Delta u+V(a_i)u=\displaystyle\frac{1}{8\pi}
 \big(\int_{\R^3}\frac{u^2(\xi)}{|x-\xi|}d\xi\big)u,~ \mbox{in} ~\R^3,\\
u(x)>0,~~ \mbox{in} ~\R^3,~~~~
u(0)=\displaystyle\max_{x\in \R^3}u(x).
\end{cases}
\end{flalign*}
By regularity theory and maximum principle, we know $U_i(x)>0$ in $\R^3$. Also from the uniqueness result in \cite{Wei}, we can show that $U_i(x)\equiv U_{a_i}(x)$ and $U_i(x)$ decays exponentially at infinity.
Then following the decomposition lemma in Lemma \ref{lem2.5}, we can write $u_{\varepsilon}(x)$ uniquely as
\eqref{2-1}
with $x_{j,\varepsilon}$ and $w_{\varepsilon}(x)$  satisfying \eqref{2--2} and \eqref{2--3}
for $j=1,\cdots,k$, as $\varepsilon\rightarrow 0$.
\end{proof}

\begin{Prop}\label{prop1-1}
Suppose that $u_\varepsilon(x)$ is a positive solution of \eqref{1.2}
 concentrating at  different points $a_1,\cdots,a_k$ with $k\geq 1$. Then
for any fixed $R\gg 1$, there exist $\theta>0$ and $C>0$, such that
\begin{flalign}\label{2--5}
u_\varepsilon(x)\leq Ce^{-\theta|x-x_{l,\varepsilon}|/\varepsilon},~\mbox{for}~
l=1,\cdots,k~\mbox{and}~x\in \R^3\backslash \bigcup^k_{j=1} B_{R \varepsilon}(x_{j,\varepsilon}).
\end{flalign}
\end{Prop}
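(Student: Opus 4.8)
The plan is to use a comparison-function argument based on the decay of the limiting profile together with the maximum principle, exploiting the fact that the nonlocal term decays away from the concentration points. First I would rewrite \eqref{1.2} as a linear Schr\"odinger-type inequality: setting $W_\varepsilon(x):=\frac{1}{8\pi\varepsilon^2}\int_{\R^3}\frac{u_\varepsilon^2(\xi)}{|x-\xi|}\,d\xi$, the solution satisfies $-\varepsilon^2\Delta u_\varepsilon+\big(V(x)-W_\varepsilon(x)\big)u_\varepsilon=0$. The key preliminary estimate is that $W_\varepsilon(x)\to 0$ uniformly on $\R^3\setminus\bigcup_{j}B_{R\varepsilon}(x_{j,\varepsilon})$ as $\varepsilon\to0$: indeed, from Proposition \ref{pr2.1}, $u_\varepsilon(\varepsilon\,\cdot+x_{j,\varepsilon})$ is close in $H^1$ to $\sum_j U_{a_j}$, and a change of variables gives $W_\varepsilon(x)=\frac{1}{8\pi}\int_{\R^3}\frac{u_{j,\varepsilon}^2(y)}{|x-x_{j,\varepsilon}-\varepsilon y|}\,dy$ after rescaling; using the exponential decay of each $U_{a_j}$ (Theorem A) plus the smallness of $\|w_\varepsilon\|_\varepsilon$, one controls this integral by splitting into the region where $|y|$ is small (there $|x-x_{j,\varepsilon}-\varepsilon y|\gtrsim R\varepsilon$, contributing $O(1/(R\varepsilon))\cdot\varepsilon\cdot\|U\|_{L^2}^2 \cdot\varepsilon^{-1}$... more carefully, the $\varepsilon$-powers cancel and one gets $O(1/R)$) and the region where $|y|$ is large (negligible by exponential decay). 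The upshot is: for any $\delta>0$, choosing $R$ large and $\varepsilon$ small, $W_\varepsilon(x)\le\delta$ outside the balls $B_{R\varepsilon}(x_{j,\varepsilon})$.

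Next I would fix $\delta_0:=\frac12\inf_{\R^3}V>0$ and choose $R$ so large that $V(x)-W_\varepsilon(x)\ge\delta_0$ on $\Omega_\varepsilon:=\R^3\setminus\bigcup_{j}B_{R\varepsilon}(x_{j,\varepsilon})$ for all small $\varepsilon$. On $\Omega_\varepsilon$ the operator $-\varepsilon^2\Delta+(V-W_\varepsilon)$ is uniformly elliptic with positive zeroth-order coefficient, so the maximum principle applies. As a comparison barrier I would take, for a parameter $\theta\in(0,\sqrt{\delta_0})$ to be fixed,
\begin{flalign*}
\Psi_\varepsilon(x):=M\sum_{l=1}^{k}e^{-\theta|x-x_{l,\varepsilon}|/\varepsilon},
\end{flalign*}
where $M$ is a large constant. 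A direct computation shows that away from the singularities $x=x_{l,\varepsilon}$ (which lie outside $\Omega_\varepsilon$) each summand satisfies $-\varepsilon^2\Delta e^{-\theta|x-x_{l,\varepsilon}|/\varepsilon}=\big(\theta^2-\frac{2\theta\varepsilon}{|x-x_{l,\varepsilon}|}\big)e^{-\theta|x-x_{l,\varepsilon}|/\varepsilon}$, so on $\Omega_\varepsilon$, using $|x-x_{l,\varepsilon}|\ge R\varepsilon$,
\begin{flalign*}
\big(-\varepsilon^2\Delta+(V-W_\varepsilon)\big)\Psi_\varepsilon\ge M\big(\delta_0-\theta^2-\tfrac{2\theta}{R}\big)\sum_l e^{-\theta|x-x_{l,\varepsilon}|/\varepsilon}\ge 0,
\end{flalign*}
provided $\theta$ is small and $R$ large. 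Thus $\Psi_\varepsilon$ is a supersolution on $\Omega_\varepsilon$.

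It remains to control $u_\varepsilon$ on the boundary $\partial\Omega_\varepsilon=\bigcup_l\partial B_{R\varepsilon}(x_{l,\varepsilon})$. From Proposition \ref{pr2.1} and the exponential decay of $U_{a_j}$, together with elliptic regularity bounds (a Moser-type iteration giving $\|u_\varepsilon\|_{L^\infty}\le C$ uniformly in $\varepsilon$), one has $u_\varepsilon(x)\le C_0$ on $\partial B_{R\varepsilon}(x_{l,\varepsilon})$ for some fixed $C_0$ independent of $\varepsilon$; meanwhile $\Psi_\varepsilon\ge M e^{-\theta R}$ there. Choosing $M:=C_0 e^{\theta R}$ gives $u_\varepsilon\le\Psi_\varepsilon$ on $\partial\Omega_\varepsilon$, and $u_\varepsilon\to0$ at infinity while $\Psi_\varepsilon\ge0$. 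The maximum principle on the (unbounded, but with decay at infinity) domain $\Omega_\varepsilon$ then yields $u_\varepsilon\le\Psi_\varepsilon$ throughout $\Omega_\varepsilon$. Finally, for $x\in\R^3\setminus\bigcup_j B_{R\varepsilon}(x_{j,\varepsilon})$, pick the index $l$; for every other index $j\ne l$ the point $x$ may be far from $x_{j,\varepsilon}$, but since the $a_j$ are distinct and $|x_{j,\varepsilon}-a_j|=o(1)$ one has $|x-x_{j,\varepsilon}|\ge c>0$ when $|x-x_{l,\varepsilon}|$ is bounded, while for $|x-x_{l,\varepsilon}|$ large the term $e^{-\theta|x-x_{l,\varepsilon}|/\varepsilon}$ already dominates; absorbing the finitely many cross terms into a slightly smaller exponent $\theta'<\theta$ gives the claimed bound $u_\varepsilon(x)\le Ce^{-\theta'|x-x_{l,\varepsilon}|/\varepsilon}$, after relabelling $\theta'$ as $\theta$.

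I expect the main obstacle to be the first step: establishing that $W_\varepsilon\to0$ uniformly outside the concentration balls with the right dependence on $R$ and $\varepsilon$, since this requires combining the weak $H^1$ convergence of Proposition \ref{pr2.1}, the uniform $L^\infty$ bound (itself needing a separate Moser iteration argument), and the exponential decay from Theorem A, all uniformly in $\varepsilon$; the barrier construction and the maximum principle are then routine.
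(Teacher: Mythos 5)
Your proposal follows essentially the same route as the paper: rewrite \eqref{1.2} as $-\varepsilon^2\Delta u_\varepsilon+(V-W_\varepsilon)u_\varepsilon=0$, show via the decomposition of Proposition \ref{pr2.1} (Hardy--Littlewood--Sobolev/H\"older plus the exponential decay of $U_{a_j}$ and the smallness of $w_\varepsilon$) that $W_\varepsilon$ is uniformly small on $\R^3\setminus\bigcup_j B_{R\varepsilon}(x_{j,\varepsilon})$ so that $V-W_\varepsilon\ge \tfrac12\inf V$ there, and then compare with the barrier $e^{-\theta|x-x_{l,\varepsilon}|/\varepsilon}$, $\theta^2<\tfrac12\inf V$, by the maximum principle with a uniform bound on $\partial B_{R\varepsilon}(x_{j,\varepsilon})$ — exactly the paper's argument (the paper's estimate \eqref{l1} plays the role of your smallness claim for $W_\varepsilon$, and your sign slip in $-\varepsilon^2\Delta e^{-\theta r/\varepsilon}$ is harmless since the omitted term has the favorable sign). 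One remark: your closing step, deducing the bound $Ce^{-\theta'|x-x_{l,\varepsilon}|/\varepsilon}$ for \emph{every} $l$ from the sum barrier, cannot work for $x$ near a different bump $x_{j,\varepsilon}$ (there the sum is only of size $e^{-\theta R}$), but the paper's own boundary verification for its single-term barrier has the identical lacuna when $k\ge2$, and only the sum (equivalently, distance-to-nearest-$x_{j,\varepsilon}$) version, which you do establish, is used in the sequel, so this is not a gap relative to the paper.
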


\begin{proof}
If $u_\varepsilon(x)$ is a positive solution of \eqref{1.2}, then we have
\begin{flalign}\label{1-1}
-\varepsilon^2\Delta u_\varepsilon(x)+\big(V(x)-\frac{1}{8\pi \varepsilon^2}
 \big(\int_{\R^3}\frac{u_\varepsilon^2(\xi)}{|x-\xi|}d\xi\big)\big)u_\varepsilon(x)=0,~x\in \R^3.
\end{flalign}
By \eqref{l1} in the Appendix, we know that, for large fixed $R$  and $\varepsilon$ small enough,
\begin{flalign}\label{b1}
V(x)-\frac{1}{8\pi \varepsilon^2}
 \big(\int_{\R^3}\frac{u_\varepsilon^2(\xi)}{|x-\xi|}d\xi\big)\geq m/2, ~\mbox{in}~ \R^3\backslash\bigcup_{j=1}^k B_{R\varepsilon}(x_{j,\varepsilon}),
\end{flalign}
where $m=\displaystyle\inf_{x\in \R^3}V(x)$.
Then \eqref{1-1} and \eqref{b1} imply
\begin{flalign*}
-\varepsilon^2\Delta u_{\varepsilon}+\frac{m}{2}u_{\varepsilon}\leq 0, ~\mbox{in}~ \R^3\backslash\bigcup_{j=1}^k B_{R\varepsilon}(x_{j,\varepsilon}).
\end{flalign*}
Define the operator $L_\varepsilon$ as follows:
$$
L_\varepsilon v:=
-\varepsilon^2\Delta v+\frac{m}{2}v,~\mbox{for all}~v\in H^1(\R^3).
$$
Then for
 $v_l(x)=e^{-\theta |x-x_{l,\varepsilon}|/\varepsilon}$, where $0<\theta <\sqrt{m/2}$ and $l\in \{1,2,\cdots,k\}$, we have
\begin{flalign*}
\begin{split}
L_\varepsilon v_l(x)&=-\varepsilon^2\big[\frac{\theta^2}{\varepsilon^2}
-\frac{2\theta}{|x-x_{l,\varepsilon}|\varepsilon}\big]e^{-\theta |x-x_{l,\varepsilon}|/\varepsilon}+\frac{m}{2}e^{-\theta |x-x_{l,\varepsilon}|/\varepsilon}\\&=
\big[\frac{2\varepsilon\theta}{|x-x_{l,\varepsilon}|}+\frac{m}{2}-\theta^2\big]e^{-\theta |x-x_{l,\varepsilon}|/\varepsilon}\geq 0.
\end{split}
\end{flalign*}

Next, we extend $u_\varepsilon(x)$ to $\R^3$ by $0$ (still denoted as $u_\varepsilon(x)$) and let $\bar v_l(x)=cv_l(x)-u_\varepsilon(x)$, where $c>0$, then
\begin{flalign*}
L_\varepsilon \bar v_l=cL_\varepsilon v_l-
L_\varepsilon u_\varepsilon\geq 0, ~\mbox{in}~\R^3\backslash\bigcup_{j=1}^k B_{R\varepsilon}(x_{j,\varepsilon}).
\end{flalign*}
Also since $u_\varepsilon(x)\in C(\R^3)$, then there exists $C_0>0$ such that
$|u_\varepsilon(x)|\leq C_0, ~\mbox{on}~\partial \big(\bigcup_{j=1}^k B_{R\varepsilon}(x_{j,\varepsilon})\big).$
So taking $c=C_0e^{R\theta}$, we have
\begin{flalign*}
\begin{split}
\bar v_l(x)&=cv_l(x)-u_\varepsilon(x)
\geq ce^{-R\theta}-C_0\geq 0, ~\mbox{on}~\partial \big(\bigcup_{j=1}^k B_{R\varepsilon}(x_{j,\varepsilon})\big).
\end{split}
\end{flalign*}
Thus for the above fixed large $R$, we obtain
\begin{flalign*}
  \begin{cases}
L_\varepsilon \bar v_l(x)\geq 0,
    &~\mbox{in}~\R^3\backslash\bigcup_{j=1}^k B_{R\varepsilon}(x_{j,\varepsilon}),\\
\bar v_l(x)\geq 0, & ~\mbox{on}~\partial \big(\bigcup_{j=1}^k B_{R\varepsilon}(x_{j,\varepsilon})\big),\\
 \bar v_l(x)\rightarrow 0,&~\mbox{as}~|x|\rightarrow +\infty.
  \end{cases}
\end{flalign*}
Then by the maximum principle, we have
$\bar v_l(x)\geq 0$, for $x\in\R^3 \setminus \displaystyle\bigcup_{j=1}^k B_{R\varepsilon}(x_{j,\varepsilon})$.
This means \eqref{2--5}.
\end{proof}

\begin{Cor}
Suppose that $u_\varepsilon(x)$ is a solution of \eqref{1.2} as in Proposition \ref{prop1-1}.

\textup{(1)}. Then
for any fixed $R\gg 1$, there exists $\theta_1>0$  such that
\begin{equation}\label{l3}
u_\varepsilon(x)=O\big(e^{-\theta_1 R}\big),~\mbox{for}~x\in\R^3\backslash \bigcup_{j=1}^k B_{R\varepsilon }(x_{j,\varepsilon}).
\end{equation}

\textup{(2)}. Then
for any fixed $d>0$, there exists $\theta_2>0$  such that
\begin{equation}\label{l4}
u_\varepsilon(x)=O\big(e^{-\theta_2 /\varepsilon}\big),~\mbox{for}~x\in \R^3\backslash  \bigcup_{j=1}^k B_{d }(x_{j,\varepsilon}).
\end{equation}
\end{Cor}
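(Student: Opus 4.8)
The final statement to prove is the Corollary with parts (1) and (2), which are immediate consequences of Proposition~\ref{prop1-1}.

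\smallskip

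The plan is to deduce both estimates directly from the pointwise bound \eqref{2--5} established in Proposition~\ref{prop1-1}. First I would recall that \eqref{2--5} gives $u_\varepsilon(x)\le Ce^{-\theta|x-x_{l,\varepsilon}|/\varepsilon}$ for every $l\in\{1,\dots,k\}$ and every $x$ outside $\bigcup_{j=1}^k B_{R\varepsilon}(x_{j,\varepsilon})$, with $\theta>0$ independent of $\varepsilon$.

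\smallskip

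For part (1): fix $R\gg 1$ and take $x\in\R^3\setminus\bigcup_{j=1}^k B_{R\varepsilon}(x_{j,\varepsilon})$. Choosing $l$ to be an index realizing $\min_j|x-x_{j,\varepsilon}|$ (or indeed any fixed $l$, since $x$ lies outside the $R\varepsilon$-ball around every center), we have $|x-x_{l,\varepsilon}|\ge R\varepsilon$, hence
\begin{flalign*}
u_\varepsilon(x)\le Ce^{-\theta|x-x_{l,\varepsilon}|/\varepsilon}\le Ce^{-\theta R},
\end{flalign*}
so \eqref{l3} holds with $\theta_1=\theta$. For part (2): fix $d>0$ and take $x\in\R^3\setminus\bigcup_{j=1}^k B_d(x_{j,\varepsilon})$. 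Then $|x-x_{l,\varepsilon}|\ge d$ for every $l$. To apply \eqref{2--5} I first note that, since $|x_{j,\varepsilon}-a_j|=o(1)$ and the $a_j$ are fixed distinct points, for $\varepsilon$ small enough $d/\varepsilon\ge R$, so $x\notin\bigcup_j B_{R\varepsilon}(x_{j,\varepsilon})$ and \eqref{2--5} is applicable; it yields
\begin{flalign*}
u_\varepsilon(x)\le Ce^{-\theta|x-x_{l,\varepsilon}|/\varepsilon}\le Ce^{-\theta d/\varepsilon},
\end{flalign*}
which is \eqref{l4} with $\theta_2=\theta d$ (any smaller positive constant works too).

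\smallskip

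There is essentially no obstacle here: the Corollary is purely a matter of specializing the region in \eqref{2--5} and reading off the resulting bound. The only point requiring a line of care is checking, in part (2), that $B_d(x_{j,\varepsilon})\supset B_{R\varepsilon}(x_{j,\varepsilon})$ for small $\varepsilon$ so that Proposition~\ref{prop1-1} applies on the complement; this is immediate once $\varepsilon<d/R$. Both constants $\theta_1,\theta_2$ are inherited from the $\theta$ of Proposition~\ref{prop1-1} and are independent of $\varepsilon$.
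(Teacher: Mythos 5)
Your proposal is correct and is exactly the argument the paper intends: the paper simply states that both bounds are direct consequences of Proposition \ref{prop1-1}, and your two specializations of \eqref{2--5} (using $|x-x_{l,\varepsilon}|\geq R\varepsilon$ for (1) and $|x-x_{l,\varepsilon}|\geq d$ together with $R\varepsilon<d$ for small $\varepsilon$ for (2)) supply precisely the missing routine details, with $\theta_1=\theta$ and $\theta_2=\theta d$.
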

\begin{proof}
These are the direct results by Proposition \ref{prop1-1}.
\end{proof}
By the regularity theory of elliptic equations in \cite{Gilbarg}, $u_{\varepsilon}(x)$ above is in fact  a classical solution.
\begin{Prop}
Let $u(x)$ be the solution of \eqref{1.2}, then  we have following \textbf{local Pohozaev identity}:
\begin{flalign}\label{2.5}
\begin{split}
\int_{\Omega}\frac{\partial V(x)}{\partial x^i}u^2(x)dx=&
\int_{\partial \Omega}\big(-\varepsilon^2|\nabla u(x)|^2 +V(x)u^2(x)-\frac{1}{8\pi \varepsilon^2}\int_{\R^3}\frac{u^2(\xi)u^2(x)}{|x-\xi|}d\xi\big)\nu_i(x)d
\sigma
\\&+
\frac{1}{8\pi \varepsilon^2}\int_{\Omega}\int_{\R^3}u^2(\xi)u^2(x)\frac{x^i-\xi^i}{|x-\xi|^3}d\xi dx,~\mbox{for}~i=1,2,3,
\end{split}
\end{flalign}
where $\Omega$ is a bounded open domain of $\R^3$, $\nu(x)=\big(\nu_{1}(x),\nu_2(x),\nu_3(x)\big)$ is the outward unit normal of $\partial \Omega$ and $x^i, \xi^i$ are the $i$-th components of $x, \xi$.
 \end{Prop}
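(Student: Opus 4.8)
The proof is a direct computation based on multiplying the equation by $\frac{\partial u}{\partial x^i}$ and integrating over $\Omega$. The plan is to start from the equation
\begin{flalign*}
-\varepsilon^2\Delta u+V(x)u=\frac{1}{8\pi \varepsilon^2}\Big(\int_{\R^3}\frac{u^2(\xi)}{|x-\xi|}d\xi\Big)u,
\end{flalign*}
multiply both sides by $\partial u/\partial x^i$, and integrate over the bounded domain $\Omega$. The left-hand side contains two terms. For the Laplacian term, I would integrate by parts twice: writing $\int_\Omega \Delta u\,\partial_i u\,dx$ and using $\partial_i(|\nabla u|^2)=2\nabla u\cdot\nabla(\partial_i u)$, one obtains, after one integration by parts, a boundary term $\int_{\partial\Omega}(\partial_\nu u)\partial_i u\,d\sigma$ and a bulk term $-\int_\Omega \nabla u\cdot\nabla(\partial_i u)\,dx=-\tfrac12\int_\Omega\partial_i(|\nabla u|^2)\,dx$, and then integrating this last bulk term by parts again turns it into the boundary term $-\tfrac12\int_{\partial\Omega}|\nabla u|^2\nu_i\,d\sigma$; the combined contribution is $\varepsilon^2\big(\tfrac12\int_{\partial\Omega}|\nabla u|^2\nu_i\,d\sigma-\int_{\partial\Omega}(\partial_\nu u)\partial_i u\,d\sigma\big)$. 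For the potential term, $\int_\Omega V u\,\partial_i u\,dx=\tfrac12\int_\Omega V\,\partial_i(u^2)\,dx=-\tfrac12\int_\Omega \partial_i V\,u^2\,dx+\tfrac12\int_{\partial\Omega}V u^2\nu_i\,d\sigma$, which is where the term $\int_\Omega \partial_i V\,u^2\,dx$ appears.

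The nonlocal right-hand side is the only genuinely nonstandard piece. Writing $I:=\frac{1}{8\pi\varepsilon^2}\int_\Omega\big(\int_{\R^3}\frac{u^2(\xi)}{|x-\xi|}d\xi\big)u(x)\,\partial_i u(x)\,dx=\frac{1}{16\pi\varepsilon^2}\int_\Omega\int_{\R^3}\frac{u^2(\xi)}{|x-\xi|}\,\partial_{x^i}\!\big(u^2(x)\big)\,d\xi\,dx$, I would integrate by parts in the $x$-variable over $\Omega$. This produces a boundary term $\frac{1}{16\pi\varepsilon^2}\int_{\partial\Omega}\int_{\R^3}\frac{u^2(\xi)u^2(x)}{|x-\xi|}\,d\xi\,\nu_i(x)\,d\sigma$ and a bulk term $-\frac{1}{16\pi\varepsilon^2}\int_\Omega\int_{\R^3}u^2(\xi)u^2(x)\,\partial_{x^i}\!\big(\frac{1}{|x-\xi|}\big)\,d\xi\,dx$. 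Since $\partial_{x^i}\frac{1}{|x-\xi|}=-\frac{x^i-\xi^i}{|x-\xi|^3}$, the bulk term becomes $+\frac{1}{16\pi\varepsilon^2}\int_\Omega\int_{\R^3}u^2(\xi)u^2(x)\frac{x^i-\xi^i}{|x-\xi|^3}\,d\xi\,dx$. One must check that the integration by parts in $x$ is legitimate despite the singularity of the Newtonian kernel at $x=\xi$: this is fine because $\frac{1}{|x-\xi|}$ is locally integrable in $\R^3$ and, together with the boundedness and exponential decay of $u$ (from Proposition \ref{prop1-1}), all integrals converge absolutely and Fubini applies; the singular set has measure zero and does not contribute.

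Putting the three contributions together, moving $\int_\Omega \partial_i V\,u^2\,dx$ to one side and collecting boundary terms, one arrives at an identity in which the boundary integrand involves $\varepsilon^2|\nabla u|^2$, $\varepsilon^2(\partial_\nu u)\partial_i u$, $V u^2$, and $\frac{1}{8\pi\varepsilon^2}\int_{\R^3}\frac{u^2(\xi)u^2(x)}{|x-\xi|}d\xi$. To match the stated form \eqref{2.5} exactly, I expect the $\partial_\nu u\,\partial_i u$ term and the sign/coefficient bookkeeping on the remaining boundary terms to be the only delicate point; in the statement as written it appears that term has been absorbed or the identity is being recorded in a streamlined form, so the main obstacle is purely careful bookkeeping of boundary contributions rather than any conceptual difficulty. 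The factor $\tfrac{1}{16\pi\varepsilon^2}$ in the bulk nonlocal term doubles to $\tfrac{1}{8\pi\varepsilon^2}$ precisely because the $x$-integrand $u^2(x)$ was obtained from $2u(x)\partial_i u(x)$, which accounts for the coefficient in the last term of \eqref{2.5}. Finally I would remark that all manipulations are justified by the classical regularity of $u$ noted just before the proposition, so the formal computation is in fact rigorous.
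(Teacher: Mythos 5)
Your proposal follows exactly the paper's own (one-line) proof: multiply the equation by $\partial u/\partial x^i$, integrate over $\Omega$, and integrate by parts in each term, and your computations (including the treatment of the Newtonian kernel and the doubling $\tfrac{1}{16\pi\varepsilon^2}\to\tfrac{1}{8\pi\varepsilon^2}$) are correct. The only caveat is the one you already flag: the computation genuinely produces the boundary terms $+\varepsilon^2|\nabla u|^2\nu_i-2\varepsilon^2\,\partial_\nu u\,\partial_i u$ (and, after isolating $\int_\Omega\partial_i V\,u^2\,dx$, a minus sign in front of the volume nonlocal term), so \eqref{2.5} as printed is a streamlined/imprecise record of the identity rather than something recoverable by further bookkeeping; this discrepancy is immaterial for the paper's later arguments, where the boundary and volume terms are only ever estimated in absolute value.
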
\begin{proof}
\eqref{2.5} is obtained by multiplying $\frac{\partial u(x)}{\partial x^i}$ on both sides of \eqref{1.2} and integrating on $\Omega$. We omit the details.
 \end{proof}

\section{Proof of Theorem \ref{th1--1}}

\begin{Prop}\label{Prop2.7}
Let $u_{\varepsilon}(x)$ be the solution of \eqref{1.2}  concentrating at a nondegenerate critical point  $a_1 \in \R^3$ of $V(x)$.
 Then it holds
\begin{flalign}\label{l2--12}
|x_{1,\varepsilon}-a_{1}|=o(\varepsilon).
\end{flalign}
\end{Prop}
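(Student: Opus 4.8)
\textbf{Proof plan for Proposition \ref{Prop2.7}.}
The plan is to exploit the local Pohozaev identity \eqref{2.5} on a fixed ball $\Omega = B_d(x_{1,\varepsilon})$ for some small $d>0$, evaluate each term through the decomposition $u_\varepsilon(x)=U_{a_1}((x-x_{1,\varepsilon})/\varepsilon)+w_\varepsilon(x)$ from Proposition \ref{pr2.1}, and extract from the leading-order balance a relation that forces $|x_{1,\varepsilon}-a_1|=o(\varepsilon)$. First I would record the exponential decay: by \eqref{l4}, on $\partial B_d(x_{1,\varepsilon})$ the solution $u_\varepsilon$ and its gradient are $O(e^{-\theta_2/\varepsilon})$, so the boundary integral in \eqref{2.5} is negligible compared with any power of $\varepsilon$. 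Hence \eqref{2.5} reduces to
\begin{flalign*}
\int_{B_d(x_{1,\varepsilon})}\frac{\partial V(x)}{\partial x^i}u_\varepsilon^2(x)\,dx=
\frac{1}{8\pi \varepsilon^2}\int_{B_d(x_{1,\varepsilon})}\int_{\R^3}u_\varepsilon^2(\xi)u_\varepsilon^2(x)\frac{x^i-\xi^i}{|x-\xi|^3}\,d\xi\,dx+o(\varepsilon^N)
\end{flalign*}
for every $N$.

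Next I would estimate the two volume integrals after the rescaling $x\mapsto x_{1,\varepsilon}+\varepsilon y$. The left side becomes $\varepsilon^3\int \partial_i V(x_{1,\varepsilon}+\varepsilon y)\,u_\varepsilon^2(x_{1,\varepsilon}+\varepsilon y)\,dy$; using $u_\varepsilon(x_{1,\varepsilon}+\varepsilon y)=U_{a_1}(y)+w_\varepsilon(x_{1,\varepsilon}+\varepsilon y)$, a Taylor expansion of $\nabla V$ at $a_1$, and $\nabla V(a_1)=0$ together with nondegeneracy (so $\partial_i V(x_{1,\varepsilon}+\varepsilon y) = \sum_j \partial_{ij}V(a_1)(x_{1,\varepsilon}-a_1)^j + \partial_{ij}V(a_1)\varepsilon y^j + O(|x_{1,\varepsilon}-a_1|^2 + \varepsilon^2|y|^2)$), one gets a leading term
\begin{flalign*}
\varepsilon^3\sum_{j}\partial_{ij}V(a_1)(x_{1,\varepsilon}-a_1)^j\int_{\R^3}U_{a_1}^2(y)\,dy + \text{(smaller order)},
\end{flalign*}
the $\varepsilon y^j$ term dropping by oddness of $y^j U_{a_1}^2(y)$, and the $w_\varepsilon$ contribution being controlled by $\|w_\varepsilon\|_\varepsilon=o(\varepsilon^{3/2})$ and Cauchy--Schwarz to be $o(\varepsilon^{3/2})\cdot O(\varepsilon^{3/2}) = o(\varepsilon^3)$ once combined with the $|x_{1,\varepsilon}-a_1|$ factor (I will need to argue the cross terms are $o(\varepsilon^3 \cdot |x_{1,\varepsilon}-a_1|) + o(\varepsilon^4)$). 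For the right side, the crucial observation is a symmetry/oddness cancellation: replacing $u_\varepsilon$ by the exact radial profile $U_{a_1}((x-x_{1,\varepsilon})/\varepsilon)$ and changing variables, the kernel integral $\int\int U_{a_1}^2(\zeta)U_{a_1}^2(y)\frac{y^i-\zeta^i}{|y-\zeta|^3}\,d\zeta\,dy$ vanishes by antisymmetry under $(y,\zeta)\mapsto(-y,-\zeta)$ combined with radial symmetry of $U_{a_1}$. Therefore the nonlocal volume term is lower order than $\varepsilon^3$ — more precisely $O(\varepsilon^3)$ times $o(1)$ after inserting the $w_\varepsilon$ corrections and using the decay of $U_{a_1}$ to localize $\xi$ near $x_{1,\varepsilon}$.

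Combining the two sides yields, for $i=1,2,3$,
\begin{flalign*}
\sum_{j}\partial_{ij}V(a_1)(x_{1,\varepsilon}-a_1)^j = o(\varepsilon) + o(|x_{1,\varepsilon}-a_1|),
\end{flalign*}
and since the Hessian $(\partial_{ij}V(a_1))$ is invertible by nondegeneracy, this gives $|x_{1,\varepsilon}-a_1|=o(\varepsilon)$ after absorbing the $o(|x_{1,\varepsilon}-a_1|)$ term on the left (for $\varepsilon$ small). The main obstacle I anticipate is the careful treatment of the nonlocal double integral: one must show not only that its "principal part" vanishes by symmetry but also that all the error terms — arising from $w_\varepsilon$, from the shift $x_{1,\varepsilon}\ne a_1$ breaking exact radial symmetry, from truncating the inner $\xi$-integral from $\R^3$ to a neighborhood of $x_{1,\varepsilon}$, and from the Taylor remainder of $V$ — are genuinely $o(\varepsilon^3)$, which requires the decay estimates \eqref{2--5}, \eqref{l3}, \eqref{l4}, the orthogonality relations \eqref{2--3}, and the Hardy--Littlewood--Sobolev / Newtonian potential bounds. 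A secondary subtlety is that at this stage we only know $\|w_\varepsilon\|_\varepsilon = o(\varepsilon^{3/2})$, so the cross-term bookkeeping must be done so that every $w_\varepsilon$-term comes paired with either an extra factor of $|x_{1,\varepsilon}-a_1|$ or an extra power of $\varepsilon$; otherwise the argument would stall at $O(\varepsilon)$ rather than $o(\varepsilon)$.
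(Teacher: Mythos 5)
Your overall strategy (Pohozaev identity \eqref{2.5} on a ball around $x_{1,\varepsilon}$, Taylor expansion of $\partial_i V$ at $a_1$, nondegeneracy of the Hessian) matches the paper, but the two delicate terms are handled differently and your treatment of the nonlocal volume term has a genuine gap. You claim that term is ``$O(\varepsilon^3)$ times $o(1)$'', i.e.\ $o(\varepsilon^3)$; since the left-hand side is $\varepsilon^{3}\big(\int U_{a_1}^2\big)\sum_l \partial^2_{il}V(a_1)(x^l_{1,\varepsilon}-a^l_1)+o(\varepsilon^4+\varepsilon^3|x_{1,\varepsilon}-a_1|)$ as in \eqref{l2-11}, an $o(\varepsilon^3)$ bound only yields $|x_{1,\varepsilon}-a_1|=o(1)$, which is already known; to get $o(\varepsilon)$ you need the nonlocal term to be $o\big(\varepsilon^4+\varepsilon^3|x_{1,\varepsilon}-a_1|\big)$. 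Moreover, the route you propose --- substitute $U_{a_1}+w_\varepsilon$, kill the principal $U$-part by radial symmetry, and control the corrections by decay and HLS-type bounds --- does not obviously reach that order: the term linear in $w_\varepsilon$, namely $\varepsilon^{-2}\int\!\!\int U_{a_1}^2(\tfrac{x-x_{1,\varepsilon}}{\varepsilon})U_{a_1}(\tfrac{\xi-x_{1,\varepsilon}}{\varepsilon})w_\varepsilon(\xi)\frac{x^i-\xi^i}{|x-\xi|^3}$, is only $O(\varepsilon^{1/2}\|w_\varepsilon\|_\varepsilon)$ by \eqref{aa5}, and with the bound available at this stage, \eqref{B.3}, this is $O(\varepsilon^4)+O(\varepsilon^2|x_{1,\varepsilon}-a_1|^2)$, which does not close (the ratio $|x_{1,\varepsilon}-a_1|/\varepsilon$ is not yet known to be bounded). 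The paper avoids all of this with a much simpler observation, see \eqref{l6}: first extend the $x$-integration from $B_d(x_{1,\varepsilon})$ to all of $\R^3$ at the cost of an $O(e^{-\eta/\varepsilon})$ error (using the pointwise decay \eqref{l4}), and then the full double integral $\int_{\R^3}\int_{\R^3}u_\varepsilon^2(x)u_\varepsilon^2(\xi)\frac{x^i-\xi^i}{|x-\xi|^3}\,d\xi\,dx$ vanishes \emph{identically}, because $u_\varepsilon^2(x)u_\varepsilon^2(\xi)$ is symmetric and the kernel is antisymmetric under $x\leftrightarrow\xi$ --- no decomposition, no radial symmetry of $U_{a_1}$, and no $w_\varepsilon$-bookkeeping are needed, and the term is $O(e^{-\eta/\varepsilon})$. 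If you insist on decomposing, you would have to keep the symmetric structure so that the two linear-in-$w_\varepsilon$ cross terms cancel pairwise against the antisymmetric kernel, which is exactly what your localization of $\xi$ near $x_{1,\varepsilon}$ destroys.

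A secondary gap is the boundary term: you assert it is negligible to all orders because $u_\varepsilon$ and $\nabla u_\varepsilon$ are pointwise exponentially small on $\partial B_d(x_{1,\varepsilon})$. The pointwise decay \eqref{l4} covers $u_\varepsilon$ but not $\nabla u_\varepsilon$ (in particular not $\nabla w_\varepsilon$), and no such gradient estimate is established; it could be patched by rescaled interior elliptic estimates, but as written it is unjustified. The paper instead avoids pointwise gradient control entirely by averaging over radii $d\in(\bar d,2\bar d)$ (Lemma \ref{lem-A-4}, \eqref{A.14}) to pick $d_\varepsilon$ for which the surface integral is bounded by a volume integral, giving \eqref{l5}: $O\big(e^{-\eta/\varepsilon}+\|w_\varepsilon\|_\varepsilon^2\big)=O(\varepsilon^7)$, which is all that is needed. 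So: same skeleton, but the two key estimates must be done as in the paper (or with substitute arguments you have not supplied); as it stands the proposal does not prove $|x_{1,\varepsilon}-a_1|=o(\varepsilon)$.
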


\begin{proof}
First, for the small fixed constant $\bar d>0$, taking $u(x)=u_\varepsilon(x)$ and  $\Omega=B_{d}(x_{1,\varepsilon})$  in the Pohozaev identity \eqref{2.5} with any $d\in (\bar d, 2\bar d)$,  we have, for $i=1,2,3$,
\begin{flalign}\label{ll2.11}
\begin{aligned}
\int_{B_{d}(x_{1,\varepsilon})}\frac{\partial V(x)}{\partial x^i}u_\varepsilon^2(x)dx
=&\int_{\partial B_{d}(x_{1,\varepsilon})}A(x)d\sigma
+\frac{1}{8\pi \varepsilon^2}\int_{ B_{d}(x_{1,\varepsilon})}\int_{\R^3}u^2_{\varepsilon}(x)u^2_{\varepsilon}(\xi)\frac{x^i-\xi^i}{|x-\xi|^3}d\xi dx,
\end{aligned}
\end{flalign}
where
\begin{equation}\label{l3.3}
A(x)=\big(-\varepsilon^2 |\nabla u_{\varepsilon}(x)|^2+ u_\varepsilon^2(x)(V(x)-\frac{1}{8\pi \varepsilon^2}\int_{\R^3}\frac{ u^2_{\varepsilon}(\xi) }{|x-\xi|}d\xi )\big)\nu_i(x).
\end{equation}
Next for any $d\in (\bar d, 2\bar d)$, \eqref{1-6} and \eqref{laa1} imply
\begin{flalign}\label{l2-11}
\begin{aligned}
\mbox{LHS of}~\eqref{ll2.11}&=\int_{B_{d}(x_{1,\varepsilon})}\frac{\partial V(x)}{\partial x^i} U^2_{a_1}(\frac{x-x_{1,\varepsilon}}{\varepsilon})dx+
o\big(\varepsilon^4+\varepsilon^3|x_{1,\varepsilon}-a_1|\big)
\\&= \varepsilon^{3} \big(\int_{\R^3} U_{a_1}^2(x)dx\big)\sum^3_{l=1}\frac{\partial^2 V(a_{1})}{\partial x^i\partial x^l}
(x^l_{1,\varepsilon}-a^l_{1}) +o\big(\varepsilon^4+\varepsilon^3|x_{1,\varepsilon}-a_1|\big),
\end{aligned}
\end{flalign}
where $x^l_{1,\varepsilon}, a^l_{1}$ are the $l$-th components of $x_{1,\varepsilon}, a_{1}$.

On the other hand, using \eqref{l4},  \eqref{A.14} and \eqref{2.45}, there exists $d_\varepsilon\in (\bar d, 2\bar d)$ such that
\begin{flalign}\label{l5}
\int_{\partial B_{d_\varepsilon}(x_{1,\varepsilon})}A(x)d\sigma=O\big(e^{-\eta/\varepsilon}+\|w_\varepsilon\|^2_\varepsilon\big)
=O\big(\varepsilon^{7}\big).
\end{flalign}
Also for any $d\in (\bar d, 2\bar d)$, by symmetry and \eqref{l4}, we deduce
\begin{flalign}\label{l6}
\begin{split}
\frac{1}{8\pi \varepsilon^2}&\int_{ B_{d}(x_{1,\varepsilon})}\int_{\R^3}u^2_{\varepsilon}(x)u^2_{\varepsilon}(\xi)\frac{x^i-\xi^i}{|x-\xi|^3}d\xi dx\\
=&
\frac{1}{8\pi \varepsilon^2}\int_{ \R^3}\int_{\R^3}u^2_{\varepsilon}(x)u^2_{\varepsilon}(\xi)\frac{x^i-\xi^i}{|x-\xi|^3}d\xi dx+
O\big(e^{-\eta/\varepsilon}\big)=O\big(e^{-\eta/\varepsilon}\big).
\end{split}\end{flalign}
Let $d=d_\varepsilon$ in \eqref{ll2.11}, then \eqref{l2-11}, \eqref{l5} and  \eqref{l6} imply
\begin{flalign*}
\begin{aligned}
\sum^3_{l=1}\frac{\partial^2 V(a_{1})}{\partial x^i\partial x^l}
(x^l_{1,\varepsilon}-a^l_{1})=o\big(|x_{1,\varepsilon}-a_1|\big)+o(\varepsilon).
\end{aligned}
\end{flalign*}
This means that \eqref{l2--12} holds.
\end{proof}

\begin{Prop}\label{3-2}
Let $\eta_{1,\varepsilon}(x)=\eta_{\varepsilon}(\varepsilon x+x_{1,\varepsilon}^{(1)})$, then taking
a subsequence necessarily, it holds
\begin{equation}\label{lp11}
\eta_{1,\varepsilon}(x)\rightarrow\sum_{i=1}^3 b_{1,i}\frac{\partial U_{a_1}(x)}{\partial x^i}
\end{equation}
uniformly in $C^1(B_R(0))$ for any $R>0$, where $\eta_{\varepsilon}(x)$ is the function in \eqref{3.1} and $b_{1,i}$, $i=1,2,3$ are some constants.
\end{Prop}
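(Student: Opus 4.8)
The plan is to run a blow-up argument on $\eta_\varepsilon$, identify its limit as a solution of the linearized equation of Theorem A, and conclude by the nondegeneracy stated there. Write $u_1=u_\varepsilon^{(1)}$, $u_2=u_\varepsilon^{(2)}$ (we may assume $u_1\not\equiv u_2$, otherwise there is nothing to prove). Subtracting the two copies of \eqref{1.2}, using $u_1^2-u_2^2=(u_1+u_2)(u_1-u_2)$ and dividing by $\|u_1-u_2\|_{L^\infty(\R^3)}$ gives
\[
-\varepsilon^2\Delta\eta_\varepsilon+\Big(V(x)-\frac{1}{8\pi\varepsilon^2}\int_{\R^3}\frac{u_2^2(\xi)}{|x-\xi|}d\xi\Big)\eta_\varepsilon
=\frac{1}{8\pi\varepsilon^2}\Big(\int_{\R^3}\frac{(u_1+u_2)(\xi)\eta_\varepsilon(\xi)}{|x-\xi|}d\xi\Big)u_1(x).
\]
Setting $y=(x-x_{1,\varepsilon}^{(1)})/\varepsilon$ and $u_{l,\varepsilon}(y)=u_\varepsilon^{(l)}(\varepsilon y+x_{1,\varepsilon}^{(1)})$, this rescales to
\[
-\Delta\eta_{1,\varepsilon}+\Big(V(\varepsilon y+x_{1,\varepsilon}^{(1)})-\frac{1}{8\pi}\int_{\R^3}\frac{u_{2,\varepsilon}^2(\zeta)}{|y-\zeta|}d\zeta\Big)\eta_{1,\varepsilon}
=\frac{1}{8\pi}\Big(\int_{\R^3}\frac{(u_{1,\varepsilon}+u_{2,\varepsilon})(\zeta)\eta_{1,\varepsilon}(\zeta)}{|y-\zeta|}d\zeta\Big)u_{1,\varepsilon}(y).
\]

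\textbf{Step 1 (a priori bounds and a subsequential limit).} Since $\|\eta_{1,\varepsilon}\|_{L^\infty(\R^3)}=1$ and, by Proposition \ref{prop1-1} applied to $u_1$ and $u_2$, the functions $u_{1,\varepsilon},u_{2,\varepsilon}$ are uniformly bounded and decay exponentially uniformly in $\varepsilon$, the nonlocal factor $\int\frac{(u_{1,\varepsilon}+u_{2,\varepsilon})|\eta_{1,\varepsilon}|}{|y-\zeta|}d\zeta$ is bounded uniformly in $y$ and $\varepsilon$, hence the right-hand side above is bounded in $L^\infty(\R^3)$; so is the zeroth-order coefficient. Interior $W^{2,p}$ estimates for $-\Delta\eta_{1,\varepsilon}$ together with Sobolev embedding give $\|\eta_{1,\varepsilon}\|_{C^{1,\alpha}(B_R(0))}\le C(R)$ for every $R>0$. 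Extracting a subsequence and using a diagonal argument, $\eta_{1,\varepsilon}\to\eta$ in $C^1_{\mathrm{loc}}(\R^3)$ for some $\eta$ with $\|\eta\|_{L^\infty(\R^3)}\le1$; this gives exactly the convergence claimed in \eqref{lp11} on each ball $B_R(0)$.

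\textbf{Step 2 (the limit equation).} By Proposition \ref{pr2.1} and Proposition \ref{Prop2.7} (the latter applied to both $u_1$ and $u_2$, which also yields $|x_{1,\varepsilon}^{(1)}-x_{1,\varepsilon}^{(2)}|=o(\varepsilon)$, so the shift of center in $u_{2,\varepsilon}$ is negligible), we have $u_{1,\varepsilon}\to U_{a_1}$ and $u_{2,\varepsilon}\to U_{a_1}$ in $C^1_{\mathrm{loc}}(\R^3)$, with uniform exponential decay, and $V(\varepsilon y+x_{1,\varepsilon}^{(1)})\to V(a_1)$. Passing to the limit in the nonlocal integrals is justified by dominated convergence, the integrands being dominated by $Ce^{-\theta|\zeta|}/|y-\zeta|$ and $Ce^{-2\theta|\zeta|}/|y-\zeta|$, which are integrable. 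Since $u_{1,\varepsilon}+u_{2,\varepsilon}\to 2U_{a_1}$, the limit $\eta$ solves
\[
-\Delta\eta+\Big(V(a_1)-\frac{1}{8\pi}\int_{\R^3}\frac{U_{a_1}^2(\zeta)}{|y-\zeta|}d\zeta\Big)\eta
=\frac{1}{4\pi}\Big(\int_{\R^3}\frac{U_{a_1}(\zeta)\eta(\zeta)}{|y-\zeta|}d\zeta\Big)U_{a_1}(y),
\]
which is precisely the linearized equation in Theorem A.

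\textbf{Step 3 ($\eta\in H^1$ and conclusion).} To invoke the nondegeneracy in Theorem A we need $\eta\in H^1(\R^3)$, which follows from a uniform-in-$\varepsilon$ decay estimate for $\eta_{1,\varepsilon}$. For $|y|\ge R$ with $R$ large but fixed, the bound $\frac{1}{8\pi}\int\frac{u_{2,\varepsilon}^2(\zeta)}{|y-\zeta|}d\zeta\le C/(1+|y|)\le m/2$ (with $m=\inf V$) shows the zeroth-order coefficient is $\ge m/2$, while the right-hand side is $\le C|u_{1,\varepsilon}(y)|\le Ce^{-\theta|y|}$ because $|\eta_{1,\varepsilon}|\le1$ and the nonlocal factor is bounded. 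Comparing $\pm\eta_{1,\varepsilon}$ with the barrier $C_1e^{-\theta'|y|}$ for suitable $0<\theta'<\min\{\theta,\sqrt{m/2}\}$ and $C_1$ large, exactly as in the proof of Proposition \ref{prop1-1}, gives $|\eta_{1,\varepsilon}(y)|\le C_1e^{-\theta'|y|}$ for $|y|\ge R$, uniformly in $\varepsilon$; passing to the limit, $|\eta(y)|\le C_1e^{-\theta'|y|}$, hence $\eta\in H^1(\R^3)$. By Theorem A, $\eta=\sum_{i=1}^3 b_{1,i}\,\partial U_{a_1}/\partial x^i$ for constants $b_{1,i}$, which is \eqref{lp11}.

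\textbf{Main obstacle.} The difficulties both stem from the nonlocal term. First, the uniform exponential decay of $\eta_{1,\varepsilon}$ (routine in the local Schr\"odinger case) must be obtained by moving the ``good'' nonlocal term $\frac{1}{8\pi\varepsilon^2}\int u_2^2/|x-\xi|\,d\xi$ into the zeroth-order coefficient — where it is $\le m/2$ away from the concentration point — and observing that the remaining nonlocal term on the right carries the exponentially decaying factor $u_{1,\varepsilon}(y)$. Second, the passage to the limit in the nonlocal integrals requires the uniform exponential decay of $u_{1,\varepsilon},u_{2,\varepsilon}$ from Proposition \ref{prop1-1} to produce an integrable dominating function; without it, convergence of the Newtonian-type integrals is not immediate.
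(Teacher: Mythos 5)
Your proposal is correct and follows the same overall strategy as the paper: uniform $C^{1,\alpha}_{loc}$ bounds from $\|\eta_{1,\varepsilon}\|_{L^\infty}\le 1$ and elliptic regularity, identification of the limit as a solution of the linearized equation of Theorem A, and the nondegeneracy to conclude $\eta_1=\sum_i b_{1,i}\partial U_{a_1}/\partial x^i$. The implementation differs in two places worth noting. First, you pass to the limit in the rescaled equation pointwise, dominating the nonlocal integrals by $C(\chi_{B_R}+e^{-\theta|\zeta|})/|y-\zeta|$ via the uniform exponential decay of Proposition \ref{prop1-1} (and you correctly observe that $|x^{(1)}_{1,\varepsilon}-x^{(2)}_{1,\varepsilon}|=o(\varepsilon)$, needed to center both rescaled solutions, follows from Proposition \ref{Prop2.7} in the single-peak case); the paper instead works with the weak formulation \eqref{3.11} and the appendix estimates \eqref{l9}--\eqref{l10}, which control $E_1,E_2$ through the decomposition $u_\varepsilon=U_{a_1}(\frac{\cdot-x_{1,\varepsilon}}{\varepsilon})+w_\varepsilon$, Hardy--Littlewood--Sobolev and the $\|w_\varepsilon\|_\varepsilon$ bounds, so it never needs pointwise convergence of $u^{(2)}_{\varepsilon}$ under the rescaling. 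Your splitting of the difference of the nonlocal terms (potential with $u_2^2$, source with $u_1$) versus the paper's $E_1,E_2$ in \eqref{e} is only cosmetic. Second, your Step 3 — the barrier argument giving $|\eta_{1,\varepsilon}(y)|\le C e^{-\theta'|y|}$ for $|y|\ge R$, hence $\eta_1\in H^1(\R^3)$ — addresses a point the paper passes over silently: Theorem A is stated for $H^1$ solutions of the linearized equation, and the paper applies it to the bounded limit $\eta_1$ without verifying integrability. Your extra step (which is sound: the zeroth-order coefficient exceeds $m/2$ outside $B_R$ by the same computation as \eqref{b1}, and the right-hand side inherits the decay of $u_{1,\varepsilon}$) closes this gap, at the modest price of invoking the maximum principle once more.
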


\begin{proof}
Since $\|\eta_{1,\varepsilon}\|_{L^{\infty}(\R^3)}\leq 1$, by the regularity theory, we know
\begin{flalign*}
\eta_{1,\varepsilon}(x)\in C^{1,\alpha}_{loc}(\R^3)~~\mbox{and}~~\|\eta_{1,\varepsilon}\|_{C^{1,\alpha}_{loc}(\R^3)}\leq C,
~\mbox{for some}~\alpha \in (0,1).
\end{flalign*}
So we may assume that $$\eta_{1,\varepsilon}(x)\rightarrow\eta_{1}(x),~\mbox{in}~C_{loc}(\R^3).$$
 By direct calculations, we have

 \begin{flalign*}
-\Delta\eta_{1,\varepsilon}(x)= -V(\varepsilon x+x_{1,\varepsilon}^{(1)})\eta_{1,\varepsilon}(x)+
E_1(\varepsilon x+x_{1,\varepsilon}^{(1)})\eta_{1,\varepsilon}(x)+E_2(\varepsilon x+x_{1,\varepsilon}^{(1)}),
\end{flalign*}
where $E_1(\varepsilon x+x_{1,\varepsilon}^{(1)})$ and $E_2(\varepsilon x+x_{1,\varepsilon}^{(1)})$ are  in \eqref{e}.
Then from \eqref{l9} and \eqref{l10}, we get
\begin{flalign*}
E_1(\varepsilon x+x_{1,\varepsilon}^{(1)})=
\frac{1}{8\pi}
 \big(\int_{\R^3}\frac{U_{a_1}^2(\xi)}{|x-\xi|}d\xi\big)+o(1), ~x\in B_{d/{\varepsilon}}(0),
\end{flalign*}
and
\begin{flalign}\label{la}
E_2(\varepsilon x+x_{1,\varepsilon}^{(1)})=\frac{U_{a_1}(x)}{4\pi}
 \big(\int_{\R^3}\frac{U_{a_1}(\xi)\eta_{1,\varepsilon}(\xi)}{|x-\xi|}d\xi\big)+o(1), ~x\in B_{d/{\varepsilon}}(0).
\end{flalign}
Next, for any given $\Phi(x)\in C_{0}^{\infty}(\R^3)$, we have
\begin{flalign}\label{3.11}
\begin{aligned}
\displaystyle \int_{\R^3}\big(-\Delta\eta_{1,\varepsilon}(x)&
+V(\varepsilon x+x_{1,\varepsilon}^{(1)})\eta_{1,\varepsilon}(x) \big)\Phi(x)dx-\frac{1}{8\pi}
\int_{\R^3}\int_{\R^3}\frac{U_{a_1}^2(\xi)}{|x-\xi|}\eta_{1,\varepsilon}(x) \Phi(x)d\xi dx\\
&
-\frac{1}{4\pi}\displaystyle \int_{\R^3}\int_{\R^3}
\frac{U_{a_1}(\xi)\eta_{1,\varepsilon}(\xi)}{|x-\xi|}
U_{a_1}(x)\Phi(x) d\xi dx
=o(1)\|\Phi\|_{H^1(\R^3)}.
\end{aligned}
\end{flalign}
Letting $\varepsilon\rightarrow 0$ in \eqref{3.11} and using the elliptic regularity theory, we find that $\eta_{1}(x)$ satisfies
\begin{flalign*}
-\Delta\eta_{1}(x)+V(a_1)\eta_{1}(x)=
\frac{1}{8\pi}
 \big(\int_{\R^3}\frac{U_{a_1}^2(\xi)}{|x-\xi|}d\xi\big)\eta_{1}(x)+\frac{1}{4\pi}
 \big(\int_{\R^3}\frac{U_{a_1}(\xi)\eta_{1}(\xi)}{|x-\xi|}d\xi\big)U_{a_1}(x),~~\textrm{in~}\R^3,
\end{flalign*}
which gives $
\eta_{1}(x)=\displaystyle\sum_{i=1}^3b_{1,i}\frac{\partial U_{a_1}(x)}{\partial x^i}.
$ This means \eqref{lp11}.
\end{proof}

\begin{Prop}\label{3-3}
Let $b_{1,i}$ be as in Proposition \ref{3-2}, then
we have
$$b_{1,i}=0,~~\mbox{for all}~i=1,2,3.$$
\end{Prop}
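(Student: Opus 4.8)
The plan is to feed the blow-up limit \eqref{lp11} into a local Pohozaev identity for the \emph{pair} $\big(u^{(1)}_\varepsilon,u^{(2)}_\varepsilon\big)$, produce the homogeneous linear system
\begin{flalign*}
\sum_{l=1}^3\frac{\partial^2 V(a_1)}{\partial x^i\partial x^l}\,b_{1,l}=0,\qquad i=1,2,3,
\end{flalign*}
and then read off $b_{1,l}=0$ from the invertibility of the Hessian $\big(\partial^2 V(a_1)/\partial x^i\partial x^l\big)$, which holds because $a_1$ is a nondegenerate critical point. Concretely, fix a small $\bar d>0$ and, exactly as in the proof of Proposition~\ref{Prop2.7}, choose a ``good radius'' $d_\varepsilon\in(\bar d,2\bar d)$. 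Apply the local Pohozaev identity \eqref{2.5} on $\Omega=B_{d_\varepsilon}(x^{(1)}_{1,\varepsilon})$ to $u=u^{(1)}_\varepsilon$ and to $u=u^{(2)}_\varepsilon$, subtract the two identities, and divide by $\|u^{(1)}_\varepsilon-u^{(2)}_\varepsilon\|_{L^\infty(\R^3)}$. Using the algebraic identity $(u^{(1)}_\varepsilon)^2-(u^{(2)}_\varepsilon)^2=(u^{(1)}_\varepsilon+u^{(2)}_\varepsilon)(u^{(1)}_\varepsilon-u^{(2)}_\varepsilon)$, the left-hand side becomes
\begin{flalign*}
\int_{B_{d_\varepsilon}(x^{(1)}_{1,\varepsilon})}\frac{\partial V(x)}{\partial x^i}\big(u^{(1)}_\varepsilon+u^{(2)}_\varepsilon\big)(x)\,\eta_\varepsilon(x)\,dx .
\end{flalign*}

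Next I would blow up this integral. Changing variables $x=\varepsilon y+x^{(1)}_{1,\varepsilon}$: by Proposition~\ref{Prop2.7}, $|x^{(j)}_{1,\varepsilon}-a_1|=o(\varepsilon)$ for $j=1,2$, hence $|x^{(1)}_{1,\varepsilon}-x^{(2)}_{1,\varepsilon}|=o(\varepsilon)$, so $\big(u^{(1)}_\varepsilon+u^{(2)}_\varepsilon\big)(\varepsilon y+x^{(1)}_{1,\varepsilon})\to 2U_{a_1}(y)$ in $C_{loc}$, with the uniform bound $\le Ce^{-\theta|y|/2}$ coming from Proposition~\ref{prop1-1}; by Proposition~\ref{3-2}, $\eta_\varepsilon(\varepsilon y+x^{(1)}_{1,\varepsilon})=\eta_{1,\varepsilon}(y)\to\sum_m b_{1,m}\,\partial U_{a_1}/\partial x^m$ in $C^1_{loc}$. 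Since $\partial V(a_1)/\partial x^i=0$, a Taylor expansion gives $\partial V(\varepsilon y+x^{(1)}_{1,\varepsilon})/\partial x^i=\varepsilon\sum_l (\partial^2 V(a_1)/\partial x^i\partial x^l)\, y^l+o(\varepsilon)$ on compacts, with $|\partial V(\varepsilon y+x^{(1)}_{1,\varepsilon})/\partial x^i|\le C\varepsilon(1+|y|)$ on $B_{d_\varepsilon/\varepsilon}(0)$; together with the exponential decay of $U_{a_1}$ this controls the region $|y|\gg1$. Using also $\int_{\R^3}y^l\,U_{a_1}\,(\partial U_{a_1}/\partial x^m)\,dy=-\tfrac12\delta_{lm}\int_{\R^3}U_{a_1}^2$ (integration by parts), the divided left-hand side equals $-\varepsilon^4\big(\int_{\R^3}U_{a_1}^2\big)\sum_l (\partial^2 V(a_1)/\partial x^i\partial x^l)\, b_{1,l}+o(\varepsilon^4)$.

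For the divided right-hand side, the surface integral over $\partial B_{d_\varepsilon}(x^{(1)}_{1,\varepsilon})$ is handled as in \eqref{l5}: with the good choice of $d_\varepsilon$, exponential decay away from the concentration point (see \eqref{l4}), interior gradient estimates for $\eta_\varepsilon$, and the $w_\varepsilon$-estimates together give $O(e^{-c/\varepsilon})+o(\varepsilon^4)=o(\varepsilon^4)$. The essentially new term is the nonlocal volume integral $\frac{1}{8\pi\varepsilon^2}\int_\Omega\int_{\R^3}(u^{(l)}_\varepsilon)^2(x)(u^{(l)}_\varepsilon)^2(\xi)\frac{x^i-\xi^i}{|x-\xi|^3}\,d\xi\,dx$. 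Setting $P:=(u^{(1)}_\varepsilon)^2-(u^{(2)}_\varepsilon)^2$, the difference of the two such terms equals $\frac{1}{8\pi\varepsilon^2}\int_\Omega\int_{\R^3}N(x,\xi)\frac{x^i-\xi^i}{|x-\xi|^3}\,d\xi\,dx$ with $N(x,\xi)=P(x)(u^{(1)}_\varepsilon)^2(\xi)+(u^{(2)}_\varepsilon)^2(x)P(\xi)$; splitting $\int_{\R^3}=\int_\Omega+\int_{\R^3\setminus\Omega}$, over $\Omega\times\Omega$ the substitution $x\leftrightarrow\xi$ in the second piece of $N$ turns the integrand into $P(x)P(\xi)\frac{x^i-\xi^i}{|x-\xi|^3}$, which is odd under $x\leftrightarrow\xi$ over a symmetric domain, hence integrates to $0$, while over $\Omega\times(\R^3\setminus\Omega)$ one has $|N(x,\xi)|\le C\|u^{(1)}_\varepsilon-u^{(2)}_\varepsilon\|_{L^\infty}e^{-\theta|\xi-x^{(1)}_{1,\varepsilon}|/\varepsilon}$, so after dividing this term is $O(\varepsilon\,e^{-c/\varepsilon})=o(\varepsilon^4)$. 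Hence the whole divided right-hand side is $o(\varepsilon^4)$.

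Equating the two sides, dividing by $\varepsilon^4$ and letting $\varepsilon\to0$ yields $\sum_l (\partial^2 V(a_1)/\partial x^i\partial x^l)\, b_{1,l}=0$ for $i=1,2,3$, and the nondegeneracy of $a_1$ forces $b_{1,l}=0$ for $l=1,2,3$. The main obstacle is the nonlocal volume term: the exact cancellation over $\Omega\times\Omega$ by antisymmetry (the ``symmetries used skillfully''), arranged so that the remaining piece over $\Omega\times(\R^3\setminus\Omega)$ carries a factor $\|u^{(1)}_\varepsilon-u^{(2)}_\varepsilon\|_{L^\infty}$ that permits the division even though no lower bound for this norm is known, has no counterpart in the local Schr\"odinger setting. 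A secondary technical point is making the boundary estimate rigorous, which rests on the $w_\varepsilon$-estimates from the Appendix and interior gradient bounds for $\eta_\varepsilon$.
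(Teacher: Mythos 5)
Your proposal is correct and follows essentially the same route as the paper: subtract the two local Pohozaev identities on a good ball $B_{d_\varepsilon}(x^{(1)}_{1,\varepsilon})$, divide by $J_\varepsilon$, blow up the left-hand side using Proposition \ref{3-2} together with $|x_{1,\varepsilon}-a_1|=o(\varepsilon)$, annihilate the nonlocal volume contribution by the antisymmetry of the kernel plus exponential decay (your exact cancellation over $\Omega\times\Omega$ is just a repackaging of the paper's extension to $\R^3\times\R^3$ and symmetrization in \eqref{4-2}), handle the boundary term by the good-radius choice as in \eqref{e17l}, and conclude from the nondegeneracy of $D^2V(a_1)$. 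A minor remark: your coefficient $-\varepsilon^4$ on the left-hand side, coming from $\int_{\R^3}y^l\,U_{a_1}\,\partial_m U_{a_1}\,dy=-\tfrac{1}{2}\delta_{lm}\int_{\R^3}U_{a_1}^2\,dy$, is the correct sign (the paper's \eqref{4-1} records a $+$), but this is immaterial since the right-hand side is $o(\varepsilon^4)$ and only the homogeneous system $\sum_l\partial^2V(a_1)/\partial x^i\partial x^l\,b_{1,l}=0$ is needed.
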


\begin{proof}
Since $u_{\varepsilon}^{(1)}(x)$, $u_{\varepsilon}^{(2)}(x)$ are the positive solutions of \eqref{1.2},
for the small fixed constant $\bar d>0$, using Pohozaev identity \eqref{2.5} with any $\delta\in (\bar d, 2\bar d)$, we deduce
\begin{flalign}\label{3.14}
\begin{aligned}
&\displaystyle \int_{B_{\delta}(x_{1,\varepsilon}^{(1)})}\frac{\partial V(x)}{\partial x^i}\big(u_{\varepsilon}^{(1)}(x)+u_{\varepsilon}^{(2)}(x)\big)\cdot\eta_{\varepsilon}(x)dx \\=&
\int_{\partial B_{\delta}(x_{1,\varepsilon}^{(1)})}B(x)d\sigma+\frac{1}{8\pi \varepsilon^2}
\int_{ B_{\delta}(x_{1,\varepsilon}^{(1)})}\int_{\R^3}
\big(u^{(1)}_{\varepsilon}(x)\big)^2
\big(u^{(1)}_{\varepsilon}(\xi)+u^{(2)}_{\varepsilon}(\xi)\big)\eta_{\varepsilon}(\xi)
\frac{x^i-\xi^i}{|x-\xi|^3}d\xi dx\\&
+\frac{1}{8\pi \varepsilon^2}
\int_{B_{\delta}(x_{1,\varepsilon}^{(1)})}\int_{\R^3}
\big(u^{(2)}_{\varepsilon}(\xi)\big)^2
\big(u^{(1)}_{\varepsilon}(x)+u^{(2)}_{\varepsilon}(x)\big)\eta_{\varepsilon}(x)\frac{x^i-\xi^i}{|x-\xi|^3}d\xi dx,
\end{aligned}
\end{flalign}
where
\begin{flalign}\label{lb}
\begin{aligned}
B(x)=&
-
\big(\varepsilon^2 \nabla \big(u_{\varepsilon}^{(1)}(x)
+u_{\varepsilon}^{(2)}(x)\big)\cdot \nabla \eta_{\varepsilon}(x)+V(x)\big(u_{\varepsilon}^{(1)}(x)
+u_{\varepsilon}^{(2)}(x)\big)\eta_{\varepsilon}(x)\big)\nu_i(x)\\
&-\frac{\big(u^{(1)}_{\varepsilon}(x)\big)^2\nu_i(x)}{8\pi \varepsilon^2}\int_{\R^3}
\big(u^{(1)}_{\varepsilon}(\xi)+u^{(2)}_{\varepsilon}(\xi)\big)\eta_{\varepsilon}(\xi){|x-\xi|^{-1}}d\xi \\&
-\frac{\big(u^{(1)}_{\varepsilon}(x)+u^{(2)}_{\varepsilon}(x)\big)\eta_{\varepsilon}(x)\nu_i(x)}{8\pi \varepsilon^2}\int_{\R^3}
\big(u^{(2)}_{\varepsilon}(\xi)\big)^2
{|x-\xi|^{-1}}d\xi.
\end{aligned}
\end{flalign}
Now from \eqref{2--2},  Proposition \ref{Prop2.7}, Proposition \ref{3-2} and \eqref{laa1}, for any $\delta\in (\bar d, 2\bar d)$, we have
\begin{flalign}\label{4-1}
\begin{split}
\textrm{LHS of} ~\eqref{3.14}=&
\int_{B_{\delta}(x_{1,\varepsilon}^{(1)})}\frac{\partial V(x)}{\partial x^i}\big(U_{a_j}(\frac{x-x^{(1)}_{1,\varepsilon}}{\varepsilon})+
U_{a_j}(\frac{x-x^{(2)}_{1,\varepsilon}}{\varepsilon})\big)\eta_{\varepsilon}(x)dx+O\big(\varepsilon^5\big)\\=&
\int_{B_{\delta}(x_{1,\varepsilon}^{(1)})}\big(\sum^3_{l=1}(x^l-
a^l_{1})\frac{\partial^2 V(a_{1})}{\partial x^i\partial x^l} \big)\big(U_{a_j}(\frac{x-x^{(1)}_{1,\varepsilon}}{\varepsilon})+
U_{a_1}(\frac{x-x^{(2)}_{1,\varepsilon}}{\varepsilon})\big)\eta_{\varepsilon}(x)dx\\&
+\int_{B_{\delta}(x_{1,\varepsilon}^{(1)})}o(|x-
a_{1}|)\big(U_{a_j}(\frac{x-x^{(1)}_{1,\varepsilon}}{\varepsilon})+
U_{a_1}(\frac{x-x^{(2)}_{1,\varepsilon}}{\varepsilon})\big)\eta_{\varepsilon}(x)dx+O\big(\varepsilon^5\big)
\\=&
2\varepsilon^4\big(\sum^3_{l=1}\frac{\partial^2 V(a_{1})}{\partial x^i\partial x^l}\int_{\R^3}x^l
U_{a_1}(x) \big(b_{1,i}\frac{\partial U_{a_1}(x)}{\partial x^i}\big)dx\big)+o(\varepsilon^4)\\=&
\varepsilon^4\int_{\R^3} U^2_{a_1}(x)dx\big(\sum^3_{l=1}\frac{\partial^2 V(a_1)}{\partial x^i\partial x^l}b_{1,l}\big)+o(\varepsilon^4),
\end{split}
\end{flalign}
where $x^l, a^l_{1}$ are the $l$-th components of $x, a_{1}$.

On the other hand, similar to \eqref{l5}, there exists $\delta_\varepsilon\in (\bar d, 2\bar d)$, we obtain
\begin{flalign}\label{e17l}
\int_{\partial B_{\delta_\varepsilon}(x_{1,\varepsilon}^{(1)})}B(x)d\sigma=O\big(e^{-\eta/\varepsilon}
+\|w_\varepsilon\|_\varepsilon \|\eta_\varepsilon\|_\varepsilon\big)
=O\big(\varepsilon^{5}\big).
\end{flalign}
Also, by \eqref{2--5},  for any $\delta\in (\bar d, 2\bar d)$, we have
\begin{flalign}\label{e18l}
\begin{split}
\frac{1}{8\pi \varepsilon^2}&
\int_{ B_{\delta}(x_{1,\varepsilon}^{(1)})}\int_{\R^3}
\big(u^{(1)}_{\varepsilon}(x)\big)^2
\big(u^{(1)}_{\varepsilon}(\xi)+u^{(2)}_{\varepsilon}(\xi)\big)\eta_{\varepsilon}(\xi)
\frac{x^i-\xi^i}{|x-\xi|^3}d\xi dx\\=&\frac{1}{8\pi \varepsilon^2}
\int_{\R^3}\int_{\R^3}
\big(u^{(1)}_{\varepsilon}(x)\big)^2
\big(u^{(1)}_{\varepsilon}(\xi)+u^{(2)}_{\varepsilon}(\xi)\big)\eta_{\varepsilon}(\xi)
\frac{x^i-\xi^i}{|x-\xi|^3}d\xi dx+O\big(e^{-\eta/\varepsilon}
\big),
\end{split}\end{flalign}
and
\begin{flalign}\label{e19l}
\begin{split}
\frac{1}{8\pi \varepsilon^2}&
\int_{B_{\delta}(x_{1,\varepsilon}^{(1)})}\int_{\R^3}
\big(u^{(2)}_{\varepsilon}(\xi)\big)^2
\big(u^{(1)}_{\varepsilon}(x)+u^{(2)}_{\varepsilon}(x)\big)\eta_{\varepsilon}(x)\frac{x^i-\xi^i}{|x-\xi|^3}d\xi dx\\=&\frac{1}{8\pi \varepsilon^2}
\int_{\R^3}\int_{\R^3}\big(u^{(2)}_{\varepsilon}(\xi)\big)^2
\big(u^{(1)}_{\varepsilon}(x)+u^{(2)}_{\varepsilon}(x)\big)\eta_{\varepsilon}(x)\frac{x^i-\xi^i}{|x-\xi|^3}d\xi dx+O\big(e^{-\eta/\varepsilon}
\big).
\end{split}\end{flalign}
Let $\delta=\delta_\varepsilon$ in \eqref{3.14}, by symmetry, \eqref{e17l}, \eqref{e18l}  and \eqref{e19l}, we get
\begin{flalign}\label{4-2}
\begin{split}
&\textrm{RHS  of}~ \eqref{3.14}\\=&\frac{1}{8\pi \varepsilon^2}
\int_{\R^3}\int_{\R^3}\big(\big(u^{(1)}_{\varepsilon}(x)\big)^2
-\big(u^{(2)}_{\varepsilon}(x)\big)^2\big)
\big(u^{(1)}_{\varepsilon}(\xi)+u^{(2)}_{\varepsilon}(\xi)\big)\eta_{\varepsilon}(\xi)
\frac{x^i-\xi^i}{|x-\xi|^3}d\xi dx
+O\big(e^{-\eta/\varepsilon}\big)\\=&
\frac{J_{\varepsilon}}{8\pi \varepsilon^2}
\int_{\R^3}\int_{\R^3}\big(u^{(1)}_{\varepsilon}(x)+ u^{(2)}_{\varepsilon}(x)\big) \eta_{\varepsilon}(x)
\big(u^{(1)}_{\varepsilon}(\xi)+u^{(2)}_{\varepsilon}(\xi)\big)
 \eta_{\varepsilon}(\xi)
\frac{x^i-\xi^i}{|x-\xi|^3}d\xi dx
+O\big(e^{-\eta/\varepsilon}\big)\\=&
O\big(e^{-\eta/\varepsilon}
\big),
\end{split}\end{flalign}
where $J_{\varepsilon}:=\|u^{(1)}_{\varepsilon}(\cdot)-u^{(2)}_{\varepsilon}(\cdot)\|_{L^{\infty}(\R^3)}$.
Then \eqref{4-1} and \eqref{4-2} imply
\begin{flalign*}
\sum^3_{l=1}\frac{\partial^2 V(a_1)}{\partial x^i\partial x^l}b_{1,l}=o(1).
\end{flalign*}
This means $b_{1,i}=0$.
Similarly, we can obtain
$b_{1,i}=0$, for all $i=1,2,3$.
\end{proof}

\begin{Prop}\label{prop3.4}
For any fixed $R>0$, it holds
$$\eta_{\varepsilon}(x)=o(1),~ x\in B_{R\varepsilon}(x_{1,\varepsilon}^{(1)}).$$
\end{Prop}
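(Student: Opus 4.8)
The plan is to combine the blow-up convergence from Proposition \ref{3-2}--\ref{3-3} with a pointwise estimate that propagates this information to the whole ball $B_{R\varepsilon}(x_{1,\varepsilon}^{(1)})$. By Proposition \ref{3-2}, after rescaling we have $\eta_{1,\varepsilon}(x)=\eta_\varepsilon(\varepsilon x+x_{1,\varepsilon}^{(1)})\to\sum_{i=1}^3 b_{1,i}\partial U_{a_1}/\partial x^i$ in $C^1_{loc}(\R^3)$, and by Proposition \ref{3-3} all the $b_{1,i}$ vanish. Hence $\eta_{1,\varepsilon}(x)\to 0$ in $C^1_{loc}(\R^3)$, which is precisely the statement that $\eta_\varepsilon(x)=o(1)$ uniformly for $x$ in any fixed ball $B_{R}(x_{1,\varepsilon}^{(1)})$ after unscaling; taking $y=(x-x_{1,\varepsilon}^{(1)})/\varepsilon\in B_R(0)$ gives $\eta_\varepsilon(x)=\eta_{1,\varepsilon}(y)\to 0$ uniformly on $B_R(0)$. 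So the ball $B_{R\varepsilon}(x_{1,\varepsilon}^{(1)})$ in the scaled variable is exactly the fixed ball $B_R(0)$, and the conclusion is immediate.

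More precisely, first I would recall that $\eta_\varepsilon$ is defined in \eqref{3.1} as the normalized difference $(u_\varepsilon^{(1)}-u_\varepsilon^{(2)})/\|u_\varepsilon^{(1)}-u_\varepsilon^{(2)}\|_{L^\infty(\R^3)}$, so that $\|\eta_\varepsilon\|_{L^\infty(\R^3)}=1$ and, by elliptic regularity applied to the equation it satisfies, $\{\eta_{1,\varepsilon}\}$ is bounded in $C^{1,\alpha}_{loc}$. Then I would invoke Proposition \ref{3-2} to extract, along any subsequence, a limit of the form $\sum_i b_{1,i}\partial U_{a_1}/\partial x^i$, and Proposition \ref{3-3} to conclude $b_{1,i}=0$ for $i=1,2,3$. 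Since the limit is the same ($\equiv 0$) along every subsequence, the full family converges: $\eta_{1,\varepsilon}\to 0$ in $C^1_{loc}(\R^3)$, hence in particular uniformly on $\overline{B_R(0)}$.

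The last step is just a change of variables: for $x\in B_{R\varepsilon}(x_{1,\varepsilon}^{(1)})$ write $x=\varepsilon y+x_{1,\varepsilon}^{(1)}$ with $y\in B_R(0)$, so $\eta_\varepsilon(x)=\eta_{1,\varepsilon}(y)=o(1)$ uniformly in $y\in B_R(0)$, which gives the claim. There is essentially no obstacle here — this proposition is a bookkeeping corollary that packages the conclusions of Propositions \ref{3-2} and \ref{3-3} in the unscaled variable; all the genuine work (the Pohozaev identity computation forcing $b_{1,i}=0$, and the regularity/blow-up analysis) has already been done in the preceding three propositions.
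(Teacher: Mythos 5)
Your proposal is correct and follows exactly the paper's own argument: combine Propositions \ref{3-2} and \ref{3-3} to get $\eta_{1,\varepsilon}\to 0$ uniformly on $B_R(0)$, then undo the scaling $x=\varepsilon y+x_{1,\varepsilon}^{(1)}$. The extra remark on subsequences (the limit being identically zero forces convergence of the full family) is a harmless elaboration of the same bookkeeping step.
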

\begin{proof}
Proposition \ref{3-2} and Proposition \ref{3-3} show that for any fixed $R>0$,
$\eta_{1,\varepsilon}(x)=o(1)$  in $B_{R}(0)$.
Also, we know $\eta_{1,\varepsilon}(x)=\eta_{\varepsilon}(\varepsilon x+x_{1,\varepsilon}^{(1)})$. Then $\eta_{\varepsilon}(x)=o(1),x\in  B_{R\varepsilon}(x_{1,\varepsilon}^{(1)})$.
\end{proof}

\begin{Prop}\label{prop3.5}
For large $R>0$ and fixed $\gamma\in (0,1)$, there exists $\varepsilon_0$ such that
\begin{flalign*}
|\eta_{\varepsilon}(x)|\leq \gamma,~\mbox{for}~x\in \R^3\backslash B_{R\varepsilon}(x_{1,\varepsilon}^{(1)})~\mbox{and}~\varepsilon\in (0,\varepsilon_0).
\end{flalign*}
\end{Prop}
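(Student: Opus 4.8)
The plan is to show that $\eta_\varepsilon$ solves a linear nonlocal equation whose zeroth-order coefficient is uniformly coercive outside the concentration ball $B_{R\varepsilon}(x_{1,\varepsilon}^{(1)})$, and then to trap $\eta_\varepsilon$ there between two small, exponentially decaying barriers, in the spirit of the proof of Proposition \ref{prop1-1}. First I would subtract the equations \eqref{1.2} for $u_\varepsilon^{(1)}$ and $u_\varepsilon^{(2)}$, use $(u_\varepsilon^{(1)})^2-(u_\varepsilon^{(2)})^2=(u_\varepsilon^{(1)}+u_\varepsilon^{(2)})(u_\varepsilon^{(1)}-u_\varepsilon^{(2)})$, and divide by $\|u_\varepsilon^{(1)}-u_\varepsilon^{(2)}\|_{L^\infty(\R^3)}$ to get
\[
-\varepsilon^2\Delta\eta_\varepsilon+Q_\varepsilon(x)\,\eta_\varepsilon=g_\varepsilon(x)\qquad\text{in }\R^3,
\]
where $Q_\varepsilon(x):=V(x)-\frac{1}{8\pi\varepsilon^2}\int_{\R^3}\frac{(u_\varepsilon^{(1)})^2(\xi)}{|x-\xi|}\,d\xi$ and $g_\varepsilon(x):=\frac{u_\varepsilon^{(2)}(x)}{8\pi\varepsilon^2}\int_{\R^3}\frac{(u_\varepsilon^{(1)}(\xi)+u_\varepsilon^{(2)}(\xi))\,\eta_\varepsilon(\xi)}{|x-\xi|}\,d\xi$. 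Applying \eqref{b1} to $u_\varepsilon^{(1)}$ (whose peak is $x_{1,\varepsilon}^{(1)}$), for $R$ large and $\varepsilon$ small we have $Q_\varepsilon(x)\ge m/2$ on $\Omega_\varepsilon:=\R^3\setminus B_{R\varepsilon}(x_{1,\varepsilon}^{(1)})$.

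The main step is to estimate $g_\varepsilon$ on $\Omega_\varepsilon$. I would split the inner integral over $B_{R\varepsilon}(x_{1,\varepsilon}^{(1)})$ and its complement. On the ball, $u_\varepsilon^{(1)},u_\varepsilon^{(2)}$ are bounded and, by Proposition \ref{prop3.4}, $|\eta_\varepsilon|\le\rho_\varepsilon$ with $\rho_\varepsilon=o(1)$; together with $\int_{B_{R\varepsilon}(x_{1,\varepsilon}^{(1)})}|x-\xi|^{-1}d\xi\le C(R\varepsilon)^2$ this contributes $O(R^2\rho_\varepsilon\varepsilon^2)$. On the complement, $|\eta_\varepsilon|\le1$ and, by \eqref{2--5} applied to $u_\varepsilon^{(1)}$ and $u_\varepsilon^{(2)}$ (using $|x_{1,\varepsilon}^{(1)}-x_{1,\varepsilon}^{(2)}|=o(\varepsilon)$, which follows from Proposition \ref{Prop2.7}), $u_\varepsilon^{(1)}+u_\varepsilon^{(2)}\le Ce^{-\theta|\xi-x_{1,\varepsilon}^{(1)}|/\varepsilon}$ with $\theta\in(0,\sqrt{m/2})$ fixed; after the scaling $\xi=x_{1,\varepsilon}^{(1)}+\varepsilon z$, the Newtonian potential of $e^{-\theta|z|}$ over $\{|z|\ge R\}$ is $O(e^{-\theta R/2})$, so this contributes $O(\varepsilon^2 e^{-\theta R/2})$. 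Hence $\big|\int_{\R^3}\frac{(u_\varepsilon^{(1)}+u_\varepsilon^{(2)})\eta_\varepsilon}{|x-\xi|}d\xi\big|\le C\varepsilon^2\sigma_{\varepsilon,R}$ with $\sigma_{\varepsilon,R}:=R^2\rho_\varepsilon+e^{-\theta R/2}$, and since also $u_\varepsilon^{(2)}(x)\le Ce^{-\theta|x-x_{1,\varepsilon}^{(1)}|/\varepsilon}$ on $\Omega_\varepsilon$, this yields $|g_\varepsilon(x)|\le C_0\,\sigma_{\varepsilon,R}\,e^{-\theta|x-x_{1,\varepsilon}^{(1)}|/\varepsilon}$ on $\Omega_\varepsilon$ for some fixed $C_0$. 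The crucial feature is that $\sigma_{\varepsilon,R}$ can be made as small as we wish by first choosing $R$ large and then $\varepsilon$ small.

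Finally I would run the barrier argument. With $v(x):=e^{-\theta|x-x_{1,\varepsilon}^{(1)}|/\varepsilon}$, the computation in the proof of Proposition \ref{prop1-1} combined with $Q_\varepsilon\ge m/2$ on $\Omega_\varepsilon$ gives $-\varepsilon^2\Delta v+Q_\varepsilon v\ge(\tfrac{m}{2}-\theta^2)v$ there, while $-\varepsilon^2\Delta(\gamma/2)+Q_\varepsilon(\gamma/2)\ge0$. Setting $\Psi_\varepsilon:=\tfrac{\gamma}{2}+B\sigma_{\varepsilon,R}\,v$ with $B$ chosen so that $B(\tfrac{m}{2}-\theta^2)\ge C_0$, we get $-\varepsilon^2\Delta\Psi_\varepsilon+Q_\varepsilon\Psi_\varepsilon\ge|g_\varepsilon|$ on $\Omega_\varepsilon$, hence $\Psi_\varepsilon\pm\eta_\varepsilon$ are supersolutions of $-\varepsilon^2\Delta+Q_\varepsilon$ on $\Omega_\varepsilon$. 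On $\partial\Omega_\varepsilon$, Proposition \ref{prop3.4} gives $\Psi_\varepsilon\ge\gamma/2\ge|\eta_\varepsilon|$ for $\varepsilon$ small, and both $\Psi_\varepsilon+\eta_\varepsilon$ and $\Psi_\varepsilon-\eta_\varepsilon$ tend to $\gamma/2>0$ as $|x|\to\infty$ (since $\eta_\varepsilon(x),v(x)\to0$ there); the maximum principle, exactly as in Proposition \ref{prop1-1}, then gives $|\eta_\varepsilon(x)|\le\Psi_\varepsilon(x)\le\tfrac{\gamma}{2}+B\sigma_{\varepsilon,R}$ on $\Omega_\varepsilon$. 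Taking $R$ large and then $\varepsilon_0$ small so that $B\sigma_{\varepsilon,R}\le\gamma/2$ for all $\varepsilon\in(0,\varepsilon_0)$ gives $|\eta_\varepsilon|\le\gamma$ on $\Omega_\varepsilon$, as claimed.

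I expect the estimate of the nonlocal forcing term $g_\varepsilon$ to be the main obstacle: one must extract exactly the factor $\varepsilon^2$ needed to cancel the $\varepsilon^{-2}$ in front — which is why the near/far splitting of the inner integral is necessary, and why both the blow-up information of Proposition \ref{prop3.4} and the exponential decay \eqref{2--5} must enter — and at the same time keep track of the residual coefficient $\sigma_{\varepsilon,R}$ precisely enough to see that it is genuinely small once $R$ is large and $\varepsilon$ small.
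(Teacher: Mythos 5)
Your proposal is correct and follows essentially the same route as the paper: rewrite the equation for $\eta_\varepsilon$ with coefficient $V-E_1$ and forcing $E_2$, use the coercivity $V-E_1\ge m/2$ outside $B_{R\varepsilon}(x_{1,\varepsilon}^{(1)})$ together with smallness of $E_2$ there, take the boundary data from Proposition \ref{prop3.4}, and conclude by the maximum principle. The only (harmless) deviations are that you bound $E_2$ by a pointwise near/far splitting of the nonlocal integral instead of invoking the paper's estimate \eqref{l2}, and you use the barrier $\tfrac{\gamma}{2}+B\sigma_{\varepsilon,R}e^{-\theta|x-x_{1,\varepsilon}^{(1)}|/\varepsilon}$ where the paper simply shifts $\eta_\varepsilon$ by the constants $\pm\gamma$ and absorbs $E_2$ into $\gamma(V-E_1)$.
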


\begin{proof}
First, $\eta_{\varepsilon}(x)$ satisfies the following equation:
\begin{flalign*}
\begin{split}
-\varepsilon^2\Delta \eta_{\varepsilon}(x)+V(x)\eta_{\varepsilon}(x)=&
E_1(x)\eta_{\varepsilon}(x)+E_2(x),
\end{split}
\end{flalign*}
 where $E_1(x)$ and $E_2(x)$ are the functions in \eqref{e}.
Let $\hat\eta_\varepsilon(x)=\eta_\varepsilon(x)+\gamma$, then
\begin{flalign*}
\begin{split}
-\varepsilon^2\Delta \hat\eta_{\varepsilon}(x)+\big(V(x)-E_1(x)\big)\hat\eta_{\varepsilon}(x)=&
 E_2(x)+\gamma\big(V(x)-E_1(x)\big),
\end{split}
\end{flalign*}
and
$$\hat\eta_{\varepsilon}(x)=\gamma+o_\varepsilon(1), ~\mbox{for}~x\in \partial  B_{R\varepsilon}(x_{1,\varepsilon}^{(1)}) ~\mbox{or}~|x|\rightarrow \infty.$$
Also for large $R>0$, \eqref{l2} and \eqref{e01}  imply
$$V(x)-E_1(x)\geq 0,~\mbox{and}~E_2(x)+\gamma\big(V(x)-E_1(x)\big)\geq 0, ~\mbox{for}~x\in \R^3\backslash B_{R\varepsilon}(x_{1,\varepsilon}^{(1)}).$$
Then by the maximum principle, we have
\begin{flalign*}
\min_{\R^3\backslash B_{R\varepsilon}(x_{1,\varepsilon}^{(1)})}\hat\eta_{\varepsilon}(x)
\geq -\max_{\partial \big(\R^3\backslash B_{R\varepsilon}(x_{1,\varepsilon}^{(1)})\big)}\big(\hat \eta_{\varepsilon}(x)\big)^-=0.
\end{flalign*}
This means
\begin{flalign*}
 \eta_{\varepsilon}(x)\geq -\gamma,~\mbox{for}~x\in \R^3\backslash B_{R\varepsilon}(x_{1,\varepsilon}^{(1)}).
\end{flalign*}
Let $\tilde{\eta}_\varepsilon(x)=\eta_\varepsilon(x)-\gamma$, then
\begin{flalign*}
\begin{split}
-\varepsilon^2\Delta \tilde\eta_{\varepsilon}(x)+\big(V(x)-E_1(x)\big)\tilde\eta_{\varepsilon}(x)=&
 E_2(x)-\gamma\big(V(x)-E_1(x)\big).
\end{split}
\end{flalign*}
Also
$$\tilde\eta_{\varepsilon}(x)=-\gamma+o_\varepsilon(1), ~\mbox{for}~x\in \partial  B_{R\varepsilon}(x_{1,\varepsilon}^{(1)}) ~\mbox{or}~|x|\rightarrow \infty,$$
and for large $R>0$,  \eqref{l2} and \eqref{e01} imply
$$V(x)-E_1(x)\geq 0,~\mbox{and}~E_2(x)-\gamma\big(V(x)-E_1(x)\big)\leq 0, ~\mbox{for}~x\in \R^3\backslash  B_{R\varepsilon}(x_{1,\varepsilon}^{(1)}).$$
Then by the maximum principle, we have
\begin{flalign*}
\max_{\R^3\backslash  B_{R\varepsilon}(x_{1,\varepsilon}^{(1)})}\tilde\eta_{\varepsilon}(x)
\leq \max_{\partial \big(\R^3\backslash   B_{R\varepsilon}(x_{1,\varepsilon}^{(1)})\big)}\big(\tilde \eta_{\varepsilon}(x)\big)^{+}=0.
\end{flalign*}
This means
\begin{flalign*}
 \eta_{\varepsilon}(x)\leq \gamma,~\mbox{for}~x\in \R^3\backslash B_{R\varepsilon}(x_{j,\varepsilon}^{(1)}).
 \end{flalign*}
\end{proof}
\begin{proof}[\textbf{Proof of Theorem \ref{th1--1}:}]
Suppose that $u^{(1)}_{\varepsilon}(x)$, $u^{(2)}_{\varepsilon}(x)$ are two different positive solutions concentrating at  the nondegenerate critical point $a_1$ of $V(x)$. From Proposition \ref{prop3.4} and Proposition \ref{prop3.5}, for small $\varepsilon$ and fixed $\gamma\in (0,1)$, we have
$|\eta_{\varepsilon}(x)|\leq\gamma$,   $x\in\R^3$,
which contradicts with $\|\eta_{\varepsilon}\|_{L^{\infty}(\R^3)}=1$. So, $u^{(1)}_{\varepsilon}(x)\equiv u^{(2)}_{\varepsilon}(x)$ for small $\varepsilon$. Also \eqref{l2--12} and \eqref{2.45} imply \eqref{aaa6}.
\end{proof}

\section{More precise estimates}

\begin{Prop}\label{Prop2.7l}
Let $u_{\varepsilon}(x)$ be the solution of \eqref{1.2} concentrating at $k~(k\geq 2)$ different nondegenerate critical  points $\{a_1,\cdots,a_k\}\subset \R^3$ of $V(x)$.
 Then it holds
\begin{flalign}\label{2--12}
|x_{j,\varepsilon}-a_{j}|=O(\varepsilon),~\mbox{for}~j=1,2,\cdots,k.
\end{flalign}
Furthermore, there exist  $j_0\in \{1,\cdots,k\}$, $C_1>0$ and $C_2>0$  such that
\begin{flalign}\label{2--6}
C_1\varepsilon \leq |x_{j_0,\varepsilon}-a_{j_0}|\leq C_2 \varepsilon.
\end{flalign}
\end{Prop}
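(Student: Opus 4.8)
The plan is to mimic the single-peak argument of Proposition \ref{Prop2.7}, but to keep the error terms at one order lower because in the multi-bump case we cannot rule out the possibility $|x_{j,\varepsilon}-a_j|\sim\varepsilon$. Concretely, I would take $\Omega=B_d(x_{j,\varepsilon})$ for a small fixed $\bar d>0$ and $d\in(\bar d,2\bar d)$ in the local Pohozaev identity \eqref{2.5} applied to $u=u_\varepsilon$, for each fixed index $j$. The left-hand side, using the decomposition \eqref{2-1}, the exponential decay \eqref{l4}, and the estimate $\|w_\varepsilon\|_\varepsilon=o(\varepsilon^{3/2})$, is
\begin{flalign*}
\int_{B_d(x_{j,\varepsilon})}\frac{\partial V(x)}{\partial x^i}u_\varepsilon^2(x)\,dx
=\varepsilon^3\Big(\int_{\R^3}U_{a_j}^2\Big)\sum_{l=1}^3\frac{\partial^2 V(a_j)}{\partial x^i\partial x^l}(x^l_{j,\varepsilon}-a^l_j)+o\big(\varepsilon^3|x_{j,\varepsilon}-a_j|\big)+O(\varepsilon^4),
\end{flalign*}
where I have Taylor-expanded $\partial_i V$ around $a_j$ and used that $a_j$ is a critical point. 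For the boundary term, exactly as in \eqref{l5}, one chooses a good radius $d_\varepsilon\in(\bar d,2\bar d)$ (using that the average over such radii of the boundary integral is controlled) so that $\int_{\partial B_{d_\varepsilon}(x_{j,\varepsilon})}A(x)\,d\sigma=O(e^{-\eta/\varepsilon}+\|w_\varepsilon\|_\varepsilon^2)=o(\varepsilon^3)$; the volume double-integral term is $O(e^{-\eta/\varepsilon})$ by the symmetry/decay argument of \eqref{l6} (the full-space integral vanishes by antisymmetry of the kernel, and the complement of $B_{d_\varepsilon}$ contributes exponentially small mass). Putting these together gives
\begin{flalign*}
\sum_{l=1}^3\frac{\partial^2 V(a_j)}{\partial x^i\partial x^l}(x^l_{j,\varepsilon}-a^l_j)=o\big(|x_{j,\varepsilon}-a_j|\big)+O(\varepsilon),\qquad i=1,2,3,
\end{flalign*}
and since the Hessian $D^2V(a_j)$ is invertible by nondegeneracy, this forces $|x_{j,\varepsilon}-a_j|=O(\varepsilon)$, which is \eqref{2--12}.

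For the lower bound \eqref{2--6} I would argue by contradiction: suppose $|x_{j,\varepsilon}-a_j|=o(\varepsilon)$ for every $j=1,\dots,k$. The idea is that the constant in the $O(\varepsilon)$ of the Pohozaev balance is not arbitrary — it is driven by the interaction between distinct bumps through the nonlocal term, and if all $x_{j,\varepsilon}-a_j$ were of lower order the identity would over-determine them. More precisely, under the assumption $|x_{j,\varepsilon}-a_j|=o(\varepsilon)$ one can sharpen the left-hand side to $o(\varepsilon^4)$ after dividing by $\varepsilon^3$, while the dominant surviving contribution on the right-hand side, coming from the cross terms $\int_{\R^3}\frac{U_{a_m}^2(\cdot)}{|x-\xi|}\,d\xi$ evaluated at the bump centred at $x_{j,\varepsilon}$ for $m\ne j$, is of exact order $\varepsilon^3\cdot\varepsilon=\varepsilon^4$ with a nonzero coefficient determined by the mutual positions $a_j-a_m$ (this is the term that replaces the ``$O(\varepsilon)$'' above by an explicit nonzero quantity). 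This yields a contradiction for at least one index $j_0$, giving $|x_{j_0,\varepsilon}-a_{j_0}|\geq C_1\varepsilon$; the upper bound is already \eqref{2--12}.

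The main obstacle is the precise computation of the nonlocal cross-term in the Pohozaev identity: one must show that
\begin{flalign*}
\frac{1}{8\pi\varepsilon^2}\int_{B_{d}(x_{j,\varepsilon})}\int_{\R^3}u_\varepsilon^2(x)u_\varepsilon^2(\xi)\frac{x^i-\xi^i}{|x-\xi|^3}\,d\xi\,dx
\end{flalign*}
picks up, besides the exponentially small pieces, a genuine $O(\varepsilon^4)$ contribution with an identifiable sign/coefficient built from the far-field behaviour $U_{a_m}(x)\sim\lambda_0 e^{-|x|}/|x|$ of the other bumps. This is where the nonlocality makes the analysis essentially different from the Schrödinger case, and it is the reason one can only extract information about \emph{some} $j_0$ rather than all $j$ — the balance couples all the bumps together, so only a weighted combination of the $x_{j,\varepsilon}-a_j$ is pinned down at order $\varepsilon$. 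I expect the bookkeeping here (and the verification that the coefficient of the cross-term does not accidentally vanish, which is where the geometry of the configuration $\{a_1,\dots,a_k\}$ and the positivity of $U_{a_m}$ enter) to be the technically delicate step, and the estimates \eqref{laa1}, \eqref{A.14}, \eqref{2.45}, \eqref{l1}, \eqref{l2}, \eqref{l4} from the Appendix will be used throughout to control the remainders.
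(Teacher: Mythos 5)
Your overall skeleton (local Pohozaev identity on $B_{d}(x_{j,\varepsilon})$, matching orders, nondegeneracy of $D^2V(a_j)$) is the paper's, but the first paragraph contains a genuine error: for $k\geq 2$ the volume term $A_1=\frac{1}{8\pi\varepsilon^2}\int_{B_{d}(x_{j,\varepsilon})}\int_{\R^3}u_\varepsilon^2(x)u_\varepsilon^2(\xi)\frac{x^i-\xi^i}{|x-\xi|^3}\,d\xi dx$ is \emph{not} $O(e^{-\eta/\varepsilon})$. The antisymmetry argument of \eqref{l6} only kills the full-space integral; the error made in extending the $x$-integration from $B_{d}(x_{j,\varepsilon})$ to $\R^3$ is governed by the \emph{other} bumps, which sit in the complement and carry $L^2$-mass of order $\varepsilon^{3}$, so the cross-interaction contributes exactly order $\varepsilon^{4}$ (this is the content of Proposition \ref{prop2.3}, in particular \eqref{c40} and \eqref{claim}). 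Your claim is also internally inconsistent: if $A_1$ really were exponentially small, the single-peak argument of Proposition \ref{Prop2.7} would give $|x_{j,\varepsilon}-a_j|=o(\varepsilon)$ for every $j$, contradicting the lower bound \eqref{2--6} you then set out to prove (and which your own ``main obstacle'' paragraph correctly attributes to this very cross-term). The repair is the paper's: prove $A_1=O(\varepsilon^4)$, which after dividing by $\varepsilon^3$ still yields \eqref{2--12}. A second, smaller bookkeeping gap: your LHS expansion with remainder $o(\varepsilon^3|x_{j,\varepsilon}-a_j|)+O(\varepsilon^4)$ is not justified by $\|w_\varepsilon\|_\varepsilon=o(\varepsilon^{3/2})$ alone (that only gives an $o(\varepsilon^3)$ error from the $w_\varepsilon^2$ term, hence only $|x_{j,\varepsilon}-a_j|=o(1)$); one needs the refined estimate \eqref{B.3}, $\|w_\varepsilon\|_\varepsilon=O(\varepsilon^{7/2}+\varepsilon^{3/2}\max_j|x_{j,\varepsilon}-a_j|^{2})$, so that the error is $O(\varepsilon^5+\varepsilon^3\max_j|x_{j,\varepsilon}-a_j|^{2})$ and can be absorbed.

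For the lower bound, your contradiction scheme is in the right spirit, but the decisive point — that the $\varepsilon^4$ cross-term has a \emph{nonvanishing} coefficient for some pair $(i_0,j_0)$ — is asserted rather than proved, and it is exactly the delicate step. The paper makes it concrete: $A_1=\sum_{l\neq j}\frac{(a^i_j-a^i_l)\varepsilon}{8\pi}\int\!\!\int U_{a_j}^2(x)U_{a_l}^2\big(\xi+\frac{x_{j,\varepsilon}-x_{l,\varepsilon}}{\varepsilon}\big)|x-\xi|^{-3}d\xi dx+o(\varepsilon^4)$, each double integral is bounded above \emph{and below} by constants times $\varepsilon^3$ (see \eqref{c46} and \eqref{c36}), and one then chooses $i_0,j_0$ with $\sum_{l\neq j_0}(a^{i_0}_{j_0}-a^{i_0}_l)>0$ (e.g.\ a coordinate in which some $a_{j_0}$ is extremal, so all summands are nonnegative and one is positive), giving $C_3\varepsilon^4\leq|A_1|\leq C_4\varepsilon^4$ as in \eqref{claim1}; feeding this two-sided bound back into the Pohozaev balance together with the $O(\varepsilon^7)$ boundary term yields \eqref{2--6} directly, without needing a contradiction argument. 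Without such a choice of $(i_0,j_0)$ and the lower bound on the interaction integral, your argument does not exclude an accidental cancellation of the weighted sum, so the lower bound in \eqref{2--6} is not yet established in your proposal.
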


\begin{proof}
First, for the small fixed constant $\bar d>0$, taking $u(x)=u_\varepsilon(x)$ and  $\Omega=B_{d}(x_{j,\varepsilon})$  in the Pohozaev identity \eqref{2.5} with any $d\in (\bar d, 2\bar d)$,  we have
\begin{flalign}\label{2.11}
\begin{aligned}
\int_{B_{d}(x_{j,\varepsilon})}\frac{\partial V(x)}{\partial x^i}u^2_\varepsilon(x)dx
=\int_{\partial B_{d}(x_{j,\varepsilon})}A(x)d\sigma
+A_1, ~\mbox{for}~i=1,2,3,
\end{aligned}
\end{flalign}
where $A(x)$ is the function in \eqref{l3.3}
and
\begin{flalign}\label{c-1}
A_1=\frac{1}{8\pi \varepsilon^2}\int_{ B_{d}(x_{j,\varepsilon})}\int_{\R^3}u^2_{\varepsilon}(x)u^2_{\varepsilon}(\xi)\frac{x^i-\xi^i}{|x-\xi|^3}d\xi dx.
\end{flalign}
By \eqref{A.1}, \eqref{B.3} and Proposition \ref{pr2.1}, for any $d\in (\bar d, 2\bar d)$, we obtain
\begin{flalign}\label{2-11}
\begin{aligned}
\mbox{LHS of}~\eqref{2.11}&=\int_{B_{d}(x_{j,\varepsilon})}\frac{\partial V(x)}{\partial x^i} U^2_{a_j}(\frac{x-x_{j,\varepsilon}}{\varepsilon})dx+
O\big(e^{-\eta/\varepsilon}+\|U_{a_j}(\frac{\cdot-x_{j,\varepsilon}}{\varepsilon})
\|_{\varepsilon}\|w_\varepsilon\|_{\varepsilon}
+\|w_\varepsilon\|^2_{\varepsilon}\big)\\&=\int_{B_{d}(x_{j,\varepsilon})}\frac{\partial V(x)}{\partial x^i} U^2_{a_j}(\frac{x-x_{j,\varepsilon}}{\varepsilon})dx+
O\big(\varepsilon^{5}+\varepsilon^
    {3}\max_{j=1,\cdots,k}|x_{j,\varepsilon}-a_j|^{2}\big).
\end{aligned}
\end{flalign}
Next similar to \eqref{4-1}, for any $d\in (\bar d, 2\bar d)$,  from \eqref{laa1}, we have
\begin{flalign}\label{b5}
\begin{aligned}
 &\int_{B_{d}(x_{j,\varepsilon})}\frac{\partial V(x)}{\partial x^i}U_{a_j}^2\big(\frac{x-x_{j,\varepsilon}}{\varepsilon}\big)dx\\&
=  \sum^3_{l=1}\frac{\partial^2 V(a_{j})}{\partial x^i\partial x^l}
\int_{B_{d}(x_{j,\varepsilon})} (x^l-
a^l_{j})U_{a_j}^2\big(\frac{x-x_{j,\varepsilon}}{\varepsilon}\big)dx+o\big(\int_{B_{d}(x_{j,\varepsilon})} |x-a_{j}|U_{a_j}^2\big(\frac{x-x_{j,\varepsilon}}{\varepsilon}\big)dx\big)
 \\&
= \varepsilon^{3} \big(\int_{\R^3} U_{a_j}^2(x)dx\big)\sum^3_{l=1}\frac{\partial^2 V(a_{j})}{\partial x^i\partial x^l}
(x^l_{j,\varepsilon}-a^l_{j}) +o\big(\varepsilon^4+\varepsilon^3\max_{j=1,\cdots,k}|x_{j,\varepsilon}-a_j|\big),
\end{aligned}
\end{flalign}
where $x^l_{j,\varepsilon}, a^l_{j}$ are the $l$-th components of $x_{j,\varepsilon}, a_{j}$.
So \eqref{2-11} and \eqref{b5} imply
\begin{flalign}\label{2-13}
\begin{aligned}
\mbox{LHS of}~\eqref{2.11}&=\varepsilon^{3} \big(\int_{\R^3} U_{a_j}^2(x)dx\big)\sum^3_{l=1}\frac{\partial^2 V(a_{j})}{\partial x^i\partial x^l}
(x^l_{j,\varepsilon}-a^l_{j}) +o\big(\varepsilon^4+\varepsilon^3\max_{j=1,\cdots,k}|x_{j,\varepsilon}-a_j|\big).
\end{aligned}
\end{flalign}
On the other hand, similar to \eqref{l5}, there exists $\delta_\varepsilon\in (\bar d, 2\bar d)$, we get
\begin{flalign}\label{2-19}
\int_{\partial B_{d_\varepsilon}(x_{j,\varepsilon})}A(x)d\sigma=O\big(\varepsilon^{7}\big).
\end{flalign}
Let $d=d_\varepsilon$ in \eqref{2.11}, then combining \eqref{2-19} and \eqref{claim} below in \textbf{Proposition \ref{prop2.3}}, we deduce that
\begin{flalign}\label{2-12}
\begin{split}
\mbox{RHS of}~\eqref{2.11}&=O(\varepsilon^{4}+\varepsilon^
    {3}\max_{j=1,\cdots,k}|x_{j,\varepsilon}-a_j|^{4}\big).
\end{split}
\end{flalign}
So \eqref{2-13} and \eqref{2-12} imply
\begin{flalign*}
\begin{aligned}
\sum^3_{l=1}\frac{\partial^2 V(a_{j})}{\partial x^i\partial x^l}
(x^l_{j,\varepsilon}-a^l_{j})=o\big(\max_{j=1,\cdots,k}|x_{j,\varepsilon}-a_j|\big)+O(\varepsilon).
\end{aligned}
\end{flalign*}
This means that \eqref{2--12} holds.

Moreover, \eqref{2--12}, \eqref{2.11}, \eqref{2-13}, \eqref{2-19} and \eqref{claim1} below in \textbf{Proposition \ref{prop2.3}} imply \eqref{2--6}.
\end{proof}

\begin{Prop}\label{prop2.3}
For the small fixed constant $\bar d>0$ and any $d\in (\bar d, 2\bar d)$, it holds
\begin{flalign}\label{claim}
A_1=O\big(\varepsilon^{4}\big).
\end{flalign}
Furthermore, there exist $i_0\in \{1,2,3\}$, $j_0\in \{1,\cdots,k\}$, $C_3>0$ and $C_4>0$ such that
\begin{flalign}\label{claim1}
C_3\varepsilon^4\leq |A_{1}|\leq C_4\varepsilon^4.
\end{flalign}
\end{Prop}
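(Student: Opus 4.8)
The plan is to analyze the volume term $A_1$ in \eqref{c-1} by exploiting the fact that the inner integral runs over all of $\R^3$ while the outer integral is localized. First I would replace the domain $B_d(x_{j,\varepsilon})$ of the outer integral by all of $\R^3$: by the exponential decay estimate \eqref{l4}, the difference between $\int_{B_d(x_{j,\varepsilon})}$ and $\int_{\R^3}$ in the outer variable costs only $O(e^{-\eta/\varepsilon})$, which is negligible. Then one is left with the symmetric double integral $\frac{1}{8\pi\varepsilon^2}\int_{\R^3}\int_{\R^3}u_\varepsilon^2(x)u_\varepsilon^2(\xi)\frac{x^i-\xi^i}{|x-\xi|^3}\,d\xi\,dx$, and here the antisymmetry of the kernel $\frac{x^i-\xi^i}{|x-\xi|^3}$ under $x\leftrightarrow\xi$, combined with the symmetry of the weight $u_\varepsilon^2(x)u_\varepsilon^2(\xi)$, forces this full-space integral to vanish identically. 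Therefore $A_1=O(e^{-\eta/\varepsilon})$ if this were the whole story — but it is not, because the localized outer integral is precisely what survives, and one must carefully track the boundary/tail contributions that break the symmetry.

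More precisely, the correct strategy is to split $u_\varepsilon$ via the decomposition \eqref{2-1}: $u_\varepsilon = \sum_{j=1}^k U_{a_j}\big(\frac{\cdot-x_{j,\varepsilon}}{\varepsilon}\big)+w_\varepsilon$. Substituting this into both copies of $u_\varepsilon^2$ in $A_1$ produces a sum of terms. The terms quadratic in a single bump $U_{a_j}$ for both $x$ and $\xi$, after the change of variables $x=\varepsilon y+x_{j,\varepsilon}$, $\xi=\varepsilon z+x_{j,\varepsilon}$, become $\varepsilon^{4}\cdot\frac{1}{8\pi}\int\int U_{a_j}^2(y)U_{a_j}^2(z)\frac{y^i-z^i}{|y-z|^3}\,dz\,dy$, which vanishes by the same antisymmetry/radial-symmetry argument (since $U_{a_j}$ is radial). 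The cross terms involving two different bumps $U_{a_j}$ and $U_{a_l}$ with $j\neq l$ are exponentially small because the bumps are separated by a fixed distance and each decays exponentially. The terms involving $w_\varepsilon$ are controlled by the Cauchy–Schwarz inequality together with the Hardy–Littlewood–Sobolev inequality and the bound $\|w_\varepsilon\|_\varepsilon = O(\varepsilon^{7/2})$ from the more precise estimate established in Proposition \ref{lp1}; dimensional bookkeeping (each power of $w_\varepsilon$ in $L^2$ against a bump in $L^2$ contributes $\varepsilon^{3/2}\cdot\varepsilon^{7/2}$, offset by the $\varepsilon^{-2}$ prefactor) shows these are $O(\varepsilon^{5})$ or smaller. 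Collecting everything gives $A_1=O(\varepsilon^4)$, which is \eqref{claim}; in fact the leading $\varepsilon^4$ is not present and the true size is smaller, but the stated $O(\varepsilon^4)$ suffices for the application in Proposition \ref{Prop2.7l}.

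For the lower bound \eqref{claim1}, the point is to find the genuine leading-order term that is not killed by symmetry. After the rescaling, the symmetry-breaking comes from the fact that $V$ is not constant, so the profiles $U_{a_j}$ at different points have different "masses," and more importantly from the $O(\varepsilon)$-displacement $x_{j,\varepsilon}-a_j$ interacting with the shape of $V$ near $a_j$. I would expand $u_\varepsilon$ one order further, writing the next-order correction to the single bump explicitly in terms of the second derivatives of $V$ at $a_j$ (this correction solves a linearized Schrödinger–Newton equation with a non-radial right-hand side coming from the Taylor expansion of $V$), and compute the resulting $\varepsilon^4$-coefficient of $A_1$ for the appropriate choice of index $i_0$ and bump $j_0$. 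The nondegeneracy of the critical points guarantees that this coefficient does not vanish for all $i$ and $j$ simultaneously, yielding $C_3\varepsilon^4\le |A_1|\le C_4\varepsilon^4$ for some $i_0,j_0$.

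The main obstacle will be the lower bound \eqref{claim1}: one must genuinely identify the non-vanishing $\varepsilon^4$-term, which requires knowing the next-order profile correction precisely enough and ruling out a miraculous cancellation among the several contributions (the non-radial correction to each $U_{a_j}$, the $w_\varepsilon$-cross terms, and the inter-bump interactions). This is exactly where the nonlocal structure bites, since the correction profile is defined through the Riesz-potential-linearized operator whose kernel from Theorem A must be inverted against a non-radial source; the compatibility with the kernel $\mathrm{span}\{\partial U_{a_j}/\partial x^l\}$ has to be checked, and the off-kernel part of the correction is what ultimately contributes to $A_1$. Carrying out this computation carefully, and matching it against the Pohozaev balance in \eqref{2.11}, is the crux of the whole multi-bump argument.
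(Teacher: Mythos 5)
Your proposal breaks down at the decisive point: you dismiss the cross terms in which $U_{a_j}$ sits in the $x$-variable while a different bump $U_{a_l}$, $l\neq j$, sits in the $\xi$-variable as ``exponentially small because the bumps are separated by a fixed distance''. That would be true if the two bumps were multiplied in the \emph{same} variable, but here they are coupled only through the long-range kernel $\frac{x^i-\xi^i}{|x-\xi|^3}$, which is of size $O(1)$ when $x\approx x_{j,\varepsilon}$, $\xi\approx x_{l,\varepsilon}$; after rescaling each bump contributes $\varepsilon^3$ in its own variable, so with the prefactor $\frac{1}{8\pi\varepsilon^2}$ this interaction is of order $\varepsilon^4$ — it is precisely the leading part of $A_1$. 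The paper's proof isolates it (Lemma \ref{lem3.4}, \eqref{c40}, \eqref{c37}): $A_1=\sum_{l\neq j}\frac{(a_j^i-a_l^i)\varepsilon}{8\pi}\int\int U_{a_j}^2(x)U_{a_l}^2\big(\xi+\frac{x_{j,\varepsilon}-x_{l,\varepsilon}}{\varepsilon}\big)|x-\xi|^{-3}d\xi dx+o(\varepsilon^4)$, and the double integral is comparable to $\varepsilon^3$ from above and below (\eqref{c46}, \eqref{c36}), which gives both \eqref{claim} and, after choosing $i_0,j_0$ with $\sum_{l\neq j_0}(a^{i_0}_{j_0}-a^{i_0}_{l})>0$ as in \eqref{c35}, the lower bound \eqref{claim1}. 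Having discarded this term, you then hunt for the $\varepsilon^4$ contribution in a next-order, $D^2V(a_j)$-driven correction of a single bump; that is the wrong mechanism: for $k=1$ the corresponding volume term is genuinely exponentially small (this is exactly how Proposition \ref{Prop2.7} gets $|x_{1,\varepsilon}-a_1|=o(\varepsilon)$), so no single-bump correction can produce a nonvanishing $\varepsilon^4$ term; the lower bound in \eqref{claim1} is a purely multi-bump, nonlocal-interaction effect, which is the whole point of estimate \eqref{2a6}.

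A second gap is your dimensional bookkeeping for the $w_\varepsilon$-terms. The contributions \emph{linear} in $w_\varepsilon$ (bump times $w_\varepsilon$ in one variable against $u_\varepsilon^2$ in the other) are only $O(\varepsilon^4)$ by Cauchy--Schwarz/HLS with $\|w_\varepsilon\|_\varepsilon=O(\varepsilon^{7/2})$ (prefactor $\varepsilon^{-2}$, HLS cost $\varepsilon^{-2}$ from \eqref{aa5}, norms $\varepsilon^{3/2}\cdot\varepsilon^{7/2}\cdot\varepsilon^{3}$), i.e.\ exactly the order you must resolve, not ``$O(\varepsilon^5)$ or smaller''. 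The paper does not estimate these terms away; it cancels them: the linear-in-$w_\varepsilon$ part of $A_{1,1}$ and that of $A_{1,2}$ appear with opposite signs by the antisymmetry of the kernel under $x\leftrightarrow\xi$ (compare \eqref{c-2} with \eqref{c-3}), so they drop out of \eqref{c37}. Crude estimates would still salvage the upper bound \eqref{claim}, but without the inter-bump term and this cancellation you cannot identify the leading term and hence cannot prove \eqref{claim1}. (Minor point: the bound $\|w_\varepsilon\|_\varepsilon=O(\varepsilon^{7/2})$ is \eqref{2.45} in Proposition \ref{prop2.5}, not Proposition \ref{lp1}.)
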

\begin{proof}
First, $A_1$  can be written as follows:
\begin{flalign}\label{c1}
A_1=A_{1,1}+A_{1,2}+A_{1,3}+A_{1,4},
\end{flalign}
where
\begin{flalign*}
&A_{1,1}=\frac{1}{8\pi \varepsilon^2}\int_{ B_{d}(x_{j,\varepsilon})}\int_{\R^3}U^2_{a_j}(\frac{x-x_{j,\varepsilon}}{\varepsilon})
u^2_{\varepsilon}(\xi)\frac{x^i-\xi^i}{|x-\xi|^3}d\xi dx,&
\end{flalign*}
\begin{flalign*}
&A_{1,2}=\frac{1}{4\pi \varepsilon^2}\int_{ B_{d}(x_{j,\varepsilon})}\int_{\R^3}U_{a_j}(\frac{x-x_{j,\varepsilon}}{\varepsilon})w_\varepsilon(x)
u^2_{\varepsilon}(\xi)\frac{x^i-\xi^i}{|x-\xi|^3}d\xi dx,&
\end{flalign*}
\begin{flalign*}
&A_{1,3}=\frac{1}{8\pi \varepsilon^2}\int_{ B_{d}(x_{j,\varepsilon})}\int_{\R^3} w^2_{\varepsilon}(x)
u^2_{\varepsilon}(\xi)\frac{x^i-\xi^i}{|x-\xi|^3}d\xi dx,&
\end{flalign*}

\begin{flalign*}
&A_{1,4}=\frac{1}{8\pi \varepsilon^2}\int_{ B_{d}(x_{j,\varepsilon})}\int_{\R^3}W_{j,\varepsilon}(x)
 \big(2u_{\varepsilon}(x)-W_{j,\varepsilon}(x)\big)
u^2_{\varepsilon}(\xi)\frac{x^i-\xi^i}{|x-\xi|^3}d\xi dx,&
\end{flalign*}
and $W_{j,\varepsilon}(x)$ is the function in \eqref{lm3}.

Then by \eqref{aa5} and \eqref{2.45}, we get
\begin{flalign}\label{c22}
A_{1,3}=O\big(\varepsilon^{-4}\| u_\varepsilon
\|^2_{\varepsilon}\|w_\varepsilon\|^2_{\varepsilon}\big)=O\big(\varepsilon^{6} \big).
\end{flalign}
Also, from \eqref{l4} and \eqref{A.1}, we obtain
\begin{flalign}\label{c--2}
A_{1,4}=O(e^{-\eta/\varepsilon}).
\end{flalign}
Then \eqref{c1}, \eqref{c22}, \eqref{c--2}, \eqref{c-2}, \eqref{c-3} and \eqref{c40} imply
\begin{flalign}\label{c37}
\begin{split}
A_{1}=&\sum^k_{l=1,l\neq j} \frac{(a^i_{j}-a^i_{l})\varepsilon}{8\pi}\int_{\R^3}\int_{\R^3}
U^2_{a_j}(x)
U^2_{a_l}(\xi+\frac{x_{j,\varepsilon}-x_{l,\varepsilon}}{\varepsilon})|x-\xi|^{-3}d\xi dx+o(\varepsilon^4).
\end{split}
\end{flalign}
Now by the exponential decay of $U_{a_j}(x)$, we have
\begin{flalign}\label{c46}
\begin{split}
\int_{\R^3}\int_{\R^3}&
U^2_{a_j}(x)
U^2_{a_l}(\xi+\frac{x_{j,\varepsilon}-x_{l,\varepsilon}}{\varepsilon})|x-\xi|^{-3}d\xi dx\\
=&\int_{ B_{d/\varepsilon}(0)}\int_{\R^3 \backslash B_{2d/\varepsilon}(0)}
U^2_{a_j}(x)
U^2_{a_l}(\xi+\frac{x_{j,\varepsilon}-x_{l,\varepsilon}}{\varepsilon})|x-\xi|^{-3}d\xi dx+O\big(e^{-\eta/\varepsilon}\big)
\\
=
&O\big(\varepsilon^3
\int_{\R^3}
U^2_{a_j}(x)dx  \cdot \int_{\R^3}
U^2_{a_l}(\xi+\frac{x_{j,\varepsilon}-x_{l,\varepsilon}}{\varepsilon})d\xi\big)+O\big(e^{-\eta/\varepsilon}\big)=O\big(\varepsilon^3\big).
\end{split}
\end{flalign}
Then \eqref{c37} and \eqref{c46} imply \eqref{claim}.

On the other hand, since $a_j$ are different points, we can take $i=i_0$ and $j=j_0$ such that
\begin{flalign}\label{c35}
\sum^k_{l=1,l\neq j_0} (a^{i_0}_{j_0}-a^{i_0}_{l})>0.
\end{flalign}
Also, since
$B_{\tilde{d}/\varepsilon}(-\frac{x_{j,\varepsilon}-x_{l,\varepsilon}}{\varepsilon})\subset \R^3 \backslash B_{2d/\varepsilon}(0)$ for small fixed $\tilde{d}>0$, we deduce
\begin{flalign}\label{c36}
\begin{split}
\int_{\R^3}\int_{\R^3}&
U^2_{a_j}(x)
U^2_{a_l}(\xi+\frac{x_{j,\varepsilon}-x_{l,\varepsilon}}{\varepsilon})|x-\xi|^{-3}d\xi dx\\
=&\int_{ B_{d/\varepsilon}(0)}\int_{\R^3 \backslash B_{2d/\varepsilon}(0)}
U^2_{a_j}(x)
U^2_{a_l}(\xi+\frac{x_{j,\varepsilon}-x_{l,\varepsilon}}{\varepsilon})|x-\xi|^{-3}d\xi dx+O\big(e^{-\eta/\varepsilon}\big)
\\
\geq
&\int_{ B_{d/\varepsilon}(0)}\int_{B_{\tilde{d}/\varepsilon}(-\frac{x_{j,\varepsilon}-x_{l,\varepsilon}}{\varepsilon})}
U^2_{a_j}(x)
U^2_{a_l}(\xi+\frac{x_{j,\varepsilon}-x_{l,\varepsilon}}{\varepsilon})|x-\xi|^{-3}d\xi dx+O\big(e^{-\eta/\varepsilon}\big)
\\
\geq &
\int_{ B_{d/\varepsilon}(0)}\int_{B_{\tilde{d}/\varepsilon}(0)}
U^2_{a_j}(x)
U^2_{a_l}(\xi)|x-(\xi-\frac{x_{j,\varepsilon}-x_{l,\varepsilon}}{\varepsilon})|^{-3}d\xi dx+O\big(e^{-\eta/\varepsilon}\big)
\\
\geq &{C}{\varepsilon^3}\big(\int_{ B_{d/\varepsilon}(0)}
U^2_{a_j}(x)dx \big)\big(\int_{ B_{\tilde{d}/\varepsilon}(0)}
U^2_{a_l}(\xi)d\xi \big)+Ce^{-\eta/\varepsilon}.
\end{split}
\end{flalign}
Then \eqref{claim}, \eqref{c37}, \eqref{c35}, \eqref{c36} imply \eqref{claim1} for some  $i_0$, $j_0$, $C_3>0$ and $C_4>0$.
\end{proof}

\begin{Prop}\label{prop3--1}
Let $u^{(1)}_{\varepsilon}(x)$ and $u^{(2)}_{\varepsilon}(x)$ be the solutions of \eqref{1.2} concentrating at $k~(k\geq 2)$ different nondegenerate critical  points $\{a_1,\cdots,a_k\}\subset \R^3$ of $V(x)$, then
\begin{flalign}\label{aaaa}
|x^{(1)}_{j,\varepsilon}-x^{(2)}_{j,\varepsilon}|=o(\varepsilon),~\mbox{for}~j=1,2,\cdots,k.
\end{flalign}
\end{Prop}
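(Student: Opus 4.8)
The plan is to apply the local Pohozaev identity \eqref{2.5} to the \emph{difference} of the two solutions in a ball $B_\delta(x^{(1)}_{j,\varepsilon})$ of fixed radius $\delta\in(\bar d,2\bar d)$, exactly as in the proof of Proposition \ref{3-3}, but now \emph{without} first knowing that $b_{j,i}=0$ — that is precisely what we are trying to establish here in the multi-bump case, and it will force the conclusion \eqref{aaaa}. Concretely, I subtract the Pohozaev identities for $u^{(1)}_\varepsilon$ and $u^{(2)}_\varepsilon$ on $B_\delta(x^{(1)}_{j,\varepsilon})$; the left-hand side becomes $\int_{B_\delta(x^{(1)}_{j,\varepsilon})}\partial_{x^i}V(x)\,\big(u^{(1)}_\varepsilon+u^{(2)}_\varepsilon\big)\big(u^{(1)}_\varepsilon-u^{(2)}_\varepsilon\big)\,dx$. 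Here I should \emph{not} normalize by $J_\varepsilon=\|u^{(1)}_\varepsilon-u^{(2)}_\varepsilon\|_{L^\infty}$; instead I keep the genuine difference, use Proposition \ref{Prop2.7l} (so $|x^{(m)}_{j,\varepsilon}-a_j|=O(\varepsilon)$ for $m=1,2$ and all $j$), a Taylor expansion of $V$ around $a_j$, and the decomposition of Proposition \ref{pr2.1} applied to each of $u^{(1)}_\varepsilon,u^{(2)}_\varepsilon$. Writing $u^{(1)}_\varepsilon+u^{(2)}_\varepsilon = U_{a_j}\big(\tfrac{x-x^{(1)}_{j,\varepsilon}}{\varepsilon}\big)+U_{a_j}\big(\tfrac{x-x^{(2)}_{j,\varepsilon}}{\varepsilon}\big)+(\text{lower order})$ and expanding each bump around, say, $x^{(1)}_{j,\varepsilon}$, the leading contribution of the left-hand side should be of order $\varepsilon^3\,\nabla^2V(a_j)\cdot(x^{(1)}_{j,\varepsilon}-x^{(2)}_{j,\varepsilon})$ times $\int_{\R^3}U_{a_j}^2$, plus $o\big(\varepsilon^3 |x^{(1)}_{j,\varepsilon}-x^{(2)}_{j,\varepsilon}|\big)+O(\varepsilon^4)$.

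For the right-hand side, the boundary integral $\int_{\partial B_{\delta_\varepsilon}}B(x)\,d\sigma$ is, by the exponential decay Corollary (estimate \eqref{l4}) and the choice of a good radius $\delta_\varepsilon\in(\bar d,2\bar d)$, of size $O\big(e^{-\eta/\varepsilon}+\|w^{(1)}_\varepsilon\|_\varepsilon\|w^{(1)}_\varepsilon - w^{(2)}_\varepsilon\|_\varepsilon\big)$, hence negligible compared to $\varepsilon^4$ once the crude bounds on $\|w^{(m)}_\varepsilon\|_\varepsilon$ from \eqref{2--2} are in hand — but to be safe I would only claim $O(\varepsilon^{4+\sigma})$ here or absorb it into the error. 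The two volume integrals coming from the nonlocal term are the delicate part. As in \eqref{4-2}, the difference of the two nonlocal volume integrals collapses, after writing $\big(u^{(1)}_\varepsilon\big)^2-\big(u^{(2)}_\varepsilon\big)^2 = \big(u^{(1)}_\varepsilon+u^{(2)}_\varepsilon\big)\big(u^{(1)}_\varepsilon-u^{(2)}_\varepsilon\big)$ and symmetrizing the kernel $\tfrac{x^i-\xi^i}{|x-\xi|^3}$, into an expression controlled by the \emph{interaction} between the $j$-th bump and the other bumps $a_l$, $l\neq j$ — which is exactly the mechanism isolated in Proposition \ref{prop2.3}: such cross-bump nonlocal integrals are $O(\varepsilon^4)$ (see \eqref{claim}, \eqref{c37}, \eqref{c46}). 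The self-interaction terms vanish by the antisymmetry of the kernel under $x\leftrightarrow\xi$ combined with the (approximate) radial symmetry of each $U_{a_j}$, up to $O(e^{-\eta/\varepsilon})$. Thus the whole right-hand side is $O(\varepsilon^4)$.

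Putting the two sides together and dividing by $\varepsilon^3$ yields, for $i=1,2,3$,
\begin{flalign*}
\sum_{l=1}^3 \frac{\partial^2 V(a_j)}{\partial x^i\partial x^l}\,
\frac{x^l_{j,\varepsilon}{}^{(1)}-x^l_{j,\varepsilon}{}^{(2)}}{\varepsilon}
= o\!\left(\frac{|x^{(1)}_{j,\varepsilon}-x^{(2)}_{j,\varepsilon}|}{\varepsilon}\right)+O(\varepsilon).
\end{flalign*}
Since $a_j$ is a \emph{nondegenerate} critical point, the Hessian $\big(\partial^2_{x^ix^l}V(a_j)\big)$ is invertible, so this linear system forces $\big|x^{(1)}_{j,\varepsilon}-x^{(2)}_{j,\varepsilon}\big|/\varepsilon = o\big(|x^{(1)}_{j,\varepsilon}-x^{(2)}_{j,\varepsilon}|/\varepsilon\big)+O(\varepsilon)$, i.e. $\big|x^{(1)}_{j,\varepsilon}-x^{(2)}_{j,\varepsilon}\big| = o(\varepsilon)$, which is \eqref{aaaa}; the argument is uniform in $j=1,\dots,k$.

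I expect the main obstacle to be the bookkeeping of the nonlocal volume integrals: one must verify that, after the algebraic identity $\big(u^{(1)}\big)^2-\big(u^{(2)}\big)^2=\big(u^{(1)}+u^{(2)}\big)\big(u^{(1)}-u^{(2)}\big)$ and the symmetrization of the kernel, no term of order lower than $\varepsilon^4$ survives except the one carrying the Hessian of $V$ on the left — in particular that the cross-bump interactions are genuinely $O(\varepsilon^4)$ and not larger, and that the self-interaction of the $j$-th bump with itself cancels to exponential order. This requires reusing Proposition \ref{prop2.3}-type estimates carefully for the difference rather than for a single solution, and tracking the (only crude, at this stage) bounds $\|w^{(m)}_\varepsilon\|_\varepsilon=o(\varepsilon^{3/2})$; a sharper estimate of $\|w^{(m)}_\varepsilon\|_\varepsilon$ will be available later (Proposition \ref{lp1}) but should not be needed for \eqref{aaaa}.
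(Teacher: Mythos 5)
Your overall strategy (subtract the two Pohozaev identities \eqref{2.5} near $x^{(1)}_{j,\varepsilon}$, expand the left-hand side to produce $\varepsilon^{3}\,\nabla^{2}V(a_j)\,(x^{(1)}_{j,\varepsilon}-x^{(2)}_{j,\varepsilon})\int U_{a_j}^{2}$, and control the boundary and nonlocal volume terms) is the same as the paper's, but there is a genuine gap in how you treat the nonlocal volume terms, and it is exactly the step the proposition hinges on. Bounding the right-hand side only by $O(\varepsilon^{4})$ cannot give \eqref{aaaa}: dividing your identity by $\varepsilon^{3}$ yields $\sum_{l}\partial^{2}_{x^ix^l}V(a_j)\,(x^{(1),l}_{j,\varepsilon}-x^{(2),l}_{j,\varepsilon})=o\big(|x^{(1)}_{j,\varepsilon}-x^{(2)}_{j,\varepsilon}|\big)+O(\varepsilon)$, i.e.\ only $|x^{(1)}_{j,\varepsilon}-x^{(2)}_{j,\varepsilon}|=O(\varepsilon)$. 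In your final display you in effect divide the Hessian term by $\varepsilon^{4}$ but the error by $\varepsilon^{3}$, which is where the spurious $o(\varepsilon)$ conclusion comes from. Moreover the $O(\varepsilon^{4})$ bound cannot be sharpened for each solution separately: by \eqref{claim1} the cross-bump volume term $A^{(m)}_1$ is genuinely of order $\varepsilon^{4}$ (this is precisely why, in the multi-bump case, only $|x_{j,\varepsilon}-a_j|=O(\varepsilon)$ holds, with the lower bound \eqref{2--6}), so "each cross-bump integral is $O(\varepsilon^4)$" is a dead end for proving an $o(\varepsilon)$ statement.

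What the paper actually uses is a cancellation \emph{between the two solutions}. By \eqref{2----1}, the $\varepsilon^{4}$-order part of $A^{(m)}_1$ is the same explicit cross-interaction integral for $m=1,2$, except that the kernel is translated by $\frac{x^{(m)}_{j,\varepsilon}-x^{(m)}_{l,\varepsilon}}{\varepsilon}$; the difference of the two kernels, the quantity $D(x,\xi)$ in \eqref{2-14}, is estimated by the mean value theorem in \eqref{2--23}: since on $B_{d/\varepsilon}(0)\times B_{d/\varepsilon}(0)$ the kernel arguments have size of order $1/\varepsilon$ while the centers of the two solutions differ by $o(1)$, one gets $D(x,\xi)=o(\varepsilon^{3})$, hence $A^{(1)}_1-A^{(2)}_1=o(\varepsilon^{4})$, and the boundary terms are exponentially small for a well-chosen radius. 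Only with this $o(\varepsilon^{4})$ on the right (and with the left-hand side expansion carried to accuracy $o(\varepsilon^{4})$ as in \eqref{2---13} --- your "$+O(\varepsilon^{4})$" there must also be sharpened, which the $O(\varepsilon)$ bounds of Proposition \ref{Prop2.7l} do allow) does the nondegeneracy of the Hessian force \eqref{aaaa}. So your proposal needs the kernel-difference cancellation step added; invoking Proposition \ref{prop2.3} solution by solution, as you do, does not suffice.
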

\begin{proof}
First, for the small fixed constant $\bar d>0$, taking $u(x)=u^{(m)}_\varepsilon(x)=\displaystyle\sum_{l=1}^{k}
U_{a_l}(\frac{x-x^{(m)}_{l,\varepsilon}}{\varepsilon})
+w^{(m)}_\varepsilon(x)$ and  $\Omega=B_{d}(x^{(m)}_{j,\varepsilon})$ in the Pohozaev identity \eqref{2.5} with any $d\in (\bar d, 2\bar d)$ and $m=1,2$, then
\begin{flalign}\label{2--11}
\begin{aligned}
\int_{B_{d}(x^{(m)}_{j,\varepsilon})}\frac{\partial V(x)}{\partial x^i}\big(u^{(m)}_\varepsilon(x)\big)^2dx
=\int_{\partial B_{d}(x^{(m)}_{j,\varepsilon})}A^{(m)}(x)d\sigma+A^{(m)}_1,~\mbox{for}~i=1,2,3,
\end{aligned}
\end{flalign}
where
\begin{flalign*}
A^{(m)}(x)=\big(-\varepsilon^2|\nabla u^{(m)}_{\varepsilon}(x)|^2+ \big(u^{(m)}_{\varepsilon}(x)\big)^2(V(x)-\frac{1}{8\pi \varepsilon^2}\int_{\R^3}\frac{ \big(u^{(m)}_{\varepsilon}(\xi)\big)^2}{|x-\xi|}d\xi)\big)
\nu_i(x),
\end{flalign*}
and
\begin{flalign*}
A^{(m)}_1=\frac{1}{8\pi \varepsilon^2}\int_{ B_{d}(x^{(m)}_{j,\varepsilon})}\int_{\R^3}\big(u^{(m)}_{\varepsilon}(x)\big)^2
\big(u^{(m)}_{\varepsilon}(\xi)\big)^2\frac{x^i-\xi^i}{|x-\xi|^3}d\xi dx.
\end{flalign*}
From \eqref{2--11}, for any $d\in (\bar d, 2\bar d)$, we have
\begin{flalign}\label{2---11}
\begin{split}
\int_{B_{d}(x^{(1)}_{j,\varepsilon})}&\frac{\partial V(x)}{\partial x^i}\big(u^{(1)}_\varepsilon(x)\big)^2dx-
\int_{B_{d}(x^{(2)}_{j,\varepsilon})}\frac{\partial V(x)}{\partial x^i}\big(u^{(2)}_\varepsilon(x)\big)^2dx\\&=\int_{\partial B_{d}(x^{(1)}_{j,\varepsilon})}A^{(1)}(x)d\sigma-\int_{\partial B_{d}(x^{(2)}_{j,\varepsilon})}A^{(2)}(x)d\sigma+\big(A^{(1)}_1-A^{(2)}_1\big).
\end{split}
\end{flalign}
Then \eqref{2--12}, \eqref{2.11} and \eqref{2-13} imply
\begin{flalign}\label{2---13}
\begin{aligned}
\mbox{LHS of}~\eqref{2---11}&=\varepsilon^{3} \big(\int_{\R^3} U_{a_j}^2(x)dx\big)\sum^3_{l=1}\frac{\partial^2 V(a_{j})}{\partial x^i\partial x^l}
(x^{(1),l}_{j,\varepsilon}-x^{(2),l}_{j,\varepsilon}) +o\big(\varepsilon^4\big),
\end{aligned}
\end{flalign}
where $x^{(1),l}_{j,\varepsilon}, x^{(2),l}_{j,\varepsilon}$ are the $l$-th components of $x^{(1)}_{j,\varepsilon}, x^{(2)}_{j,\varepsilon}$.

On the other hand, similar to \eqref{2-19}, there exists $d_\varepsilon\in (\bar d, 2\bar d)$ such that
\begin{flalign}\label{2----2}
\int_{\partial B_{d_\varepsilon}(x^{(m)}_{j,\varepsilon})}A^{(m)}(x)d\sigma=O\big(e^{-\eta/\varepsilon}\big),~\mbox{for}~m=1,2.
\end{flalign}
Also, from \eqref{c40}, we know
\begin{flalign}\label{2----1}
A^{(m)}_1=\sum^k_{l=1,l\neq j} \frac{(a^{i}_{j}-a^{i}_{l})\varepsilon}{8\pi}\int_{\R^3}\int_{\R^3}
U^2_{a_j}(x)
U^2_{a_l}(\xi+\frac{x^{(m)}_{j,\varepsilon}-x^{(m)}_{l,\varepsilon}}{\varepsilon})|x-\xi|^{-3}d\xi dx+o(\varepsilon^4),
\end{flalign}
where $a^{i}_{j}, a^{i}_{l}$ are the $i$-th components of $a_{j},a_{l}$.

Then  \eqref{2----2}, \eqref{2----1} and  the exponential decay of $U_{a_j}(x)$ imply
\begin{flalign}\label{2-14}
\begin{aligned}
\mbox{RHS of}~\eqref{2---11}&=\sum^k_{l=1,l\neq j} \frac{(a^{i}_{j}-a^{i}_{l})\varepsilon}{8\pi}\int_{B_{d/\varepsilon}(0)}\int_{B_{d/\varepsilon}(0)}
U^2_{a_j}(x)
U^2_{a_l}(\xi)D(x,\xi)d\xi dx+o(\varepsilon^4),
\end{aligned}
\end{flalign}
where
\begin{flalign*}
D(x,\xi)=|x-\xi+\frac{x^{(1)}_{j,\varepsilon}-x^{(1)}_{l,\varepsilon}}{\varepsilon}|^{-3}
-|x-\xi+\frac{x^{(2)}_{j,\varepsilon}-x^{(2)}_{l,\varepsilon}}{\varepsilon}|^{-3}.
\end{flalign*}
Then using \eqref{2--2}, we get
\begin{flalign}\label{2--23}
D(x,\xi)=O\big(|x^{(1)}_{j,\varepsilon}
-x^{(2)}_{j,\varepsilon}+x^{(1)}_{l,\varepsilon}
-x^{(2)}_{l,\varepsilon}|\varepsilon^3\big)=o(\varepsilon^3),~\mbox{for}~ x\in B_{d/\varepsilon}(0),~\xi\in B_{d/\varepsilon}(0).
\end{flalign}
Then \eqref{2-14} and \eqref{2--23} imply
\begin{flalign}\label{2-15}
\begin{aligned}
\mbox{RHS of}~\eqref{2---11}&= o(\varepsilon^4).
\end{aligned}
\end{flalign}
So from \eqref{2---13} and \eqref{2-15}, we obtain \eqref{aaaa}.
\end{proof}
\begin{Prop}\label{prop3--3}
Let $u^{(1)}_{\varepsilon}(x)$ and $u^{(2)}_{\varepsilon}(x)$ be the solutions of \eqref{1.2} concentrating at $k~(k\geq 2)$ different nondegenerate critical  points $\{a_1,\cdots,a_k\}\subset \R^3$ of $V(x)$, then
\begin{flalign}\label{5-8}
\|w^{(1)}_{\varepsilon}-w^{(2)}_{\varepsilon}\|_{\varepsilon}=o(\varepsilon^{7/2}).
\end{flalign}
\end{Prop}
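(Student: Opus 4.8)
The plan is to write down the linear equation satisfied by the difference $v_\varepsilon := w^{(1)}_\varepsilon - w^{(2)}_\varepsilon$ and then invert the linearized operator, controlling every inhomogeneous term by $o(\varepsilon^{7/2})$. Since $u^{(m)}_\varepsilon$ both solve \eqref{1.2}, subtracting the two equations and writing $u^{(m)}_\varepsilon = Z^{(m)}_\varepsilon + w^{(m)}_\varepsilon$ with $Z^{(m)}_\varepsilon = \sum_j U_{a_j}((x-x^{(m)}_{j,\varepsilon})/\varepsilon)$, I would first separate the \emph{local} (Schr\"odinger) part from the \emph{nonlocal} (Newton) part. The local part gives the familiar expression $-\varepsilon^2\Delta v_\varepsilon + V v_\varepsilon$ modulo terms that are linear in $v_\varepsilon$ with small coefficients; the nonlocal part, after expanding the product $\big(\int u^2/|x-\xi|\big)u$ for each solution and subtracting, produces two kinds of contributions: (i) terms proportional to $v_\varepsilon$ itself (with coefficient bounded in the relevant norm by the nonlocal potential, which is $O(1)$ on compact sets and exponentially small away from the concentration points by \eqref{2--5}), and (ii) genuinely inhomogeneous terms coming from $Z^{(1)}_\varepsilon - Z^{(2)}_\varepsilon$ and from $w^{(1)}_\varepsilon, w^{(2)}_\varepsilon$ appearing quadratically.

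The key estimates feeding into this are already available: $\|w^{(m)}_\varepsilon\|_\varepsilon = O(\varepsilon^{7/2})$ from Theorem \ref{th1--1}-type bounds (to be established in the multi-bump case, presumably in an earlier Proposition of Section 4, e.g.\ \ref{lp1}), $|x^{(1)}_{j,\varepsilon} - x^{(2)}_{j,\varepsilon}| = o(\varepsilon)$ from Proposition \ref{prop3--1}, and $|x^{(m)}_{j,\varepsilon} - a_j| = O(\varepsilon)$ from Proposition \ref{Prop2.7l}. Using these, I would estimate $\|Z^{(1)}_\varepsilon - Z^{(2)}_\varepsilon\|_\varepsilon = O\big(\varepsilon^{3/2}\,\varepsilon^{-1}\max_j|x^{(1)}_{j,\varepsilon}-x^{(2)}_{j,\varepsilon}|\big) = o(\varepsilon^{3/2})$ — this alone is not good enough, so the gain to $o(\varepsilon^{7/2})$ must come from pairing against the linearized operator: because $\partial_{x^i}U_{a_j}$ lies in the kernel, the leading $O(\varepsilon^{1/2})$-type discrepancy in $Z^{(1)}_\varepsilon-Z^{(2)}_\varepsilon$ is annihilated and one is left with the next-order Taylor remainder, which carries the extra power. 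Concretely, I would decompose $v_\varepsilon = v_\varepsilon^\perp + \sum_{j,i} c_{j,i}\,\partial_{x^i}U_{a_j}((\cdot-x^{(1)}_{j,\varepsilon})/\varepsilon)$, apply the invertibility of the linearized operator on the orthogonal complement (Theorem A, nondegeneracy) to bound $\|v_\varepsilon^\perp\|_\varepsilon$ by the projected right-hand side, and then estimate the coefficients $c_{j,i}$ separately by testing the full equation against $\partial_{x^i}U_{a_j}$ and $U_{a_j}$ and invoking a local Pohozaev-type cancellation.

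The main obstacle will be the nonlocal term: unlike in \cite{Cao1,Deng}, the quantity $\int_{\R^3}\frac{(u^{(1)}_\varepsilon)^2(\xi)-(u^{(2)}_\varepsilon)^2(\xi)}{|x-\xi|}\,d\xi$ couples $v_\varepsilon$ back to itself \emph{and} to the position differences through a convolution, so one cannot treat it as a lower-order perturbation naively. I expect to handle it by splitting $\R^3$ into $\bigcup_j B_{d}(x^{(1)}_{j,\varepsilon})$ and its complement: on the complement the exponential decay \eqref{2--5} makes the convolution $O(e^{-\eta/\varepsilon})$, while on each ball one rescales, uses $(u^{(1)}_\varepsilon)^2 - (u^{(2)}_\varepsilon)^2 = (u^{(1)}_\varepsilon + u^{(2)}_\varepsilon)\,(u^{(1)}_\varepsilon - u^{(2)}_\varepsilon)$ and the Hardy–Littlewood–Sobolev inequality to bound the convolution in $L^6$ (hence in the dual norm) by $C\|u^{(1)}_\varepsilon - u^{(2)}_\varepsilon\|_{L^{12/5}}$, which in turn is controlled by $\|v_\varepsilon\|_\varepsilon$ plus $\|Z^{(1)}_\varepsilon - Z^{(2)}_\varepsilon\|_\varepsilon$. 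Absorbing the $\|v_\varepsilon\|_\varepsilon$-proportional piece into the left-hand side (legitimate for $\varepsilon$ small, since its coefficient is $o(1)$ after rescaling) closes the estimate and yields \eqref{5-8}. I would relegate the routine HLS and Taylor-remainder bookkeeping to the Appendix.
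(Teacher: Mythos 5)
Your overall strategy is the one the paper actually follows: write the equation for $v_\varepsilon=w^{(1)}_\varepsilon-w^{(2)}_\varepsilon$, use the coercivity/invertibility of the linearized operator, and beat the naive bound $\|Z^{(1)}_\varepsilon-Z^{(2)}_\varepsilon\|_\varepsilon=o(\varepsilon^{3/2})$ by the observation that the translation discrepancy is annihilated by the linearization up to the factor $V(x)-V(a_j)=O(|x-a_j|^2)$, which combined with $|x^{(1)}_{j,\varepsilon}-x^{(2)}_{j,\varepsilon}|=o(\varepsilon)$ (Proposition \ref{prop3--1}) and $|x_{j,\varepsilon}-a_j|=O(\varepsilon)$ yields the extra $\varepsilon^2$; the quadratic-in-$w$ terms are disposed of by \eqref{aa5} and \eqref{2.45} exactly as you propose. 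The operational difference is that the paper performs no kernel decomposition of $v_\varepsilon$, no estimate of coefficients $c_{j,i}$, and no Pohozaev identity at this stage: it writes the difference identity \eqref{d11} with forcing $\bar N_\varepsilon+\bar l_\varepsilon$, tests it against $v_\varepsilon$ itself, and invokes the coercivity \eqref{B.2} of $M_\varepsilon$ (Proposition \ref{lem-A.1}); the near-orthogonality of $v_\varepsilon$ to the kernel directions needed there is essentially free, since each $w^{(m)}_\varepsilon$ satisfies the exact orthogonality conditions \eqref{2--3} at centers that differ only by $o(\varepsilon)$ by \eqref{aaaa}, so your extra step of estimating the $c_{j,i}$ via a Pohozaev-type cancellation is unnecessary (though not wrong).

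Two points in your sketch need care. First, the $\|v_\varepsilon\|_\varepsilon$-proportional piece you propose to ``absorb'', namely
\begin{equation*}
\frac{1}{4\pi\varepsilon^2}\Big(\int_{\R^3}\frac{\bar u_\varepsilon(\xi)\,v_\varepsilon(\xi)}{|x-\xi|}\,d\xi\Big)u^{(2)}_\varepsilon(x),
\qquad \bar u_\varepsilon=\tfrac12\big(u^{(1)}_\varepsilon+u^{(2)}_\varepsilon\big),
\end{equation*}
contributes $O(1)\|v_\varepsilon\|^2_\varepsilon$ to the quadratic form (apply \eqref{aa5} with $\|u^{(m)}_\varepsilon\|_\varepsilon\sim\varepsilon^{3/2}$), not $o(1)\|v_\varepsilon\|^2_\varepsilon$: it cannot be absorbed by smallness and must be kept inside the operator whose nondegeneracy you invoke. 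This is precisely why the paper includes the term $\frac{1}{4\pi\varepsilon^2}\big(\int R_\varepsilon w/|x-\xi|\,d\xi\big)R_\varepsilon$ in $G$ (see \eqref{B--2}) so that Proposition \ref{lem-A.1} handles it; only the corrections whose coefficients carry $w^{(m)}_\varepsilon$ or the center mismatch are genuinely $o(1)$ and absorbable. Second, the input $\|w^{(m)}_\varepsilon\|_\varepsilon=O(\varepsilon^{7/2})$ must be quoted from \eqref{2.45} (Proposition \ref{Prop2.6} combined with Proposition \ref{Prop2.7l}), not from Proposition \ref{lp1}: the latter is proved after the present statement and uses \eqref{5-8}, so citing it here would be circular.
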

\begin{proof}
First, from \eqref{B--1}, we obtain
 \begin{flalign}\label{d11}
 \begin{split}
M_{\varepsilon}\big(x,w^{(1)}_{\varepsilon}(x)-w^{(2)}_{\varepsilon}(x)\big)
    =M_{\varepsilon}\big(x,w^{(1)}_{\varepsilon}(x)\big)
   -M_{\varepsilon}\big(x,w^{(2)}_{\varepsilon}(x)\big)=\bar N_{\varepsilon}(x)+\bar l_{\varepsilon}(x),
 \end{split}
 \end{flalign}
where
\begin{flalign} \label{B--3l}
\begin{split}
\bar N_{\varepsilon}(x)=&\frac{1}{8\pi \varepsilon^2}
 \big(\int_{\R^3}\frac{\big(w_{\varepsilon}^{(1)}(\xi)\big)^2}{|x-\xi|}d\xi\big)
\big(R^{(1)}_\varepsilon(x)+ w^{(1)}_{\varepsilon}(x)\big)
+\frac{w^{(1)}_{\varepsilon}(x)}{4\pi \varepsilon^{2}}\int_{\R^{3}}\frac{R^{(1)}_\varepsilon(\xi)w^{(1)}_{\xi}(\xi)}{|x-\xi|}d\xi\\&
-\frac{1}{8\pi \varepsilon^2}
 \big(\int_{\R^3}\frac{\big(w_{\varepsilon}^{(2)}(\xi)\big)^2}{|x-\xi|}d\xi\big)
\big(R^{(2)}_\varepsilon(x)+ w^{(2)}_{\varepsilon}(x)\big)
-\frac{w^{(2)}_{\varepsilon}(x)}{4\pi \varepsilon^{2}}\int_{\R^{3}}\frac{R^{(2)}_\varepsilon(\xi)w^{(2)}_{\xi}(\xi)}{|x-\xi|}d\xi ,
 \end{split}
\end{flalign}
and
 \begin{flalign} \label{B--4l}
 \begin{split}
 \bar l_{\varepsilon}(x)=&
      \frac{W^{(1)}_{j,\varepsilon}(x)}{8\pi \varepsilon^{2}}\int_{\R^{3}}
      \frac{W^{(1)}_{j,\varepsilon}(\xi)U_{a_{j}}(\frac{\xi-x^{(1)}_{j,\varepsilon}}{\varepsilon}) }{|x-\xi|}d\xi
      +\sum_{j=1}^k\big(V(a_j) - V(x)\big)U_{a_j}
      (\frac{x-x^{(1)}_{j,\varepsilon}}{\varepsilon})\\&
      -\frac{W^{(2)}_{j,\varepsilon}(x)}{8\pi \varepsilon^{2}}\int_{\R^{3}}
      \frac{W^{(2)}_{j,\varepsilon}(\xi)U_{a_{j}}(\frac{\xi-x^{(2)}_{j,\varepsilon}}{\varepsilon}) }{|x-\xi|}d\xi
      -\sum_{j=1}^k\big(V(a_j) - V(x)\big)U_{a_j}
      (\frac{x-x^{(2)}_{j,\varepsilon}}{\varepsilon}),
 \end{split}
 \end{flalign}
 where
$R^{(m)}_\varepsilon(x)$ and ${W}^{(m)}_{j,\varepsilon}(x)$ are the functions in \eqref{lm2} and \eqref{lm3} for $m=1,2$.

Next, using \eqref{aa5} and \eqref{2.45}, we obtain
\begin{flalign}\label{d12}
\begin{split}
&\int_{\R^3}\bar N_{\varepsilon}(x)\big(w^{(1)}_{\varepsilon}(x)-w^{(2)}_{\varepsilon}(x)\big)dx
\\=&O\big(\varepsilon^{-4}
|x^{(1)}_{j,\varepsilon}-x^{(2)}_{j,\varepsilon}|\cdot
\|R^{(1)}_\varepsilon\|_{\varepsilon}\big(\sum^2_{m=1}\|w^{(m)}_\varepsilon\|_\varepsilon\big)^3\big)+
O\big(\varepsilon^{-3}
\big(\sum^2_{m=1}\|w^{(m)}_\varepsilon\|_\varepsilon\big)^3
\|w^{(1)}_\varepsilon-w^{(2)}_\varepsilon\|_\varepsilon)\\&
+
O\big(\varepsilon^{-3}\|R^{(1)}_\varepsilon\|_{\varepsilon}\big(\sum^2_{m=1}\|w^{(m)}_\varepsilon\|_\varepsilon\big)^2
\|w^{(1)}_\varepsilon-w^{(2)}_\varepsilon\|_\varepsilon\big)\\=&
o(\varepsilon^{9})+
O(\varepsilon^{11/2})\|w^{(1)}_\varepsilon-w^{(2)}_\varepsilon\|_\varepsilon.
\end{split}
\end{flalign}
Also, similar to \eqref{B.14l}, we have
\begin{flalign}\label{d13}
\begin{split}
\int_{\R^3}\bar l_{\varepsilon}(x)\big(w^{(1)}_{\varepsilon}(x)-w^{(2)}_{\varepsilon}(x)\big)dx
=&o(\varepsilon^{7/2})\|w^{(1)}_\varepsilon-w^{(2)}_\varepsilon\|_\varepsilon.
\end{split}
\end{flalign}
On the other hand, by Proposition \ref{lem-A.1},  there exists $\rho'$ such that
 \begin{flalign}\label{d14}
 \begin{split}
\int_{\R^3} M_{\varepsilon}\big(x,w^{(1)}_{\varepsilon}(x)-w^{(2)}_{\varepsilon}(x)\big)
\big(w^{(1)}_{\varepsilon}(x)-w^{(2)}_{\varepsilon}(x)\big)dx
   \geq \rho'\|w^{(1)}_\varepsilon-w^{(2)}_\varepsilon\|^2_\varepsilon.
 \end{split}
 \end{flalign}
Then \eqref{d11}, \eqref{d12}, \eqref{d13} and \eqref{d14} imply \eqref{5-8}.
\end{proof}

\begin{Prop}\label{lp1}
It holds
\begin{flalign}\label{l2--20}
w^{(1)}_{\varepsilon}(x)-w^{(2)}_{\varepsilon}(x)=o\big(\varepsilon^2\big),~\mbox{in}~ \bigcup_{j=1}^k B_{R\varepsilon }(x^{(1)}_{j,\varepsilon}).
\end{flalign}
\end{Prop}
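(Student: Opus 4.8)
The plan is to derive a pointwise estimate for $W_\varepsilon := w^{(1)}_\varepsilon - w^{(2)}_\varepsilon$ inside the concentration balls by combining the $H_\varepsilon$-estimate \eqref{5-8} with a Nash--Moser iteration applied to the rescaled function. First I would rescale around $x^{(1)}_{j,\varepsilon}$, setting $\widetilde W_{j,\varepsilon}(y) := W_\varepsilon(\varepsilon y + x^{(1)}_{j,\varepsilon})$ for $y \in B_R(0)$, and write the equation satisfied by $\widetilde W_{j,\varepsilon}$. From the subtracted equations for $u^{(1)}_\varepsilon$ and $u^{(2)}_\varepsilon$ together with \eqref{d11}--\eqref{d13}, $\widetilde W_{j,\varepsilon}$ solves an equation of the form
\begin{flalign*}
-\Delta \widetilde W_{j,\varepsilon}(y) + V(\varepsilon y + x^{(1)}_{j,\varepsilon})\widetilde W_{j,\varepsilon}(y) = c_{j,\varepsilon}(y)\widetilde W_{j,\varepsilon}(y) + g_{j,\varepsilon}(y),~ y\in B_R(0),
\end{flalign*}
where $c_{j,\varepsilon}(y)$ collects the nonlocal convolution coefficients (which are uniformly bounded on $B_R(0)$ by the exponential decay of $U_{a_l}$ and the $L^\infty$ bounds on $u^{(m)}_\varepsilon$) and $g_{j,\varepsilon}$ collects the source terms coming from $\bar l_\varepsilon$, the potential difference $V(a_j)-V(x)$, the shift $x^{(1)}_{j,\varepsilon}-x^{(2)}_{j,\varepsilon}$, and the nonlocal remainder $\bar N_\varepsilon$.

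Next I would control the relevant norms of the ingredients. By \eqref{5-8} we have $\|\widetilde W_{j,\varepsilon}\|_{L^2(B_R(0))} = O(\varepsilon^{-3/2}\|W_\varepsilon\|_{L^2}) = o(\varepsilon^{2})$ after the scaling $\|W_\varepsilon\|^2_{L^2(B_{R\varepsilon})} \sim \varepsilon^3 \|\widetilde W_{j,\varepsilon}\|^2_{L^2(B_R)}$; more precisely $\|W_\varepsilon\|_\varepsilon = o(\varepsilon^{7/2})$ gives $\|\widetilde W_{j,\varepsilon}\|_{H^1(B_R(0))} = o(\varepsilon^{2})$. For the source term $g_{j,\varepsilon}$, I would use Proposition \ref{prop3--1}, i.e. $|x^{(1)}_{j,\varepsilon}-x^{(2)}_{j,\varepsilon}| = o(\varepsilon)$, together with $|x^{(1)}_{j,\varepsilon}-a_j| = O(\varepsilon)$ from Proposition \ref{Prop2.7l}, Taylor expansion of $V$, and the nonlocal estimates \eqref{d12}--\eqref{d13}, to obtain $\|g_{j,\varepsilon}\|_{L^q(B_R(0))} = o(\varepsilon^{2})$ for a suitably large $q$. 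With these bounds and the fact that $c_{j,\varepsilon}$ is uniformly bounded, one runs the standard Moser iteration (or simply elliptic $L^p$ estimates bootstrapped on shrinking balls) to upgrade from the $L^2$ bound to $\|\widetilde W_{j,\varepsilon}\|_{L^\infty(B_{R}(0))} = o(\varepsilon^{2})$. Undoing the scaling yields \eqref{l2--20}.

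The main obstacle I expect is the careful bookkeeping of the nonlocal source terms in $g_{j,\varepsilon}$: unlike in the local Schr\"odinger case, the convolution $\int \frac{(w^{(m)}_\varepsilon)^2(\xi)}{|x-\xi|}d\xi$ and the cross terms $\int \frac{R^{(m)}_\varepsilon(\xi) w^{(m)}_\varepsilon(\xi)}{|x-\xi|}d\xi$ mix the two solutions over all of $\R^3$, so one must split the integration region into the concentration balls and their complement, use Proposition \ref{prop1-1} (exponential decay away from the peaks) on the complement, and use the Hardy--Littlewood--Sobolev inequality together with \eqref{5-8} and the bounds on $R^{(m)}_\varepsilon$ on the balls. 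Getting the precise order $o(\varepsilon^2)$ rather than merely $O(\varepsilon^2)$ requires that every such term be strictly subcritical, which is where $|x^{(1)}_{j,\varepsilon}-x^{(2)}_{j,\varepsilon}| = o(\varepsilon)$ and $\|W_\varepsilon\|_\varepsilon = o(\varepsilon^{7/2})$ are used in an essential way. Once these nonlocal terms are shown to be $o(\varepsilon^2)$ in the appropriate $L^q$ norm, the iteration step itself is routine.
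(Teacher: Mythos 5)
Your proposal is correct and follows essentially the same route as the paper: rescale the difference $w^{(1)}_\varepsilon-w^{(2)}_\varepsilon$ around $x^{(1)}_{j,\varepsilon}$, view it as a solution of a Poisson-type equation whose right-hand side is controlled via the $H_\varepsilon$-estimate \eqref{5-8}, the closeness of the centers \eqref{aaaa}, the Hardy--Littlewood--Sobolev bounds and the exponential decay of the profiles, and then apply the Nash--Moser iteration of Lemma \ref{lem-A-6} to upgrade the $L^2$ bound $o(\varepsilon^{2})$ to an $L^\infty$ bound on $B_R(0)$. The only cosmetic difference is that the paper puts the entire linear part $\bar G_\varepsilon$ into the $L^2$ source rather than keeping a bounded coefficient in front of the unknown, which changes nothing of substance.
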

\begin{proof}
Let $\bar{w}_{j,\varepsilon}(x):=w^{(1)}_{j,\varepsilon}(x)- w^{(2)}_{j,\varepsilon}(x)=w^{(1)}_\varepsilon(\varepsilon x+x^{(1)}_{j,\varepsilon})-w^{(2)}_\varepsilon(\varepsilon x+x^{(1)}_{j,\varepsilon})$, then we have
\begin{flalign*}
\begin{split}
-\Delta \bar{w}_{j,\varepsilon}(x)=& \bar G_\varepsilon(\varepsilon x+x^{(1)}_{j,\varepsilon})
+\bar N_{\varepsilon}(\varepsilon x+x^{(1)}_{j,\varepsilon})+\bar l_{\varepsilon}(\varepsilon x+x^{(1)}_{j,\varepsilon}),
\end{split}
\end{flalign*}
$$ \bar G_\varepsilon(\varepsilon x+x^{(1)}_{j,\varepsilon})=G(\varepsilon x+x^{(1)}_{j,\varepsilon},w^{(1)}_{j,\varepsilon}(\varepsilon x+x^{(1)}_{j,\varepsilon}))- G(\varepsilon x+x^{(1)}_{j,\varepsilon},w^{(2)}_{j,\varepsilon}(\varepsilon x+x^{(1)}_{j,\varepsilon})),$$
where $G(\varepsilon x+x^{(1)}_{j,\varepsilon},w^{(m)}_{j,\varepsilon}(\varepsilon x+x^{(1)}_{j,\varepsilon}))$,
$\bar N_{\varepsilon}(\varepsilon x+x^{(1)}_{j,\varepsilon})$ and $\bar l_{\varepsilon}(\varepsilon x+x^{(1)}_{j,\varepsilon})$ are the functions in \eqref{B--2},
\eqref{B--3l} and \eqref{B--4l} for $m=1,2$.

So by Nash-Moser iteration (Lemma \ref{lem-A-6}), we have, for any fixed $R>0$,
\begin{flalign}\label{d16}
\begin{split}
\sup_{B_R(0)}\bar w_{j,\varepsilon}(x)\leq &
C \| \bar w_{j,\varepsilon}\|_{L^2(B_{2R}(0))}+
C\|\bar G_\varepsilon(\varepsilon \cdot +x^{(1)}_{j,\varepsilon})\|_{L^{2}(\R^3)}\\&
+C\|\bar N_\varepsilon(\varepsilon x+x^{(1)}_{j,\varepsilon})\|_{L^{2}(\R^3)}
   +C\|\bar l_{\varepsilon}(\varepsilon \cdot+x^{(1)}_{j,\varepsilon})\|_{L^{2}(\R^3)}.
\end{split}
\end{flalign}
Then using \eqref{aaaa} and \eqref{5-8}, we obtain
\begin{flalign}\label{d17}
\|\bar w_{j,\varepsilon}\|_{L^2(B_{2R}(0))}=O\big(\|\bar w_{j,\varepsilon}\|_{L^2(\R^3)}\big)=O\big(\varepsilon^{-3/2}\|w^{(1)}_\varepsilon
-w^{(2)}_\varepsilon\|_\varepsilon\big)=o(\varepsilon^2).
\end{flalign}
Also, from \eqref{5-8}, \eqref{aa5}, \eqref{A.1}, \eqref{B--3} and \eqref{2.45}, we deduce
\begin{flalign}\label{d18}
\begin{split}
&\|\bar G_\varepsilon(\varepsilon \cdot +x^{(1)}_{j,\varepsilon})\|_{L^{2}(\R^3)}=\varepsilon^{-\frac{3}{2}}\|\bar G_\varepsilon (\cdot)\|_{L^{2}(\R^3)}
\\=&
O\big(\varepsilon^{-\frac{3}{2}}\|w^{(1)}_\varepsilon
-w^{(2)}_\varepsilon\|_\varepsilon\big)+
O\big(\varepsilon^{-\frac{7}{2}}\|R^{(1)}_\varepsilon\|_\varepsilon
\big(\int_{\R^3}\int_{\R^{3}} \frac{\big(R^{(1)}_\varepsilon(\xi)\big)^2} {|x-\xi|^{2} }
\big(w^{(1)}_{\varepsilon}(x)-w^{(2)}_{\varepsilon}(x)\big)^2dxd\xi\big)^{\frac{1}{2}}\big)\\&
+O\big(\varepsilon^{-\frac{9}{2}}
|x^{(1)}_{j,\varepsilon}-x^{(2)}_{j,\varepsilon}|\cdot\|R^{(1)}_\varepsilon\|_\varepsilon
\big(\int_{\R^3}\int_{\R^{3}} \frac{\big(R^{(1)}_\varepsilon(\xi)\big)^2} {|x-\xi|^{2} }
\big(w^{(1)}_{\varepsilon}(x)\big)^2dxd\xi\big)^{\frac{1}{2}}\big)\\=&
O\big(\varepsilon^{-\frac{3}{2}}\|w^{(1)}_\varepsilon
-w^{(2)}_\varepsilon\|_\varepsilon\big)+O\big( \varepsilon^{-\frac{5}{2}}
|x^{(1)}_{j,\varepsilon}-x^{(2)}_{j,\varepsilon}|\cdot\|w^{(1)}_\varepsilon
\|_\varepsilon\big)=o(\varepsilon^2).
\end{split}
\end{flalign}
Next, similar to \eqref{d18}, \eqref{B.14}  and \eqref{B.14l}, we can obtain
\begin{flalign}\label{d19}
\begin{split}
\|\bar N_{\varepsilon}(\varepsilon x+x^{(1)}_{j,\varepsilon})\|_{L^{2}(\R^3)} =
\varepsilon^{-\frac{3}{2}}\|\bar N_\varepsilon (\cdot)\|_{L^{2}(\R^3)}=
O\big(\varepsilon^{-\frac{3}{2}}\cdot \big(\sum^2_{m=1}\|N\big(x,w^{(m)}_{\varepsilon}(x)\|_{L^{2}(\R^3)}\big)
\big)=
O(\varepsilon^4),
\end{split}
\end{flalign}
and
\begin{flalign}\label{d20}
\begin{split}
\|\bar l_{\varepsilon}(\varepsilon x+x^{(1)}_{j,\varepsilon})\|_{L^{2}(\R^3)}=
O(\varepsilon ^{-\frac{1}{2}}\cdot|x^{(1)}_{j,\varepsilon}-x^{(2)}_{j,\varepsilon}|
\cdot\|U_{a_j}(\frac{\cdot-x_{j,\varepsilon}}{\varepsilon})
\|_{\varepsilon})+O\big(e^{-\eta/\varepsilon}\big)=o(\varepsilon^2).
\end{split}\end{flalign}
Then \eqref{d16}, \eqref{d17}, \eqref{d18}, \eqref{d19} and \eqref{d20} imply
$$\displaystyle\sup_{B_R(0)}\bar w_{j,\varepsilon}(x)=o(\varepsilon^2), ~\mbox{for}~j=1,2,\cdots,k.$$
This means \eqref{l2--20}.
\end{proof}

\section{Proof of Theorem \ref{th1.1}}\label{s4}
\setcounter{equation}{0}
\begin{Prop}\label{l3-2}
Let $\eta_{j,\varepsilon}(x)=\eta_{\varepsilon}(\varepsilon x+x_{j,\varepsilon}^{(1)})$ for $j=1,2,\cdots, k$
and $k\geq 2$, then taking
a subsequence necessarily, it holds
$$\eta_{j,\varepsilon}(x)\rightarrow\sum_{i=1}^3 d_{j,i}\frac{\partial U_{a_j}(x)}{\partial x^i}$$
uniformly in $C^1(B_R(0))$ for any $R>0$, where  $\eta_{\varepsilon}(x)$ is the function in \eqref{3.1} and $d_{j,i}$, $i=1,2,3$ are some constants.
\end{Prop}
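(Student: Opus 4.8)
The plan is to follow the blow-up argument of Proposition \ref{3-2}, now carried out around each of the $k$ points, with the additional inputs of Proposition \ref{prop3--1} (to align the two families of centers) and of the exponential decay estimates (to discard the far-away bumps).

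First I would record that, after the rescaling $x \mapsto \varepsilon x + x_{j,\varepsilon}^{(1)}$, the function $\eta_{j,\varepsilon}$ satisfies
\[
-\Delta \eta_{j,\varepsilon}(x) + V(\varepsilon x + x_{j,\varepsilon}^{(1)})\eta_{j,\varepsilon}(x) = E_1(\varepsilon x + x_{j,\varepsilon}^{(1)})\eta_{j,\varepsilon}(x) + E_2(\varepsilon x + x_{j,\varepsilon}^{(1)}),
\]
with $E_1,E_2$ as in \eqref{e}. Since $\|\eta_{j,\varepsilon}\|_{L^\infty(\R^3)}\le 1$ and the coefficients are bounded on any fixed ball, elliptic $L^p$ and Schauder estimates give a uniform bound in $C^{1,\alpha}_{loc}(\R^3)$ for some $\alpha\in(0,1)$, hence along a subsequence $\eta_{j,\varepsilon}\to\eta_j$ in $C^1_{loc}(\R^3)$.

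The crucial step is to show that on every ball $B_R(0)$,
\[
E_1(\varepsilon x + x_{j,\varepsilon}^{(1)}) = \frac{1}{8\pi}\int_{\R^3}\frac{U_{a_j}^2(\xi)}{|x-\xi|}\,d\xi + o(1), \qquad E_2(\varepsilon x + x_{j,\varepsilon}^{(1)}) = \frac{U_{a_j}(x)}{4\pi}\int_{\R^3}\frac{U_{a_j}(\xi)\eta_{j,\varepsilon}(\xi)}{|x-\xi|}\,d\xi + o(1).
\]
To do so I would decompose $u_\varepsilon^{(m)}(\varepsilon\,\cdot\, + x_{j,\varepsilon}^{(1)})$ into the $j$-th bump, the other bumps, and $w_\varepsilon^{(m)}$. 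Because the $a_l$ are distinct, $|x_{j,\varepsilon}^{(1)}-x_{l,\varepsilon}^{(1)}|/\varepsilon\to\infty$ for $l\ne j$, so by the exponential decay in \textbf{Theorem A} and Proposition \ref{prop1-1} the bumps centered away from $a_j$ are exponentially small on $B_{d/\varepsilon}(0)$; by Proposition \ref{prop3--1}, $|x_{j,\varepsilon}^{(1)}-x_{j,\varepsilon}^{(2)}|=o(\varepsilon)$, so the $j$-th bump of $u_\varepsilon^{(2)}$ also blows up to $U_{a_j}$. Splitting the Newtonian integrals as $\int_{\R^3}=\int_{B_{d/\varepsilon}(0)}+\int_{\R^3\setminus B_{d/\varepsilon}(0)}$ and using \eqref{2--2} together with the decay estimates of Proposition \ref{prop1-1} on the exterior part yields the two displayed asymptotics. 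Testing the rescaled equation against an arbitrary $\Phi\in C_0^\infty(\R^3)$ and letting $\varepsilon\to0$ (the $C^1_{loc}$ convergence controls the local terms, and the exponential tails control the Newtonian potential at infinity), $\eta_j$ is seen to solve
\[
-\Delta \eta_j + V(a_j)\eta_j = \frac{1}{8\pi}\Big(\int_{\R^3}\frac{U_{a_j}^2(\xi)}{|x-\xi|}\,d\xi\Big)\eta_j + \frac{1}{4\pi}\Big(\int_{\R^3}\frac{U_{a_j}(\xi)\eta_j(\xi)}{|x-\xi|}\,d\xi\Big)U_{a_j}(x).
\]
Since $\eta_j$ is bounded and, by the same argument as in \cite{Cao1}, lies in $H^1(\R^3)$, the nondegeneracy statement of \textbf{Theorem A} forces $\eta_j=\sum_{i=1}^3 d_{j,i}\,\partial U_{a_j}/\partial x^i$ for some constants $d_{j,i}$, which is exactly the claim.

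The main obstacle is precisely the identification of the nonlocal coefficients $E_1,E_2$: one must verify that the bumps centered at the other concentration points and the remainders $w_\varepsilon^{(m)}$ do not contaminate the Newtonian potential near $a_j$. This is where Proposition \ref{prop3--1} (so that the two $j$-th bumps have essentially the same center after rescaling) and the exponential-decay/orthogonality estimates of Section \ref{s2} are indispensable, and there is no analogue of this difficulty for the local Schr\"odinger equation treated in \cite{Cao1,Deng}.
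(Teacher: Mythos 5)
Your proposal is correct and follows essentially the same route as the paper: the paper's own proof is simply the remark that one repeats the blow-up argument of Proposition \ref{3-2}, the only new point being that the estimate \eqref{aaaa} of Proposition \ref{prop3--1} (together with the exponential smallness of the bumps centered at $a_l$, $l\neq j$) is what makes the identification of $E_1$, $E_2$ — in particular \eqref{la} — go through when $k\geq 2$, which is exactly the step you single out and justify.
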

\begin{proof}
The proof is just as similar as that of  Proposition \ref{3-2}.
Here we want to point out that
to obtain \eqref{la} for the case $k\geq 2$, the estimate \eqref{aaaa} is crucial.
\end{proof}
\begin{Prop}\label{l3-3}
Let $d_{j,i}$ be as in Proposition \ref{l3-2}, then
we have
\begin{equation}\label{lpp}
d_{j,i}=0,~~\mbox{for all}~j=1,\cdots,k,~i=1,2,3.
\end{equation}
\end{Prop}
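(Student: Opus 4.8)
\textbf{Proof proposal for Proposition \ref{l3-3}.}

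The plan is to mimic the proof of Proposition \ref{3-3}, applying the local Pohozaev identity \eqref{2.5} to the difference $\eta_\varepsilon$ on a ball $B_\delta(x^{(1)}_{j,\varepsilon})$ centered at the $j$-th concentration point, for each $j=1,\dots,k$ separately. Fixing a small $\bar d>0$ and using \eqref{2.5} with $u = u^{(1)}_\varepsilon$ and with $u=u^{(2)}_\varepsilon$ over $B_\delta(x^{(1)}_{j,\varepsilon})$ and subtracting, the left-hand side becomes $\int_{B_\delta(x^{(1)}_{j,\varepsilon})}\frac{\partial V(x)}{\partial x^i}\bigl(u^{(1)}_\varepsilon(x)+u^{(2)}_\varepsilon(x)\bigr)\eta_\varepsilon(x)\,dx$. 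Here the new input relative to the single-peak case is Proposition \ref{prop3--1}, i.e. $|x^{(1)}_{j,\varepsilon}-x^{(2)}_{j,\varepsilon}|=o(\varepsilon)$, which lets one center both bumps essentially at the same point when passing to the rescaled limit. First I would Taylor-expand $\frac{\partial V(x)}{\partial x^i}$ around $a_j$, use $|x^{(1)}_{j,\varepsilon}-a_j|=O(\varepsilon)$ from Proposition \ref{Prop2.7l}, rescale $x = \varepsilon y + x^{(1)}_{j,\varepsilon}$, and invoke Proposition \ref{l3-2} together with the exponential decay of $U_{a_j}$ to obtain
\begin{flalign*}
\text{LHS} = \varepsilon^4 \Bigl(\int_{\R^3}U^2_{a_j}(x)\,dx\Bigr)\sum_{l=1}^3\frac{\partial^2 V(a_j)}{\partial x^i\partial x^l}d_{j,l} + o(\varepsilon^4),
\end{flalign*}
exactly as in \eqref{4-1}; the odd-in-$y$ cancellations that turn $\int y^l U_{a_j}\partial_{x^i}U_{a_j}$ into $-\frac12\delta_{il}\int U^2_{a_j}$ are the same as before.

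For the right-hand side, the boundary term $\int_{\partial B_{\delta_\varepsilon}(x^{(1)}_{j,\varepsilon})}B(x)\,d\sigma$ is handled by choosing a good radius $\delta_\varepsilon\in(\bar d,2\bar d)$ and using the decay estimate \eqref{l4} together with the precise error bounds on $w^{(m)}_\varepsilon$ and $w^{(1)}_\varepsilon - w^{(2)}_\varepsilon$; I expect $\int_{\partial B_{\delta_\varepsilon}}B\,d\sigma = O(e^{-\eta/\varepsilon} + \|w_\varepsilon\|_\varepsilon\|\eta_\varepsilon\|_\varepsilon) = O(\varepsilon^5) = o(\varepsilon^4)$, mirroring \eqref{e17l} but now using Proposition \ref{prop3--3} to control the difference of the $w$'s. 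The volume terms coming from the nonlocal part are the delicate point: unlike the single-peak case, one cannot simply replace $B_\delta(x^{(1)}_{j,\varepsilon})$ by $\R^3$ and conclude the whole integral is $O(e^{-\eta/\varepsilon})$, because the other bumps at $a_l$, $l\ne j$, contribute a genuine $O(\varepsilon^5)$ interaction term (this is precisely the phenomenon that gives \eqref{claim1}). However, that interaction term is $O(\varepsilon^5)=o(\varepsilon^4)$, so after factoring out $J_\varepsilon = \|u^{(1)}_\varepsilon-u^{(2)}_\varepsilon\|_{L^\infty}$ from the difference $\bigl(u^{(1)}_\varepsilon\bigr)^2 - \bigl(u^{(2)}_\varepsilon\bigr)^2$ and using $|\eta_\varepsilon|\le 1$ plus the symmetry $\xi \leftrightarrow x$ argument as in \eqref{4-2}, the whole right-hand side is $o(\varepsilon^4)$. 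Combining with the left-hand side gives $\sum_{l=1}^3\frac{\partial^2 V(a_j)}{\partial x^i\partial x^l}d_{j,l} = o(1)$, and nondegeneracy of the critical point $a_j$ (invertibility of the Hessian $D^2V(a_j)$) forces $d_{j,l}=0$ for all $l$. Running this over $j=1,\dots,k$ yields \eqref{lpp}.

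The main obstacle I anticipate is the bookkeeping of the nonlocal volume integrals on $B_\delta(x^{(1)}_{j,\varepsilon})$: one must verify that, after splitting $u^{(m)}_\varepsilon$ into its bump part $\sum_l U_{a_l}(\frac{\cdot - x^{(m)}_{l,\varepsilon}}{\varepsilon})$ and the remainder $w^{(m)}_\varepsilon$, every cross term is either exponentially small (bump at $a_j$ against bumps at $a_l$, $l\ne j$, both cut to scale $O(1)$), or $O(\varepsilon^5)$ (the genuine pairwise interaction), or controlled by $\|w^{(m)}_\varepsilon\|_\varepsilon = O(\varepsilon^{7/2})$ and $\|w^{(1)}_\varepsilon - w^{(2)}_\varepsilon\|_\varepsilon = o(\varepsilon^{7/2})$ from Propositions \ref{prop3--3} and \ref{lp1}; in particular one needs $\varepsilon^{-2}\cdot\varepsilon^{3/2}\cdot\varepsilon^{7/2}\cdot(\text{bounded}) = O(\varepsilon^3) = o(\varepsilon^4)$? — no, this is only $O(\varepsilon^3)$, so the saving must instead come from the factor $J_\varepsilon$ and the fact that the leading $O(\varepsilon^3)$-type pieces appear in the combination $\bigl(u^{(1)}_\varepsilon\bigr)^2-\bigl(u^{(2)}_\varepsilon\bigr)^2 = J_\varepsilon\bigl(u^{(1)}_\varepsilon+u^{(2)}_\varepsilon\bigr)\eta_\varepsilon$, which is $o(1)$ pointwise after the blow-up of Proposition \ref{l3-2}. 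Making this quantitative — showing the nonlocal right-hand side is genuinely $o(\varepsilon^4)$ rather than merely $O(\varepsilon^4)$ — is where the estimate \eqref{aaaa} and the precise control on $w^{(1)}_\varepsilon - w^{(2)}_\varepsilon$ will be used most carefully.
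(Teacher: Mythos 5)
Your overall skeleton is the same as the paper's: apply the Pohozaev identity to the difference on $B_\delta(x^{(1)}_{j,\varepsilon})$ for each $j$, identify the left-hand side with $\varepsilon^4\big(\int U^2_{a_j}\big)\sum_l \partial^2_{il}V(a_j)\,d_{j,l}+o(\varepsilon^4)$, kill the boundary term at a good radius, and invoke nondegeneracy of $D^2V(a_j)$. The gap is precisely the step you defer at the end: showing that the nonlocal volume terms $F_1+F_2$ are genuinely $o(\varepsilon^4)$ (the paper's Proposition \ref{prop3.3}). You correctly observe that the global $x\leftrightarrow\xi$ antisymmetry trick of \eqref{4-2} is unavailable in the multi-peak case, but you then still conclude by "the symmetry argument as in \eqref{4-2}" after factoring out $J_\varepsilon$ -- this is circular, because forming the combination $\big(u^{(1)}_\varepsilon\big)^2-\big(u^{(2)}_\varepsilon\big)^2=J_\varepsilon\big(u^{(1)}_\varepsilon+u^{(2)}_\varepsilon\big)\eta_\varepsilon$ in the $x$-variable requires replacing the ball by $\R^3$, which is exactly what the other bumps forbid. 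The paper instead decomposes $F_1,F_2$ into bump and $w$ pieces, cancels the $w$-linear terms pairwise between the $F_1$- and $F_2$-expansions (Lemmas \ref{ld1}--\ref{ld4}; each such term is only $O(\varepsilon^4)$ by \eqref{aa5}, so norm estimates alone cannot discard them), handles the cross-bump terms by the integration-by-parts gain in \eqref{f5}, and is left with the single term $G$ built from $U^2_{a_j}(\frac{x-x^{(1)}_{j,\varepsilon}}{\varepsilon})-U^2_{a_j}(\frac{x-x^{(2)}_{j,\varepsilon}}{\varepsilon})$.

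For that remaining term your proposed input is insufficient: by \eqref{lg1} and \eqref{aa5}, $G=O\big(\frac{|x^{(1)}_{j,\varepsilon}-x^{(2)}_{j,\varepsilon}|}{\varepsilon}\,\varepsilon^2\big)$, so \eqref{aaaa} ($|x^{(1)}_{j,\varepsilon}-x^{(2)}_{j,\varepsilon}|=o(\varepsilon)$) only yields $o(\varepsilon^2)$, two orders short of $o(\varepsilon^4)$. The paper closes this with a dichotomy that is absent from your proposal: either $|x^{(1)}_{j,\varepsilon}-x^{(2)}_{j,\varepsilon}|=o(\varepsilon^3)$, in which case the direct estimate gives $G=o(\varepsilon^4)$; or, along a subsequence, $|x^{(1)}_{j,\varepsilon}-x^{(2)}_{j,\varepsilon}|\geq C_0\varepsilon^3$, in which case the pointwise bound $w^{(1)}_\varepsilon-w^{(2)}_\varepsilon=o(\varepsilon^2)$ from Proposition \ref{lp1} (Nash--Moser) forces the lower bound $J_\varepsilon\geq \frac{CC_0}{4\varepsilon}|x^{(1)}_{j,\varepsilon}-x^{(2)}_{j,\varepsilon}|$ as in \eqref{phi}; one then substitutes $\eta_\varepsilon=J_\varepsilon^{-1}\big[(U^{(1)}-U^{(2)})+(w^{(1)}_\varepsilon-w^{(2)}_\varepsilon)\big]$ locally, the piece $[U^2(\cdot;1)-U^2(\cdot;2)]\otimes[U^2(\cdot;1)-U^2(\cdot;2)]$ vanishes exactly against the antisymmetric kernel, and the remainder is controlled by $\varepsilon^{-2}\|U_{a_j}\|^3_\varepsilon\|w^{(1)}_\varepsilon-w^{(2)}_\varepsilon\|_\varepsilon=o(\varepsilon^6)$ as in \eqref{f40}. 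Without this case analysis and the $J_\varepsilon$ lower bound, your argument establishes at best $O(\varepsilon^4)$ for the right-hand side, which does not yield $d_{j,i}=0$.
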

\begin{proof}
Since $u_{\varepsilon}^{(1)}(x)$, $u_{\varepsilon}^{(2)}(x)$ are the positive solutions of \eqref{1.2},
 for the small fixed constant $\bar d>0$ and any $\delta\in (\bar d, 2\bar d)$, similar to \eqref{3.14},  we have
\begin{flalign}\label{l3.14}
\begin{aligned}
\displaystyle \int_{B_{\delta}(x_{1,\varepsilon}^{(1)})}\frac{\partial V(x)}{\partial x^i}\big(u_{\varepsilon}^{(1)}(x)+u_{\varepsilon}^{(2)}(x)\big)\cdot\eta_{\varepsilon}(x)dx =&
\int_{\partial B_{\delta}(x_{j,\varepsilon}^{(1)})}B(x)d\sigma+F_1+F_2,
\end{aligned}
\end{flalign}
where $B(x)$ is the function in \eqref{lb},
 $$F_1=\frac{1}{8\pi \varepsilon^2}
\int_{ B_{\delta}(x_{j,\varepsilon}^{(1)})}\int_{\R^3}
\big(u^{(1)}_{\varepsilon}(x)\big)^2
\big(u^{(1)}_{\varepsilon}(\xi)+u^{(2)}_{\varepsilon}(\xi)\big)\eta_{\varepsilon}(\xi)
\frac{x^i-\xi^i}{|x-\xi|^3}d\xi dx,$$
and
 $$F_2=\frac{1}{8\pi \varepsilon^2}
\int_{B_{\delta}(x_{j,\varepsilon}^{(1)})}\int_{\R^3}
\big(u^{(2)}_{\varepsilon}(\xi)\big)^2
\big(u^{(1)}_{\varepsilon}(x)+u^{(2)}_{\varepsilon}(x)\big)\eta_{\varepsilon}(x)\frac{x^i-\xi^i}{|x-\xi|^3}d\xi dx.$$
Then similar to \eqref{4-1} and \eqref{e17l}, we know
\begin{flalign}\label{l4-1}
\textrm{LHS of \eqref{l3.14}}= \varepsilon^4 \int_{\R^3} U^2_{a_j}(x)dx\big(\sum^3_{l=1}\frac{\partial^2 V(a_j)}{\partial x^i\partial x^l}d_{j,l}\big)+o(\varepsilon^4),
\end{flalign}
and
\begin{flalign}\label{e17la}
\int_{\partial B_{\delta_\varepsilon}(x_{j,\varepsilon}^{(1)})}B(x)d\sigma=O\big(\varepsilon^{5}
\big),
~\mbox{for some}~\delta_\varepsilon\in (\bar d, 2\bar d).
\end{flalign}
Let $\delta=\delta_\varepsilon$ in \eqref{l3.14}, then  \eqref{l4-1}, \eqref{e17la} and \eqref{f-1} below in \textbf{Proposition \ref{prop3.3}}  imply
\begin{equation*}
\sum^3_{l=1}\frac{\partial^2 V(a_j)}{\partial x^i\partial x^l}d_{j,l}=o(1).
\end{equation*}
This means $d_{j,i}=0$, for $i=1,2,3$.
Similarly, we can obtain \eqref{lpp}.
\end{proof}
\begin{Prop}\label{prop3.3}
For the small fixed constant $\bar d>0$ and any $\delta\in (\bar d, 2\bar d)$, it holds
\begin{flalign}\label{f-1}
F_1+F_2=o\big(\varepsilon^{4}\big).
\end{flalign}
\end{Prop}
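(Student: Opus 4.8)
The plan is to estimate $F_1$ and $F_2$ separately, following the strategy of the single-peak case \eqref{4-2} but now exploiting the refined estimates from Section 4, in particular \eqref{aaaa} and \eqref{l2--20}. Recall from Proposition \ref{l3-2} that after rescaling at each concentration point $\eta_{j,\varepsilon}$ converges in $C^1_{loc}$ to $\sum_i d_{j,i}\partial_{x^i}U_{a_j}$, and from Proposition \ref{l3-3} (which we are in the middle of proving, so we may only use what is logically prior) we cannot yet assume $d_{j,i}=0$. However, what we \emph{can} use is that $\eta_\varepsilon$ is bounded by $1$ in $L^\infty(\R^3)$, that $u^{(1)}_\varepsilon,u^{(2)}_\varepsilon$ decay exponentially away from the $x^{(m)}_{j,\varepsilon}$ by Proposition \ref{prop1-1}, and crucially the precise error estimate $w^{(1)}_\varepsilon-w^{(2)}_\varepsilon=o(\varepsilon^2)$ in $\bigcup_j B_{R\varepsilon}(x^{(1)}_{j,\varepsilon})$ from Proposition \ref{lp1}.

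First I would treat $F_1$. Split the inner $\xi$-integral into the region near the concentration points and its complement; by \eqref{l4} the complement contributes $O(e^{-\eta/\varepsilon})$. Near the points, I would write $u^{(1)}_\varepsilon(\xi)+u^{(2)}_\varepsilon(\xi) = 2\sum_j U_{a_j}((\xi-x^{(1)}_{j,\varepsilon})/\varepsilon)+\big(\text{terms involving } x^{(1)}_{j,\varepsilon}-x^{(2)}_{j,\varepsilon}\text{ and } w^{(m)}_\varepsilon\big)$; by \eqref{aaaa} and \eqref{2.45} these remainder terms are $o(\varepsilon)$ relative to the leading profile in $\|\cdot\|_\varepsilon$, and the $\eta_\varepsilon(\xi)$ factor is bounded, so after the change of variables $\xi=\varepsilon y+x^{(1)}_{j,\varepsilon}$ (picking up $\varepsilon^3$) and $x=\varepsilon z+x^{(1)}_{j,\varepsilon}$ (another $\varepsilon^3$), balanced against the prefactor $\varepsilon^{-2}$ and the kernel $|x-\xi|^{-2}=\varepsilon^{-2}|z-y|^{-2}$, one obtains a net power $\varepsilon^{3+3-2-2}=\varepsilon^{2}$ times integrals of the form $\int\int U_{a_j}^2(z)U_{a_j}(y)\eta_{j,\varepsilon}(y)\frac{z^i-y^i}{|z-y|^3}\,dy\,dz$, and similarly cross terms between different peaks which are $O(e^{-\eta/\varepsilon})$ by exponential decay as in \eqref{c46}. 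The point is then that each such leading integral, after passing to the limit, becomes $\int\int U_{a_j}^2(z)U_{a_j}(y)\big(\sum_l d_{j,l}\partial_{y^l}U_{a_j}(y)\big)\frac{z^i-y^i}{|z-y|^3}\,dy\,dz$, which I claim vanishes by an odd/even symmetry of $U_{a_j}$ (radial) against the odd kernel $\frac{z^i-y^i}{|z-y|^3}$ paired with the odd factor $\partial_{y^l}U_{a_j}$ — more precisely, one should integrate by parts in $y$ to move the $\partial_{y^l}$ onto $|z-y|^{-1}$ and use that $\int\int U_{a_j}^2(z)U_{a_j}^2(y)\partial_{y^l}\partial_{y^i}|z-y|^{-1}\,dy\,dz$ is symmetric in a way that forces the trace-free antisymmetric part to vanish, exactly the symmetry observation flagged in Remark 1.6. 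Hence $F_1=o(\varepsilon^4)$.

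For $F_2$ the structure is the mirror image: the inner integral now carries $(u^{(2)}_\varepsilon(\xi))^2$ (no $\eta$), and the outer integrand carries $(u^{(1)}_\varepsilon(x)+u^{(2)}_\varepsilon(x))\eta_\varepsilon(x)$; the same two-scale blow-up gives a leading term $\varepsilon^2\int\int U_{a_j}^2(y)\,U_{a_j}(z)\eta_{j,\varepsilon}(z)\frac{z^i-y^i}{|z-y|^3}\,dy\,dz$ whose limit again vanishes by the same symmetry after integration by parts. One subtlety is that here it is natural to combine $F_1+F_2$ before taking limits: writing $(u^{(1)}_\varepsilon)^2-(u^{(2)}_\varepsilon)^2 = J_\varepsilon(u^{(1)}_\varepsilon+u^{(2)}_\varepsilon)\eta_\varepsilon$ as in \eqref{4-2}, one sees that the $J_\varepsilon$-proportional discrepancy between $F_1$ and $F_2$ is $O(e^{-\eta/\varepsilon})$ just as in the single-peak case, so it suffices to show that the ``diagonal'' part $\frac{1}{8\pi\varepsilon^2}\int\int(u^{(1)}_\varepsilon(x))^2(u^{(1)}_\varepsilon(\xi)+u^{(2)}_\varepsilon(\xi))\eta_\varepsilon(\xi)\frac{x^i-\xi^i}{|x-\xi|^3}$ plus its symmetric companion is $o(\varepsilon^4)$, and that follows from the symmetry argument above applied peak by peak.

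The main obstacle, as in Remark 1.6, is the precise evaluation of the two volume integrals: one must show that the naive size $O(\varepsilon^2)$ — which would be fatal, since we need $o(\varepsilon^4)$ after the LHS of \eqref{l3.14} has size $\varepsilon^4$ — actually collapses. The mechanism is that the $\varepsilon^2$-order coefficient is an integral of an odd kernel against the radially symmetric profile $U_{a_j}$ (and its first derivative, which is odd), so the coefficient is literally zero, leaving only lower-order corrections coming from $V(x)-V(a_j)$, from $x^{(1)}_{j,\varepsilon}-x^{(2)}_{j,\varepsilon}=o(\varepsilon)$, and from $w^{(1)}_\varepsilon-w^{(2)}_\varepsilon=o(\varepsilon^2)$ near the peaks together with $\|w^{(m)}_\varepsilon\|_\varepsilon=O(\varepsilon^{7/2})$; tracking these through the two changes of variables and the Hardy–Littlewood–Sobolev bound for the nonlocal kernel gives each piece as $o(\varepsilon^4)$. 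I would carry this out by: (i) reducing to a sum over single peaks via exponential decay; (ii) Taylor-expanding $V$ and substituting the decomposition of $u^{(m)}_\varepsilon$; (iii) performing the blow-up and identifying the $\varepsilon^2$ coefficient; (iv) invoking the symmetry/integration-by-parts identity to kill it; (v) estimating every remainder by $o(\varepsilon^4)$ using \eqref{aaaa}, \eqref{l2--20}, \eqref{2.45} and \eqref{l4}. This yields \eqref{f-1}.
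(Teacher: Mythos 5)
There is a genuine gap, and it sits exactly at the step you call the "mechanism": the claim that the rescaled leading integral $\int_{\R^3}\int_{\R^3}U_{a_j}^2(z)\,U_{a_j}(y)\,\partial_{y^l}U_{a_j}(y)\,\frac{z^i-y^i}{|z-y|^3}\,dy\,dz$ vanishes by odd/even symmetry is false when $l=i$. Writing $\frac{z^i-y^i}{|z-y|^3}=\partial_{y^i}|z-y|^{-1}$ and integrating by parts, this integral equals $\frac12\int \partial_{y^i}\big(U_{a_j}^2\big)\,\partial_{y^i}\Phi\,dy$ with $\Phi=U_{a_j}^2*|y|^{-1}$; since both radial derivatives are negative, the integrand is nonnegative, and summing over $i$ gives $\frac12\int\nabla(U_{a_j}^2)\cdot\nabla\Phi=2\pi\int U_{a_j}^4>0$, so each diagonal term equals $\frac{2\pi}{3}\int U_{a_j}^4>0$. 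The $\varepsilon^2$-order coefficient of $F_1$ (or $F_2$) alone is therefore not zero; the cancellation at that order comes only from pairing $F_1$ with $F_2$ through the antisymmetry of the kernel under $x\leftrightarrow\xi$, which is precisely how the paper reduces $F_1+F_2$ (after the bump/remainder decomposition and the cancellations recorded in \eqref{f1}, \eqref{f8}, \eqref{f13}, \eqref{f25}) to the quantity $G$ carrying the difference $U_{a_j}^2(\frac{x-x^{(1)}_{j,\varepsilon}}{\varepsilon})-U_{a_j}^2(\frac{x-x^{(2)}_{j,\varepsilon}}{\varepsilon})$. Moreover, even if your vanishing claim were true, Proposition \ref{l3-2} gives only qualitative $C^1_{loc}$ convergence of $\eta_{j,\varepsilon}$ with no rate, so "coefficient zero plus lower-order corrections" would only yield $o(\varepsilon^2)$, not the required $o(\varepsilon^4)$.

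Two further steps you rely on would also fail in the multi-peak setting. First, the cross-peak terms are not $O(e^{-\eta/\varepsilon})$, and extending the $x$-integral from $B_\delta(x^{(1)}_{j,\varepsilon})$ to $\R^3$ does not cost only $O(e^{-\eta/\varepsilon})$ "as in the single-peak case": when $x$ lies near a bump $l\neq j$ and $\xi$ near bump $j$, the two profiles sit in different integration variables joined by the slowly decaying kernel $|x-\xi|^{-2}$, so these contributions are only polynomially small — indeed of the critical size $\varepsilon^4$ (this is exactly why $A_1$ in Proposition \ref{prop2.3} satisfies $A_1\geq C_3\varepsilon^4$). The paper shows they are nevertheless $o(\varepsilon^4)$ here only because the factor $\eta_\varepsilon$ localized at the other bump converges to $\sum_m d_{m,l}\partial_m U_{a_l}$, and an integration by parts transfers the derivative onto the kernel, gaining $\varepsilon^3$ from the $1/\varepsilon$ separation (see \eqref{f5}--\eqref{af5}); your appeal to \eqref{c46} does not cover this. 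Second, after the correct reduction to $G$, the dangerous contribution where the profile difference enters both through $(u^{(1)}_\varepsilon)^2-(u^{(2)}_\varepsilon)^2$ and through $\eta_\varepsilon$ cannot be handled by \eqref{aaaa} alone; the paper needs the dichotomy $|x^{(1)}_{j,\varepsilon}-x^{(2)}_{j,\varepsilon}|=o(\varepsilon^3)$ versus $|x^{(1)}_{j,\varepsilon}-x^{(2)}_{j,\varepsilon}|\geq C_0\varepsilon^3$, and in the second case the lower bound $J_\varepsilon\geq \frac{CC_0}{4\varepsilon}|x^{(1)}_{j,\varepsilon}-x^{(2)}_{j,\varepsilon}|$ deduced from Proposition \ref{lp1} (this is where \eqref{l2--20} actually enters), the exact vanishing of the symmetric profile-difference part under $x\leftrightarrow\xi$, and $\|w^{(1)}_\varepsilon-w^{(2)}_\varepsilon\|_\varepsilon=o(\varepsilon^{7/2})$ from \eqref{5-8}. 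Your proposal cites the right estimates but never supplies this quantitative mechanism, so as written the argument does not reach $o(\varepsilon^4)$.
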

\begin{proof}
First, $F_{1}$ can be written as
\begin{flalign}\label{lll1}
F_1=F_{1,1}+F_{1,2}+F_{1,3}+F_{1,4},
\end{flalign}
where
\begin{flalign*}
&F_{1,1}=\frac{1}{8\pi \varepsilon^2}
\int_{ B_{\delta}(x_{j,\varepsilon}^{(1)})}\int_{\R^3}
U_{a_j}^2(\frac{x-x^{(1)}_{j,\varepsilon}}{\varepsilon})
\big(u^{(1)}_{\varepsilon}(\xi)+u^{(2)}_{\varepsilon}(\xi)\big)\eta_{\varepsilon}(\xi)
\frac{x^i-\xi^i}{|x-\xi|^3}d\xi dx,&
\end{flalign*}
\begin{flalign*}
&F_{1,2}=\frac{1}{4\pi \varepsilon^2}
\int_{ B_{\delta}(x_{j,\varepsilon}^{(1)})}\int_{\R^3}
U_{a_j} (\frac{x-x^{(1)}_{j,\varepsilon}}{\varepsilon})w_\varepsilon^{(1)}(x)
\big(u^{(1)}_{\varepsilon}(\xi)+u^{(2)}_{\varepsilon}(\xi)\big)\eta_{\varepsilon}(\xi)
\frac{x^i-\xi^i}{|x-\xi|^3}d\xi dx,&
\end{flalign*}
\begin{flalign*}
&F_{1,3}=\frac{1}{8\pi \varepsilon^2}
\int_{ B_{\delta}(x_{j,\varepsilon}^{(1)})}\int_{\R^3}\big(w_\varepsilon^{(1)}(x)\big)^2
\big(u^{(1)}_{\varepsilon}(\xi)+u^{(2)}_{\varepsilon}(\xi)\big)\eta_{\varepsilon}(\xi)
\frac{x^i-\xi^i}{|x-\xi|^3}d\xi dx,&
\end{flalign*}
\begin{flalign*}
&F_{1,4}=\frac{1}{8\pi \varepsilon^2}
\int_{ B_{\delta}(x_{j,\varepsilon}^{(1)})}\int_{\R^3}W^{(1)}_{j,\varepsilon}(x)
\big(2u_\varepsilon^{(1)}(x)-W^{(1)}_{j,\varepsilon}(x)\big)
\big(u^{(1)}_{\varepsilon}(\xi)+u^{(2)}_{\varepsilon}(\xi)\big)\eta_{\varepsilon}(\xi)
\frac{x^i-\xi^i}{|x-\xi|^3}d\xi dx,&
\end{flalign*}
and $W^{(1)}_{j,\varepsilon}(x)$ is the function in \eqref{lm3}.

Next, $F_2$ can be written as follows:
\begin{flalign}\label{lll2}
F_2=F_{2,1}+F_{2,2}+F_{2,3},
\end{flalign}
where
\begin{flalign*}
&F_{2,1}=\frac{1}{8\pi \varepsilon^2}
\int_{B_{\delta}(x_{j,\varepsilon}^{(1)})}\int_{\R^3}
\big(U_{a_j}(\frac{x-x^{(1)}_{j,\varepsilon}}{\varepsilon})+U_{a_j}(\frac{x-x^{(2)}_{j,\varepsilon}}{\varepsilon}) \big)
\eta_{\varepsilon}(x)
\big(u^{(2)}_{\varepsilon}(\xi)\big)^2\frac{x^i-\xi^i}{|x-\xi|^3}d\xi dx,&
\end{flalign*}
\begin{flalign*}
&F_{2,2}=\frac{1}{8\pi \varepsilon^2}
\int_{B_{\delta}(x_{j,\varepsilon}^{(1)})}\int_{\R^3}\big(W^{(1)}_{j,\varepsilon}(x)+W^{(2)}_{j,\varepsilon}(x)\big)
\eta_{\varepsilon}(x)
\big(u^{(2)}_{\varepsilon}(\xi)\big)^2\frac{x^i-\xi^i}{|x-\xi|^3}d\xi dx,&
\end{flalign*}
\begin{flalign*}
&
F_{2,3}=\frac{1}{8\pi \varepsilon^2}
\int_{B_{\delta}(x_{j,\varepsilon}^{(1)})}\int_{\R^3}
\big(w^{(1)}_{\varepsilon}(x)+w^{(2)}_{\varepsilon}(x)\big)
\eta_{\varepsilon}(x)\big(u^{(2)}_{\varepsilon}(\xi)\big)^2\frac{x^i-\xi^i}{|x-\xi|^3}d\xi dx.
&
\end{flalign*}
Then \eqref{aa5}, \eqref{A.1}, \eqref{3.3}, \eqref{2.45} imply
\begin{flalign}\label{f33}
\begin{split}
F_{1,3}&=O\big(\varepsilon^{-4}\|w^{(1)}_{\varepsilon}\|^2_{\varepsilon}
\|u^{(1)}_{\varepsilon}+u^{(2)}_{\varepsilon}\|_{\varepsilon}\|\eta_{\varepsilon}\|_{\varepsilon}\big)
=O\big(\varepsilon^{6}\big),~
 F_{1,4}=O\big(e^{-\eta/\varepsilon}\big)~\mbox{and}~F_{2,2}=O\big(e^{-\eta/\varepsilon}\big).
\end{split}\end{flalign}
Next from \eqref{f33},  \eqref{f1}, \eqref{f8}, \eqref{f13} and \eqref{f25}, we know
\begin{flalign}\label{f35}
F_1+F_2=G+o\big( \varepsilon^4\big),
\end{flalign}
where
$$G=\frac{1}{8\pi \varepsilon^2}
\int_{\R^3}\int_{\R^3}
\big(U_{a_j}^2(\frac{x-x^{(1)}_{j,\varepsilon}}{\varepsilon})
-U_{a_j}^2(\frac{x-x^{(2)}_{j,\varepsilon}}{\varepsilon})\big)
\big(\sum^{2}_{m=1}U_{a_j}(\frac{\xi-x^{(m)}_{j,\varepsilon}}{\varepsilon})
\big)\eta_{\varepsilon}(\xi)
\frac{x^i-\xi^i}{|x-\xi|^3}d\xi dx.$$

\smallskip

\noindent To estimate the term $G$,  we divide it into two cases.

\smallskip

\noindent{\textbf{Case 1:}}
\begin{equation}\label{le1}
|x^{(1)}_{j,\varepsilon}-x^{(2)}_{j,\varepsilon}|
=o\big(\varepsilon^{3}\big).
\end{equation}
Then from \eqref{le1}, \eqref{aa5} and \eqref{lg1}, we obtain
\begin{flalign}\label{f36}
G=O\big(\varepsilon^{-4}\cdot (\frac{|x^{(1)}_{j,\varepsilon}-x^{(2)}_{j,\varepsilon}|}{\varepsilon})
\|U_{a_j}(\frac{\cdot-x^{(1)}_{j,\varepsilon}}{\varepsilon})\|^3_{\varepsilon}
\|\eta_\varepsilon\|_\varepsilon\big)=
o\big(\varepsilon^{4}\big).
\end{flalign}
So in this case, \eqref{f35} and  \eqref{f36} imply \eqref{f-1}.

\smallskip

\noindent{\textbf{Case 2:}}  For any fixed $C_0>0$, there exists $\{\varepsilon_{i}\}^{\infty}_{i=1}$  such that
$$\displaystyle\lim_{i\rightarrow +\infty}\varepsilon_{i}=0~\mbox{and}~|x^{(1)}_{j,\varepsilon_i}-x^{(2)}_{j,\varepsilon_i}|\geq C_0\varepsilon_i^{3}, ~\mbox{for all}~i=1,\cdots,+\infty.$$
Then from \eqref{l2--20} and \eqref{A.1}, there exists some $C>0$ such that
\begin{flalign}\label{phi}
\begin{split}
J_{\varepsilon}:=&\|u^{(1)}_{\varepsilon}(\cdot)-u^{(2)}_{\varepsilon}(\cdot)\|_{L^{\infty}(\R^3)}
\geq \|u^{(1)}_{\varepsilon}(\cdot)-u^{(2)}_{\varepsilon}(\cdot)\|_{L^{\infty}
(B_{\delta}(x^{(1)}_{j,\varepsilon}))}\\ \geq &
C\big(\varepsilon^{-1}|x^{(1)}_{j,\varepsilon}-x^{(2)}_{j,\varepsilon}
|-\frac{C_0}{2}\varepsilon^{2}-e^{-\eta/\varepsilon}\big)\geq \frac{CC_0}{4\varepsilon}|x^{(1)}_{j,\varepsilon}-x^{(2)}_{j,\varepsilon}
|.
\end{split}
\end{flalign}
On the other hand, from \eqref{aaaa} and \eqref{A.1}, for small fixed $d>0$, we obtain
\begin{flalign}\label{l21}
\begin{split}
\eta_{\varepsilon}(x)=
J^{-1}_{\varepsilon}\big(U_{a_j}(\frac{x-x^{(1)}_{j,\varepsilon}}{\varepsilon})
-U_{a_j}(\frac{x-x^{(2)}_{j,\varepsilon}}{\varepsilon})\big)
+ J^{-1}_{\varepsilon}  w_{j,\varepsilon}(x)+ O\big(e^{-\eta/\varepsilon}\big),
~\mbox{in}~B_{d}(x^{(1)}_{j,\varepsilon}).
\end{split}\end{flalign}
Then   the exponential decay of $U_{a_j}(x)$ and \eqref{l21} imply
\begin{flalign}\label{l22}
\begin{split}
U_{a_j}(\frac{x-x^{(1)}_{j,\varepsilon}}{\varepsilon})\eta_{\varepsilon}(x)=&
J^{-1}_{\varepsilon}U_{a_j}(\frac{x-x^{(1)}_{j,\varepsilon}}{\varepsilon})\big(U_{a_j}(\frac{x-x^{(1)}_{j,\varepsilon}}{\varepsilon})
-U_{a_j}(\frac{x-x^{(2)}_{j,\varepsilon}}{\varepsilon})\big)\\&
+ J^{-1}_{\varepsilon} U_{a_j}(\frac{x-x^{(1)}_{j,\varepsilon}}{\varepsilon}) w_{j,\varepsilon}(x)+ O\big(e^{-\eta/\varepsilon}\big),
~\mbox{in}~\R^3.
\end{split}\end{flalign}
Then from the symmetry and \eqref{l22}, we have
\begin{flalign}\label{f40}
\begin{split}
G=&\frac{1}{8\pi \varepsilon^2J_{\varepsilon}}
\int_{\R^3}\int_{\R^3}
\big(U_{a_j}^2(\frac{x-x^{(1)}_{j,\varepsilon}}{\varepsilon})
-U_{a_j}^2(\frac{x-x^{(2)}_{j,\varepsilon}}{\varepsilon})\big)
\big(U_{a_j}(\frac{\xi-x^{(1)}_{j,\varepsilon}}{\varepsilon})+
U_{a_j}(\frac{\xi-x^{(2)}_{j,\varepsilon}}{\varepsilon})
\big)\\&
\cdot \big(w^{(1)}_{\varepsilon}(\xi)-w^{(2)}_{\varepsilon}(\xi)\big)
\frac{x^i-\xi^i}{|x-\xi|^3}d\xi dx+O\big(e^{-\eta/\varepsilon}\big)\\
=&O\big({\varepsilon}^{-2}
\|U_{a_j}(\frac{\cdot-x^{(1)}_{j,\varepsilon}}{\varepsilon})\|^3_{\varepsilon}
\|w^{(1)}_{\varepsilon}-w^{(2)}_{\varepsilon}\|_\varepsilon\big)+O\big(e^{-\eta/\varepsilon}\big)
=o\big({\varepsilon}^{6}\big).
\end{split}
\end{flalign}
So in this case, \eqref{f35} and \eqref{f40} imply \eqref{f-1}.
\end{proof}
\begin{Prop}\label{prop3.4l}
For any fixed $R>0$, it holds
$$\eta_{\varepsilon}(x)=o(1),~ x\in \bigcup_{j=1}^kB_{R\varepsilon}(x_{j,\varepsilon}^{(1)}).$$
\end{Prop}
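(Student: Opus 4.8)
The plan is to combine the blow-up analysis at each concentration point with the vanishing of the limiting coefficients, exactly as was done for the single-peak case in Proposition~\ref{prop3.4}. Concretely, I would argue as follows. Recall $\eta_{\varepsilon}(x)$ from \eqref{3.1} and the rescaled functions $\eta_{j,\varepsilon}(x)=\eta_{\varepsilon}(\varepsilon x+x_{j,\varepsilon}^{(1)})$ for $j=1,\dots,k$. By Proposition~\ref{l3-2}, after passing to a subsequence, each $\eta_{j,\varepsilon}$ converges in $C^{1}(B_R(0))$ for every $R>0$ to $\sum_{i=1}^{3} d_{j,i}\,\partial U_{a_j}/\partial x^i$, and by Proposition~\ref{l3-3} all the coefficients $d_{j,i}$ vanish. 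Therefore $\eta_{j,\varepsilon}(x)\to 0$ in $C^{1}(B_R(0))$, i.e. for every fixed $R>0$ we have $\eta_{j,\varepsilon}(x)=o(1)$ uniformly on $B_R(0)$.

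The second step is simply to unscale. Since $\eta_{j,\varepsilon}(x)=\eta_{\varepsilon}(\varepsilon x+x_{j,\varepsilon}^{(1)})$, the estimate $\eta_{j,\varepsilon}(x)=o(1)$ on $B_R(0)$ translates directly into $\eta_{\varepsilon}(y)=o(1)$ for $y\in B_{R\varepsilon}(x_{j,\varepsilon}^{(1)})$. Running this for each $j=1,\dots,k$ and taking the (finite) union over $j$, we get $\eta_{\varepsilon}(x)=o(1)$ on $\bigcup_{j=1}^{k} B_{R\varepsilon}(x_{j,\varepsilon}^{(1)})$, which is exactly the claim. One should note here that the $o(1)$ is uniform because there are only finitely many points $a_1,\dots,a_k$, so one can take the maximum over $j$ of the individual error terms.

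The only subtlety worth flagging is the subsequence issue: Propositions~\ref{l3-2} and~\ref{l3-3} are stated up to extraction of a subsequence. But since the limit is the \emph{same} (namely $0$) along every convergent subsequence, a standard contradiction argument shows that the full family satisfies $\eta_{j,\varepsilon}\to 0$; if not, there would be a sequence $\varepsilon_n\to 0$ with $\|\eta_{j,\varepsilon_n}\|_{C^1(B_R(0))}\geq c>0$, and extracting a further subsequence from this one and applying Propositions~\ref{l3-2}--\ref{l3-3} to it would force convergence to $0$, a contradiction. Thus no real obstacle remains here; the substance of the statement is entirely contained in the already-established Propositions~\ref{l3-2} and~\ref{l3-3} (which in turn rest on the crucial estimate \eqref{aaaa} from Proposition~\ref{prop3--1} and the Pohozaev computation in Proposition~\ref{prop3.3}), and this proof is just their packaging.
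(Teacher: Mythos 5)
Your argument is exactly the paper's: Proposition \ref{prop3.4l} is deduced there by combining Propositions \ref{l3-2} and \ref{l3-3} and unscaling, just as in Proposition \ref{prop3.4} for the single-peak case. Your additional remark on upgrading the subsequential statement to the full family is a correct (and standard) clarification, so the proposal is correct and follows essentially the same route.
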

\begin{proof}
Similar to Proposition \ref{prop3.4}, this is the result of
Proposition \ref{l3-2} and Proposition \ref{l3-3}.
\end{proof}

\begin{Prop}\label{prop3.5l}
For large $R>0$ and fixed $\gamma_1\in (0,1)$, there exists $\varepsilon_0$ such that
\begin{equation*}
|\eta_{\varepsilon}(x)|\leq \gamma_1,~\mbox{for}~x\in \R^3\backslash\bigcup_{j=1}^k B_{R\varepsilon}(x_{j,\varepsilon}^{(1)})~\mbox{and}~\varepsilon\in (0,\varepsilon_0).
\end{equation*}
\end{Prop}

\begin{proof}
Similar to the proof of Proposition \ref{prop3.5}, we replace
$B_{R\varepsilon}(x_{1,\varepsilon}^{(1)})$ by $\bigcup_{j=1}^k B_{R\varepsilon}(x_{j,\varepsilon}^{(1)})$.
\end{proof}

\begin{proof}[\textbf{Proof of Theorem \ref{th1.1}:}]
Let $u^{(1)}_{\varepsilon}(x)$, $u^{(2)}_{\varepsilon}(x)$ be two different positive solutions concentrating at  the nondegenerate critical points $\{a_1,\cdots,a_k\}$ of $V(x)$ for $k\geq 2$. Then Proposition \ref{prop3.4l} and Proposition \ref{prop3.5l} imply $|\eta_{\varepsilon}(x)|\leq\gamma_1$, for  $x\in\R^3$, small $\varepsilon$ and fixed $\gamma_1\in (0,1)$,
which contradicts with $\|\eta_{\varepsilon}\|_{L^{\infty}(\R^3)}=1$. So  $u^{(1)}_{\varepsilon}(x)\equiv u^{(2)}_{\varepsilon}(x)$ for small $\varepsilon$. Also \eqref{2--12}, \eqref{2--6} and \eqref{2.45} imply \eqref{lll} and \eqref{2a6}.
\end{proof}

\section*{Appendix}

\appendix
\renewcommand{\theequation}{A.\arabic{equation}}

\setcounter{equation}{0}

\section{Some basic estimates}
\setcounter{equation}{0}

\begin{Lem}[\textbf{Hardy-Littlewood-Sobolev inequality}, c.f. \cite{Lieb}] \label{lem-A-5}
Let $p,r>1, 0<\lambda<3$, $\frac{1}{p}+\frac{1}{r}+\frac{\lambda}{3}=2$, $f\in L^p(\R^3)$, $h\in L^r(\R^3)$, then there exists $C(\lambda,p)>0$ such that
\begin{flalign}
\int_{\R^3}\int_{\R^3}f(x)|x-y|^{-\lambda}h(y)dxdy \leq C(\lambda,p)\|f\|_{L^p(\R^3)}\|g\|_{L^r(\R^3)}.
\end{flalign}
\end{Lem}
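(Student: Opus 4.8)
The plan is to reduce the inequality to a mapping property of the Riesz-type potential and then establish that property by a weak-type estimate followed by real interpolation. Writing $(I_\lambda h)(x)=\int_{\R^3}|x-y|^{-\lambda}h(y)\,dy$, the left-hand side equals $\int_{\R^3}f(x)\,(I_\lambda h)(x)\,dx$, so by duality the claim is equivalent to the bound $\|I_\lambda h\|_{L^{p'}(\R^3)}\le C\|h\|_{L^{r}(\R^3)}$, where $p'$ is the conjugate exponent of $p$. Using $\frac1p+\frac1r+\frac{\lambda}{3}=2$ one checks that $\frac{1}{p'}=\frac1r-\frac{3-\lambda}{3}$, which is exactly the scaling-admissible relation for convolution against $|x|^{-\lambda}$ on $\R^3$; moreover the same identity, together with $p,r>1$ and $0<\lambda<3$, forces $1<r<p'<\infty$. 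Thus it suffices to prove: for $1<r<s<\infty$ with $\frac1s=\frac1r-\frac{3-\lambda}{3}$, convolution with $K(x)=|x|^{-\lambda}$ maps $L^{r}(\R^3)$ boundedly into $L^{s}(\R^3)$.

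First I would record that $K(x)=|x|^{-\lambda}$ lies in the weak-$L^{q}$ space $L^{q,\infty}(\R^3)$ with $q=3/\lambda$, since $\bigl|\{x:|x|^{-\lambda}>t\}\bigr|=c\,t^{-3/\lambda}$ for every $t>0$. Then I would prove the weak Young inequality $\|f*K\|_{L^{s,\infty}}\le C\|f\|_{L^{r}}\|K\|_{L^{q,\infty}}$ under the balance condition $1+\frac1s=\frac1r+\frac1q$ (which is again our condition) by the standard truncation device: for a fixed height $A>0$ split $K=K\mathbf{1}_{\{K\le A\}}+K\mathbf{1}_{\{K>A\}}$; the bounded part is integrable-to-the-appropriate-power and is controlled by the ordinary Young inequality, while the singular part is handled by Chebyshev's inequality together with the distribution-function bound for $K$; optimizing over $A$ in terms of the level of $f*K$ yields the weak-type $(r,s)$ estimate. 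Finally, since $r$ (hence $s$) may be perturbed slightly while keeping all exponents in the admissible open range, the Marcinkiewicz interpolation theorem upgrades these weak-type bounds to the strong-type bound $\|f*K\|_{L^{s}}\le C\|f\|_{L^{r}}$, which is what we needed; undoing the duality gives the stated inequality with some constant $C(\lambda,p)$.

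The main obstacle is essentially bookkeeping rather than a genuine difficulty: one must check that the exponents produced by the truncation argument straddle $s$ so that Marcinkiewicz interpolation applies, and that the balance relations are respected at each step; there is no compactness or rigidity issue. I would also note a completely self-contained alternative, namely the dyadic decomposition $|x-y|^{-\lambda}=\sum_{k\in\Z}|x-y|^{-\lambda}\mathbf{1}_{\{2^{k}\le|x-y|<2^{k+1}\}}$, estimating the contribution of each annulus by H\"older's inequality and summing the resulting geometric series; this avoids Lorentz spaces at the cost of a slightly cruder but still admissible constant. Since only the existence of some $C(\lambda,p)$ is asserted, either route suffices; for the sharp constant one would instead invoke Lieb's rearrangement and conformal-invariance argument as in \cite{Lieb}, which is considerably more delicate and not needed here.
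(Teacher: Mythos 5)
Your argument is correct, but note that the paper does not prove this lemma at all: it is quoted verbatim as a classical result from Lieb--Loss (the reference \cite{Lieb}), so there is no internal proof to compare against. Your route --- dualizing to the mapping property $\|\,|x|^{-\lambda}*h\,\|_{L^{p'}}\le C\|h\|_{L^{r}}$, checking the scaling relation $\tfrac{1}{p'}=\tfrac1r-\tfrac{3-\lambda}{3}$ forced by $\tfrac1p+\tfrac1r+\tfrac{\lambda}{3}=2$, proving the weak-type bound by truncating the kernel at height $A$ and optimizing, and upgrading to strong type by Marcinkiewicz interpolation (or, alternatively, the dyadic annulus decomposition with H\"older on each annulus) --- is the standard textbook proof of the non-sharp inequality and is entirely adequate here, since the paper only ever uses the existence of some constant $C(\lambda,p)$. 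What the citation to Lieb buys beyond your argument is the sharp constant and the characterization of optimizers via rearrangement and conformal invariance, none of which is needed in this paper; what your argument buys is self-containedness and elementary tools. The exponent bookkeeping in your sketch checks out ($1<r<p'<\infty$ follows from $p,r>1$ and $0<\lambda<3$, and the balance condition $1+\tfrac{1}{p'}=\tfrac1r+\tfrac{\lambda}{3}$ is exactly the weak Young relation with $K\in L^{3/\lambda,\infty}$), so the only remaining care is, as you say, that the two weak-type endpoints you interpolate between straddle the target pair on the admissible line, which is possible precisely because that line is open in the stated parameter range. One cosmetic remark: the $\|g\|_{L^r}$ on the right-hand side of the displayed inequality is a typo in the paper for $\|h\|_{L^r}$, which you have implicitly corrected.
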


\begin{Lem}[\textbf{Nash-Moser iteration}, c.f. Theorem 8.17 in \cite{Gilbarg}]\label{lem-A-6}
If $u\in H^1(\R^3)$ is the solution of $-\Delta u=f(x)$ in $\R^3$ and $f\in L^{q/2}(\R^3)$ for some $q>3$, then for any ball $B_{2R}(y)\subset \R^3$ and $p>1$, there exists $C=C(p,q)$ such that
\begin{flalign*}
\sup_{B_R(y)}u(x)\leq C\big(R^{-3/p}\|u\|_{L^2(B_{2R}(y))}+R^{2(1-3/q)}\|f\|_{L^{q/2}(\R^3)}\big).
\end{flalign*}
\end{Lem}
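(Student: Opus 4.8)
The statement is the classical De Giorgi--Nash--Moser local boundedness bound for subsolutions (Theorem 8.17 of \cite{Gilbarg}), so strictly speaking one could just cite that reference; the plan, if one wants a proof, is the standard Moser iteration. First I would reduce to the unit ball by scaling. Setting $v(z)=u(y+Rz)$ on $B_2(0)$ gives $-\Delta v=g$ with $g(z)=R^2 f(y+Rz)$, and a change of variables yields
\[
\|g\|_{L^{q/2}(B_2(0))}=R^{2(1-3/q)}\|f\|_{L^{q/2}(B_{2R}(y))},\qquad \|v\|_{L^p(B_2(0))}=R^{-3/p}\|u\|_{L^p(B_{2R}(y))},
\]
so it suffices to prove $\sup_{B_1(0)}v\le C\big(\|v\|_{L^p(B_2(0))}+\|g\|_{L^{q/2}(B_2(0))}\big)$; the two $R$-dependent factors in the statement then reappear, using $\|f\|_{L^{q/2}(B_{2R}(y))}\le\|f\|_{L^{q/2}(\R^3)}$. (The statement as written has $\|u\|_{L^2}$ on the right, which is the case $p=2$; the argument gives general $p>1$.) Replacing $v$ by $|v|$, which is a weak subsolution of $-\Delta|v|\le|g|$ by Kato's inequality, I may assume $v\ge 0$ solves $-\Delta v\le|g|$ weakly in $B_2(0)$.

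The core is a single Caccioppoli/reverse-H\"older step. Fix $\gamma=2\beta\ge 2$, radii $1\le r'<r\le 2$, and a cutoff $\eta\in C_c^\infty(B_r)$ with $\eta\equiv 1$ on $B_{r'}$, $|\nabla\eta|\le 2/(r-r')$. Testing the inequality with $\eta^2 v_M^{2\beta-2}v$, where $v_M:=\min(v,M)$ keeps the test function in $H^1_c$, integrating by parts, absorbing the gradient cross-term with Young's inequality, and applying the Sobolev embedding $\|w\|_{L^6(\R^3)}\le C\|\nabla w\|_{L^2(\R^3)}$ (here $2^\ast=6$ since $n=3$) to $w=\eta v_M^{\beta-1}v$, one gets after $M\to\infty$
\[
\Big(\int_{B_{r'}}v^{3\gamma}\Big)^{1/(3\gamma)}\le\Big(\frac{C\gamma}{(r-r')^2}\Big)^{1/\gamma}\Big[\Big(\int_{B_r}v^{\gamma}\Big)^{1/\gamma}+\|g\|_{L^{q/2}(B_2)}\Big],
\]
with $C$ independent of $\gamma,r,r'$. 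Here $q>3$, i.e.\ $q/2>3/2=n/2$, is precisely what lets the term $\int\eta^2|g|v_M^{2\beta-2}v$ be split by H\"older into a small fraction of the $L^6$-norm of $w$ (absorbed on the left) and a harmless data contribution.

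Next I iterate. Put $\gamma_k=2\cdot 3^k$, $r_k=1+2^{-k}$ (so $r_k-r_{k+1}\asymp 2^{-k}$), and $\bar v:=v+\|g\|_{L^{q/2}(B_2)}$, which turns the previous inequality into the multiplicative recursion $\|\bar v\|_{L^{\gamma_{k+1}}(B_{r_{k+1}})}\le\big(C\,4^{k}\gamma_k\big)^{1/\gamma_k}\|\bar v\|_{L^{\gamma_k}(B_{r_k})}$ (using $\bar v\ge\|g\|_{L^{q/2}(B_2)}$ to swallow the additive term up to a bounded factor). Multiplying these,
\[
\|\bar v\|_{L^{\gamma_{k+1}}(B_{r_{k+1}})}\le\prod_{j=0}^{k}\big(C\,4^{j}\gamma_j\big)^{1/\gamma_j}\,\|\bar v\|_{L^{2}(B_2)},
\]
and since $\gamma_j=2\cdot 3^j$ grows geometrically, $\sum_j\gamma_j^{-1}\log(C\,4^{j}\gamma_j)<\infty$, so the product converges to a finite $C=C(q)$ as $k\to\infty$, giving $\sup_{B_1}v\le C\big(\|v\|_{L^2(B_2)}+\|g\|_{L^{q/2}(B_2)}\big)$. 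For $p\ge 2$ one replaces $\|v\|_{L^2(B_2)}$ by $\|v\|_{L^p(B_2)}$ via H\"older on $B_2$; for $p\in(1,2)$ one appends the standard ``$L^p$ for small $p$'' iteration of \cite[Theorem 8.17]{Gilbarg}. Rescaling finishes the proof.

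The main obstacle is bookkeeping, not any conceptual point: one must (i) use the truncations $v_M$ together with a density argument so every test function is admissible, then pass to the limit $M\to\infty$ with all constants uniform in $M$; (ii) verify that the constant in the reverse-H\"older step really has the form $(C\gamma/(r-r')^2)^{1/\gamma}$ with $C$ independent of $\gamma$ and $r,r'$, as this is exactly what makes $\prod_j(C\,4^{j}\gamma_j)^{1/\gamma_j}$ converge; and (iii) check that the shift $v\mapsto\bar v$ is compatible with every step. Everything else is routine, and for the purposes of the paper it is enough to invoke \cite[Theorem 8.17]{Gilbarg}.
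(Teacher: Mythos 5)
Your proposal is correct and takes essentially the same route as the paper, which gives no proof of this lemma at all but simply cites Theorem 8.17 of \cite{Gilbarg}; your rescaling to the unit ball followed by the standard truncated Caccioppoli/reverse-H\"older step and geometric Moser iteration is precisely the classical argument behind that citation. Your remark that the $L^2$-norm paired with the factor $R^{-3/p}$ is just the $p=2$ case (the general statement should carry $\|u\|_{L^p(B_{2R}(y))}$) is also accurate.
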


\begin{Lem}[\textbf{Decomposition lemma}, c.f. \cite{Bahri} or Lemma A.1 in
 \cite{Cao}]\label{lem2.5}
For $u(x)\in H_\varepsilon$, if there exist $\delta_0>0$, $\varepsilon_0>0$ such that
$$\|u-\sum^k_{j=1}U_{a_j} (\frac{x-x_{j,\varepsilon}}{\varepsilon})\|_\varepsilon\leq \delta_0 \varepsilon^3,~\mbox{and}~|x_{j,\varepsilon}-a_j|\leq \delta,~\mbox{for all}~\delta\in (0,\delta_0]~\mbox{and}~\varepsilon\in (0,\varepsilon_0],$$
then for all $ \delta\in (0,\delta_0]$ and $\varepsilon\in (0,\varepsilon_0]$, the following minimization problem
$$\inf\{\varepsilon^{-3}\|u-\sum^k_{j=1}U_{a_j} (\frac{x-x_{j,\varepsilon}}{\varepsilon})\|_\varepsilon; ~x_{j,\varepsilon}\in B_{4\delta}(a_j)\}$$
has a unique solution which can be written as
\begin{flalign*}
u=\sum^k_{j=1}\alpha_{j,\varepsilon} U_{a_j}(\frac{x-x_{j,\varepsilon}}{\varepsilon})+v_\varepsilon(x),
\end{flalign*}
where $|\alpha_{j,\varepsilon}-1|\leq 2\delta$, $v_\varepsilon\in
\bigcap ^k_{j=1}E_{\varepsilon,a_j,x_{j,\varepsilon}}$ and
\begin{flalign*}
\begin{split}
E_{\varepsilon,a_j,x_{j,\varepsilon}}&=\left \{ u(x)\in H_\varepsilon: \big(u(x),U_{a_j}(\frac{x-x_{j,\varepsilon}}{\varepsilon})\big)_{\varepsilon}=0,
~\big(u(x),\frac{\partial{U_{a_j}(\frac{x-x_{j,\varepsilon}}{\varepsilon})}}{\partial{x^i}}\big)_{\varepsilon}=0, ~i=1,2,3
 \right\}.
 \end{split}
\end{flalign*}
\end{Lem}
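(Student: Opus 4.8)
The plan is to recast the statement as a finite–dimensional minimization problem and then apply a quantitative inverse function theorem. For $(\alpha,y)=(\alpha_1,\dots,\alpha_k,y_1,\dots,y_k)$ in the compact set $K_\delta:=\prod_{j=1}^k[1-2\delta,1+2\delta]\times\prod_{j=1}^k\overline{B_{4\delta}(a_j)}$, put
$$F_\varepsilon(\alpha,y):=\varepsilon^{-3}\Big\|u-\sum_{j=1}^k\alpha_jU_{a_j}\big(\tfrac{\cdot-y_j}{\varepsilon}\big)\Big\|_\varepsilon^2.$$
Since $F_\varepsilon$ is continuous on the compact set $K_\delta$, it attains its infimum there, so the first step is to show the infimum is attained in the interior. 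Evaluating at the distinguished point $(1,\dots,1,x_{1,\varepsilon},\dots,x_{k,\varepsilon})$, which lies in the interior because $|x_{j,\varepsilon}-a_j|\le\delta<4\delta$, gives $F_\varepsilon\le\delta_0^2\varepsilon^3$. On $\partial K_\delta$ one has $F_\varepsilon\ge c\delta^2$ for a constant $c>0$ independent of $\varepsilon$: if $|\alpha_j-1|=2\delta$ this comes from expanding the norm and using $\varepsilon^{-3}\|U_{a_j}(\tfrac{\cdot-y_j}{\varepsilon})\|_\varepsilon^2\to\|U_{a_j}\|_{H^1}^2>0$; if $|y_j-a_j|=4\delta$ it follows from the exponential decay of $U_{a_j}$ (the bump centred at $y_j$ has exponentially small overlap with the one centred at the given $x_{j,\varepsilon}$) together with $\inf_{\R^3}V>0$. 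Hence for $\delta_0,\varepsilon$ small the minimizer $(\alpha_\varepsilon,y_\varepsilon)$ is interior, and $|\alpha_{j,\varepsilon}-1|\le 2\delta$ is automatic.

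The second step extracts the orthogonality conditions from the Euler–Lagrange equations at the interior minimizer. Writing $v_\varepsilon:=u-\sum_j\alpha_{j,\varepsilon}U_{a_j}(\tfrac{\cdot-y_{j,\varepsilon}}{\varepsilon})$ and using the identity $\partial_{y_j^i}U_{a_j}(\tfrac{x-y_j}{\varepsilon})=-\varepsilon^{-1}(\partial_{x^i}U_{a_j})(\tfrac{x-y_j}{\varepsilon})$, the conditions $\partial_{\alpha_j}F_\varepsilon=0$ and $\partial_{y_j^i}F_\varepsilon=0$ become
$$\big(v_\varepsilon,U_{a_j}(\tfrac{\cdot-y_{j,\varepsilon}}{\varepsilon})\big)_\varepsilon=0,\qquad \tfrac{\alpha_{j,\varepsilon}}{\varepsilon}\big(v_\varepsilon,(\partial_{x^i}U_{a_j})(\tfrac{\cdot-y_{j,\varepsilon}}{\varepsilon})\big)_\varepsilon=0,$$
and since $\alpha_{j,\varepsilon}$ is close to $1$ we get $v_\varepsilon\in\bigcap_jE_{\varepsilon,a_j,x_{j,\varepsilon}}$ after renaming $y_{j,\varepsilon}$ as $x_{j,\varepsilon}$. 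This is exactly the claimed decomposition $u=\sum_j\alpha_{j,\varepsilon}U_{a_j}(\tfrac{\cdot-x_{j,\varepsilon}}{\varepsilon})+v_\varepsilon$.

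For uniqueness it suffices to show that the suitably rescaled map $G_\varepsilon(\alpha,y)=\big(\varepsilon^{-3}(v,U_{a_j}(\tfrac{\cdot-y_j}{\varepsilon}))_\varepsilon,\ \varepsilon^{-2}(v,(\partial_{x^i}U_{a_j})(\tfrac{\cdot-y_j}{\varepsilon}))_\varepsilon\big)_{j,i}$ has a unique zero in the interior of $K_\delta$. Its differential at $(\mathbf 1,\mathbf x_\varepsilon)$ is, up to an $o(1)$ error as $\varepsilon\to 0$ (coming from $v$ having $\varepsilon$-norm $O(\delta_0\varepsilon^3+\delta\varepsilon^{3/2})$ and from the exponentially small interaction of bumps at distinct $a_j$), block-diagonal in $j$, with $j$-th block the Gram matrix of $\{U_{a_j},\partial_{x^1}U_{a_j},\partial_{x^2}U_{a_j},\partial_{x^3}U_{a_j}\}$ in the weighted inner product $(\cdot,\cdot)$ with weight $V(a_j)$; by the parity of the radial profile $U_{a_j}$ this matrix is diagonal and positive definite, hence invertible with bounded inverse. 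A standard contraction–mapping argument then yields a unique zero of $G_\varepsilon$ near $(\mathbf 1,\mathbf x_\varepsilon)$, which together with Steps 1–2 identifies it as the unique minimizer. The main technical burden is the bookkeeping of $\varepsilon$–powers — $F_\varepsilon$ scales like $\varepsilon^3$ while each $y$–derivative produces a factor $\varepsilon^{-1}$ — and checking that the $v$–contributions and the inter-bump interactions are genuinely negligible uniformly in small $\varepsilon$; once the Jacobian estimate is established the rest is routine.
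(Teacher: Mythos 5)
Your proposal is correct and is essentially the standard argument that the paper does not reproduce but delegates to \cite{Bahri} and Lemma A.1 of \cite{Cao}: minimize over $(\alpha,y)$ on the compact set, compare the interior value $\le\delta_0^2\varepsilon^3$ with the boundary lower bound to get an interior minimizer, read the orthogonality relations off the Euler--Lagrange equations, and obtain uniqueness from the invertibility of the nearly diagonal Gram-type Jacobian (your inclusion of the coefficients $\alpha_j$ in the minimization is the right reading, since orthogonality of $v_\varepsilon$ to $U_{a_j}$ itself cannot come from varying the centers alone). The only step to make explicit in a write-up is the localization that links your Step 3 to Steps 1--2: the bound $F_\varepsilon\le\delta_0^2\varepsilon^3$, together with the near-orthogonality of the bumps and the lower bound $\|\alpha_j U_{a_j}(\tfrac{\cdot-y_j}{\varepsilon})-U_{a_j}(\tfrac{\cdot-x_{j,\varepsilon}}{\varepsilon})\|_\varepsilon\ge c\,\varepsilon^{3/2}\min\{1,|\alpha_j-1|+\varepsilon^{-1}|y_j-x_{j,\varepsilon}|\}$, forces every minimizer to satisfy $|\alpha_{j,\varepsilon}-1|+\varepsilon^{-1}|y_{j,\varepsilon}-x_{j,\varepsilon}|=O(\delta_0\varepsilon^{3/2})$, so all minimizers lie inside the neighborhood where your contraction argument produces the unique zero of $G_\varepsilon$, and local uniqueness indeed upgrades to uniqueness of the minimizer.
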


\begin{Lem}\label{lem-A-4}
Suppose $f_{\varepsilon}\in L^1({\R^3})\cap C(\R^3)$, for any fixed small $\bar d>0$ independent of $\varepsilon$ and  $x_{\varepsilon}$,
there exists a small constant $d_\varepsilon\in (\bar d,2\bar d)$  such that
\begin{flalign}\label{A.14}
\int_{\partial B_{d_\varepsilon}(x_{\varepsilon})}|f_{\varepsilon}(x)|d\sigma \leq \frac{1}{\bar d} \int_{\R^3}|f_{\varepsilon}(x)|dx.
\end{flalign}
\end{Lem}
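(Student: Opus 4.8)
The plan is to establish the averaging estimate \eqref{A.14} by a simple mean-value argument over the radial variable, exploiting the fact that the spherical shells $\{d < |x - x_\varepsilon| < 2d\}$ with $d \in (\bar d, 2\bar d)$ are disjoint as $d$ ranges and their union lies inside a fixed ball. First I would write the $L^1$-norm of $f_\varepsilon$ over the annulus $A := B_{2\bar d}(x_\varepsilon) \setminus B_{\bar d}(x_\varepsilon)$ in polar coordinates centered at $x_\varepsilon$, namely
\begin{flalign*}
\int_{A} |f_\varepsilon(x)|\,dx = \int_{\bar d}^{2\bar d}\Big(\int_{\partial B_{r}(x_\varepsilon)} |f_\varepsilon(x)|\,d\sigma\Big) dr,
\end{flalign*}
which is valid since $f_\varepsilon \in L^1(\R^3)$. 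Since $|A| \le \int_{\R^3}|f_\varepsilon(x)|\,dx$ trivially (the left side bounds only a piece of the integral), the quantity $g(r) := \int_{\partial B_{r}(x_\varepsilon)} |f_\varepsilon|\,d\sigma$ is a nonnegative function on $(\bar d, 2\bar d)$ whose integral over that interval of length $\bar d$ is at most $\int_{\R^3}|f_\varepsilon(x)|\,dx$.

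Next I would invoke the mean value property: if $\int_{\bar d}^{2\bar d} g(r)\,dr \le \int_{\R^3}|f_\varepsilon|\,dx$, then there must exist $d_\varepsilon \in (\bar d, 2\bar d)$ with $g(d_\varepsilon) \le \frac{1}{\bar d}\int_{\R^3}|f_\varepsilon|\,dx$; otherwise $g(r) > \frac{1}{\bar d}\int_{\R^3}|f_\varepsilon|\,dx$ for all $r$ in the interval, and integrating over the interval of length $\bar d$ would force $\int_{\bar d}^{2\bar d} g(r)\,dr > \int_{\R^3}|f_\varepsilon|\,dx$, a contradiction. The continuity hypothesis $f_\varepsilon \in C(\R^3)$ ensures $g$ is well-defined and measurable (indeed continuous) so the integral identity and the mean-value argument are legitimate; it also guarantees that the surface integral $\int_{\partial B_{d_\varepsilon}(x_\varepsilon)}|f_\varepsilon|\,d\sigma$ in the conclusion is a genuine (finite) quantity rather than merely defined almost everywhere.

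I do not anticipate a serious obstacle here — this is a soft Fubini-plus-pigeonhole argument. The only point requiring a word of care is the passage to polar coordinates for an $L^1$ function, which is justified by Tonelli's theorem applied to $|f_\varepsilon|$, together with the observation that $g$ is actually continuous on $(\bar d, 2\bar d)$ because $f_\varepsilon$ is continuous, so no measurability subtleties arise. One should also note that $\bar d$ is fixed independently of $\varepsilon$, so the factor $1/\bar d$ in \eqref{A.14} is a genuine constant; the selected radius $d_\varepsilon$ is allowed to depend on $\varepsilon$, which is exactly what is needed when this lemma is later applied (as in \eqref{l5}, \eqref{2-19}) to the boundary integrands $A(x)$.
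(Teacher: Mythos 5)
Your argument is correct and essentially the paper's own proof: both write the integral over the annulus $B_{2\bar d}(x_\varepsilon)\setminus B_{\bar d}(x_\varepsilon)$ in polar coordinates via Tonelli, bound it by $\int_{\R^3}|f_\varepsilon|$, and then select a radius $d_\varepsilon\in(\bar d,2\bar d)$ by averaging over the interval of length $\bar d$ --- the only (harmless) difference being that you pick $d_\varepsilon$ by a pigeonhole/contradiction step, which needs only measurability of $r\mapsto\int_{\partial B_r(x_\varepsilon)}|f_\varepsilon|\,d\sigma$, whereas the paper invokes the mean value theorem for integrals using the continuity of this map. The one blemish is the phrase ``$|A|\le\int_{\R^3}|f_\varepsilon(x)|\,dx$'', where $|A|$ should of course read $\int_A|f_\varepsilon(x)|\,dx$, as your own parenthetical remark makes clear.
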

\begin{proof}
First, for any fixed small $\bar d>0$ and  $x_{\varepsilon}$,
\begin{flalign}\label{aA.14}
\int^{2\bar d}_{\bar d}\int_{\partial B_{r}(x_{\varepsilon})}|f_{\varepsilon}(x)|d\sigma dr= \int_{B_{2\bar d}(x_{\varepsilon})\setminus B_{\bar d}(x_{\varepsilon})}|f_{\varepsilon}(x)|dx\leq  \int_{\R^3}|f_{\varepsilon}(x)|dx.
\end{flalign}
Also $\int_{\partial B_{r}(x_{\varepsilon})}|f_{\varepsilon}(x)|d\sigma$ is continuous with respect to $r$.
By mean value theorem of integrals, there exists $d_\varepsilon\in (\bar d,2\bar d)$ such that
\begin{flalign}\label{bA.14}
\int^{2\bar d}_{\bar d}\int_{\partial B_{r}(x_{\varepsilon})}|f_{\varepsilon}(x)|d\sigma dr=
d_\varepsilon \int_{\partial B_{r}(x_{\varepsilon})}|f_{\varepsilon}(x)|d\sigma.
\end{flalign}
Then \eqref{aA.14} and \eqref{bA.14} imply \eqref{A.14}.
\end{proof}

\begin{Lem}\label{lem2.4}
For any $u_1,u_2,u_3,u_4\in H_{\varepsilon}$ and $0<\lambda \leq 2$,  then
\begin{flalign}\label{aa5}
\int_{\R^3}\int_{\R^3}u_1(\xi)u_2(\xi)u_3(x)u_4(x)\cdot {|x-\xi|^{-\lambda}}d\xi dx\leq C\varepsilon^{-\lambda}\|u_1\|_{\varepsilon}\|u_2\|_{\varepsilon}\|u_3\|_{\varepsilon}\|u_4\|_{\varepsilon}.
\end{flalign}
\end{Lem}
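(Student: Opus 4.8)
\textbf{Proof proposal for Lemma \ref{lem2.4}.}
The plan is to reduce the weighted four-factor integral to a two-factor integral to which the Hardy--Littlewood--Sobolev inequality (Lemma \ref{lem-A-5}) applies, and then to convert the resulting $L^p$-norms into the $\varepsilon$-dependent norm $\|\cdot\|_\varepsilon$ via a scaling argument together with the Sobolev and H\"older inequalities. First I would set $f(\xi):=u_1(\xi)u_2(\xi)$ and $h(x):=u_3(x)u_4(x)$, so that the left-hand side of \eqref{aa5} equals $\int_{\R^3}\int_{\R^3} f(\xi)\,|x-\xi|^{-\lambda}\,h(x)\,d\xi\,dx$. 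With $\lambda\in(0,2]$, choosing $p=r=\tfrac{6}{6-\lambda}\in(1,\tfrac{3}{2}]$ gives $\tfrac1p+\tfrac1r+\tfrac{\lambda}{3}=2$, so Lemma \ref{lem-A-5} yields the bound $C(\lambda,p)\,\|f\|_{L^p(\R^3)}\,\|h\|_{L^p(\R^3)}$. By H\"older's inequality with exponents $\tfrac1p=\tfrac12+\tfrac12-\tfrac{\lambda}{6}$ split as $\tfrac{1}{2}\cdot$ (to pull out an $L^2$ factor) plus the conjugate, one gets $\|f\|_{L^p}\le \|u_1\|_{L^{q_1}}\|u_2\|_{L^{q_2}}$ with $q_1,q_2\in[2,6]$, and similarly for $h$; in fact it is cleanest to take $q_1=q_2=q_3=q_4=\tfrac{12}{6-\lambda}\in[2,6]$, which lies in the range where Sobolev embedding $H^1(\R^3)\hookrightarrow L^{q}(\R^3)$ holds.

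Next I would carry out the scaling to produce the factor $\varepsilon^{-\lambda}$. Writing $\tilde u_i(y)=u_i(\varepsilon y)$, the change of variables $x=\varepsilon y$, $\xi=\varepsilon z$ gives
\begin{flalign*}
\int_{\R^3}\int_{\R^3}u_1 u_2 u_3 u_4 |x-\xi|^{-\lambda}\,d\xi\,dx
=\varepsilon^{6-\lambda}\int_{\R^3}\int_{\R^3}\tilde u_1\tilde u_2\tilde u_3\tilde u_4\,|y-z|^{-\lambda}\,dz\,dy,
\end{flalign*}
and the $\varepsilon$-independent HLS/Sobolev estimate above bounds the right-hand integral by $C\prod_{i=1}^4\|\tilde u_i\|_{H^1(\R^3)}$. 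Since $\|\nabla\tilde u_i\|_{L^2}^2=\varepsilon^{-1}\|\nabla u_i\|_{L^2}^2$ and $\|\tilde u_i\|_{L^2}^2=\varepsilon^{-3}\|u_i\|_{L^2}^2$, while $\|u_i\|_\varepsilon^2=\varepsilon^2\|\nabla u_i\|_{L^2}^2+\int V u_i^2 \ge c(\varepsilon^2\|\nabla u_i\|_{L^2}^2+\|u_i\|_{L^2}^2)$ because $\inf V>0$, one checks $\|\tilde u_i\|_{H^1(\R^3)}\le C\varepsilon^{-3/2}\|u_i\|_\varepsilon$ for $\varepsilon$ small. Combining, the left side is at most $C\varepsilon^{6-\lambda}\cdot\varepsilon^{-6}\prod_{i=1}^4\|u_i\|_\varepsilon=C\varepsilon^{-\lambda}\prod_{i=1}^4\|u_i\|_\varepsilon$, which is \eqref{aa5}.

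The only mildly delicate point is bookkeeping the exponents so that every intermediate Lebesgue exponent stays in the admissible ranges ($p\in(1,3/2]$ for HLS, $q\in[2,6]$ for Sobolev in $\R^3$) uniformly for $\lambda\in(0,2]$; the endpoint $\lambda=2$ forces $p=3/2$ and $q=6$, both allowed, so no case is lost. The scaling identities are routine, and the comparison $\|\tilde u_i\|_{H^1}\le C\varepsilon^{-3/2}\|u_i\|_\varepsilon$ uses only $\inf_{\R^3}V>0$ and $\varepsilon\le 1$. Thus the main (still very mild) obstacle is simply verifying the exponent arithmetic; there is no genuine analytic difficulty.
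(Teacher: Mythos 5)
Your argument is correct, and it reaches \eqref{aa5} by a slightly different route than the paper. Both proofs start identically, applying the Hardy--Littlewood--Sobolev inequality of Lemma \ref{lem-A-5} to the pair products $u_1u_2$ and $u_3u_4$ with $p=r=\frac{6}{6-\lambda}$; the difference lies in how the factor $\varepsilon^{-\lambda}$ is extracted. The paper stays at the original scale: it splits $\|u_1u_2\|_{L^{6/(6-\lambda)}}\le\|u_1\|_{L^2}\|u_2\|_{L^{6/(3-\lambda)}}$, interpolates $L^{6/(3-\lambda)}$ between $L^2$ and $L^6$, and uses $\|u\|_{L^6}\le C\|\nabla u\|_{L^2}\le C\varepsilon^{-1}\|u\|_{\varepsilon}$ to obtain a loss of $\varepsilon^{-\lambda/2}$ from each pair, as in \eqref{aa7}--\eqref{aa8}. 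You instead rescale to unit scale, invoke the $\varepsilon$-free four-factor bound (this is exactly the paper's companion estimate \eqref{aa9}, proved by the same HLS/H\"older/Sobolev chain), and then convert $\|\tilde u_i\|_{H^1}$ back to $\|u_i\|_{\varepsilon}$ via the scaling identities and $\inf V>0$; the bookkeeping $\varepsilon^{6-\lambda}\cdot\varepsilon^{-6}=\varepsilon^{-\lambda}$ is right. Your approach makes the origin of the $\varepsilon$-power transparent and reuses \eqref{aa9} as a black box, while the paper's direct interpolation avoids introducing rescaled functions and shows the loss arises solely from the $L^6$ factor. Two harmless quibbles: with your symmetric split $q_i=\frac{12}{6-\lambda}$ the endpoint $\lambda=2$ gives $q=3$, not $q=6$ (the value $6$ belongs to the paper's asymmetric split $6/(3-\lambda)$), and the comparison $\|\tilde u_i\|_{H^1}\le C\varepsilon^{-3/2}\|u_i\|_{\varepsilon}$ needs only $\inf_{\R^3}V>0$, not $\varepsilon\le 1$; neither affects the proof.
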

\begin{proof}
First, by Hardy-Littlewood-Sobolev inequality in Lemma \ref{lem-A-5}, we have
\begin{flalign}\label{aa6}
\int_{\R^3}\int_{\R^3}u_1(\xi)u_2(\xi)u_3(x)u_4(x)\cdot{|x-\xi|^{-\lambda}}d\xi dx
\leq C \|u_1\cdot u_2\|_{L^{\frac{6}{6-\lambda}}(\R^3)} \cdot \|u_3\cdot u_4\|_{L^{\frac{6}{6-\lambda}}(\R^3)}.
\end{flalign}
Next, for $0<\lambda \leq 2$, by H\"older's inequality and Sobolev embedding, we get
\begin{flalign}\label{aa7}
\begin{split}
\|u_1\cdot u_2\|_{L^{\frac{6}{6-\lambda}}(\R^3)}&
\leq \|u_1\|_{L^2(\R^3)}\|u_2\|_{L^{\frac{6}{3-\lambda}}(\R^3)}\leq
\|u_1\|_{\varepsilon}\|u_2\|^{{\frac{3(2-\lambda)}{6}}}_{L^2(\R^3)}\|u_2\|^{{\frac{\lambda}{2}}}_{L^6(\R^3)}
\leq \varepsilon^{-\frac{\lambda}{2}}\|u_1\|_{\varepsilon}\|u_2\|_{\varepsilon}.
\end{split}
\end{flalign}
Similarly, for $0<\lambda \leq 2$, we have
\begin{flalign}\label{aa8}
 \|u_3\cdot u_4\|_{L^{\frac{6}{6-\lambda}}(\R^3)}\leq \varepsilon^{-\lambda/2}\|u_3\|_{\varepsilon}\|u_4\|_{\varepsilon}.
\end{flalign}
Then \eqref{aa6}, \eqref{aa7} and  \eqref{aa8} imply \eqref{aa5}.
\end{proof}
\begin{Lem}
For any $u_1,u_2,u_3,u_4\in H^1(\R^3)$, and $0<\lambda \leq 2$,  then
\begin{flalign}\label{aa9}
\int_{\R^3}\int_{\R^3}\frac{u_1(\xi)u_2(\xi)u_3(x)u_4(x)}{|x-\xi|^{\lambda}}d\xi dx\leq C\|u_1\|_{H^1(\R^3)}\|u_2\|_{H^1(\R^3)}\|u_3\|_{H^1(\R^3)}\|u_4\|_{H^1(\R^3)}.
\end{flalign}
\end{Lem}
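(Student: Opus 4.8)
The quickest route is to observe that \eqref{aa9} is just \eqref{aa5} of Lemma~\ref{lem2.4} with $\varepsilon=1$: since $V$ is bounded with $\inf_{\R^3}V>0$, the norm $\|\cdot\|_{1}$ is equivalent to $\|\cdot\|_{H^1(\R^3)}$, so the claim follows from Lemma~\ref{lem2.4} and the equivalence of norms. For completeness I would nonetheless give the self-contained argument, which mirrors the proof of Lemma~\ref{lem2.4}.

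First I would apply the Hardy--Littlewood--Sobolev inequality (Lemma~\ref{lem-A-5}) with $p=r=\frac{6}{6-\lambda}$. The balance condition is satisfied because $\frac{6-\lambda}{6}+\frac{6-\lambda}{6}+\frac{\lambda}{3}=2$, and the constraints $p,r>1$ and $0<\lambda<3$ hold since $0<\lambda\le 2$. This gives
\[
\int_{\R^3}\int_{\R^3}\frac{u_1(\xi)u_2(\xi)u_3(x)u_4(x)}{|x-\xi|^{\lambda}}\,d\xi\,dx
\le C\,\|u_1 u_2\|_{L^{6/(6-\lambda)}(\R^3)}\,\|u_3 u_4\|_{L^{6/(6-\lambda)}(\R^3)}.
\]
Next I would bound each factor. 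By H\"older's inequality with exponents $2$ and $\frac{6}{3-\lambda}$ (note $\frac12+\frac{3-\lambda}{6}=\frac{6-\lambda}{6}$) one has $\|u_1 u_2\|_{L^{6/(6-\lambda)}}\le\|u_1\|_{L^2}\|u_2\|_{L^{6/(3-\lambda)}}$. Since $0<\lambda\le 2$ the exponent $\frac{6}{3-\lambda}$ lies in $[2,6]$, so by interpolation $\|u_2\|_{L^{6/(3-\lambda)}}\le\|u_2\|_{L^2}^{1-\lambda/2}\|u_2\|_{L^6}^{\lambda/2}$, and combining $\|u_2\|_{L^2}\le\|u_2\|_{H^1}$ with the Sobolev embedding $H^1(\R^3)\hookrightarrow L^6(\R^3)$ yields $\|u_1 u_2\|_{L^{6/(6-\lambda)}}\le C\|u_1\|_{H^1}\|u_2\|_{H^1}$. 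The identical estimate holds for $\|u_3 u_4\|_{L^{6/(6-\lambda)}}$. Inserting the two bounds into the displayed HLS inequality gives \eqref{aa9}.

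There is no substantive obstacle here; the only point requiring care is the bookkeeping of conjugate exponents, so that the HLS balance relation and the two H\"older splittings are mutually consistent. The hypothesis $\lambda\le 2$ is used precisely to keep the interpolation exponent $\frac{6}{3-\lambda}$ inside the range $[2,6]$ on which $L^{6/(3-\lambda)}$ is controlled by $H^1$; for $\lambda$ close to $3$ this would fail, which is why the statement is restricted to $0<\lambda\le2$.
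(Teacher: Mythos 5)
Your proof is correct and follows essentially the same route as the paper, whose proof of this lemma simply says to repeat the argument of Lemma \ref{lem2.4} (Hardy--Littlewood--Sobolev with $p=r=\frac{6}{6-\lambda}$, H\"older with exponents $2$ and $\frac{6}{3-\lambda}$, interpolation between $L^2$ and $L^6$, and the Sobolev embedding $H^1(\R^3)\hookrightarrow L^6(\R^3)$). The preliminary observation that the claim also follows from \eqref{aa5} with $\varepsilon=1$ and the equivalence of $\|\cdot\|_{1}$ with $\|\cdot\|_{H^1(\R^3)}$ (using that $V$ is bounded with positive infimum and that the constant in \eqref{aa5} is $\varepsilon$-independent) is a valid shortcut consistent with the paper.
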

\begin{proof}
Similar to the proof of Lemma \ref{lem2.4}, we can obtain \eqref{aa9} by Hardy-Littlewood-Sobolev inequality,
H\"older's inequality and Sobolev embedding.
\end{proof}
\begin{Lem}
\textup{(1)}  There exist two positive constants $d_1$ and $\eta$ such that, for $~j=1,2,\cdots,k$,
 \begin{flalign}\label{A.1}
  U_{a_j}(\frac{x-x_{j,\varepsilon}}{\varepsilon})=O(e^{-\eta/\varepsilon}), ~\mbox{for}~ x\in \R^3\backslash B_{d}(x_{j,\varepsilon}),~\mbox{and}~0<d<d_1.
 \end{flalign}
 \textup{(2)}  Let $\{a_1,\cdots,a_k\}\subset \R^3$  be the different nondegenerate critical  points  of $V(x)$ with $k\geq 1$, then  it holds
 \begin{flalign}\label{A.5}
  \int_{\R^3}\big(V(a_j)-V(x)\big)U_{a_j}\big(\frac{x-x_{j,\varepsilon}}{\varepsilon}\big)
u(x)dx=O\big(\varepsilon^{7/2}+\varepsilon^
   {3/2}|x_{j,\varepsilon}-a_j|^2\big)\|u\|_{\varepsilon},
\end{flalign}
and
\begin{flalign}\label{A.6}
\begin{split}
\int_{\R^3}\frac{\partial V(x)}{\partial x^i}U_{a_j}\big(\frac{x-x_{j,\varepsilon}}{\varepsilon}\big)u(x)dx=
O\big(\varepsilon^{5/2}+\varepsilon^
   {3/2}|x_{j,\varepsilon}-a_j|\big)\|u\|_{\varepsilon},
\end{split}
\end{flalign}
where $u(x)\in H_{\varepsilon}$ and $j=1,2,\cdots,k$.
\end{Lem}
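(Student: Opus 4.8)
The plan is to read off (1) from the exponential decay of the limiting profile recorded in Theorem A, and to prove the two bounds in (2) by rescaling to the variable $y=(x-x_{j,\varepsilon})/\varepsilon$, Taylor‑expanding $V$ at the critical point $a_j$, and splitting the resulting integral into the ball $\{|y|\le d/\varepsilon\}$, where $U_{a_j}$ is of order one but $V-V(a_j)$ is small by criticality, and its complement, where $U_{a_j}$ is exponentially small. For (1): Theorem A gives $U_{a_j}(y)\le Ce^{-|y|/2}$ for all $y\in\R^3$; if $x\in\R^3\setminus B_d(x_{j,\varepsilon})$ then $|(x-x_{j,\varepsilon})/\varepsilon|\ge d/\varepsilon$, hence $U_{a_j}((x-x_{j,\varepsilon})/\varepsilon)\le Ce^{-d/(2\varepsilon)}$, so \eqref{A.1} holds with $\eta=d/2$ for any $d$ below a fixed $d_1>0$.

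For \eqref{A.5}, after the substitution $x=\varepsilon y+x_{j,\varepsilon}$ the left‑hand side equals $\varepsilon^3\int_{\R^3}\big(V(a_j)-V(\varepsilon y+x_{j,\varepsilon})\big)U_{a_j}(y)\,u(\varepsilon y+x_{j,\varepsilon})\,dy$. On $\{|y|\le d/\varepsilon\}$, since $\nabla V(a_j)=0$ and $V$ is $C^{1,1}$ near $a_j$ (implicit in the nondegeneracy hypothesis), $|V(a_j)-V(\varepsilon y+x_{j,\varepsilon})|\le C|\varepsilon y+x_{j,\varepsilon}-a_j|^2\le C(\varepsilon^2|y|^2+|x_{j,\varepsilon}-a_j|^2)$. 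Applying the Cauchy--Schwarz inequality, using the finiteness of $\int_{\R^3}(1+|y|^4)U_{a_j}^2(y)\,dy$ (a consequence of the exponential decay) together with the elementary bound $\int_{\R^3}|u(\varepsilon y+x_{j,\varepsilon})|^2\,dy=\varepsilon^{-3}\|u\|_{L^2(\R^3)}^2\le C\varepsilon^{-3}\|u\|_\varepsilon^2$, this part is $O\big(\varepsilon^3(\varepsilon^2+|x_{j,\varepsilon}-a_j|^2)\varepsilon^{-3/2}\big)\|u\|_\varepsilon=O\big(\varepsilon^{7/2}+\varepsilon^{3/2}|x_{j,\varepsilon}-a_j|^2\big)\|u\|_\varepsilon$. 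On $\{|y|>d/\varepsilon\}$ one bounds $|V(a_j)-V(\varepsilon y+x_{j,\varepsilon})|$ by $2\|V\|_{L^\infty(\R^3)}$, uses $\int_{|y|>d/\varepsilon}U_{a_j}^2=O(e^{-d/\varepsilon}/\varepsilon^2)$ and Cauchy--Schwarz again, producing an $O(e^{-\eta_0/\varepsilon})\|u\|_\varepsilon$ term that is absorbed in the claimed error. This gives \eqref{A.5}.

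The estimate \eqref{A.6} follows by the same scheme: now $\nabla V(a_j)=0$ gives the first‑order bound $|\partial V/\partial x^i(\varepsilon y+x_{j,\varepsilon})|\le C|\varepsilon y+x_{j,\varepsilon}-a_j|\le C(\varepsilon|y|+|x_{j,\varepsilon}-a_j|)$ on $\{|y|\le d/\varepsilon\}$, which after the identical Cauchy--Schwarz step yields $O\big(\varepsilon^3(\varepsilon+|x_{j,\varepsilon}-a_j|)\varepsilon^{-3/2}\big)\|u\|_\varepsilon=O\big(\varepsilon^{5/2}+\varepsilon^{3/2}|x_{j,\varepsilon}-a_j|\big)\|u\|_\varepsilon$; on the far region $|\nabla V|$ is controlled by its global bound and the same exponentially small estimate applies. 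The argument is essentially routine, and the only real point to watch is the bookkeeping of the three powers of $\varepsilon$ — the Jacobian $\varepsilon^{3}$, the loss $\varepsilon^{-3/2}$ in passing from $\|u\|_{L^2}$ to the rescaled $L^2$ norm, and the gain $\varepsilon^{2}$ (resp. $\varepsilon$) from the Taylor remainder — which together produce the stated exponents; the exponential decay of $U_{a_j}$ is what simultaneously makes the moment integrals $\int|y|^kU_{a_j}^2$ finite and the far‑region tails negligible.
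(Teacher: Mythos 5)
Your proposal is correct and follows essentially the same route as the paper: part (1) is read off from the exponential decay of $U_{a_j}$, and parts (2) use the Taylor expansion of $V$ at the nondegenerate critical point $a_j$ (with $\nabla V(a_j)=0$), Cauchy--Schwarz, the moment bounds $\int(1+|y|^4)U_{a_j}^2\,dy<\infty$, and a split into the region near $x_{j,\varepsilon}$ and an exponentially negligible far region, with the same $\varepsilon$-bookkeeping as in the paper's \eqref{A.7}--\eqref{aa3}. Rescaling to $y=(x-x_{j,\varepsilon})/\varepsilon$ at the outset rather than inside the Hölder step is only a cosmetic difference.
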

\begin{proof}
First,
the exponential decay of $U_{a_j}(x)$ implies \eqref{A.1}.
Next, since $a_j$ is a nondegenerate critical point of $V(x)$, we know
\begin{flalign}\label{A-1}
V(a_j)-V(x)=-\sum^3_{i=1}\sum^3_{l=1}(x^i-a^{i}_{j})(x^l-a^{l}_{j})\frac{\partial^2 V(a_{j})}{\partial x^i\partial x^l}+o(|x-a_j|^2).
\end{flalign}
Then using \eqref{A-1} and H\"older's inequality, for any small constant $d$, we have
 \begin{flalign}\label{A.7}
   \begin{split}
    \big|\int_{B_{d}(x_{j,\varepsilon})}&\big(V(a_j)-V(x)\big)U_{a_j}\big(\frac{x-x_{j,\varepsilon}}
    {\varepsilon}\big)u(x)dx\big|\\&
    \leq C\int_{B_{d}(x_{j,\varepsilon})}|x-a_{j}|
    ^2U_{a_j}\big(\frac{x-x_{j,\varepsilon}}{\varepsilon}\big)|u(x)|dx
    \\&\leq C\big(\int_{B_{d}(x_{j,\varepsilon})}|x-a_j|^{4}U_{a_j}^2\big(\frac{x-x_{j,\varepsilon}}
    {\varepsilon}\big)dx\big)^{\frac{1}{2}}\big(\int_{B_{d}(x_{j,\varepsilon})}u^2(x)
    dx\big)^{\frac{1}{2}}\\
    &\leq C\varepsilon^{\frac{3}{2}}\big(\int_{B_{{d}/{\varepsilon}}(0)}|\varepsilon y+(x_{j,\varepsilon}-a_j)|^{4}U_{a_j}^2(y)\mathrm{d}y\big)^{\frac{1}{2}}\|u\|_{\varepsilon}\\
    &\leq C\varepsilon^{\frac{3}{2}}\big(\varepsilon^2+|x_{j,\varepsilon}-a_j|^2\big)\|u
    \|_{\varepsilon}.
   \end{split}
 \end{flalign}
Also, by \eqref{A.1}, we can deduce that
 \begin{flalign}\label{A.8}
   \big|\int_{\R^3\backslash B_{d}(x_{j,\varepsilon})}\big(V(a_j)-V(x)\big)U_{a_j}
   \big(\frac{x-x_{j,\varepsilon}}{\varepsilon}\big)u(x)
   dx\big|\leq Ce^{-\eta/\varepsilon}\|u\|_{\varepsilon}.
 \end{flalign}
Then from \eqref{A.7} and \eqref{A.8}, we get \eqref{A.5}.

 Similarly, since $a_j$ is the nondegenerate critical point of $V(x)$,  we know
\begin{flalign}\label{laa1}
\frac{\partial V(x)}{\partial x^i}=\frac{\partial V(x)}{\partial x^i}-
\frac{\partial V(a_{j})}{\partial x^i}=\sum^3_{l=1}(x^l-
a^l_{j})\frac{\partial^2 V(a_{j})}{\partial x^i\partial x^l}+o(|x-
a_{j}|),~\mbox{for}~i=1,2,3.
\end{flalign}
So similar to \eqref{A.7}, from \eqref{laa1} and H\"older's inequality, for any small fixed ${d}$, we have
\begin{flalign}\label{aa2}
\begin{split}
\big|\int_{B_{{d}}(x_{j,\varepsilon})}\frac{\partial V(x)}{\partial x^i}U_{a_j}\big(\frac{x-x_{j,\varepsilon}}{\varepsilon}\big)u(x)dx\big|&
\leq C\int_{B_{{d}}(x_{j,\varepsilon})}|x-a_{j}|
    U_{a_j}\big(\frac{x-x_{j,\varepsilon}}{\varepsilon}\big)|u(x)|dx
\\&\leq C\varepsilon^{\frac{3}{2}}\big(\varepsilon+|x_{j,\varepsilon}-a_j|\big)\|u
    \|_{\varepsilon}.
\end{split}
\end{flalign}
Also, by \eqref{A.1}, we  know
 \begin{flalign}\label{aa3}
   \big|\int_{\R^3\backslash B_{{d}}(x_{j,\varepsilon})}\frac{\partial V(x)}{\partial x^i}U_{a_j}\big(\frac{x-x_{j,\varepsilon}}{\varepsilon}\big)u(x)dx\big|\leq Ce^{-\eta/\varepsilon}\|u\|_{\varepsilon}.
 \end{flalign}
Then \eqref{aa2} and \eqref{aa3} imply \eqref{A.6}.

\end{proof}

\renewcommand{\theequation}{B.\arabic{equation}}

\setcounter{equation}{0}

\section{Regularization and some calculations}
Let $u^{(1)}_{\varepsilon}(x)$, $u^{(2)}_{\varepsilon}(x)$ be two different positive solutions concentrating at $\{a_1,\cdots, a_k\}$. Set
\begin{flalign}\label{a3.1}
\eta_{\varepsilon}(x)=\frac{u_{\varepsilon}^{(1)}(x)-u_{\varepsilon}^{(2)}(x)}
{\|u_{\varepsilon}^{(1)}-u_{\varepsilon}^{(2)}\|_{L^{\infty}(\R^3)}}.
\end{flalign}
Then we know $\|\eta_{\varepsilon}\|_{L^{\infty}(\R^3)}=1$ and
\begin{flalign}\label{3.2}
\begin{split}
-\varepsilon^2\Delta \eta_{\varepsilon}(x)+V(x)\eta_{\varepsilon}(x)=&
E_1(x)\eta_{\varepsilon}(x)+E_2(x),~~x\in \R^{3},
\end{split}
\end{flalign}
where
\begin{flalign}\label{e}
E_1(x)=
\frac{1}{8\pi \varepsilon^2}
\int_{\R^3} \frac{\big(u^{(1)}_{\varepsilon}(\xi)\big)^2}{|x-\xi|}d\xi,
~E_2(x)=\frac{u^{(2)}_{\varepsilon}(x)}{8\pi \varepsilon^2}
\int_{\R^3} \frac{u^{(1)}_{\varepsilon}(\xi)+u^{(2)}_{\varepsilon}(\xi)}{|x-\xi|}\eta_\varepsilon(\xi)d\xi.
\end{flalign}

\begin{Prop}\label{prop3.1}
For $\eta_{\varepsilon}(x)$ defined by \eqref{a3.1}, we have
\begin{flalign}\label{3.3}
\|\eta_{\varepsilon}\|_{\varepsilon}=O(\varepsilon^{3/2}).
\end{flalign}
\end{Prop}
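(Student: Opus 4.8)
The plan is to test the equation \eqref{3.2} against $\eta_\varepsilon$ itself and estimate each of the three resulting terms. First I would multiply \eqref{3.2} by $\eta_\varepsilon(x)$ and integrate over $\R^3$, obtaining
\begin{flalign*}
\|\eta_\varepsilon\|_\varepsilon^2=\int_{\R^3}\varepsilon^2|\nabla\eta_\varepsilon|^2+V(x)\eta_\varepsilon^2\,dx=\int_{\R^3}E_1(x)\eta_\varepsilon^2(x)\,dx+\int_{\R^3}E_2(x)\eta_\varepsilon(x)\,dx.
\end{flalign*}
The point is that the right-hand side, after inserting the definitions \eqref{e} of $E_1,E_2$, is a sum of nonlocal quartic expressions in the functions $u^{(1)}_\varepsilon,u^{(2)}_\varepsilon,\eta_\varepsilon$, each of the form $\varepsilon^{-2}\iint |x-\xi|^{-1}(\cdots)\,d\xi\,dx$, and these are exactly what Lemma \ref{lem2.4} controls (with $\lambda=1$).

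For the first term I would write $\int E_1\eta_\varepsilon^2 = \frac{1}{8\pi\varepsilon^2}\iint |x-\xi|^{-1}\big(u^{(1)}_\varepsilon(\xi)\big)^2\eta_\varepsilon(x)\eta_\varepsilon(x)\,d\xi\,dx$ and apply \eqref{aa5} to get a bound $C\varepsilon^{-2}\cdot\varepsilon^{-1}\|u^{(1)}_\varepsilon\|_\varepsilon^2\|\eta_\varepsilon\|_\varepsilon^2$. That is too crude by itself, so the key observation is that $u^{(1)}_\varepsilon$ concentrates: its profile is $\sum_j U_{a_j}((x-x_{j,\varepsilon})/\varepsilon)+w_\varepsilon$, and one checks $\|u^{(1)}_\varepsilon\|_\varepsilon^2=O(\varepsilon^3)$ by the scaling $\|U_{a_j}((\cdot-x_{j,\varepsilon})/\varepsilon)\|_\varepsilon^2=\varepsilon^3\|U_{a_j}\|_{H^1(V(a_j))}^2+o(\varepsilon^3)$ together with \eqref{2--2}. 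Hence the first term is $O(\varepsilon^{-3}\|u^{(1)}_\varepsilon\|_\varepsilon^2)\|\eta_\varepsilon\|_\varepsilon^2=O(1)\|\eta_\varepsilon\|_\varepsilon^2$, which on its own only gives $\|\eta_\varepsilon\|_\varepsilon^2\le C\|\eta_\varepsilon\|_\varepsilon^2$ — not enough. The genuine gain must come from a more careful splitting: after subtracting the limiting linear operator (whose kernel is spanned by the $\partial U_{a_j}/\partial x^i$, by Theorem A), the remaining piece of $\int E_1\eta_\varepsilon^2+\int E_2\eta_\varepsilon$ that is \emph{not} already coercively absorbed into $\|\eta_\varepsilon\|_\varepsilon^2$ is lower order; equivalently, use that $\eta_{1,\varepsilon}\to\sum b_{1,i}\partial U_{a_1}/\partial x^i$ is orthogonal (after the later propositions, zero) to make the difference operator invertible with bounds independent of $\varepsilon$, so that $\|\eta_\varepsilon\|_\varepsilon=O(\varepsilon^{3/2})\|\eta_\varepsilon\|_{L^\infty}=O(\varepsilon^{3/2})$.

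Concretely, the cleanest route I would actually follow is to bound $\|\eta_\varepsilon\|_\varepsilon$ directly from the pointwise normalization $\|\eta_\varepsilon\|_{L^\infty(\R^3)}=1$ combined with the exponential decay from Proposition \ref{prop1-1}/Corollary \eqref{l3}: since $\eta_\varepsilon=J_\varepsilon^{-1}(u^{(1)}_\varepsilon-u^{(2)}_\varepsilon)$ and both $u^{(m)}_\varepsilon$ obey $u^{(m)}_\varepsilon(x)\le Ce^{-\theta|x-x_{l,\varepsilon}|/\varepsilon}$ outside $B_{R\varepsilon}(x_{l,\varepsilon})$, one gets $|\eta_\varepsilon(x)|\le Ce^{-\theta|x-x_{l,\varepsilon}|/\varepsilon}$ in that region as well (the tails of $\eta_\varepsilon$ are dominated by the tails of the two solutions divided by $J_\varepsilon$, and on the small balls $|\eta_\varepsilon|\le 1$). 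Then
\begin{flalign*}
\|\eta_\varepsilon\|_\varepsilon^2\le C\sum_j\int_{\R^3}\big(\varepsilon^2|\nabla\eta_\varepsilon|^2+\eta_\varepsilon^2\big)\,dx\le C\sum_j\Big(\int_{B_{R\varepsilon}}+\int_{\R^3\setminus B_{R\varepsilon}}\Big)=O(\varepsilon^3),
\end{flalign*}
the first integral contributing $O(\varepsilon^3)$ from the volume of $B_{R\varepsilon}$ and elliptic $C^1$ bounds on $\eta_\varepsilon$, the second contributing $O(\varepsilon^3)$ after the change of variables $x\mapsto(x-x_{j,\varepsilon})/\varepsilon$ applied to the exponential bound (and its gradient version). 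I would supply the gradient estimate by interior elliptic estimates applied to \eqref{3.2}, using that $E_1\eta_\varepsilon+E_2$ is bounded on each scaled ball.

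The main obstacle I anticipate is the nonlocal term $E_2$: unlike the Schrödinger case, $E_2(x)$ involves $\eta_\varepsilon$ itself inside a convolution, $E_2(x)=\frac{u^{(2)}_\varepsilon(x)}{8\pi\varepsilon^2}\int|x-\xi|^{-1}(u^{(1)}_\varepsilon+u^{(2)}_\varepsilon)(\xi)\eta_\varepsilon(\xi)\,d\xi$, so $\int E_2\eta_\varepsilon$ is genuinely quartic and not sign-definite; controlling it requires both the Hardy–Littlewood–Sobolev estimate of Lemma \ref{lem2.4} and the concentration bound $\|u^{(m)}_\varepsilon\|_\varepsilon^2=O(\varepsilon^3)$, and care that the resulting $O(1)\|\eta_\varepsilon\|_\varepsilon^2$ term does not swallow the coercivity. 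This is handled by first establishing the $L^\infty$-to-$L^2$ passage via the exponential decay as above, which sidesteps the delicate cancellation entirely; the coercivity-based argument is then only needed later (Proposition \ref{lem-A.1}), not here.
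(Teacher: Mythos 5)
Your proposal has a genuine gap, in both of the routes you sketch. The first (energy) route is the right starting point, and you correctly observe that Lemma \ref{lem2.4} applied bluntly only yields $O(1)\|\eta_\varepsilon\|^2_\varepsilon$; but the fix you then invoke --- subtracting the limiting linearized operator and using that the blow-up limit of $\eta_\varepsilon$ is (eventually) zero --- is circular, because Propositions \ref{3-2}--\ref{3-3} and the estimates feeding them (e.g.\ \eqref{l2} via \eqref{e20}, and \eqref{e17l}) all use \eqref{3.3} as an input. The second route, which you present as the one you would actually follow, rests on a false claim: $\eta_\varepsilon$ does \emph{not} inherit the exponential decay \eqref{2--5} of the solutions outside the concentration balls. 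Indeed $\eta_\varepsilon=(u^{(1)}_\varepsilon-u^{(2)}_\varepsilon)/J_\varepsilon$ with $J_\varepsilon=\|u^{(1)}_\varepsilon-u^{(2)}_\varepsilon\|_{L^\infty(\R^3)}$, and there is no a priori positive lower bound on $J_\varepsilon$ (the whole theorem asserts the difference is eventually identically zero), so ``tails of the two solutions divided by $J_\varepsilon$'' can be of size $1$ anywhere; the only available pointwise information is $|\eta_\varepsilon|\le 1$. The paper itself only obtains smallness of $\eta_\varepsilon$ away from the peaks much later (Proposition \ref{prop3.5}), via the maximum principle and after \eqref{3.3} is already known. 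Hence your $O(\varepsilon^3)$ bound on the outer integral of $\varepsilon^2|\nabla\eta_\varepsilon|^2+\eta_\varepsilon^2$ is unsupported.

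The ingredient you are missing is how the normalization $\|\eta_\varepsilon\|_{L^\infty(\R^3)}=1$ is exploited \emph{inside} the energy identity so as to produce a sublinear power of $\|\eta_\varepsilon\|_\varepsilon$ rather than a full square. In the paper, for the $E_1$-term one applies Hardy--Littlewood--Sobolev with exponents $12/5$, then uses $|\eta_\varepsilon|^{12/5}\le|\eta_\varepsilon|^{2}$ (valid precisely because $|\eta_\varepsilon|\le1$) and the concentration of $u^{(1)}_\varepsilon$ to get a bound $C\varepsilon^{1/2}\|\eta_\varepsilon\|_\varepsilon^{5/3}$, which Young's inequality converts into $C\varepsilon^{3}+\tfrac12\|\eta_\varepsilon\|^2_\varepsilon$, absorbable by the left-hand side with no orthogonality, coercivity, or decay of $\eta_\varepsilon$ needed; for the $E_2$-term one simply discards $\eta_\varepsilon$ by $|\eta_\varepsilon|\le1$ and bounds $\varepsilon^{-2}\|u^{(2)}_\varepsilon\|_{L^{6/5}}\|u^{(1)}_\varepsilon+u^{(2)}_\varepsilon\|_{L^{6/5}}=O(\varepsilon^{3})$, again using only the $\varepsilon$-scaling of the concentrated profiles. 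If you repair your argument along these lines (replace your use of \eqref{aa5} by an HLS estimate in which the $L^\infty$ bound lowers the exponent of $\eta_\varepsilon$), the energy route closes; the exponential-decay route cannot be salvaged at this stage of the argument.
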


\begin{proof}
From \eqref{3.2} we have
\begin{flalign}\label{e5}
\|\eta_{\varepsilon}\|^2_{\varepsilon}=\int_{\R^3}E_1(x)
\eta^2_{\varepsilon}(x)dx+\int_{\R^3}E_2(x)
\eta_{\varepsilon}(x)dx.
\end{flalign}
Next, by Hardy-Littlewood-Sobolev inequality, H\"older's inequality and the fact $|\eta_{\varepsilon}(x)|\leq 1$, we know
\begin{flalign}\label{e9}
\begin{split}
\big|\int_{\R^3}E_1(x)
\eta^2_{\varepsilon}(x)dx\big|
&\leq  C \varepsilon^{-2}\big(
\int_{\R^3} \big|u^{(1)}_{\varepsilon}(\xi)\big|^{12/5}d\xi\big)^{5/6}
\cdot \big(
\int_{\R^3}
\big|\eta_\varepsilon(x)\big|^{12/5}
dx\big)^{5/6}\\&
\leq   C \varepsilon^{-2}\big(
\int_{\R^3} \big|u^{(1)}_{\varepsilon}(\xi)\big|^{12/5}d\xi\big)^{5/6}
\cdot \big(
\int_{\R^3}
\big|\eta_\varepsilon(x)\big|^{2}
dx\big)^{5/6} \\&
\leq
C\varepsilon^{1/2}\|\eta_{\varepsilon}\|^{5/3}_{\varepsilon}\leq C
\varepsilon^{3}+\frac{1}{2}\|\eta_{\varepsilon}\|^{2}_{\varepsilon},
 \end{split}
\end{flalign}
and
\begin{flalign}\label{e6}
\begin{split}
\big|\int_{\R^3}&E_2(x)
\eta_{\varepsilon}(x)dx \big|
\leq
 C \varepsilon^{-2}\big(
\int_{\R^3} \big|u^{(2)}_{\varepsilon}(x)\big|^{\frac{6}{5}}dx\big)^{\frac{5}{6}}
\cdot \big(
\int_{\R^3}
\big|(u^{(1)}_{\varepsilon}(\xi)+u^{(2)}_{\varepsilon}(\xi))\big|^{\frac{6}{5}}
d\xi\big)^{\frac{5}{6}}\leq C\varepsilon^{3}.
\end{split}
\end{flalign}
Then \eqref{e5}, \eqref{e9} and \eqref{e6} imply \eqref{3.3}.
\end{proof}

\begin{Lem}For any fixed $R>0$, it holds
\begin{flalign}\label{l1}
E_1(x)=o(1)\cdot R+O(\frac{1}{R}),~ \mbox{for} ~x\in \R^3\backslash\bigcup_{j=1}^k B_{R\varepsilon }(x_{j,\varepsilon}),
\end{flalign}
and
\begin{flalign}\label{l2}
E_2(x)=O\big(e^{-\theta' R}\big),~ \mbox{for} ~x\in \R^3\backslash\bigcup_{j=1}^k B_{R\varepsilon }(x_{j,\varepsilon})~ \mbox{and some}~\theta'>0.
\end{flalign}
\end{Lem}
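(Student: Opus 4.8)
The plan is to estimate the two nonlocal quantities $E_1(x)$ and $E_2(x)$ directly from their integral definitions in \eqref{e}, splitting each integral over $\R^3$ into the ``near'' pieces $\bigcup_j B_{R\varepsilon}(x_{j,\varepsilon})$ and the ``far'' complement, and exploiting the concentration structure \eqref{2-1}--\eqref{2--2} together with the exponential decay \eqref{2--5}.

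\textbf{Estimate of $E_1$.} Recall $E_1(x)=\frac{1}{8\pi\varepsilon^2}\int_{\R^3}\frac{(u^{(1)}_\varepsilon(\xi))^2}{|x-\xi|}\,d\xi$. Fix $x\in\R^3\setminus\bigcup_j B_{R\varepsilon}(x_{j,\varepsilon})$, so $|x-\xi|\ge c\,R\varepsilon$ cannot be assumed for \emph{all} $\xi$, hence I would instead use the following two-scale bound. First, by Proposition \ref{pr2.1} and the substitution $\xi=x_{j,\varepsilon}+\varepsilon y$, the bulk contribution from a neighbourhood $B_{R\varepsilon}(x_{j,\varepsilon})$ is $\frac{1}{8\pi\varepsilon^2}\int_{B_{R\varepsilon}(x_{j,\varepsilon})}\frac{(u^{(1)}_\varepsilon)^2}{|x-\xi|}\,d\xi = \frac{\varepsilon}{8\pi}\int_{B_R(0)}\frac{U_{a_j}^2(y)+o(1)}{|x-x_{j,\varepsilon}-\varepsilon y|}\,dy$; since $x$ lies outside $B_{R\varepsilon}(x_{j,\varepsilon})$ we have $|x-x_{j,\varepsilon}-\varepsilon y|\ge$ (dist from $x$ to the centre) $-R\varepsilon$, and writing $\frac1{|x-x_{j,\varepsilon}-\varepsilon y|}\le \frac{C}{R\varepsilon}$ on the boundary layer while $\le \frac{C}{\mathrm{dist}}$ further out, one gets a term of size $O(1/R)$ plus an $o(1)\cdot R$ error coming from the $o(1)$ in the $C^1$-convergence $u_{j,\varepsilon}\to U_{a_j}$ on $B_R(0)$ and from $w_\varepsilon$ via \eqref{aa5}. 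The far part $\frac{1}{8\pi\varepsilon^2}\int_{\R^3\setminus\bigcup B_{R\varepsilon}}\frac{(u^{(1)}_\varepsilon)^2}{|x-\xi|}\,d\xi$ is controlled using \eqref{2--5}: $(u^{(1)}_\varepsilon(\xi))^2\le C e^{-2\theta|\xi-x_{l,\varepsilon}|/\varepsilon}$, and $\int_{\R^3}\frac{e^{-2\theta|\xi-x_{l,\varepsilon}|/\varepsilon}}{|x-\xi|}\,d\xi \le C\varepsilon^2\cdot\frac{1}{R\varepsilon}+Ce^{-\theta R}$ after rescaling, which is again $O(\varepsilon\cdot\frac1R)=O(1/R)$ — or directly absorbed into $o(1)$. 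Collecting, $E_1(x)=o(1)\cdot R+O(1/R)$, which is \eqref{l1}.

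\textbf{Estimate of $E_2$.} Here $E_2(x)=\frac{u^{(2)}_\varepsilon(x)}{8\pi\varepsilon^2}\int_{\R^3}\frac{u^{(1)}_\varepsilon(\xi)+u^{(2)}_\varepsilon(\xi)}{|x-\xi|}\eta_\varepsilon(\xi)\,d\xi$. The key point is the prefactor $u^{(2)}_\varepsilon(x)$: for $x\in\R^3\setminus\bigcup_j B_{R\varepsilon}(x_{j,\varepsilon})$, Proposition \ref{prop1-1} gives $u^{(2)}_\varepsilon(x)\le C e^{-\theta|x-x_{l,\varepsilon}|/\varepsilon}\le Ce^{-\theta R}$ by Corollary (1)/\eqref{l3}. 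For the integral factor, since $|\eta_\varepsilon|\le1$ and $u^{(m)}_\varepsilon\in L^1\cap L^\infty$ with $\|u^{(m)}_\varepsilon\|_{L^1(\R^3)}=O(\varepsilon^3)$ (again by \eqref{2-1} and rescaling), Newton-potential / Hardy--Littlewood--Sobolev type bounds give $\int_{\R^3}\frac{u^{(1)}_\varepsilon(\xi)+u^{(2)}_\varepsilon(\xi)}{|x-\xi|}\,d\xi = O(\varepsilon^2)$ uniformly in $x$ (indeed $\|\,|x-\cdot|^{-1}\ast f\,\|_\infty\le C\|f\|_{L^1}^{1/3}\|f\|_{L^\infty}^{2/3}$, and here $f=u^{(m)}_\varepsilon$ has $L^1$-norm $O(\varepsilon^3)$ and $L^\infty$-norm $O(1)$, giving $O(\varepsilon)$; a sharper split near/far gives $O(\varepsilon^2)$ which suffices). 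Hence $E_2(x)=\frac{O(e^{-\theta R})}{8\pi\varepsilon^2}\cdot O(\varepsilon^2)=O(e^{-\theta R})$, and renaming $\theta'=\theta$ yields \eqref{l2}.

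\textbf{Main obstacle.} The only delicate point is \eqref{l1}: one must be careful that the $R$-dependence is honestly $o(1)\cdot R+O(1/R)$ and not, say, $O(R)$ with a non-decaying coefficient. This forces a genuine two-scale decomposition of the Newton potential — an $O(1/R)$ contribution from the region where $|x-\xi|\gtrsim R\varepsilon$ but $\xi$ is still in the bulk, and an $o(1)\cdot R$ contribution from the $\varepsilon$-scale profile error (the $o(1)$ in $u_{j,\varepsilon}-U_{a_j}$ on compact sets, plus the $w_\varepsilon$ term estimated via Lemma \ref{lem2.4}). Once this bookkeeping is done, \eqref{l1} and \eqref{l2} are exactly what Proposition \ref{prop3.5} needs to run the maximum principle, since for $R$ large and then $\varepsilon$ small one has $V(x)-E_1(x)\ge m - o(1)R - O(1/R)>0$ on the exterior region.
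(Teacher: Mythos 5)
Your $E_2$ estimate is essentially sound and in fact takes a slightly more elementary route than the paper: you use $|\eta_\varepsilon|\le 1$ together with an $L^1$--$L^\infty$ splitting of the Newton potential (the optimized split gives $C\|f\|_{L^1}^{2/3}\|f\|_{L^\infty}^{1/3}=O(\varepsilon^2)$ directly, so no ``sharper split'' is even needed), whereas the paper keeps $\eta_\varepsilon$ and uses $\|\eta_\varepsilon\|_\varepsilon=O(\varepsilon^{3/2})$ from \eqref{3.3} plus a split at $|x-\xi|=R\varepsilon/2$; both yield $O(e^{-\theta' R})$ once the prefactor $u^{(2)}_\varepsilon(x)=O(e^{-\theta R})$ is in hand.

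The $E_1$ part, however, has a genuine gap. Your decomposition is centered at the concentration balls $B_{R\varepsilon}(x_{j,\varepsilon})$, and when $x$ lies near $\partial B_{R\varepsilon}(x_{j,\varepsilon})$ the kernel $|x-x_{j,\varepsilon}-\varepsilon y|^{-1}$ is \emph{large} on your ``boundary layer'', not $\le C/(R\varepsilon)$ as you assert; what rescues that region is the exponential smallness of $U_{a_j}^2(y)$ for $|y|\sim R$, which you never invoke. More importantly, the error coming from the $o(1)$ in $u_{j,\varepsilon}\to U_{a_j}$ on $B_R(0)$, integrated against the Newton kernel over $B_R(0)$, is of size $o(1)\cdot R^2$, not the claimed $o(1)\cdot R$; and \eqref{aa5} is a bound on a double (energy) integral, so it cannot control the pointwise quantity $E_1(x)$. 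The paper obtains exactly $o(1)\cdot R$ by splitting at $|x-\xi|=R\varepsilon/2$ \emph{around} $x$, observing that this near region avoids the half-radius concentration balls (so only $w_\varepsilon$ plus tails that are exponentially small in $R$ survive there), and then estimating $\int_{|x-\xi|\le R\varepsilon/2}(w^{(1)}_\varepsilon(\xi))^2|x-\xi|^{-1}d\xi$ by H\"older with $\|w_\varepsilon\|_{L^6}\le C\varepsilon^{-1}\|w_\varepsilon\|_\varepsilon=o(\varepsilon^{1/2})$ against the $L^{3/2}$ norm of the kernel; see \eqref{e01}--\eqref{ee01}. The far region then gives $O(1/R)$ via $\|u_\varepsilon\|_{L^2}^2=O(\varepsilon^3)$. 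Your far-region bookkeeping is also off: outside the concentration balls $\xi$ can still be arbitrarily close to $x$, so $|x-\xi|^{-1}\le(R\varepsilon)^{-1}$ is not available there, and the bound you display, after multiplication by the $\varepsilon^{-2}$ prefactor, would read $O(1/(\varepsilon R))$, which blows up as $\varepsilon\to 0$. A weaker bound $o(1)R^2+O(1/R)$ would still suffice for Proposition \ref{prop3.5}, since there $R$ is fixed before $\varepsilon\to0$, but to prove \eqref{l1} as stated you need the $x$-centered split and the integral-norm (H\"older) treatment of $w_\varepsilon$.
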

\begin{proof}
First, we know
$$
\{\xi,|x-\xi|\leq R \varepsilon/2\}\subset \R^3\backslash\bigcup_{j=1}^k B_{R\varepsilon/2}(x_{j,\varepsilon}), ~\mbox{for}~x\in \R^3\backslash\bigcup_{j=1}^k B_{R\varepsilon}(x_{j,\varepsilon}),
$$
and $\|u_{\varepsilon}\|_{\varepsilon}=O(\varepsilon^{3/2})$. Then by \eqref{l3}, for  $x\in \R^3\backslash\bigcup_{j=1}^k B_{R\varepsilon }(x_{j,\varepsilon})$, it holds
\begin{flalign}\label{e01}
\begin{split}
E_1(x)=&
\frac{1}{8\pi \varepsilon^2}\int_{|x-\xi|\leq R\varepsilon/2} (u^{(1)}_{\varepsilon}(\xi))^2|x-\xi|^{-1}d\xi+
\frac{1}{8\pi \varepsilon^2}\int_{|x-\xi|> R\varepsilon/2} (u^{(1)}_{\varepsilon}(\xi))^2|x-\xi|^{-1}d\xi\\=&O\big( \varepsilon^{-2}\int_{|x-\xi|\leq R\varepsilon/2} \big(w^{(1)}_{\varepsilon}(\xi)\big)^2|x-\xi|^{-1}d\xi\big)
+O\big(e^{-2\theta R}R^{2}\big)+O(\frac{1}{R}\big).
\end{split}
\end{flalign}
Also, by H\"older's inequality, we have
\begin{flalign}\label{ee01}
\begin{split}
\int_{|x-\xi|\leq R\varepsilon/2}  &\big(w^{(1)}_{\varepsilon}(\xi)\big)^2|x-\xi|^{-1}d\xi\\
=&O\big(\big(\int_{\R^3} \big(w^{(1)}_{\varepsilon}(\xi)\big)^6d\xi\big)^{1/3}\cdot \big(\int_{|x-\xi|\leq R\varepsilon/2} {|x-\xi|^{-3/2}}d\xi\big)^{2/3}\big)\\=&
R\cdot O\big(\varepsilon^{-1}\|w^{(1)}_{\varepsilon}(\xi)\|_{\varepsilon}^2\big)=o(\varepsilon^2)\cdot R.
\end{split}\end{flalign}
Then \eqref{e01} and \eqref{ee01} imply \eqref{l1}.

Next for $x\in \R^3\backslash\bigcup_{j=1}^k B_{R\varepsilon }(x_{j,\varepsilon})$, we have
\begin{flalign}\label{e19}
E_2(x)=O\big(e^{-\theta R}\big)\cdot \varepsilon^{-2}
\int_{\R^3} \big(u^{(1)}_{\varepsilon}(\xi)+u^{(2)}_{\varepsilon}(\xi)\big) |x-\xi|^{-1}\cdot
|\eta_\varepsilon(\xi)|d\xi,
\end{flalign}
and
\begin{flalign}\label{e20}
\begin{split}
\int_{\R^3}
&  \big(u^{(1)}_{\varepsilon}(\xi)+u^{(2)}_{\varepsilon}(\xi)\big) |x-\xi|^{-1}\cdot
|\eta_\varepsilon(\xi)|d\xi
\\=&\int_{|x-\xi|\leq R\varepsilon/2} \big(u^{(1)}_{\varepsilon}(\xi)+u^{(2)}_{\varepsilon}(\xi)\big) |x-\xi|^{-1}\cdot
|\eta_\varepsilon(\xi)|d\xi + O\big((R\varepsilon)^{-1}\| u^{(1)}_{\varepsilon}+u^{(2)}_{\varepsilon}\|_{\varepsilon}\|\eta_\varepsilon\|_{\varepsilon}\big)\\
=&O\big(\|(u^{(1)}_{\varepsilon}(\cdot)+u^{(2)}_{\varepsilon}(\cdot)\|_\varepsilon \big) \cdot
\big(\int_{|x-\xi|\leq R\varepsilon/2} |x-\xi|^{-2}d\xi\big)^{\frac{1}{2}}+O\big(R^{-1}\varepsilon^2\big)
=O\big( (R^{\frac{1}{2}}+R^{-1}) \varepsilon^{2}\big).
\end{split}\end{flalign}
Then \eqref{e19} and \eqref{e20} imply \eqref{l2}.
\end{proof}
\begin{Lem}
For any fixed small $d>0$, it holds
\begin{equation}\label{l9}
E_1(x)=\frac{1}{8\pi\varepsilon^2}
 \big(\int_{\R^3}U_{a_j}^2(\frac{\xi-x^{(1)}_{j,\varepsilon}}{\varepsilon}){|x-\xi|}^{-1}d\xi\big)+o(1),~ \mbox{in}~B_{d}(x^{(1)}_{j,\varepsilon}),
 \end{equation}
and

\begin{flalign}\label{l10}
E_2(x)=\frac{1}{4\pi}\cdot U_{a_j}(\frac{x-x^{(1)}_{j,\varepsilon}}{\varepsilon})
 \big(\int_{\R^3} U_{a_j}(\frac{\xi-x^{(1)}_{j,\varepsilon}}{\varepsilon})
 \eta_{\varepsilon}(\xi)|x-\xi|^{-1}d\xi\big)+o(1), ~\mbox{in}~B_{d}(x^{(1)}_{j,\varepsilon}).
\end{flalign}
\end{Lem}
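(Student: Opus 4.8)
The goal is to prove the two asymptotic identities \eqref{l9} and \eqref{l10} describing the blow-up limits of the nonlocal coefficients $E_1$ and $E_2$ near the concentration point $x^{(1)}_{j,\varepsilon}$. The plan is to exploit the decomposition $u^{(1)}_\varepsilon = \sum_{l=1}^k U_{a_l}(\tfrac{\cdot - x^{(1)}_{l,\varepsilon}}{\varepsilon}) + w^{(1)}_\varepsilon$ from Proposition \ref{pr2.1}, together with the exponential decay estimate \eqref{A.1} and the smallness $\|w^{(1)}_\varepsilon\|_\varepsilon = o(\varepsilon^{3/2})$, to isolate the contribution of the $j$-th bump and show that all other terms are $o(1)$ uniformly on $B_d(x^{(1)}_{j,\varepsilon})$.

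For \eqref{l9}, I would write $\big(u^{(1)}_\varepsilon(\xi)\big)^2 = U^2_{a_j}(\tfrac{\xi - x^{(1)}_{j,\varepsilon}}{\varepsilon}) + \big[\big(u^{(1)}_\varepsilon(\xi)\big)^2 - U^2_{a_j}(\tfrac{\xi - x^{(1)}_{j,\varepsilon}}{\varepsilon})\big]$ and insert this into the definition of $E_1$ in \eqref{e}. The main term gives exactly the claimed expression. The remainder splits, via $a^2-b^2=(a-b)(a+b)$, into a cross term involving $w^{(1)}_\varepsilon$ and the other bumps $U_{a_l}$, $l\neq j$. For $x\in B_d(x^{(1)}_{j,\varepsilon})$ and $\xi$ near another $x^{(1)}_{l,\varepsilon}$, the factor $|x-\xi|^{-1}$ is bounded, so those cross terms from distant bumps are $O(e^{-\eta/\varepsilon})$ by \eqref{A.1}; the terms involving $w^{(1)}_\varepsilon$ are controlled by splitting into $|x-\xi|\le R\varepsilon$ and $|x-\xi|>R\varepsilon$ exactly as in the proof of \eqref{l1}—H\"older's inequality plus $\varepsilon^{-1}\|w^{(1)}_\varepsilon\|^2_\varepsilon = o(\varepsilon^2)$ on the near part, and $\varepsilon^{-2}\cdot(R\varepsilon)^{-1}\|u^{(1)}_\varepsilon\|^2_\varepsilon = O(R^{-1})$ on the far part, then send $R\to\infty$ slowly. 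This yields the $o(1)$ error.

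For \eqref{l10}, I would proceed analogously on $E_2 = \tfrac{u^{(2)}_\varepsilon(x)}{8\pi\varepsilon^2}\int_{\R^3}\tfrac{u^{(1)}_\varepsilon(\xi)+u^{(2)}_\varepsilon(\xi)}{|x-\xi|}\eta_\varepsilon(\xi)\,d\xi$. In the prefactor $u^{(2)}_\varepsilon(x)$, for $x\in B_d(x^{(1)}_{j,\varepsilon})$ one replaces $u^{(2)}_\varepsilon(x)$ by $U_{a_j}(\tfrac{x-x^{(1)}_{j,\varepsilon}}{\varepsilon})$ with error $o(1)$ (again using $\|w^{(2)}_\varepsilon\|_\infty = o(1)$, the exponential smallness of the other bumps on $B_d(x^{(1)}_{j,\varepsilon})$, and—crucially in the multi-bump case—$|x^{(1)}_{j,\varepsilon}-x^{(2)}_{j,\varepsilon}|=o(\varepsilon)$ from \eqref{aaaa}, which lets one replace $U_{a_j}(\tfrac{x-x^{(2)}_{j,\varepsilon}}{\varepsilon})$ by $U_{a_j}(\tfrac{x-x^{(1)}_{j,\varepsilon}}{\varepsilon})$). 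In the integral, $u^{(1)}_\varepsilon(\xi)+u^{(2)}_\varepsilon(\xi)$ is replaced by $2U_{a_j}(\tfrac{\xi-x^{(1)}_{j,\varepsilon}}{\varepsilon})$, and the factor $2$ combined with $\tfrac{1}{8\pi}$ produces the $\tfrac{1}{4\pi}$ in the statement; the discarded pieces are handled by the same near/far splitting plus $\|\eta_\varepsilon\|_{L^\infty}=1$ and $\|\eta_\varepsilon\|_\varepsilon = O(\varepsilon^{3/2})$ from Proposition \ref{prop3.1}.

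The main obstacle I anticipate is the uniform control of the error \emph{near} the singularity of $|x-\xi|^{-1}$: one cannot simply bound $|x-\xi|^{-1}$ pointwise there, so the estimate must go through $L^6$–$L^{6/5}$ duality (Hardy–Littlewood–Sobolev) applied to $w^{(m)}_\varepsilon$ and $\eta_\varepsilon$ on the region $|x-\xi|\lesssim R\varepsilon$, with the $R$-dependence tracked so that the two competing error contributions ($o(\varepsilon^2)R$ from the near part after dividing by $\varepsilon^2$, and $O(1/R)$ from the far part) can be balanced by letting $R=R(\varepsilon)\to\infty$ sufficiently slowly. This is precisely the mechanism already used for \eqref{l1}, so I would model the argument on that proof, only keeping track additionally of the $\eta_\varepsilon$-factor and the rescaled bump $U_{a_j}(\tfrac{\cdot-x^{(1)}_{j,\varepsilon}}{\varepsilon})$ sitting outside the integral.
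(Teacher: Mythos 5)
Your proposal is correct and follows essentially the same route as the paper's proof: isolate the $j$-th bump in $\big(u^{(1)}_{\varepsilon}\big)^2$, and control the remainder by splitting the convolution into the region near the singularity of $|x-\xi|^{-1}$ (H\"older/Sobolev, giving $o(\varepsilon^{2})$ before the $\varepsilon^{-2}$ prefactor, as in \eqref{l8}) and the region away from it (bounded kernel plus $\|w^{(1)}_{\varepsilon}\|_{\varepsilon}=o(\varepsilon^{3/2})$); for \eqref{l10} the paper likewise invokes Proposition \ref{Prop2.7} and Proposition \ref{prop3--1} to replace $x^{(2)}_{j,\varepsilon}$ by $x^{(1)}_{j,\varepsilon}$, exactly as you propose, the only cosmetic difference being that the paper cuts at a fixed radius rather than at $R\varepsilon$ with $R\to\infty$ slowly. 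One small slip: for $k\geq 2$ the cross terms coming from the other bumps are not $O(e^{-\eta/\varepsilon})$, because near $\xi\approx x^{(1)}_{l,\varepsilon}$ the product $U_{a_l}(\frac{\xi-x^{(1)}_{l,\varepsilon}}{\varepsilon})\,u^{(1)}_{\varepsilon}(\xi)$ is of order one; after the $\varepsilon^{-2}$ prefactor these terms contribute $O(\varepsilon)$, which is still $o(1)$, so the conclusion (and the rest of your argument) is unaffected.
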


\begin{proof}
For $x\in B_{d}(x^{(1)}_{j,\varepsilon})$, we have
\begin{equation}\label{l7}
\begin{split}
\big|E_1(x)-&\frac{1}{8\pi\varepsilon^2}
 \big(\int_{\R^3}U_{a_j}^2(\frac{\xi-x^{(1)}_{j,\varepsilon}}{\varepsilon}){|x-\xi|^{-1}}d\xi\big)\big|
\\=&O\big(\varepsilon^{-2}
\int_{\R^3} \big|w^{(1)}_{\varepsilon}(\xi)\big|\cdot\big(u^{(1)}_{\varepsilon}(\xi)+
U_{a_j} (\frac{\xi-x^{(1)}_{j,\varepsilon}}{\varepsilon})\big) |x-\xi|^{-1}d\xi\big)+ O(e^{-\eta/\varepsilon})
\\=&
O\big(\varepsilon^{-2}
\int_{|x-\xi|\leq C} \big|w^{(1)}_{\varepsilon}(\xi)\big|\cdot\big(u^{(1)}_{\varepsilon}(\xi)+
U_{a_j} (\frac{\xi-x^{(1)}_{j,\varepsilon}}{\varepsilon})\big) |x-\xi|^{-1}d\xi\big)\\&
+O\big(\varepsilon^{-2}\| w^{(1)}_{\varepsilon}(\cdot)\|_{\varepsilon}\cdot \|u^{(1)}_{\varepsilon}(\cdot)+
U_{a_j} (\frac{\cdot-x^{(1)}_{j,\varepsilon}}{\varepsilon})\|_{\varepsilon}\big)
+ O(e^{-\eta/\varepsilon}),
\end{split}
\end{equation}
where $C$ is a fixed constant.

On the other hand, by H\"older's inequality, we know
\begin{equation}\label{l8}
\begin{split}
\int_{|x-\xi|\leq C} &\big|w^{(1)}_{\varepsilon}(\xi)\big|\cdot\big(u^{(1)}_{\varepsilon}(\xi)+
U_{a_j} (\frac{\xi-x^{(1)}_{j,\varepsilon}}{\varepsilon})\big) |x-\xi|^{-1}d\xi
\\=&O\big(\|w^{(1)}_{\varepsilon}(\cdot)\|_{L^6(\R^3)}\cdot
\|u^{(1)}_{\varepsilon}(\cdot)+
U_{a_j} (\frac{\cdot-x^{(1)}_{j,\varepsilon}}{\varepsilon})\|_{L^2(\R^3)}
\big(\int_{|x-\xi|\leq C}  |x-\xi|^{-3}d\xi\big)^{1/3}\big)\\
=&
O\big(\varepsilon^{-1}\| w^{(1)}_{\varepsilon}(\cdot)\|_{\varepsilon}\cdot \|u^{(1)}_{\varepsilon}(\cdot)+
U_{a_j} (\frac{\cdot-x^{(1)}_{j,\varepsilon}}{\varepsilon})\|_{\varepsilon}\big)=o\big(\varepsilon^2\big).
\end{split}
\end{equation}
Then \eqref{l7} and \eqref{l8} imply \eqref{l9}.
Similar to the estimates of \eqref{l9}, combining  Proposition \ref{Prop2.7}
and Proposition \ref{prop3--1}, we deduce \eqref{l10}.
\end{proof}

\renewcommand{\theequation}{C.\arabic{equation}}

\setcounter{equation}{0}

\section{Estimates of the term $w_\varepsilon$}
\setcounter{equation}{0}

For convenience, we define the following notations:
\begin{flalign}\label{lm2}
R_\varepsilon(x)=\sum^k_{l=1}U_{a_l} (\frac{x-x_{l,\varepsilon}}{\varepsilon}),~R^{(m)}_\varepsilon(x)=\sum^k_{l=1}U_{a_l} (\frac{x-x^{(m)}_{l,\varepsilon}}{\varepsilon}), ~\mbox{for}~m=1,2,
\end{flalign}
and
\begin{flalign}\label{lm3}
W_{j,\varepsilon}(x)=\sum^k_{l=1,l\neq j}U_{a_l} (\frac{x-x_{l,\varepsilon}}{\varepsilon}),~W^{(m)}_{j,\varepsilon}(x)=\sum^k_{l=1,l\neq j}U_{a_l} (\frac{x-x^{(m)}_{l,\varepsilon}}{\varepsilon}), ~\mbox{for}~m=1,2.
\end{flalign}

Let  $M_{\varepsilon}(x,w_{\varepsilon}(x))$ as follows:
\begin{flalign}\label{B--1}
\begin{split}
M_{\varepsilon}(x,w_{\varepsilon}(x)):=&-\varepsilon^2\Delta w_{\varepsilon}(x)+G(x,w_{\varepsilon}(x)),
\end{split}
\end{flalign}
where
\begin{flalign}\label{B--2}
\begin{split}
G(x,w_{\varepsilon}(x))=&V(x)w_{\varepsilon}(x)-\frac{1}{8\pi \varepsilon^2}
\big(\int_{\R^3}\frac{\big(R_\varepsilon(\xi)\big)^2}{|x-\xi|}d\xi\big)w_{\varepsilon}(x)
+
\frac{1}{4\pi \varepsilon^2}
 \big(\int_{\R^3}\frac{R_\varepsilon(\xi) w_{\varepsilon}(\xi)}{|x-\xi|}d\xi\big)R_\varepsilon(x).
 \end{split}
\end{flalign}
Let $u_\varepsilon(x)=R_{\varepsilon}(x)+w_\varepsilon(x)$ be the solution of \eqref{1.2}, then
\begin{flalign}\label{B---1}
\begin{split}
M_{\varepsilon}(x,w_{\varepsilon}(x))=N\big(x,w_{\varepsilon}(x)\big)+ l_{\varepsilon}(x),
\end{split}
\end{flalign}
where
\begin{flalign} \label{B--3}
\begin{split}
N\big(x,w_{\varepsilon}(x)\big)=&\frac{1}{8\pi \varepsilon^2}
 \big(\int_{\R^3}\frac{w_{\varepsilon}^2(\xi)}{|x-\xi|}d\xi\big)
\big(R_\varepsilon(x)+ w_{\varepsilon}(x)\big)
+\frac{w_{\varepsilon}(x)}{4\pi \varepsilon^{2}}\int_{\R^{3}}\frac{R_\varepsilon(\xi)w_{\xi}(\xi)}{|x-\xi|}d\xi ,
 \end{split}
\end{flalign}
and
 \begin{flalign} \label{B--4}
 \begin{split}
 l_{\varepsilon}(x)=&
      \frac{W_{j,\varepsilon}(x)}{8\pi \varepsilon^{2}}\int_{\R^{3}}
      \frac{W_{j,\varepsilon}(\xi)U_{a_{j}}(\frac{\xi-x_{j,\varepsilon}}{\varepsilon}) }{|x-\xi|}d\xi
      +\sum_{j=1}^k\big(V(a_j) - V(x)\big)U_{a_j}
      (\frac{x-x_{j,\varepsilon}}{\varepsilon}).
 \end{split}
 \end{flalign}

\begin{Prop}\label{lem-A.1}
Let $u_\varepsilon(x)=R_{\varepsilon}(x)+w_\varepsilon(x)$ be the solution of \eqref{1.2}, then there exists a constant $\bar{\rho}>0$ independent of $\varepsilon$ such that
 \begin{flalign}\label{B.2}
\int_{\R^3} M_{\varepsilon}\big(x,w_\varepsilon(x)\big)w_\varepsilon(x)dx \geq \bar{\rho} \|w_\varepsilon\|^2_{\varepsilon}.
 \end{flalign}
\end{Prop}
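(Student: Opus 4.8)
The plan is to decompose the quadratic form $\int_{\R^3} M_\varepsilon(x,w_\varepsilon)w_\varepsilon\,dx$ into the part coming from the linearized operator at the sum of bubbles $R_\varepsilon$ and to show this part is coercive on the orthogonality subspace in which $w_\varepsilon$ lives (recall $w_\varepsilon\in\bigcap_j E_{\varepsilon,a_j,x_{j,\varepsilon}}$ by Lemma \ref{lem2.5} and Proposition \ref{pr2.1}). Explicitly, from \eqref{B--1}--\eqref{B--2},
\begin{flalign*}
\int_{\R^3} M_\varepsilon(x,w_\varepsilon)w_\varepsilon\,dx
=\|w_\varepsilon\|_\varepsilon^2-\frac{1}{8\pi\varepsilon^2}\int_{\R^3}\Big(\int_{\R^3}\frac{R_\varepsilon^2(\xi)}{|x-\xi|}d\xi\Big)w_\varepsilon^2(x)\,dx
+\frac{1}{4\pi\varepsilon^2}\int_{\R^3}\int_{\R^3}\frac{R_\varepsilon(\xi)w_\varepsilon(\xi)R_\varepsilon(x)w_\varepsilon(x)}{|x-\xi|}d\xi\,dx.
\end{flalign*}
The strategy is the standard one: argue by contradiction and use the nondegeneracy in Theorem A.

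First I would rescale. Suppose \eqref{B.2} fails; then there is a sequence $\varepsilon_n\to0$ and $w_{\varepsilon_n}$ with $\|w_{\varepsilon_n}\|_{\varepsilon_n}=1$ (normalizing) such that the quadratic form is $o(1)$. Set $\tilde w_{j,n}(y)=\varepsilon_n^{-3/2}w_{\varepsilon_n}(\varepsilon_n y+x_{j,\varepsilon_n})$ so that $\|\tilde w_{j,n}\|_{H^1(\R^3)}$ stays bounded; after passing to a subsequence, $\tilde w_{j,n}\rightharpoonup \tilde w_j$ weakly in $H^1(\R^3)$. Because the bubbles $U_{a_l}(\cdot)$, $l\ne j$, decay exponentially away from their own centers (estimate \eqref{A.1}), in the window around $x_{j,\varepsilon_n}$ only the $j$-th bubble survives in the limit, and the cross terms between distinct bubbles in the double integral are $O(e^{-\eta/\varepsilon})$ by \eqref{aa5} together with \eqref{A.1}. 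Passing to the limit, the vanishing of the quadratic form forces, for each $j$,
\begin{flalign*}
\int_{\R^3}\big(|\nabla\tilde w_j|^2+V(a_j)\tilde w_j^2\big)dy
-\frac{1}{8\pi}\int_{\R^3}\Big(\int_{\R^3}\frac{U_{a_j}^2(\xi)}{|y-\xi|}d\xi\Big)\tilde w_j^2\,dy
-\frac{1}{4\pi}\int_{\R^3}\int_{\R^3}\frac{U_{a_j}(\xi)\tilde w_j(\xi)U_{a_j}(y)\tilde w_j(y)}{|y-\xi|}d\xi\,dy=0,
\end{flalign*}
i.e. $\tilde w_j$ lies in the kernel of the linearized Schrödinger--Newton operator of Theorem A, hence $\tilde w_j=\sum_{i=1}^3 c_{j,i}\partial U_{a_j}/\partial y^i$. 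But the orthogonality relations \eqref{2--3} pass to the limit and give $(\tilde w_j,U_{a_j})_{H^1}=0$ and $(\tilde w_j,\partial U_{a_j}/\partial y^i)_{H^1}=0$ for $i=1,2,3$; the latter forces all $c_{j,i}=0$, so $\tilde w_j\equiv0$ for every $j$.

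It then remains to upgrade the weak-limit-zero conclusion to a contradiction with $\|w_{\varepsilon_n}\|_{\varepsilon_n}=1$. The point is that the two negative (nonlocal) terms in the quadratic form are, after rescaling, weakly continuous perturbations: by the Hardy--Littlewood--Sobolev estimate \eqref{aa5} and the compact embedding $H^1(B_R)\hookrightarrow L^{12/5}(B_R)$, one shows
\begin{flalign*}
\frac{1}{8\pi\varepsilon_n^2}\int_{\R^3}\Big(\int_{\R^3}\frac{R_{\varepsilon_n}^2(\xi)}{|x-\xi|}d\xi\Big)w_{\varepsilon_n}^2(x)\,dx\longrightarrow 0,
\qquad
\frac{1}{4\pi\varepsilon_n^2}\int_{\R^3}\int_{\R^3}\frac{R_{\varepsilon_n}(\xi)w_{\varepsilon_n}(\xi)R_{\varepsilon_n}(x)w_{\varepsilon_n}(x)}{|x-\xi|}d\xi\,dx\longrightarrow 0,
\end{flalign*}
because the relevant mass of $R_{\varepsilon_n}$ concentrates at scale $\varepsilon_n$ near the $x_{j,\varepsilon_n}$, and in each such window the rescaled $w$ converges weakly to $0$ and strongly in $L^2_{loc}$, killing the interaction integrals (the tails are controlled by \eqref{A.1} and \eqref{aa5}). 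Hence the quadratic form equals $\|w_{\varepsilon_n}\|_{\varepsilon_n}^2+o(1)=1+o(1)$, contradicting that it is $o(1)$. This proves \eqref{B.2} with some $\bar\rho>0$. The main obstacle is the second bullet above: handling the nonlocal cross term $\int\!\int \frac{R_\varepsilon w_\varepsilon R_\varepsilon w_\varepsilon}{|x-\xi|}$ — one must be careful that the two factors of $R_\varepsilon w_\varepsilon$ are each localized near the bubble centers and exploit exponential decay of $U_{a_j}$ plus the local strong $L^2$ convergence of the rescaled $w$ to $0$, since this term is not manifestly sign-definite and is the only place where the nonlocality genuinely interacts with the orthogonality conditions.
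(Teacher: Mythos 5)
Your overall strategy (argue by contradiction, normalize, blow up around each $x_{j,\varepsilon}$, use local compactness plus the exponential decay \eqref{A.1} to pass to a limit problem, and invoke nondegeneracy together with the orthogonality conditions \eqref{2--3}) is exactly the route the paper intends: its own ``proof'' is only a citation to the contradiction/blow-up argument of Proposition 3.1 in \cite{Cao2}. However, as written your central step has a genuine gap. You claim that the vanishing of the quadratic form forces the limiting form to vanish at each weak limit $\tilde w_j$, and that this in turn puts $\tilde w_j$ in the kernel of the linearized operator of Theorem A, so that the derivative orthogonalities kill it. Neither implication is valid. First, weak convergence only gives lower semicontinuity of the $\|\cdot\|_\varepsilon$ part and convergence of the (locally compact) nonlocal parts, hence at best $\sum_j Q_j(\tilde w_j)\le \liminf_n Q_n\le 0$, where $Q_j$ is the limiting quadratic form; you cannot conclude $Q_j(\tilde w_j)=0$ for each $j$ unless you already know $Q_j\ge 0$ on the limit subspace, which is precisely the point at issue. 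Second, even granting $Q_j(\tilde w_j)=0$, a null vector of an \emph{indefinite} quadratic form need not solve the associated linear equation, and the linearized Schr\"odinger--Newton form is genuinely indefinite: testing it with $U_{a_j}$ itself and using the equation for $U_{a_j}$ gives $\langle L_{a_j}U_{a_j},U_{a_j}\rangle=-\frac{1}{4\pi}\int_{\R^3}\int_{\R^3}\frac{U_{a_j}^2(\xi)U_{a_j}^2(x)}{|x-\xi|}\,d\xi\,dx<0$. So Theorem A (which only identifies the kernel) cannot by itself deliver \eqref{B.2}; the argument needs the stronger spectral fact that the limiting form is positive definite on the orthogonal complement of ${\rm span}\{U_{a_j},\partial U_{a_j}/\partial y^1,\partial U_{a_j}/\partial y^2,\partial U_{a_j}/\partial y^3\}$, and it must use the (near) orthogonality of $w_\varepsilon$ to $U_{a_j}$ in \eqref{2--3} to exclude the negative direction --- in your proposal this orthogonality is recorded but never used, and without it the conclusion is simply false for the form \eqref{B--2}.

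With that spectral ingredient supplied, the proof closes more directly than in your last paragraph: from $\sum_j\|\tilde w_j\|^2\le 1$, the convergence of the nonlocal terms, and $Q_j\ge\rho_0\|\cdot\|^2$ on the orthogonal subspace, one gets $0\ge\lim Q_n\ge\sum_j Q_j(\tilde w_j)+\big(1-\sum_j\|\tilde w_j\|^2\big)$, forcing $\tilde w_j\equiv 0$ and ``no loss of mass'' simultaneously, which contradicts $\sum_j N(\tilde w_j)\ge 1$; your step showing the nonlocal terms tend to $0$ once the local limits vanish is fine (via \eqref{aa5} and \eqref{A.1}) but rests on the unproved step above. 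Two smaller points: the correct rescaling keeping the $H^1$-norm comparable to $\|w_{\varepsilon_n}\|_{\varepsilon_n}=1$ is $\tilde w_{j,n}(y)=\varepsilon_n^{3/2}w_{\varepsilon_n}(\varepsilon_n y+x_{j,\varepsilon_n})$, not $\varepsilon_n^{-3/2}$; and failure of \eqref{B.2} only gives $Q_n\le o(1)$, not $Q_n=o(1)$ --- harmless for the final contradiction, but it again reflects that the form is not a priori nonnegative on your sequence.
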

\begin{proof}
Similar to the proof of Proposition 3.1 in \cite{Cao2}, we can prove \eqref{B.2}
by the contradiction argument and blow-up analysis. For the more details, one can refer to \cite{Cao1,Cao2}.
\end{proof}

\begin{Prop}\label{Prop2.6}
Suppose that $u_\varepsilon(x)=R_{\varepsilon}(x)+w_\varepsilon(x)$ is a positive solution of \eqref{1.2}
 and $\{a_1,\cdots,a_k\}\subset \R^3$  are the different nondegenerate critical  points  of $V(x)$ with $k\geq 1$, then it holds
  \begin{flalign}\label{B.3}
    \|w_{\varepsilon}\|_{\varepsilon}=O(\varepsilon^{7/2})+O\big(\varepsilon^
    {3/2}\max_{j=1,\cdots,k}|x_{j,\varepsilon}-a_j|^{2}\big).
  \end{flalign}
\end{Prop}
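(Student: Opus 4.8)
The plan is to test the identity \eqref{B---1}, namely $M_{\varepsilon}(x,w_{\varepsilon}(x))=N(x,w_{\varepsilon}(x))+l_{\varepsilon}(x)$, against $w_{\varepsilon}$ itself and then invoke the coercivity estimate \eqref{B.2} of Proposition \ref{lem-A.1}. This reduces everything to
\begin{flalign*}
\bar{\rho}\,\|w_{\varepsilon}\|_{\varepsilon}^{2}\le\int_{\R^{3}}M_{\varepsilon}\big(x,w_{\varepsilon}(x)\big)w_{\varepsilon}(x)\,dx
=\int_{\R^{3}}N\big(x,w_{\varepsilon}(x)\big)w_{\varepsilon}(x)\,dx+\int_{\R^{3}}l_{\varepsilon}(x)w_{\varepsilon}(x)\,dx,
\end{flalign*}
so it suffices to estimate the two integrals on the right, absorbing any contribution of order $o(\|w_{\varepsilon}\|_{\varepsilon}^{2})$ into the left-hand side.

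For the nonlinear part I would expand $N(x,w_{\varepsilon})$ according to \eqref{B--3}: each of its three summands, paired with $w_{\varepsilon}$, produces a double integral of the form $\int_{\R^{3}}\int_{\R^{3}}u_{1}(\xi)u_{2}(\xi)u_{3}(x)u_{4}(x)|x-\xi|^{-1}\,d\xi\,dx$ with the $u_{i}$ drawn from $\{w_{\varepsilon},R_{\varepsilon}\}$, each term carrying a prefactor $\varepsilon^{-2}$. Invoking the nonlocal bound \eqref{aa5} of Lemma \ref{lem2.4} with $\lambda=1$ gives
\begin{flalign*}
\Big|\int_{\R^{3}}N\big(x,w_{\varepsilon}(x)\big)w_{\varepsilon}(x)\,dx\Big|
=O\big(\varepsilon^{-3}\|R_{\varepsilon}\|_{\varepsilon}\|w_{\varepsilon}\|_{\varepsilon}^{3}+\varepsilon^{-3}\|w_{\varepsilon}\|_{\varepsilon}^{4}\big).
\end{flalign*}
Since a direct scaling computation gives $\|R_{\varepsilon}\|_{\varepsilon}=O(\varepsilon^{3/2})$ and, by the a priori bound \eqref{2--2} of Proposition \ref{pr2.1}, $\|w_{\varepsilon}\|_{\varepsilon}=o(\varepsilon^{3/2})$, the right-hand side is $o(1)\|w_{\varepsilon}\|_{\varepsilon}^{2}$ and can be absorbed.

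For the error term I would treat the two summands of $l_{\varepsilon}$ in \eqref{B--4} separately. The summand carrying the product $W_{j,\varepsilon}(\xi)U_{a_{j}}(\tfrac{\xi-x_{j,\varepsilon}}{\varepsilon})$ is exponentially small: since $a_{j}$ and the $a_{l}$ with $l\neq j$ are distinct, the exponential decay \eqref{A.1} forces this product — and hence, the singularity of $|x-\xi|^{-1}$ notwithstanding, the whole inner convolution — to be $O(e^{-\eta/\varepsilon})$ up to a polynomial factor, so pairing with $w_{\varepsilon}$ and using H\"older's inequality contributes $O(e^{-\eta/\varepsilon})\|w_{\varepsilon}\|_{\varepsilon}$. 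The remaining summand $\sum_{j}(V(a_{j})-V(x))U_{a_{j}}(\tfrac{x-x_{j,\varepsilon}}{\varepsilon})$ is exactly the quantity estimated in \eqref{A.5}, so its pairing with $w_{\varepsilon}$ is $O\big(\varepsilon^{7/2}+\varepsilon^{3/2}\max_{j}|x_{j,\varepsilon}-a_{j}|^{2}\big)\|w_{\varepsilon}\|_{\varepsilon}$. Collecting these contributions, absorbing the $o(1)\|w_{\varepsilon}\|_{\varepsilon}^{2}$ term coming from $N$, and dividing by $\|w_{\varepsilon}\|_{\varepsilon}$ yields \eqref{B.3}, the exponentially small term being swallowed by $\varepsilon^{7/2}$.

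I expect the only genuinely delicate point to be the nonlocal cross term in $l_{\varepsilon}$: one has to argue that even though the Coulomb kernel is singular, the fact that $W_{j,\varepsilon}$ and $U_{a_{j}}(\tfrac{\cdot-x_{j,\varepsilon}}{\varepsilon})$ concentrate near the disjoint sets $\{a_{l}\}_{l\neq j}$ and $\{a_{j}\}$ makes the full convolution exponentially small, so that no power of $\varepsilon$ is lost there. The remaining ingredients — the nonlocal inequality \eqref{aa5}, the bound $\|R_{\varepsilon}\|_{\varepsilon}=O(\varepsilon^{3/2})$, and the a priori decay of $w_{\varepsilon}$ from \eqref{2--2} — are routine.
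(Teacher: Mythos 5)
Your proposal is correct and follows essentially the same route as the paper: testing \eqref{B---1} against $w_{\varepsilon}$, invoking the coercivity \eqref{B.2}, bounding the $N$-pairing via \eqref{aa5} together with $\|R_{\varepsilon}\|_{\varepsilon}=O(\varepsilon^{3/2})$ and \eqref{2--2}, and estimating the $l_{\varepsilon}$-pairing by the exponential smallness of the cross term (\eqref{A.1}, cf.\ \eqref{ll3}) plus the potential estimate \eqref{A.5}. The handling of the singular kernel in the cross term, which you flag as the delicate point, is exactly what the paper absorbs into the $O(e^{-\eta/\varepsilon})$ term in \eqref{B.14l}.
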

\begin{proof}
First, from  Proposition \ref{lem-A.1}, we know
 \begin{flalign}\label{B.5}
 \begin{split}
   \|w_{\varepsilon}\|^2_{\varepsilon} & \leq C\int_{\R^3}N\big(x,w_{\varepsilon}(x)\big)w_{\varepsilon}(x)dx+C
   \int_{\R^3}l_{\varepsilon}(x)w_{\varepsilon}(x)dx.
\end{split}
 \end{flalign}
Next, using \eqref{2--2} and \eqref{aa5}, we deduce
 \begin{flalign}\label{B.14}
 \begin{split}
   \int_{\R^3}N\big(x,w_{\varepsilon}(x)\big)w_{\varepsilon}(x)dx&=\frac{1}{8\pi \varepsilon^2}
\int_{\R^3}\int_{\R^3}\frac{w_{\varepsilon}^2(\xi)}{|x-\xi|}
\big(R_\varepsilon(x)+ w_{\varepsilon}(x)\big)w_{\varepsilon}(x)dxd\xi\\&\quad
+\frac{1}{4\pi \varepsilon^{2}}\int_{\R^3}\int_{\R^{3}}\frac{R_\varepsilon(\xi)w_{\xi}(\xi)}{|x-\xi|} w^2_{\varepsilon}(x)dxd\xi\\&
=O\big(\varepsilon^{-3}\|w_{\varepsilon}\|^3_\varepsilon\cdot \|w_{\varepsilon}+R_\varepsilon\|_\varepsilon\big)=o(1)\|w_{\varepsilon}\|^2_\varepsilon.
\end{split}
 \end{flalign}
Also from \eqref{A.1} and \eqref{A-1}, we have
  \begin{flalign}\label{B.14l}
  \begin{split}
  \int_{\R^3}l_{\varepsilon}(x)w_{\varepsilon}(x)dx=&\sum^k_{j=1}\int_{\R^3}(V(a_j)-V(x)) U_{a_j}(\frac{x-x_{j,\varepsilon}}{\varepsilon})w_\varepsilon(x)d\xi dx\\&
  -\frac{1}{8\pi \varepsilon^{2}}\int_{\R^{3}}\int_{\R^{3}}
      W_{j,\varepsilon}(\xi)U_{a_{j}}(\frac{\xi-x_{j,\varepsilon}}{\varepsilon})W_{j,\varepsilon}(x) w_\varepsilon(x){|x-\xi|}^{-1}d\xi\\=&
O\big(\varepsilon^{3/2}\|w_\varepsilon\|_{\varepsilon}(\varepsilon^2
+\max_{j=1,\cdots,k}|x_{j,\varepsilon}-a_j|^{2})+e^{-\eta/\varepsilon}\big).
 \end{split}
 \end{flalign}
Then \eqref{B.5}, \eqref{B.14}  and \eqref{B.14l} imply \eqref{B.3}.
\end{proof}

\begin{Prop}\label{prop2.5}
Let $u_\varepsilon(x)$ be a positive solution of \eqref{1.2} as in Proposition \ref{Prop2.6},
then it holds
\begin{flalign}\label{2.45}
\|w_{\varepsilon}\|_{\varepsilon}=O(\varepsilon^{7/2}).
\end{flalign}
\end{Prop}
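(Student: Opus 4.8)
The proof is a short combination of estimates already available. By Proposition \ref{Prop2.6} we have
\[
\|w_\varepsilon\|_\varepsilon = O(\varepsilon^{7/2}) + O\big(\varepsilon^{3/2}\max_{1\le j\le k}|x_{j,\varepsilon}-a_j|^2\big),
\]
so everything reduces to the localization $\max_j|x_{j,\varepsilon}-a_j| = O(\varepsilon)$: once this is inserted, the second term becomes $O(\varepsilon^{3/2}\cdot\varepsilon^2) = O(\varepsilon^{7/2})$ and is absorbed into the first, giving \eqref{2.45}. For $k=1$ the needed localization is $|x_{1,\varepsilon}-a_1| = o(\varepsilon)$ from Proposition \ref{Prop2.7}, and for $k\ge 2$ it is $|x_{j,\varepsilon}-a_j| = O(\varepsilon)$ from Proposition \ref{Prop2.7l}.

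The point that needs care is the logical ordering. Propositions \ref{Prop2.7} and \ref{Prop2.7l} are obtained through the local Pohozaev identity \eqref{2.5} on $B_{d_\varepsilon}(x_{j,\varepsilon})$, and as written their proofs invoke \eqref{2.45} itself (when the boundary integral $\int_{\partial B_{d_\varepsilon}}A\,d\sigma$ and the nonlocal volume term $A_1$ are controlled via $\|w_\varepsilon\|_\varepsilon^2$). To avoid circularity I would read Propositions \ref{Prop2.6}, \ref{prop2.3}, \ref{Prop2.7}, \ref{Prop2.7l} and \ref{prop2.5} as a single bootstrap: start from $\max_j|x_{j,\varepsilon}-a_j| = o(1)$ (Proposition \ref{pr2.1}), so that Proposition \ref{Prop2.6} already forces $\|w_\varepsilon\|_\varepsilon = o(\varepsilon^{3/2})$; feed this weaker bound into \eqref{2.5}, where expanding the left-hand side via \eqref{laa1}, \eqref{A.5}, \eqref{A.6}, and using the exponential decay of $U_{a_j}$ off the peaks together with Lemma \ref{lem-A-4} and Lemma \ref{lem2.4}, shows that all the error terms carrying $w_\varepsilon$ are of lower order. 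The nondegeneracy of $D^2V(a_j)$ then upgrades the localization to $O(\varepsilon)$ (to $o(\varepsilon)$ when $k=1$), and plugging this back into Proposition \ref{Prop2.6} yields \eqref{2.45}.

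I expect the genuine work to sit in the Pohozaev step for $k\ge 2$, i.e. in Proposition \ref{prop2.3}: the nonlocal volume term $A_1 = \frac{1}{8\pi\varepsilon^2}\int_{B_{d_\varepsilon}(x_{j,\varepsilon})}\int_{\R^3}u_\varepsilon^2(x)u_\varepsilon^2(\xi)\frac{x^i-\xi^i}{|x-\xi|^3}\,d\xi\,dx$ is only $O(\varepsilon^2)$ by a brute-force application of Lemma \ref{lem2.4}, which is useless here, so one must instead exploit the antisymmetry of the kernel $\frac{x^i-\xi^i}{|x-\xi|^3}$ under $x\leftrightarrow\xi$ to see that, modulo exponentially small quantities and modulo $w_\varepsilon$-terms that can be absorbed, $A_1 = O(\varepsilon^4)$ and stems from the interaction between distinct peaks. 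It is precisely this nonlocal term — absent in the local Schr\"odinger case — that constitutes the main obstacle, here and throughout the paper.
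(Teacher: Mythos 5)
Your proposal is correct and follows essentially the same route as the paper's own (one-line) proof, which simply inserts the localization estimates of Proposition \ref{Prop2.7} (and, for $k\geq 2$, Proposition \ref{Prop2.7l}) into the bound \eqref{B.3} of Proposition \ref{Prop2.6}. Your remark on the apparent circularity and its resolution by a bootstrap is a correct reading of the logical order the paper leaves implicit: Proposition \ref{Prop2.6} alone already gives $\|w_\varepsilon\|_\varepsilon^2=O(\varepsilon^{7})+O\big(\varepsilon^{3}\max_j|x_{j,\varepsilon}-a_j|^{4}\big)$, which suffices in the boundary estimates \eqref{l5} and \eqref{2-19} because $|x_{j,\varepsilon}-a_j|^{4}=o(|x_{j,\varepsilon}-a_j|)$, so the Pohozaev step yields the localization without invoking \eqref{2.45}, and feeding it back into \eqref{B.3} gives the claim.
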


\begin{proof}
It follows from the results of Proposition \ref{Prop2.7} and Proposition \ref{Prop2.6} directly.
\end{proof}

\renewcommand{\theequation}{D.\arabic{equation}}

\setcounter{equation}{0}
\section{The Estimates of  $A_{1,1}$ and $A_{1,2}$ in \eqref{c1}}

\begin{Lem}\label{lem3-1}
It holds
\begin{flalign}\label{c-2}
\begin{split}
A_{1,1}=&\frac{1}{4\pi \varepsilon^2}\int_{\R^3}\int_{\R^3}U^2_{a_j}(\frac{x-x_{j,\varepsilon}}{\varepsilon})
U_{a_j}(\frac{\xi-x_{j,\varepsilon}}{\varepsilon})w_\varepsilon(\xi)
\frac{x^i-\xi^i}{|x-\xi|^3}d\xi dx\\
&+\frac{1}{8\pi \varepsilon^2}\sum^k_{l=1,l\neq j}\int_{\R^3}\int_{\R^3}
U^2_{a_j}(\frac{x-x_{j,\varepsilon}}{\varepsilon})U^2_{a_l}(\frac{\xi-x_{l,\varepsilon}}{\varepsilon})
\frac{x^i-\xi^i}{|x-\xi|^3}d\xi dx+O(\varepsilon^6).
\end{split}
\end{flalign}
\end{Lem}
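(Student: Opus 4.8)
The plan is to start from the definition
$$A_{1,1}=\frac{1}{8\pi \varepsilon^2}\int_{B_d(x_{j,\varepsilon})}\int_{\R^3}U^2_{a_j}\big(\tfrac{x-x_{j,\varepsilon}}{\varepsilon}\big)u^2_\varepsilon(\xi)\frac{x^i-\xi^i}{|x-\xi|^3}\,d\xi\,dx$$
and substitute the decomposition $u_\varepsilon(\xi)=R_\varepsilon(\xi)+w_\varepsilon(\xi)=U_{a_j}(\tfrac{\xi-x_{j,\varepsilon}}{\varepsilon})+W_{j,\varepsilon}(\xi)+w_\varepsilon(\xi)$ from \eqref{lm2}--\eqref{lm3} into the factor $u^2_\varepsilon(\xi)$, expanding the square. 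This produces six types of terms: $U_{a_j}^2(\tfrac{\xi-x_{j,\varepsilon}}{\varepsilon})$, $2U_{a_j}(\tfrac{\xi-x_{j,\varepsilon}}{\varepsilon})W_{j,\varepsilon}(\xi)$, $W_{j,\varepsilon}^2(\xi)$, $2U_{a_j}(\tfrac{\xi-x_{j,\varepsilon}}{\varepsilon})w_\varepsilon(\xi)$, $2W_{j,\varepsilon}(\xi)w_\varepsilon(\xi)$, and $w_\varepsilon^2(\xi)$. The first term gives a diagonal self-interaction, the third gives the cross-bump sum $\sum_{l\neq j}$ (after replacing $W_{j,\varepsilon}^2$ by $\sum_{l\neq j}U^2_{a_l}$ plus exponentially small mixed products, using \eqref{A.1}), and the fourth gives the $w_\varepsilon$-linear term displayed in \eqref{c-2}. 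All remaining terms must be shown to be $O(\varepsilon^6)$ or smaller.

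The key steps are: (1) the term coming from $U_{a_j}^2(\tfrac{\xi-x_{j,\varepsilon}}{\varepsilon})$ — after rescaling $x\mapsto \varepsilon x+x_{j,\varepsilon}$, $\xi\mapsto \varepsilon\xi+x_{j,\varepsilon}$ it becomes $\tfrac{\varepsilon}{8\pi}\int\int U^2_{a_j}(x)U^2_{a_j}(\xi)\tfrac{x^i-\xi^i}{|x-\xi|^3}\,d\xi\,dx$, which vanishes identically by the antisymmetry $x\leftrightarrow\xi$ of the integrand (the radial weight $U^2_{a_j}(x)U^2_{a_j}(\xi)$ is symmetric, the kernel is odd); hence this term contributes $0$, up to the $O(e^{-\eta/\varepsilon})$ error from cutting $B_d(x_{j,\varepsilon})$ down from $\R^3$; (2) the mixed terms involving $W_{j,\varepsilon}$ and $w_\varepsilon$, and $w_\varepsilon^2$: by Lemma \ref{lem2.4} (the inequality \eqref{aa5} with $\lambda=2$) together with $\|w_\varepsilon\|_\varepsilon=O(\varepsilon^{7/2})$ from \eqref{2.45} and $\|U_{a_j}(\tfrac{\cdot-x_{j,\varepsilon}}{\varepsilon})\|_\varepsilon=O(\varepsilon^{3/2})$ and $\|W_{j,\varepsilon}\|_\varepsilon=O(\varepsilon^{3/2})$, each such double integral against the kernel $\tfrac{x^i-\xi^i}{|x-\xi|^3}$ (whose modulus is $|x-\xi|^{-2}$) is bounded by $C\varepsilon^{-2}\cdot\varepsilon^{-2}$ times a product of four $\|\cdot\|_\varepsilon$ norms, at least one of which is $O(\varepsilon^{7/2})$; the worst case $2W_{j,\varepsilon}w_\varepsilon$ gives $O(\varepsilon^{-4}\cdot\varepsilon^{3/2}\cdot\varepsilon^{3/2}\cdot\varepsilon^{3/2}\cdot\varepsilon^{7/2})=O(\varepsilon^6)$, and $w_\varepsilon^2$ gives $O(\varepsilon^{-4}\cdot(\varepsilon^{3/2})^2(\varepsilon^{7/2})^2)=O(\varepsilon^{7})$; (3) the term $2U_{a_j}(\tfrac{\xi-x_{j,\varepsilon}}{\varepsilon})W_{j,\varepsilon}(\xi)$, which is $O(e^{-\eta/\varepsilon})$ because $U_{a_j}(\tfrac{\xi-x_{j,\varepsilon}}{\varepsilon})$ and $W_{j,\varepsilon}(\xi)$ have disjoint concentration and each decays exponentially away from its own center, by \eqref{A.1}; (4) assembling these and keeping only the surviving diagonal-$w_\varepsilon$ term and the cross-bump sum yields \eqref{c-2}.

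I expect the main obstacle to be the careful bookkeeping in step (2): to apply \eqref{aa5} one must be attentive that the factor $U^2_{a_j}(\tfrac{x-x_{j,\varepsilon}}{\varepsilon})$ sitting on the $x$-side of the integral, restricted to $B_d(x_{j,\varepsilon})$, is absorbed as two of the four $H_\varepsilon$-factors, and that the $\varepsilon$-dependent prefactor $\tfrac{1}{8\pi\varepsilon^2}$ combined with the $\varepsilon^{-2}$ from \eqref{aa5} (with $\lambda=2$) must be tracked so that the resulting powers of $\varepsilon$ land at $\varepsilon^6$ or better; one also has to justify replacing the domain $B_d(x_{j,\varepsilon})$ in the $x$-variable by all of $\R^3$ at the cost of $O(e^{-\eta/\varepsilon})$, again via \eqref{A.1}. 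The antisymmetry argument in step (1) is conceptually the crucial point — it is what makes the leading self-interaction term disappear and leaves only the lower-order contributions — but it is elementary once observed. Everything else is a routine application of the Hardy--Littlewood--Sobolev estimate of Lemma \ref{lem2.4} and the norm bounds already established.
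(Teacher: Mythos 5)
Your decomposition of $u_\varepsilon^2(\xi)$ and your treatment of the self-interaction term (antisymmetry of the kernel after extending $B_d(x_{j,\varepsilon})$ to $\R^3$, cost $O(e^{-\eta/\varepsilon})$), of the $U_{a_j}w_\varepsilon$ term, of the $W_{j,\varepsilon}^2$ term, and of the $U_{a_j}(\frac{\cdot-x_{j,\varepsilon}}{\varepsilon})W_{j,\varepsilon}$ term are the same as the paper's (its terms $A_{1,1,1}$, $A_{1,1,2}$, $A_{1,1,5}$ and part of $A_{1,1,4}$). However, your step (2) has a genuine gap for the cross term $\frac{1}{8\pi\varepsilon^2}\int_{B_d(x_{j,\varepsilon})}\int_{\R^3}U^2_{a_j}(\frac{x-x_{j,\varepsilon}}{\varepsilon})\,2W_{j,\varepsilon}(\xi)w_\varepsilon(\xi)\frac{x^i-\xi^i}{|x-\xi|^3}d\xi dx$: the bound you quote from \eqref{aa5} is miscomputed. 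With the prefactor $\varepsilon^{-2}$, the factor $\varepsilon^{-2}$ from \eqref{aa5} with $\lambda=2$, and the norms $\|U_{a_j}(\frac{\cdot-x_{j,\varepsilon}}{\varepsilon})\|_\varepsilon^2=O(\varepsilon^{3})$, $\|W_{j,\varepsilon}\|_\varepsilon=O(\varepsilon^{3/2})$, $\|w_\varepsilon\|_\varepsilon=O(\varepsilon^{7/2})$, the exponent is $-4+3+\tfrac32+\tfrac72=4$, i.e. the pure Hardy--Littlewood--Sobolev argument only gives $O(\varepsilon^{4})$, not $O(\varepsilon^{6})$. Since the terms retained in \eqref{c-2} are themselves of size $O(\varepsilon^{4})$ and the lemma is later used with precision $o(\varepsilon^{4})$ (see \eqref{c37}), an $O(\varepsilon^{4})$ error here is fatal; so this term cannot be dismissed by norm bookkeeping alone.

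What is missing is the spatial separation, which HLS does not see: $W_{j,\varepsilon}$ is concentrated at the other points $x_{l,\varepsilon}$, $l\neq j$, while the kernel is singular only when $\xi$ is close to $x\in B_d(x_{j,\varepsilon})$. The paper exploits this by grouping the two mixed terms into $A_{1,1,4}$ and using the pointwise bounds \eqref{l4} and \eqref{A.1} (i.e. \eqref{ll3}) to conclude $A_{1,1,4}=O(e^{-\eta/\varepsilon})$. Equivalently, you can repair your argument by splitting the $\xi$-integral: on $B_{2d}(x_{j,\varepsilon})$ the factor $W_{j,\varepsilon}(\xi)$ is $O(e^{-\eta/\varepsilon})$ by \eqref{A.1}, while on $\R^3\setminus B_{2d}(x_{j,\varepsilon})$ one has $|x-\xi|\geq d$ so the kernel is bounded and Cauchy--Schwarz gives $O(\varepsilon^{-2}\cdot\varepsilon^{3}\cdot\|W_{j,\varepsilon}\|_{L^2}\|w_\varepsilon\|_{L^2})=O(\varepsilon^{6})$. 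With that correction the proof closes; your other arithmetic slip (the $w_\varepsilon^2$ term is $O(\varepsilon^{6})$, not $O(\varepsilon^{7})$) is harmless since $O(\varepsilon^{6})$ is exactly the stated error.
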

\begin{proof}

First, $A_{1,1}$ can be written as follows:
\begin{flalign}\label{c3}
A_{1,1}=A_{1,1,1}+A_{1,1,2}+A_{1,1,3}+A_{1,1,4}+A_{1,1,5},
\end{flalign}
where
\begin{flalign*}
&A_{1,1,1}=\frac{1}{8\pi \varepsilon^2}\int_{ B_{d}(x_{j,\varepsilon})}\int_{\R^3}U^2_{a_j}(\frac{x-x_{j,\varepsilon}}{\varepsilon})
U^2_{a_j}(\frac{\xi-x_{j,\varepsilon}}{\varepsilon})\frac{x^i-\xi^i}{|x-\xi|^3}d\xi dx,&
\end{flalign*}

\begin{flalign*}
&A_{1,1,2}=\frac{1}{4\pi \varepsilon^2}\int_{ B_{d}(x_{j,\varepsilon})}\int_{\R^3}U^2_{a_j}(\frac{x-x_{j,\varepsilon}}{\varepsilon})
U_{a_j}(\frac{\xi-x_{j,\varepsilon}}{\varepsilon})w_\varepsilon(\xi)
\frac{x^i-\xi^i}{|x-\xi|^3}d\xi dx,&
\end{flalign*}

\begin{flalign*}
&A_{1,1,3}=\frac{1}{8\pi\varepsilon^2}\int_{ B_{d}(x_{j,\varepsilon})}\int_{\R^3}
U^2_{a_j}(\frac{x-x_{j,\varepsilon}}{\varepsilon}) w^2_{\varepsilon}(\xi)
\frac{x^i-\xi^i}{|x-\xi|^3}d\xi dx,&
\end{flalign*}

\begin{flalign*}
&A_{1,1,4}= \frac{1}{4\pi \varepsilon^2}\int_{ B_{d}(x_{j,\varepsilon})}\int_{\R^3}
U^2_{a_j}(\frac{x-x_{j,\varepsilon}}{\varepsilon})
W_{j,\varepsilon}(\xi)\big(U_{a_j}(\frac{\xi-x_{j,\varepsilon}}{\varepsilon})
+w_\varepsilon(\xi)\big)
\frac{x^i-\xi^i}{|x-\xi|^3}d\xi dx,&
\end{flalign*}
\begin{flalign*}
&A_{1,1,5}= \frac{1}{8\pi \varepsilon^2}\int_{ B_{d}(x_{j,\varepsilon})}\int_{\R^3}
U^2_{a_j}(\frac{x-x_{j,\varepsilon}}{\varepsilon})\big(W_{j,\varepsilon}(\xi)\big)^2
\frac{x^i-\xi^i}{|x-\xi|^3}d\xi dx.&
\end{flalign*}
Now by symmetry and \eqref{A.1}, we have
\begin{flalign}\label{c4}
\begin{split}
A_{1,1,1}&=-\frac{1}{8\pi \varepsilon^2}\int_{ \R^3 \backslash B_{d}(x_{j,\varepsilon})}\int_{\R^3}U^2_{a_j}(\frac{x-x_{j,\varepsilon}}{\varepsilon})
U^2_{a_j}(\frac{\xi-x_{j,\varepsilon}}{\varepsilon})\frac{x^i-\xi^i}{|x-\xi|^3}d\xi dx
=O(e^{-\eta/\varepsilon}),
\end{split}
\end{flalign}
and
\begin{flalign}\label{c5}
\begin{split}
A_{1,1,2}&=\frac{1}{4\pi \varepsilon^2}\int_{\R^3}\int_{\R^3}U^2_{a_j}(\frac{x-x_{j,\varepsilon}}{\varepsilon})
U_{a_j}(\frac{\xi-x_{j,\varepsilon}}{\varepsilon})w_\varepsilon(\xi)
\frac{x^i-\xi^i}{|x-\xi|^3}d\xi dx+O(e^{-\eta/\varepsilon}).
\end{split}
\end{flalign}
Next, by \eqref{aa5} and \eqref{2.45}, we get
\begin{flalign}\label{c6}
\begin{split}
A_{1,1,3}&=O\big(\varepsilon^{-4}\|U_{a_j}(\frac{\cdot-x_{j,\varepsilon}}{\varepsilon})
\|^2_{\varepsilon}\cdot\|w_\varepsilon\|^2_{\varepsilon}\big)=O\big(\varepsilon^{6} \big).
\end{split}
\end{flalign}
Also, \eqref{l4} and \eqref{A.1} imply
\begin{equation}\label{ll3}
W_{j,\varepsilon}(x)\big(U_{a_j}(\frac{x-x_{j,\varepsilon}}{\varepsilon})
+w_\varepsilon(x)\big)=O(e^{-\eta/\varepsilon}),~\mbox{for}~x\in \R^3.
\end{equation}
This means
\begin{flalign}\label{c7}
\begin{split}
A_{1,1,4}&=O(e^{-\eta/\varepsilon}).
\end{split}
\end{flalign}
%here we use the facts that %$U_{a_l}(\xi+\frac{x_{j,\varepsilon}-x_{l,\varepsilon}}{\varepsilon})=O(e^{-\eta/\varepsilon})$ for any %$\xi\in  B_{2d/\varepsilon}(0)$ uniformly and  $U_{a_j}(\xi)=O(e^{-\eta/\varepsilon})$ for any $\xi\in  %\R^3 \backslash B_{2d/\varepsilon}(0)$ uniformly if $l\neq j$.
Also, from \eqref{A.1}, we can deduce
\begin{flalign}\label{c9}
\begin{split}
A_{1,1,5}=&\frac{1}{8\pi \varepsilon^2}\sum^k_{l=1,l\neq j}\int_{\R^3}\int_{\R^3}
U^2_{a_j}(\frac{x-x_{j,\varepsilon}}{\varepsilon})U^2_{a_l}(\frac{\xi-x_{l,\varepsilon}}{\varepsilon})
\frac{x^i-\xi^i}{|x-\xi|^3}d\xi dx+O\big(e^{-\eta/\varepsilon}\big).
\end{split}
\end{flalign}
Then \eqref{c3}, \eqref{c4}, \eqref{c5}, \eqref{c6}, \eqref{c7} and \eqref{c9} imply \eqref{c-2}.
\end{proof}

\begin{Lem}
It holds
\begin{flalign}\label{c-3}
\begin{split}
A_{1,2}=&-\frac{1}{4\pi \varepsilon^2}\int_{\R^3}\int_{\R^3}U^2_{a_j}(\frac{x-x_{j,\varepsilon}}{\varepsilon})
U_{a_j}(\frac{\xi-x_{j,\varepsilon}}{\varepsilon})w_\varepsilon(\xi)
\frac{x^i-\xi^i}{|x-\xi|^3}d\xi dx+O\big(\varepsilon^{6}\big).
\end{split}
\end{flalign}
\end{Lem}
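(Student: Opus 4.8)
The plan is to run the same decomposition used for $A_{1,1}$ in Lemma \ref{lem3-1}, now with the product $U_{a_j}(\frac{x-x_{j,\varepsilon}}{\varepsilon})w_\varepsilon(x)$ playing the role of the ``$x$-slot''. Write $u_\varepsilon(x)=R_\varepsilon(x)+w_\varepsilon(x)$ with $R_\varepsilon(x)=\sum_{l=1}^k U_{a_l}(\frac{x-x_{l,\varepsilon}}{\varepsilon})$, and expand the factor $u_\varepsilon^2(\xi)$ in the inner integral of $A_{1,2}$ as
\begin{align*}
u_\varepsilon^2(\xi)={}&\sum_{l=1}^k U_{a_l}^2\big(\tfrac{\xi-x_{l,\varepsilon}}{\varepsilon}\big)
+\sum_{l\neq m}U_{a_l}\big(\tfrac{\xi-x_{l,\varepsilon}}{\varepsilon}\big)U_{a_m}\big(\tfrac{\xi-x_{m,\varepsilon}}{\varepsilon}\big)\\
&{}+2R_\varepsilon(\xi)w_\varepsilon(\xi)+w_\varepsilon^2(\xi).
\end{align*}
Substituting this splits $A_{1,2}=P_1+P_2+P_3+P_4+P_5$, where $P_1$ is the $l=j$ term of the first sum, $P_2$ the sum of the terms with $l\neq j$ in the first sum, and $P_3,P_4,P_5$ come respectively from the cross terms, from $2R_\varepsilon w_\varepsilon$, and from $w_\varepsilon^2$. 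The goal is to show $P_1$ equals the asserted main term up to $O(\varepsilon^6)$ and that $P_2,P_3,P_4,P_5=O(\varepsilon^6)$ (indeed most are much smaller).

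First I would dispose of $P_2,\dots,P_5$. For $P_2$ the concentrated bump $U_{a_l}^2(\frac{\cdot-x_{l,\varepsilon}}{\varepsilon})$, $l\neq j$, lives near $x_{l,\varepsilon}$, which stays a fixed distance from $B_d(x_{j,\varepsilon})$; hence on the relevant region the kernel $\frac{x^i-\xi^i}{|x-\xi|^3}$ is bounded, the exponentially small tails are controlled by \eqref{A.1}, and Cauchy--Schwarz with $\|U_{a_j}(\frac{\cdot-x_{j,\varepsilon}}{\varepsilon})\|_{L^2}=O(\varepsilon^{3/2})$, $\int U_{a_l}^2(\frac{\cdot-x_{l,\varepsilon}}{\varepsilon})=O(\varepsilon^3)$, and $\|w_\varepsilon\|_\varepsilon=O(\varepsilon^{7/2})$ (by \eqref{2.45}) gives $P_2=O(\varepsilon^{-2}\cdot\varepsilon^3\cdot\varepsilon^{3/2}\cdot\varepsilon^{7/2})=O(\varepsilon^6)$. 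In $P_3$ the products $U_{a_l}(\frac{\cdot-x_{l,\varepsilon}}{\varepsilon})U_{a_m}(\frac{\cdot-x_{m,\varepsilon}}{\varepsilon})$, $l\neq m$, are pointwise $O(e^{-\eta/\varepsilon})$ with an exponentially decaying tail (split the exponent using $|\xi-x_{l,\varepsilon}|+|\xi-x_{m,\varepsilon}|\geq\frac12|x_{l,\varepsilon}-x_{m,\varepsilon}|+\frac12|\xi-x_{l,\varepsilon}|$), so $P_3=O(e^{-\eta/\varepsilon})$. For $P_4$ and $P_5$ I would invoke the Hardy--Littlewood--Sobolev bound \eqref{aa5} with $\lambda=2$ (legitimate since $|\frac{x^i-\xi^i}{|x-\xi|^3}|\le|x-\xi|^{-2}$), together with $\|w_\varepsilon\|_\varepsilon=O(\varepsilon^{7/2})$, $\|R_\varepsilon\|_\varepsilon=O(\varepsilon^{3/2})$ and $\|U_{a_j}(\frac{\cdot-x_{j,\varepsilon}}{\varepsilon})\|_\varepsilon=O(\varepsilon^{3/2})$; this gives $P_4=O(\varepsilon^{10})$ and $P_5=O(\varepsilon^8)$, both negligible.

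It remains to identify $P_1$. First I would extend the outer $x$-integral in $P_1$ from $B_d(x_{j,\varepsilon})$ to all of $\R^3$; on $\R^3\setminus B_d(x_{j,\varepsilon})$ the factor $U_{a_j}(\frac{\cdot-x_{j,\varepsilon}}{\varepsilon})$ is $O(e^{-\eta/\varepsilon})$ with decaying tail by \eqref{A.1}, so a H\"older estimate against $|w_\varepsilon|$ and $U_{a_j}^2(\frac{\cdot-x_{j,\varepsilon}}{\varepsilon})$ shows the added piece is $O(e^{-\eta/\varepsilon})$. Then, since the full-space double integral is absolutely convergent by \eqref{aa5} and the kernel $K_i(x,\xi)=\frac{x^i-\xi^i}{|x-\xi|^3}$ is odd under $x\leftrightarrow\xi$, renaming the variables yields
\begin{align*}
&\frac{1}{4\pi\varepsilon^2}\int_{\R^3}\int_{\R^3}U_{a_j}\big(\tfrac{x-x_{j,\varepsilon}}{\varepsilon}\big)w_\varepsilon(x)\,U_{a_j}^2\big(\tfrac{\xi-x_{j,\varepsilon}}{\varepsilon}\big)K_i(x,\xi)\,d\xi\,dx\\
&\qquad=-\frac{1}{4\pi\varepsilon^2}\int_{\R^3}\int_{\R^3}U_{a_j}^2\big(\tfrac{x-x_{j,\varepsilon}}{\varepsilon}\big)U_{a_j}\big(\tfrac{\xi-x_{j,\varepsilon}}{\varepsilon}\big)w_\varepsilon(\xi)K_i(x,\xi)\,d\xi\,dx,
\end{align*}
which is exactly the leading term in \eqref{c-3}. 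Collecting the five estimates proves \eqref{c-3}. The main obstacle is purely the bookkeeping of the error terms: one must consistently track the mild singularity of $\frac{x^i-\xi^i}{|x-\xi|^3}$ — handled through the endpoint case $\lambda=2$ of \eqref{aa5} and through separating the kernel from the supports of the concentrated bumps — and supply genuinely decaying (not merely uniformly small) pointwise bounds for the cross-bump terms and the tails, so that every integral converges and the passage to $\R^3$ in $P_1$ is justified.
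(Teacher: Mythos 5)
Your argument is essentially the paper's own proof: you expand $u_\varepsilon^2(\xi)$ into the single bump $U_{a_j}^2$, the other bumps, the cross terms, the $R_\varepsilon w_\varepsilon$ term and $w_\varepsilon^2$, kill the error pieces with the Hardy--Littlewood--Sobolev bound \eqref{aa5}, the exponential decay \eqref{A.1} and $\|w_\varepsilon\|_\varepsilon=O(\varepsilon^{7/2})$ from \eqref{2.45}, and identify the main term by extending to $\R^3$ and using the antisymmetry of the kernel, exactly as in \eqref{c13}--\eqref{c18}. Only note a harmless bookkeeping slip: for $P_4$ the estimate \eqref{aa5} with $\lambda=2$ actually gives $O\big(\varepsilon^{-4}\|R_\varepsilon\|_\varepsilon\|w_\varepsilon\|_\varepsilon\|U_{a_j}(\frac{\cdot-x_{j,\varepsilon}}{\varepsilon})\|_\varepsilon\|w_\varepsilon\|_\varepsilon\big)=O(\varepsilon^{6})$ rather than $O(\varepsilon^{10})$, which still suffices for \eqref{c-3}.
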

\begin{proof}
First, $A_{1,2}$ can be written as follows:
\begin{flalign}\label{c12}
A_{1,2}=A_{1,2,1}+A_{1,2,2}+A_{1,2,3}+A_{1,2,4}+A_{1,2,5},
\end{flalign}
where
\begin{flalign*}
&A_{1,2,1}=\frac{1}{4\pi \varepsilon^2}\int_{ B_{d}(x_{j,\varepsilon})}\int_{\R^3}U_{a_j}(\frac{x-x_{j,\varepsilon}}{\varepsilon})w_\varepsilon(x)
U^2_{a_j}(\frac{\xi-x_{j,\varepsilon}}{\varepsilon})\frac{x^i-\xi^i}{|x-\xi|^3}d\xi dx,&
\end{flalign*}
\begin{flalign*}
&A_{1,2,2}=\frac{1}{2\pi \varepsilon^2}\int_{ B_{d}(x_{j,\varepsilon})}\int_{\R^3}U_{a_j}(\frac{x-x_{j,\varepsilon}}{\varepsilon})w_\varepsilon(x)
U_{a_j}(\frac{\xi-x_{j,\varepsilon}}{\varepsilon})w_\varepsilon(\xi)
\frac{x^i-\xi^i}{|x-\xi|^3}d\xi dx,&
\end{flalign*}

\begin{flalign*}
&A_{1,2,3}=\frac{1}{4\pi \varepsilon^2}\int_{ B_{d}(x_{j,\varepsilon})}\int_{\R^3}
U_{a_j}(\frac{x-x_{j,\varepsilon}}{\varepsilon})w_\varepsilon(x) w^2_{\varepsilon}(\xi)
\frac{x^i-\xi^i}{|x-\xi|^3}d\xi dx,&
\end{flalign*}

\begin{flalign*}
&A_{1,2,4}=\frac{1}{2\pi \varepsilon^2}\int_{ B_{d}(x_{j,\varepsilon})}\int_{\R^3}
U_{a_j}(\frac{x-x_{j,\varepsilon}}{\varepsilon})w_\varepsilon(x)
W_{j,\varepsilon}(\xi)\big(U_{a_j}(\frac{\xi-x_{j,\varepsilon}}{\varepsilon})
+w_\varepsilon(\xi)\big)
\frac{x^i-\xi^i}{|x-\xi|^3}d\xi dx,&
\end{flalign*}
\begin{flalign*}
&
A_{1,2,5}=
\frac{1}{4\pi \varepsilon^2}\int_{ B_{d}(x_{j,\varepsilon})}
\int_{\R^3}
U_{a_j}(\frac{x-x_{j,\varepsilon}}{\varepsilon})w_\varepsilon(x)
\big(W_{j,\varepsilon}(\xi)\big)^2
\frac{x^i-\xi^i}{|x-\xi|^3}d\xi dx.
&
\end{flalign*}
Now similar to the calculations of \eqref{c4} and \eqref{c5},  by symmetry and \eqref{A.1}, we know
\begin{flalign}\label{c13}
\begin{split}
A_{1,2,1}&
=-\frac{1}{4\pi \varepsilon^2}\int_{\R^3}\int_{\R^3}U^2_{a_j}(\frac{x-x_{j,\varepsilon}}{\varepsilon})
U_{a_j}(\frac{\xi-x_{j,\varepsilon}}{\varepsilon})w_\varepsilon(\xi)
\frac{x^i-\xi^i}{|x-\xi|^3}d\xi dx+O(e^{-\eta/\varepsilon}).
\end{split}
\end{flalign}
Next, by \eqref{aa5} and \eqref{2.45}, we get
\begin{flalign}\label{c14}
\begin{split}
A_{1,2,2}=O\big(\varepsilon^{-4}\|U_{a_j}(\frac{\cdot-x_{j,\varepsilon}}{\varepsilon})
\|^2_{\varepsilon}\cdot\|w_\varepsilon\|^2_{\varepsilon}\big)=O\big(\varepsilon^{6} \big),
\end{split}
\end{flalign}
and
\begin{flalign}\label{c15}
A_{1,2,3}=O\big(\varepsilon^{-4}\|U_{a_j}(\frac{\cdot-x_{j,\varepsilon}}{\varepsilon})
\|_{\varepsilon}\|w_\varepsilon\|^3_{\varepsilon}\big)=O\big(\varepsilon^{8} \big).
\end{flalign}
Also, similar to \eqref{c7}, we have
\begin{flalign}\label{c17}
A_{1,2,4}=O(e^{-\eta/\varepsilon}).
\end{flalign}
On the other hand, for $l\neq j$ and fixed small $d$, from \eqref{A.1} and \eqref{2.45}, we have
\begin{equation}\label{cll}
\begin{split}
\int_{ B_{d}(x_{j,\varepsilon})}&
\int_{\R^3}
U_{a_j}(\frac{x-x_{j,\varepsilon}}{\varepsilon})w_\varepsilon(x)U^2_{a_l}(\frac{\xi-x_{l,\varepsilon}}{\varepsilon})
\frac{x^i-\xi^i}{|x-\xi|^3}d\xi dx\\
=&
\int_{ B_{d}(x_{j,\varepsilon})}
\int_{\R^3\backslash B_{2d}(x_{j,\varepsilon})}
U_{a_j}(\frac{x-x_{j,\varepsilon}}{\varepsilon})w_\varepsilon(x)U^2_{a_l}(\frac{\xi-x_{l,\varepsilon}}{\varepsilon})
\frac{x^i-\xi^i}{|x-\xi|^3}d\xi dx+O\big(e^{-\eta/\varepsilon}\big)\\
=&
O\big(\int_{ B_{d}(x_{j,\varepsilon})}
\int_{\R^3\backslash B_{2d}(x_{j,\varepsilon})}
U_{a_j}(\frac{x-x_{j,\varepsilon}}{\varepsilon})w_\varepsilon(x)U^2_{a_l}(\frac{\xi-x_{l,\varepsilon}}{\varepsilon})d\xi dx\big)+O\big(e^{-\eta/\varepsilon}\big)\\=&
O\big(\|
U_{a_j}(\frac{\cdot-x_{j,\varepsilon}}{\varepsilon})\|_{\varepsilon}\cdot
\|w_\varepsilon\|_{\varepsilon}\cdot\|U_{a_l}(\frac{\cdot-x_{l,\varepsilon}}{\varepsilon})\|^2_{\varepsilon}\big)+O\big(e^{-\eta/\varepsilon}\big)
=O\big(\varepsilon^{8}\big).
\end{split}
\end{equation}
Then \eqref{A.1} and \eqref{cll} imply
\begin{flalign}\label{c18}
\begin{split}
A_{1,2,5}=&O\big(\varepsilon^{6}\big)+O\big(e^{-\eta/\varepsilon}\big)=O\big(\varepsilon^{6}\big).
\end{split}
\end{flalign}
Then \eqref{c12}, \eqref{c13}, \eqref{c14}, \eqref{c15}, \eqref{c17}  and \eqref{c18} imply
\eqref{c-3}.
\end{proof}

\begin{Lem}\label{lem3.4}
For $l\neq j$, it holds
\begin{flalign}\label{c40}
\begin{split}
\int_{\R^3}& \int_{\R^3}
U^2_{a_j}(\frac{x-x_{j,\varepsilon}}{\varepsilon})U^2_{a_l}(\frac{\xi-x_{l,\varepsilon}}{\varepsilon})
\frac{x^i-\xi^i}{|x-\xi|^3}d\xi dx\\
& =\varepsilon^3(a^{i}_{j}-a^{i}_{l})\int_{\R^3}\int_{\R^3}
U^2_{a_j}(x)
U^2_{a_l}(\xi+\frac{x_{j,\varepsilon}-x_{l,\varepsilon}}{\varepsilon})|x-\xi|^{-3}d\xi dx+o(\varepsilon^6).
\end{split}\end{flalign}
\end{Lem}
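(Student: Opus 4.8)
The plan is to compute the double integral by rescaling and then Taylor-expanding the first component of the vector kernel around the center $a_j$. First I would change variables $x = \varepsilon y + x_{j,\varepsilon}$ and $\xi = \varepsilon z + x_{j,\varepsilon}$, so that $U_{a_j}\big(\tfrac{x-x_{j,\varepsilon}}{\varepsilon}\big)$ becomes $U_{a_j}(y)$, while $U_{a_l}\big(\tfrac{\xi-x_{l,\varepsilon}}{\varepsilon}\big)$ becomes $U_{a_l}\big(z + \tfrac{x_{j,\varepsilon}-x_{l,\varepsilon}}{\varepsilon}\big)$; the Jacobian contributes $\varepsilon^6$ and the kernel $\dfrac{x^i-\xi^i}{|x-\xi|^3}$ becomes $\dfrac{1}{\varepsilon^2}\cdot\dfrac{y^i-z^i}{|y-z|^3}$. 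So after rescaling the left side equals
\begin{flalign*}
\varepsilon^4\int_{\R^3}\int_{\R^3} U_{a_j}^2(y)\, U_{a_l}^2\Big(z+\tfrac{x_{j,\varepsilon}-x_{l,\varepsilon}}{\varepsilon}\Big)\,\frac{y^i-z^i}{|y-z|^3}\,dz\,dy .
\end{flalign*}
Wait — this reduces the problem to identifying the $\varepsilon^3$-order term; in fact the claimed identity shows that the leading contribution is $\varepsilon^3(a_j^i-a_l^i)$ times a double integral, so I expect a cancellation of the rescaled-kernel's odd part against $U_{a_j}^2$ and a shift to emerge.

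Here is the cleaner route. I would \emph{not} rescale $\xi$, only $x = \varepsilon y + x_{j,\varepsilon}$, writing the inner integral in $\xi$ as a convolution-type quantity. Set $H(x) := \int_{\R^3} U_{a_l}^2\big(\tfrac{\xi-x_{l,\varepsilon}}{\varepsilon}\big)\dfrac{x^i-\xi^i}{|x-\xi|^3}\,d\xi$. This is (a component of) the gradient of the Newtonian potential of $U_{a_l}^2(\tfrac{\cdot-x_{l,\varepsilon}}{\varepsilon})$, hence smooth away from the support concentration region. Since $U_{a_j}^2(\tfrac{x-x_{j,\varepsilon}}{\varepsilon})$ is concentrated in $B_{d}(x_{j,\varepsilon})$ up to $O(e^{-\eta/\varepsilon})$ by \eqref{A.1}, and on that ball $|x-\xi|$ is bounded below away from zero (because $a_j\neq a_l$ and by Proposition~\ref{pr2.1} the centers $x_{j,\varepsilon},x_{l,\varepsilon}$ stay near $a_j,a_l$), the kernel $\dfrac{x^i-\xi^i}{|x-\xi|^3}$ is a smooth function of $x$ on $B_d(x_{j,\varepsilon})$ for each relevant $\xi$. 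Then I Taylor-expand $\dfrac{x^i-\xi^i}{|x-\xi|^3}$ in $x$ around $x = a_j$ (equivalently, first I replace $x_{j,\varepsilon}$ by $a_j$ using $|x_{j,\varepsilon}-a_j|=o(1)$ — and in the later applications by $O(\varepsilon)$, so the induced error is lower order): writing $x = a_j + (x - a_j)$, the zeroth-order term gives, after changing $x=\varepsilon y + x_{j,\varepsilon}$ and using $\int_{\R^3}U_{a_j}^2(y)(y_m)\,dy = 0$ by radial symmetry, exactly the claimed leading term, namely $\varepsilon^3(a_j^i-a_l^i)$ times $\int\!\!\int U_{a_j}^2(x)U_{a_l}^2(\xi+\tfrac{x_{j,\varepsilon}-x_{l,\varepsilon}}{\varepsilon})|x-\xi|^{-3}\,dx\,d\xi$ — wait, more carefully: the zeroth order kernel value at $x=a_j$ is $\int U_{a_l}^2(\tfrac{\xi-x_{l,\varepsilon}}{\varepsilon})\tfrac{a_j^i-\xi^i}{|a_j-\xi|^3}d\xi$, and rescaling $\xi=\varepsilon z+x_{l,\varepsilon}$ turns this into $\varepsilon^3\int U_{a_l}^2(z)\tfrac{(a_j^i-x_{l,\varepsilon}^i)-\varepsilon z^i}{|\cdots|^3}dz$; combined with $\int U_{a_j}^2(y)dy$ this is not yet the stated form, but a further symmetrization / re-shift of variables (writing $x_{l,\varepsilon} = a_l + o(1)$, and recognizing $\tfrac{x_{j,\varepsilon}-x_{l,\varepsilon}}{\varepsilon}$ as the shift appearing in the statement) matches it up to $o(\varepsilon^6)$.

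The remaining terms of the Taylor expansion are handled as follows: the first-order term is linear in $(x-a_j)$, and in the rescaled variable $x = \varepsilon y + x_{j,\varepsilon}$ this is $\varepsilon y + (x_{j,\varepsilon}-a_j)$; the $\varepsilon y$ part integrates against $U_{a_j}^2(y)$ to zero by oddness, leaving $(x_{j,\varepsilon}-a_j)\cdot(\nabla\text{ of kernel})$, which contributes $O\big(\varepsilon^3|x_{j,\varepsilon}-a_j|\big)$; in the applications where $|x_{j,\varepsilon}-a_j|=O(\varepsilon)$ this is $O(\varepsilon^4)=o(\varepsilon^6)$ — hmm, $O(\varepsilon^4)$ is not $o(\varepsilon^6)$. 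I therefore need to be more careful: the claim is stated with $o(\varepsilon^6)$ as the error, so either the first-order term actually cancels (the kernel $\nabla_x(\text{Newtonian-potential gradient})$ evaluated and paired appropriately may vanish by a symmetry of $U_{a_l}^2$, which is radial) or one also expands to second order. I expect the genuine mechanism: after the shift, the kernel as a function of $x$ near $a_j$ is harmonic (it is a derivative of a Newtonian potential of a density supported near $a_l$), and integrating a harmonic function's Taylor remainder against the radial profile $U_{a_j}^2$ kills all odd-order corrections and the second-order Laplacian term vanishes, so that the total error is genuinely $o(\varepsilon^6)$ (more precisely $O(\varepsilon^7)$ or $O(\varepsilon^6\cdot|x_{j,\varepsilon}-a_j|)$ type). \textbf{The main obstacle} is precisely this bookkeeping: tracking which Taylor-order terms survive after pairing with the radial, mean-zero-first-moment profile $U_{a_j}^2$, using harmonicity of the kernel on $B_d(x_{j,\varepsilon})$ to suppress the dangerous $\varepsilon^4$ and $\varepsilon^5$ contributions, and absorbing the $|x_{j,\varepsilon}-a_j|$-dependence consistently with the bound \eqref{2--12}. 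Once that is done, collecting the surviving zeroth-order term and re-writing it in the shifted convolution form of the statement finishes the proof; the exponential tails outside $B_d(x_{j,\varepsilon})$ are $O(e^{-\eta/\varepsilon})$ by \eqref{A.1} and are negligible.
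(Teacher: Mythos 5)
Your overall strategy (freeze the slow variable, view the inner $\xi$-integral $H(x)=\int_{\R^3}U_{a_l}^2\big(\tfrac{\xi-x_{l,\varepsilon}}{\varepsilon}\big)\tfrac{x^i-\xi^i}{|x-\xi|^3}d\xi$ as the gradient of a Newtonian potential that is smooth near $a_j$, and expand about the concentration point) is viable, but as written the proof has a genuine gap, and the difficulty you single out as the ``main obstacle'' comes from a miscounted power of $\varepsilon$. In the first-order Taylor term you treat the paired kernel derivative as $O(1)$ and get $O(\varepsilon^3|x_{j,\varepsilon}-a_j|)$; but that derivative is integrated against $U_{a_l}^2\big(\tfrac{\cdot-x_{l,\varepsilon}}{\varepsilon}\big)$, whose total mass is $O(\varepsilon^3)$, so $\nabla H(a_j)=O(\varepsilon^3)$ and the first-order contribution is in fact $O\big(\varepsilon^6|x_{j,\varepsilon}-a_j|\big)=o(\varepsilon^6)$ by \eqref{2--2}; likewise the second-order remainder is $O\big(\varepsilon^6(\varepsilon^2+|x_{j,\varepsilon}-a_j|^2)\big)$. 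Hence there are no ``dangerous $\varepsilon^4$, $\varepsilon^5$ contributions'' and no harmonicity or parity cancellation is needed (though the mean value property applied to $H$, harmonic near $a_j$ up to exponentially small tails, would give an even cleaner variant). Because you believed a cancellation was required, you only conjectured it instead of proving it; moreover you left the identification of the zeroth-order term with the shifted convolution on the right of \eqref{c40} (``a further symmetrization / re-shift \dots matches it up to $o(\varepsilon^6)$'') unproved. Since both sides of \eqref{c40} are themselves of size $\varepsilon^6$, this matching \emph{is} the content of the lemma: one must replace the numerator $x^i_{j,\varepsilon}-x^i_{l,\varepsilon}-\varepsilon z^i$ by $a^i_j-a^i_l$ and the denominator $|x_{j,\varepsilon}-x_{l,\varepsilon}-\varepsilon z|^3$ by $|\varepsilon x-\varepsilon z+x_{j,\varepsilon}-x_{l,\varepsilon}|^3$, checking each swap costs only a relative $o(1)$, i.e. an absolute $o(\varepsilon^6)$.

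For comparison, the paper's proof avoids Taylor expansion altogether: rescale both variables so the integral becomes $\varepsilon^4\int\int U_{a_j}^2(x)\,U_{a_l}^2\big(\xi+\tfrac{x_{j,\varepsilon}-x_{l,\varepsilon}}{\varepsilon}\big)\tfrac{x^i-\xi^i}{|x-\xi|^3}d\xi dx$, split $x^i-\xi^i=x^i-\big(\xi^i+\tfrac{x^i_{j,\varepsilon}-x^i_{l,\varepsilon}}{\varepsilon}\big)+\tfrac{x^i_{j,\varepsilon}-x^i_{l,\varepsilon}}{\varepsilon}$, note that on the bulk of the two (far separated) bumps $|x-\xi|^{-3}=O(\varepsilon^3)$ so the first two pieces contribute $O(\varepsilon^7)$, and finally replace $x^i_{j,\varepsilon}-x^i_{l,\varepsilon}$ by $a^i_j-a^i_l$ via \eqref{2--2}, the error being $o(1)\cdot\varepsilon^3\cdot O(\varepsilon^3)=o(\varepsilon^6)$. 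If you restore the missing $\varepsilon^3$ mass factor and actually carry out the zeroth-order matching (using \eqref{A.1} for the tails), your route closes; as submitted, the $o(\varepsilon^6)$ estimate that constitutes the lemma is not established.
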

\begin{proof}
First, we have
\begin{flalign}\label{c31}
\int_{\R^3}&\int_{\R^3}
U^2_{a_j}(\frac{x-x_{j,\varepsilon}}{\varepsilon})U^2_{a_l}(\frac{\xi-x_{l,\varepsilon}}{\varepsilon})
\frac{x^i-\xi^i}{|x-\xi|^3}d\xi dx
=B_{1}+B_{2},
\end{flalign}
where
$$
B_{1}={\varepsilon^4}\int_{\R^3}\int_{\R^3}
U^2_{a_j}(x)
U^2_{a_l}(\xi+\frac{x_{j,\varepsilon}-x_{l,\varepsilon}}{\varepsilon})\frac{x^i}{|x-\xi|^3}d\xi dx,
$$
and
$$
B_{2}=-{\varepsilon^4}\int_{\R^3}\int_{\R^3}
U^2_{a_j}(x)
U^2_{a_l}(\xi+\frac{x_{j,\varepsilon}-x_{l,\varepsilon}}{\varepsilon})\frac{\xi^i}{|x-\xi|^3}d\xi dx.
$$
Then for small fixed $d>0$, we know
\begin{flalign}\label{c32}
\begin{split}
B_{1}=&{\varepsilon^4}\int_{B_{d/\varepsilon}(0)}\int_{\R^3\backslash B_{(2d)/\varepsilon}(0)}
U^2_{a_j}(x)
U^2_{a_l}(\xi+\frac{x_{j,\varepsilon}-x_{l,\varepsilon}}{\varepsilon})\frac{x^i}{|x-\xi|^3}d\xi dx+O\big(e^{-\eta/\varepsilon}\big)\\=
&O\big(\varepsilon^7 \big(\int_{\R^3}
U^2_{a_j}(x)|x|dx\big)\cdot\big(\int_{\R^3}
U^2_{a_l}(\xi+\frac{x_{j,\varepsilon}-x_{l,\varepsilon}}{\varepsilon})d\xi\big) \big)+O\big(e^{-\eta/\varepsilon}\big)=O\big(\varepsilon^7\big).
\end{split}
\end{flalign}
Also we have
\begin{flalign}\label{c33}
\begin{split}
B_{2}=&-{\varepsilon^4}\int_{\R^3}\int_{\R^3}
U^2_{a_j}(x)
U^2_{a_l}(\xi+\frac{x_{j,\varepsilon}-x_{l,\varepsilon}}{\varepsilon})
\frac{\xi^i+\frac{x^i_{j,\varepsilon}-x^i_{l,\varepsilon}}{\varepsilon}}{|x-\xi|^3}d\xi dx\\&
+ (x^i_{j,\varepsilon}-x^i_{l,\varepsilon})\varepsilon^3\int_{\R^3}\int_{\R^3}
U^2_{a_j}(x)
U^2_{a_l}(\xi+\frac{x_{j,\varepsilon}-x_{l,\varepsilon}}{\varepsilon})|x-\xi|^{-3}d\xi dx.
\end{split}
\end{flalign}
Next, similar to \eqref{c32}, we deduce
\begin{flalign}\label{c32l}
\begin{split}
\int_{\R^3}&\int_{\R^3}
 U^2_{a_j}(x)
U^2_{a_l}(\xi+\frac{x_{j,\varepsilon}-x_{l,\varepsilon}}{\varepsilon})
|x-\xi|^{-3}d\xi dx=O\big(\varepsilon^3\big),
\end{split}
\end{flalign}
and
\begin{flalign}\label{c33l}
\begin{split}
\int_{\R^3}&\int_{\R^3}
 U^2_{a_j}(x)
U^2_{a_l}(\xi+\frac{x_{j,\varepsilon}-x_{l,\varepsilon}}{\varepsilon})
\frac{\xi^i+\frac{x^i_{j,\varepsilon}-x^i_{l,\varepsilon}}{\varepsilon}}{|x-\xi|^3}d\xi dx=O\big(\varepsilon^3\big).
\end{split}
\end{flalign}
Then using \eqref{2--2}, \eqref{c33}, \eqref{c32l} and \eqref{c33l}, we obtain
\begin{flalign}\label{c33ll}
\begin{split}
B_{2}=& \varepsilon^3(a^i_{j}-a^i_{l}) \int_{\R^3}\int_{\R^3}
U^2_{a_j}(x)
U^2_{a_l}(\xi+\frac{x_{j,\varepsilon}-x_{l,\varepsilon}}{\varepsilon})|x-\xi|^{-3}d\xi dx+o\big(\varepsilon^6\big).
\end{split}
\end{flalign}
Then \eqref{c31}, \eqref{c32} and \eqref{c33ll} imply \eqref{c40}.
\end{proof}

\renewcommand{\theequation}{E.\arabic{equation}}

\setcounter{equation}{0}
\section{The estimates of $F_{1,1}$, $F_{1,2}$, $F_{2,1}$ and $F_{2,3}$ in  \eqref{lll1} and \eqref{lll2}}
\setcounter{equation}{0}

\begin{Lem}\label{ld1}
It holds
\begin{flalign}\label{f1}
\begin{split}
F_{1,1}&
=G_1+\frac{1}{8\pi \varepsilon^2}
\int_{\R^3}\int_{\R^3}
U_{a_j}^2(\frac{x-x^{(1)}_{j,\varepsilon}}{\varepsilon})
\big(w^{(1)}_{\varepsilon}(\xi)+w^{(2)}_{\varepsilon}(\xi)\big)\eta_{\varepsilon}(\xi)
\frac{x^i-\xi^i}{|x-\xi|^3}d\xi dx+o\big(\varepsilon^4\big),
\end{split}
\end{flalign}
where
\begin{flalign*}
G_1=\frac{1}{8\pi \varepsilon^2}
\int_{\R^3}\int_{\R^3}
U_{a_j}^2(\frac{x-x^{(1)}_{j,\varepsilon}}{\varepsilon})
\big(U_{a_j}(\frac{\xi-x^{(1)}_{j,\varepsilon}}{\varepsilon})+
U_{a_j}(\frac{\xi-x^{(2)}_{j,\varepsilon}}{\varepsilon})
\big)\eta_{\varepsilon}(\xi)
\frac{x^i-\xi^i}{|x-\xi|^3}d\xi dx.
\end{flalign*}
\end{Lem}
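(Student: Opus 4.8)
The plan is to substitute, in the $\xi$-slot of $F_{1,1}$, the bump decomposition of the two solutions and to split the resulting factor according to which profile sits in that slot. By Proposition \ref{pr2.1} and \eqref{lm3}, for $m=1,2$,
\[
u^{(m)}_\varepsilon(\xi)=U_{a_j}\!\Big(\frac{\xi-x^{(m)}_{j,\varepsilon}}{\varepsilon}\Big)+W^{(m)}_{j,\varepsilon}(\xi)+w^{(m)}_\varepsilon(\xi),
\]
so $u^{(1)}_\varepsilon(\xi)+u^{(2)}_\varepsilon(\xi)$ is the sum of a \emph{central} part $\sum_{m}U_{a_j}\big(\frac{\xi-x^{(m)}_{j,\varepsilon}}{\varepsilon}\big)$, a \emph{remote} part $\sum_{m}W^{(m)}_{j,\varepsilon}(\xi)$ and a \emph{remainder} part $\sum_{m}w^{(m)}_\varepsilon(\xi)$. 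Since $U_{a_j}^2\big(\frac{x-x^{(1)}_{j,\varepsilon}}{\varepsilon}\big)=O(e^{-\eta/\varepsilon})$ outside $B_\delta(x^{(1)}_{j,\varepsilon})$ by \eqref{A.1}, one may enlarge the $x$-integration from $B_\delta(x^{(1)}_{j,\varepsilon})$ to $\R^3$ in each of the three resulting double integrals at a cost $O(e^{-\eta/\varepsilon})$; this follows from the exponential decay of $U_{a_j}$ together with Lemma \ref{lem2.4} (equivalently the Hardy--Littlewood--Sobolev inequality, Lemma \ref{lem-A-5}), exactly as in the computation of $A_{1,1,1}$ in \eqref{c4}, using $\|w^{(m)}_\varepsilon\|_\varepsilon=O(\varepsilon^{7/2})$ from \eqref{2.45} and $\|\eta_\varepsilon\|_\varepsilon=O(\varepsilon^{3/2})$ from \eqref{3.3}. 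After this enlargement the central integral is exactly $G_1$ and the remainder integral is exactly the displayed $w$-term in \eqref{f1}; hence it only remains to show that the remote contribution is $o(\varepsilon^4)$.

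For the remote contribution I would split the $\xi$-integral into $B_{2\delta}(x^{(1)}_{j,\varepsilon})$ and its complement. On $B_{2\delta}(x^{(1)}_{j,\varepsilon})$ each $W^{(m)}_{j,\varepsilon}(\xi)$ is $O(e^{-\eta/\varepsilon})$ by \eqref{A.1}, since the bumps composing it are centred near the distinct points $a_l$, $l\ne j$; as $|x-\xi|^{-2}$ is integrable in $\xi$ near $x$ and $\int_{\R^3}U_{a_j}^2\big(\frac{x-x^{(1)}_{j,\varepsilon}}{\varepsilon}\big)dx=O(\varepsilon^3)$, this part is $O(\varepsilon e^{-\eta/\varepsilon})$. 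On the complement only the pieces of $W^{(m)}_{j,\varepsilon}$ localised in fixed balls $B_\rho(x^{(m)}_{l,\varepsilon})$, $l\ne j$, contribute non-negligibly, and there $|x-\xi|$ stays bounded below by a positive constant, so the Newton kernel is smooth on the relevant region. Rescaling $x=x^{(1)}_{j,\varepsilon}+\varepsilon y$, $\xi=x^{(m)}_{l,\varepsilon}+\varepsilon z$ and expanding
\[
\frac{x^i-\xi^i}{|x-\xi|^3}=\frac{\big(x^{(1)}_{j,\varepsilon}-x^{(m)}_{l,\varepsilon}\big)^i}{\big|x^{(1)}_{j,\varepsilon}-x^{(m)}_{l,\varepsilon}\big|^{3}}+O\big(\varepsilon(|y|+|z|)\big),
\]
together with the exponential decay of $U_{a_j}$ and $U_{a_l}$, reduces the complement contribution, modulo $O(\varepsilon^5)+O(e^{-\eta/\varepsilon})$, to
\[
\frac{\varepsilon^4}{8\pi}\sum_{m=1}^{2}\sum_{l\ne j}\frac{\big(x^{(1)}_{j,\varepsilon}-x^{(m)}_{l,\varepsilon}\big)^i}{\big|x^{(1)}_{j,\varepsilon}-x^{(m)}_{l,\varepsilon}\big|^{3}}\Big(\int_{\R^3}U_{a_j}^2\Big)\Big(\int_{\R^3}U_{a_l}(z)\,\eta_\varepsilon\big(x^{(m)}_{l,\varepsilon}+\varepsilon z\big)\,dz\Big).
\]

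The main obstacle is the last factor: the crude bound only gives $\big|\int_{\R^3}U_{a_l}(z)\eta_\varepsilon(x^{(m)}_{l,\varepsilon}+\varepsilon z)\,dz\big|=O(1)$, hence the remote contribution only $O(\varepsilon^4)$, which is not enough. Here one uses Proposition \ref{l3-2}: by \eqref{aaaa} the two centres satisfy $x^{(2)}_{l,\varepsilon}-x^{(1)}_{l,\varepsilon}=o(\varepsilon)$, so both choices $m=1,2$ lead to the same limit, and $\eta_\varepsilon(x^{(m)}_{l,\varepsilon}+\varepsilon z)\to\sum_{i=1}^{3}d_{l,i}\,\partial_{x^i}U_{a_l}(z)$ in $C^1_{loc}(\R^3)$. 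The $C^1_{loc}$-convergence on a fixed large ball and the exponential decay of $U_{a_l}$ on its complement then give
\[
\int_{\R^3}U_{a_l}(z)\,\eta_\varepsilon\big(x^{(m)}_{l,\varepsilon}+\varepsilon z\big)\,dz\ \longrightarrow\ \sum_{i=1}^{3}d_{l,i}\int_{\R^3}U_{a_l}\,\partial_{x^i}U_{a_l}\,dz=\frac12\sum_{i=1}^{3}d_{l,i}\int_{\R^3}\partial_{x^i}\big(U_{a_l}^2\big)\,dz=0 .
\]
Consequently the displayed double sum is $o(\varepsilon^4)$, the remote contribution is $o(\varepsilon^4)$, and combining the central, remainder and remote parts yields \eqref{f1}. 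This odd-symmetry cancellation — of $\int_{\R^3}U_{a_l}\partial_{x^i}U_{a_l}=0$ against a Newton kernel that is asymptotically constant across an interacting pair of bumps, with \eqref{aaaa} making the two families of centres interchangeable — is precisely the new ingredient the nonlocal term forces here; the companion estimates for $F_{1,2},F_{2,1},F_{2,3}$ in \eqref{lll1}--\eqref{lll2} are obtained by the same scheme.
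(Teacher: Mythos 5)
Your proposal is correct, and its skeleton coincides with the paper's: the same three-way split of $u^{(1)}_\varepsilon(\xi)+u^{(2)}_\varepsilon(\xi)$ into the central bumps, the remote bumps $W^{(m)}_{j,\varepsilon}$, and the remainders $w^{(m)}_\varepsilon$ (the paper's $F_{1,1,1},F_{1,1,3},F_{1,1,2}$), with the extension of the $x$-integral to $\R^3$ costing only $O(e^{-\eta/\varepsilon})$ by \eqref{A.1}, so that the central piece is $G_1$ and the remainder piece is the displayed $w$-term. The only place where you diverge from the paper is the remote term. The paper rescales, inserts Proposition \ref{l3-2}, writes $U_{a_l}\sum_m d_{m,l}\partial_m U_{a_l}=\tfrac12\sum_m d_{m,l}\partial_m(U_{a_l}^2)$, integrates by parts onto the Newton kernel, and then uses the separation estimate \eqref{af5} (via \eqref{aa9}) to gain a factor $\varepsilon^3$ from the $|x-\xi|^{-3}$ kernel, giving $o(\varepsilon^4)$ as in \eqref{f5}--\eqref{f7}. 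You instead freeze the kernel at the separated centres, observe that the leading coefficient $\int_{\R^3}U_{a_l}\,\partial_i U_{a_l}\,dz=0$ kills the $O(\varepsilon^4)$ term, and absorb the kernel variation into an $O(\varepsilon^5)$ error; the interchangeability of $x^{(1)}_{l,\varepsilon}$ and $x^{(2)}_{l,\varepsilon}$ via \eqref{aaaa} is used in both arguments. These are two bookkeepings of the same cancellation (the integration by parts is exactly the statement that the constant part of the kernel sees only $\int\partial_m(U_{a_l}^2)=0$), so your route is equivalent in substance; it is perhaps slightly more transparent about why the leading interaction vanishes, while the paper's version hands the quantitative gain directly to the kernel-derivative estimate \eqref{af5}. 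As in the paper, both arguments hold along the subsequence on which Proposition \ref{l3-2} applies, which is all that is needed for Proposition \ref{prop3.3}.
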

\begin{proof}
 $F_{1,1}$ can be written as
\begin{flalign}\label{f2}
\begin{split}
F_{1,1}&=F_{1,1,1}+F_{1,1,2}+F_{1,1,3},
\end{split}
\end{flalign}
where
\begin{flalign*}
&F_{1,1,1}=\frac{1}{8\pi \varepsilon^2}
\int_{ B_{\delta}(x_{j,\varepsilon}^{(1)})}\int_{\R^3}
U_{a_j}^2(\frac{x-x^{(1)}_{j,\varepsilon}}{\varepsilon})
\big(U_{a_j}(\frac{\xi-x^{(1)}_{j,\varepsilon}}{\varepsilon})+
U_{a_j}(\frac{\xi-x^{(2)}_{j,\varepsilon}}{\varepsilon})
\big)\eta_{\varepsilon}(\xi)
\frac{x^i-\xi^i}{|x-\xi|^3}d\xi dx,&
\end{flalign*}
\begin{flalign*}
&F_{1,1,2}=\frac{1}{8\pi \varepsilon^2}
\int_{ B_{\delta}(x_{j,\varepsilon}^{(1)})}\int_{\R^3}
U_{a_j}^2(\frac{x-x^{(1)}_{j,\varepsilon}}{\varepsilon})
\big(w^{(1)}_{\varepsilon}(\xi)+w^{(2)}_{\varepsilon}(\xi)\big)\eta_{\varepsilon}(\xi)
\frac{x^i-\xi^i}{|x-\xi|^3}d\xi dx,&
\end{flalign*}
\begin{flalign*}
&F_{1,1,3}=\frac{1}{8\pi \varepsilon^2}
\int_{ B_{\delta}(x_{j,\varepsilon}^{(1)})}\int_{\R^3}
U_{a_j}^2(\frac{x-x^{(1)}_{j,\varepsilon}}{\varepsilon})
\big(
W^{(1)}_{j,\varepsilon}(\xi)+W^{(2)}_{j,\varepsilon}(\xi)\big)\eta_{\varepsilon}(\xi)
\frac{x^i-\xi^i}{|x-\xi|^3}d\xi dx.&
\end{flalign*}
Now, by \eqref{A.1}, we get
\begin{flalign}\label{f3}
\begin{split}
F_{1,1,1}& =G_1+O\big(e^{-\eta/\varepsilon}\big),
\end{split}
\end{flalign}
and
\begin{flalign}\label{f4}
\begin{split}
F_{1,1,2}&=\frac{1}{8\pi \varepsilon^2}
\int_{\R^3}\int_{\R^3}
U_{a_j}^2(\frac{x-x^{(1)}_{j,\varepsilon}}{\varepsilon})
\big(w^{(1)}_{\varepsilon}(\xi)+w^{(2)}_{\varepsilon}(\xi)\big)\eta_{\varepsilon}(\xi)
\frac{x^i-\xi^i}{|x-\xi|^3}d\xi dx+O\big(e^{-\eta/\varepsilon}\big).
\end{split}
\end{flalign}
Next, using Proposition \ref{l3-2}, we can calculate that, for $l\neq j$,
\begin{flalign}\label{f5}
\begin{split}
\frac{1}{8\pi \varepsilon^2}&
\int_{ B_{\delta}(x_{j,\varepsilon}^{(1)})}\int_{\R^3}
U_{a_j}^2(\frac{x-x^{(1)}_{j,\varepsilon}}{\varepsilon})
U_{a_l}(\frac{\xi-x^{(1)}_{l,\varepsilon}}{\varepsilon})\eta_{\varepsilon}(\xi)
\frac{x^i-\xi^i}{|x-\xi|^3}d\xi dx\\=&
\frac{\varepsilon^2}{8\pi}
\int_{\R^3}\int_{\R^3}
U_{a_j}^2(x-\frac{x^{(1)}_{j,\varepsilon}-x^{(1)}_{l,\varepsilon}}{\varepsilon})
U_{a_l}(\xi)\eta_{l,\varepsilon}(\xi)
\frac{x^i-\xi^i}{|x-\xi|^3}d\xi dx+O\big(e^{-\eta/\varepsilon}\big)\\=&
\frac{\varepsilon^2}{8\pi}
\int_{\R^3}\int_{\R^3}
U_{a_j}^2(x-\frac{x^{(1)}_{j,\varepsilon}-x^{(1)}_{l,\varepsilon}}{\varepsilon})
U_{a_l}(\xi)\big(\sum^3_{m=1}d_{m,l} \frac{\partial U_{a_l}(\xi)}{\partial \xi_m}\big)
\frac{x^i-\xi^i}{|x-\xi|^3}d\xi dx\\&
+o\big(\varepsilon^2\int_{\R^3}\int_{\R^3}
U_{a_j}^2(x-\frac{x^{(1)}_{j,\varepsilon}-x^{(1)}_{l,\varepsilon}}{\varepsilon})
U_{a_l}(\xi)
\frac{x^i-\xi^i}{|x-\xi|^3}d\xi dx\big)+O\big(e^{-\eta/\varepsilon}\big)\\=&
\frac{\varepsilon^2}{16\pi}
\int_{\R^3}\int_{\R^3}
U_{a_j}^2(x-\frac{x^{(1)}_{j,\varepsilon}-x^{(1)}_{l,\varepsilon}}{\varepsilon})
U^2_{a_l}(\xi)\big(\sum^3_{m=1}d_{m,l} \frac{\partial \frac{x^i-\xi^i}{|x-\xi|^3}}{\partial \xi_m}\big)
d\xi dx
+o\big(\varepsilon^4\big),
\end{split}
\end{flalign}
and
\begin{flalign}\label{af5}
\begin{split}
\int_{\R^3}\int_{\R^3}&
U_{a_j}^2(x-\frac{x^{(1)}_{j,\varepsilon}-x^{(1)}_{l,\varepsilon}}{\varepsilon})
U^2_{a_l}(\xi)\big(\sum^3_{m=1}d_{m,l} \frac{\partial \frac{x^i-\xi^i}{|x-\xi|^3}}{\partial \xi_m}\big)
d\xi dx \\=&
O\big(
\int_{\R^3}\int_{\R^3}
U_{a_j}^2(x-\frac{x^{(1)}_{j,\varepsilon}-x^{(1)}_{l,\varepsilon}}{\varepsilon})
U^2_{a_l}(\xi)|x-\xi|^{-3}
d\xi dx\big) = O\big(\varepsilon^3\big),
\end{split}
\end{flalign}
here we also use the following estimate, which can be found by \eqref{aa9},
\begin{equation*}
\int_{\R^3}\int_{\R^3}
U_{a_j}^2(x-\frac{x^{(1)}_{j,\varepsilon}-x^{(1)}_{l,\varepsilon}}{\varepsilon})
U^2_{a_l}(\xi)|x-\xi|^{-\alpha}
d\xi dx=O\big(\varepsilon^\alpha\big),~\mbox{for}~\alpha>0,~\mbox{and}~l\neq j.
\end{equation*}
Similar to \eqref{f5} and \eqref{af5}, we have
\begin{flalign}\label{f6}
\begin{split}
\frac{1}{8\pi \varepsilon^2}
\int_{ B_{\delta}(x_{j,\varepsilon}^{(1)})}&\int_{\R^3}
U_{a_j}^2(\frac{x-x^{(1)}_{j,\varepsilon}}{\varepsilon})
U_{a_l}(\frac{\xi-x^{(2)}_{l,\varepsilon}}{\varepsilon})\eta_{\varepsilon}(\xi)
\frac{x^i-\xi^i}{|x-\xi|^3}d\xi dx=o\big(\varepsilon^4\big).
\end{split}
\end{flalign}
Then \eqref{f5}, \eqref{af5} and \eqref{f6} imply
\begin{flalign}\label{f7}
\begin{split}
F_{1,1,3}=o\big(\varepsilon^4\big).
\end{split}
\end{flalign}
Then \eqref{f1} can be deduced by \eqref{f2}, \eqref{f3}, \eqref{f4} and \eqref{f7}.
\end{proof}
\begin{Lem}\label{ld2}
It holds
\begin{flalign}\label{f8}
\begin{split}
F_{1,2}&
=\frac{1}{2\pi \varepsilon^2}
\int_{\R^3}\int_{\R^3}
U_{a_j}(\frac{x-x^{(1)}_{j,\varepsilon}}{\varepsilon})w_\varepsilon^{(1)}(x)
U_{a_j}(\frac{\xi-x^{(1)}_{j,\varepsilon}}{\varepsilon})
\big)\eta_{\varepsilon}(\xi)
\frac{x^i-\xi^i}{|x-\xi|^3}d\xi dx
+o\big(\varepsilon^4\big).
\end{split}
\end{flalign}
\end{Lem}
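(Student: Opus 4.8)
The plan is to expand $u^{(1)}_\varepsilon(\xi)+u^{(2)}_\varepsilon(\xi)$ in the inner integration variable $\xi$ by means of Proposition \ref{pr2.1}, writing $u^{(m)}_\varepsilon=U_{a_j}(\frac{\cdot-x^{(m)}_{j,\varepsilon}}{\varepsilon})+W^{(m)}_{j,\varepsilon}+w^{(m)}_\varepsilon$ for $m=1,2$. This splits $F_{1,2}=F_{1,2,1}+F_{1,2,2}+F_{1,2,3}$, where $F_{1,2,1}$ carries the two peak profiles $U_{a_j}(\frac{\xi-x^{(1)}_{j,\varepsilon}}{\varepsilon})+U_{a_j}(\frac{\xi-x^{(2)}_{j,\varepsilon}}{\varepsilon})$, $F_{1,2,2}$ the remaining bumps $W^{(1)}_{j,\varepsilon}(\xi)+W^{(2)}_{j,\varepsilon}(\xi)$, and $F_{1,2,3}$ the remainders $w^{(1)}_\varepsilon(\xi)+w^{(2)}_\varepsilon(\xi)$, in the same spirit as the decomposition of $A_{1,2}$ leading to \eqref{c-3}.

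First I would dispose of the two error pieces. Since $\big|\frac{x^i-\xi^i}{|x-\xi|^3}\big|\le|x-\xi|^{-2}$, Lemma \ref{lem2.4} applies with $\lambda=2$; combined with $\|U_{a_j}(\frac{\cdot-x^{(1)}_{j,\varepsilon}}{\varepsilon})\|_\varepsilon=O(\varepsilon^{3/2})$, with $\|\eta_\varepsilon\|_\varepsilon=O(\varepsilon^{3/2})$ from \eqref{3.3}, and with $\|w^{(m)}_\varepsilon\|_\varepsilon=O(\varepsilon^{7/2})$ from \eqref{2.45}, this yields $F_{1,2,3}=O(\varepsilon^6)$. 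For $F_{1,2,2}$ I would use that the centres $x^{(m)}_{l,\varepsilon}$, $l\ne j$, remain a fixed distance away from $x^{(1)}_{j,\varepsilon}$: when $\xi$ is close to $x^{(1)}_{j,\varepsilon}$ the factor $W^{(m)}_{j,\varepsilon}(\xi)$ is $O(e^{-\eta/\varepsilon})$ by \eqref{A.1}, while on the complementary region $|x-\xi|$ is bounded below, so the kernel is bounded and one estimates $\int U_{a_j}(\frac{x-x^{(1)}_{j,\varepsilon}}{\varepsilon})|w^{(1)}_\varepsilon(x)|\,dx$ and $\int|\eta_\varepsilon(\xi)|W^{(m)}_{j,\varepsilon}(\xi)\,d\xi$ by H\"older's inequality and Sobolev embedding; this gives $F_{1,2,2}=O(\varepsilon^6)+O(e^{-\eta/\varepsilon})$.

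For the main term $F_{1,2,1}$ I would proceed in two correction steps. First, extend the outer integration from $B_\delta(x^{(1)}_{j,\varepsilon})$ to all of $\R^3$: on $\R^3\setminus B_\delta(x^{(1)}_{j,\varepsilon})$ one has $U_{a_j}(\frac{x-x^{(1)}_{j,\varepsilon}}{\varepsilon})=O(e^{-\eta/\varepsilon})$ by \eqref{A.1}, and a Hardy-Littlewood-Sobolev estimate (Lemma \ref{lem-A-5}) together with H\"older bounds the discarded piece by $O(e^{-\eta/\varepsilon})$. Second, replace $U_{a_j}(\frac{\xi-x^{(2)}_{j,\varepsilon}}{\varepsilon})$ by $U_{a_j}(\frac{\xi-x^{(1)}_{j,\varepsilon}}{\varepsilon})$: by the mean value theorem and the exponential decay of $\nabla U_{a_j}$ one has $\|U_{a_j}(\frac{\cdot-x^{(2)}_{j,\varepsilon}}{\varepsilon})-U_{a_j}(\frac{\cdot-x^{(1)}_{j,\varepsilon}}{\varepsilon})\|_\varepsilon=O\big(\varepsilon^{-1}|x^{(1)}_{j,\varepsilon}-x^{(2)}_{j,\varepsilon}|\big)\cdot O(\varepsilon^{3/2})=o(\varepsilon^{3/2})$ by Proposition \ref{prop3--1}, and inserting this into Lemma \ref{lem2.4} (again with $\lambda=2$) makes the corresponding error $o(\varepsilon^6)$. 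After the replacement the two peak profiles add up to $2U_{a_j}(\frac{\xi-x^{(1)}_{j,\varepsilon}}{\varepsilon})$, so the prefactor $\frac1{4\pi\varepsilon^2}$ turns into $\frac1{2\pi\varepsilon^2}$, and collecting all the $O(\varepsilon^6)$ and $O(e^{-\eta/\varepsilon})$ remainders, which are $o(\varepsilon^4)$, yields \eqref{f8}.

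The calculation is essentially routine once the decomposition is in place; the points requiring care are that the singular kernel $\frac{x^i-\xi^i}{|x-\xi|^3}$ must be bounded uniformly by $|x-\xi|^{-2}$ so that Lemma \ref{lem2.4} is applicable throughout, and that the crude Hardy-Littlewood-Sobolev bounds on $F_{1,2,2}$, $F_{1,2,3}$, and on the two correction steps in $F_{1,2,1}$ all beat the target $o(\varepsilon^4)$. The substantive ingredient is the sharpened separation estimate $|x^{(1)}_{j,\varepsilon}-x^{(2)}_{j,\varepsilon}|=o(\varepsilon)$ of Proposition \ref{prop3--1}, which is what legitimises replacing the second peak by the first in the main term.
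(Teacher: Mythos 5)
Your proposal is correct and follows essentially the same route as the paper: the identical three-way splitting of $u^{(1)}_{\varepsilon}(\xi)+u^{(2)}_{\varepsilon}(\xi)$ into peak profiles, other bumps $W^{(m)}_{j,\varepsilon}$, and remainders $w^{(m)}_{\varepsilon}$, with the remainder term handled by Lemma \ref{lem2.4} together with \eqref{2.45} and \eqref{3.3}, the $W$-term by exponential decay plus a bounded-kernel H\"older estimate as in \eqref{cll}, and the main term obtained by extending the domain via \eqref{A.1} and replacing the second peak by the first using the $|x^{(1)}_{j,\varepsilon}-x^{(2)}_{j,\varepsilon}|=o(\varepsilon)$ estimate of Proposition \ref{prop3--1} (the paper's \eqref{lg1}). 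The order bookkeeping ($O(\varepsilon^6)$ errors against the $o(\varepsilon^4)$ target) matches the paper's.
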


\begin{proof}
First, we write $F_{1,2}$ as follows:
\begin{flalign}\label{f9}
F_{1,2}=F_{1,2,1}+F_{1,2,2}+F_{1,2,3},
\end{flalign}
where
\begin{flalign*}
&F_{1,2,1}=\frac{1}{4\pi \varepsilon^2}
\int_{ B_{\delta}(x_{j,\varepsilon}^{(1)})}\int_{\R^3}
U_{a_j}(\frac{x-x^{(1)}_{j,\varepsilon}}{\varepsilon})w_\varepsilon^{(1)}(x)
\big(U_{a_j}(\frac{\xi-x^{(1)}_{j,\varepsilon}}{\varepsilon})+
U_{a_j}(\frac{\xi-x^{(2)}_{j,\varepsilon}}{\varepsilon})
\big)\eta_{\varepsilon}(\xi)
\frac{x^i-\xi^i}{|x-\xi|^3}d\xi dx,&
\end{flalign*}
\begin{flalign*}
&F_{1,2,2}=\frac{1}{4\pi \varepsilon^2}
\int_{ B_{\delta}(x_{j,\varepsilon}^{(1)})}\int_{\R^3}
U_{a_j}(\frac{x-x^{(1)}_{j,\varepsilon}}{\varepsilon})w_\varepsilon^{(1)}(x)\big(
W^{(1)}_{j,\varepsilon}(\xi)+W^{(2)}_{j,\varepsilon}(\xi)\big)\eta_{\varepsilon}(\xi)
\frac{x^i-\xi^i}{|x-\xi|^3}d\xi dx,&
\end{flalign*}
\begin{flalign*}
&F_{1,2,3}=\frac{1}{4\pi \varepsilon^2}
\int_{ B_{\delta}(x_{j,\varepsilon}^{(1)})}\int_{\R^3}
U_{a_j}(\frac{x-x^{(1)}_{j,\varepsilon}}{\varepsilon})w_\varepsilon^{(1)}(x)
\big(w^{(1)}_{\varepsilon}(\xi)+w^{(2)}_{\varepsilon}(\xi)\big)\eta_{\varepsilon}(\xi)
\frac{x^i-\xi^i}{|x-\xi|^3}d\xi dx.&
\end{flalign*}
Now by direct calculation, we get
\begin{flalign}\label{lg1}
\begin{split}
U_{a_j}(\frac{x-x^{(1)}_{j,\varepsilon}}{\varepsilon})
-U_{a_j}(\frac{x-x^{(2)}_{j,\varepsilon}}{\varepsilon})
=&O\big(|\frac{x^{(1)}_{j,\varepsilon}-x^{(2)}_{j,\varepsilon}}{\varepsilon}|\big)\cdot|\nabla U_{a_j}(\frac{x-x^{(1)}_{j,\varepsilon}}{\varepsilon}x)|\\
=&O\big(\frac{|x^{(1)}_{j,\varepsilon}-x^{(2)}_{j,\varepsilon}|}{\varepsilon}\big)U_{a_j}
(\frac{x-x^{(1)}_{j,\varepsilon}}{\varepsilon}).
\end{split}
\end{flalign}
Then by \eqref{A.1}, \eqref{lg1}, we have
\begin{flalign}\label{f10}
\begin{split}
F_{1,2,1}=\frac{1}{2\pi \varepsilon^2}
\int_{\R^3}\int_{\R^3}
U_{a_j}(\frac{x-x^{(1)}_{j,\varepsilon}}{\varepsilon})w_\varepsilon^{(1)}(x)
U_{a_j}(\frac{\xi-x^{(1)}_{j,\varepsilon}}{\varepsilon})
\big)\eta_{\varepsilon}(\xi)
\frac{x^i-\xi^i}{|x-\xi|^3}d\xi dx+o\big(\varepsilon^{4}\big).
\end{split}
\end{flalign}
Also, similar to \eqref{cll}, we get
\begin{flalign}\label{f11}
\begin{split}
F_{1,2,2}=&O\big(\varepsilon^{-2} \|
U_{a_j}(\frac{\cdot-x^{(1)}_{j,\varepsilon}}{\varepsilon})\|_{\varepsilon}\cdot\|w_\varepsilon^{(1)}
\|_{\varepsilon}\cdot
\|W^{(1)}_{j,\varepsilon}(\cdot)+W^{(2)}_{j,\varepsilon}(\cdot)\|_{\varepsilon}\cdot
\|\eta_{\varepsilon}\|_{\varepsilon} \big)
+O\big(e^{-\eta/\varepsilon}\big)=
O\big(\varepsilon^6\big).
\end{split}
\end{flalign}
And by \eqref{aa5} and \eqref{2.45}, we obtain
\begin{flalign}\label{f12}
\begin{split}
F_{1,2,3}&=O\big(\varepsilon^{-4}\|U_{a_j}(\frac{\cdot -x^{(1)}_{j,\varepsilon}}{\varepsilon})\|_{\varepsilon}\|w^{(1)}_{\varepsilon}\|_{\varepsilon}
\cdot\|w^{(1)}_{\varepsilon}+w^{(2)}_{\varepsilon}\|_{\varepsilon}\cdot
\|\eta_\varepsilon\|_{\varepsilon}\big)=O\big(\varepsilon^6\big).
\end{split}
\end{flalign}
Then \eqref{f8} can be deduced by \eqref{f9}, \eqref{f10}, \eqref{f11} and \eqref{f12}.
\end{proof}
\begin{Lem}\label{ld3}
It holds
\begin{flalign}\label{f13}
\begin{split}
F_{2,1}&=G_2-\frac{1}{2\pi \varepsilon^2}
\int_{\R^3}\int_{\R^3}
U_{a_j}(\frac{x-x^{(1)}_{j,\varepsilon}}{\varepsilon})w_\varepsilon^{(1)}(x)
U_{a_j}(\frac{\xi-x^{(1)}_{j,\varepsilon}}{\varepsilon})
\big)\eta_{\varepsilon}(\xi)
\frac{x^i-\xi^i}{|x-\xi|^3}d\xi dx
+o\big(\varepsilon^4\big),
\end{split}
\end{flalign}
where
\begin{flalign*}
G_2=-\frac{1}{8\pi \varepsilon^2}
\int_{\R^3}\int_{\R^3}U^2_{a_j}(\frac{x-x^{(2)}_{j,\varepsilon}}{\varepsilon})
\big(
U_{a_j}(\frac{\xi-x^{(1)}_{j,\varepsilon}}{\varepsilon})+
U_{a_j}(\frac{\xi-x^{(2)}_{j,\varepsilon}}{\varepsilon}) \big)
\eta_{\varepsilon}(x)\frac{x^i-\xi^i}{|x-\xi|^3}d\xi dx.
\end{flalign*}
\end{Lem}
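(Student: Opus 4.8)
The plan is to mimic the proofs of Lemma \ref{ld1} and Lemma \ref{ld2}: write $u^{(2)}_\varepsilon=U_{a_j}(\frac{\cdot-x^{(2)}_{j,\varepsilon}}{\varepsilon})+W^{(2)}_{j,\varepsilon}+w^{(2)}_\varepsilon$, expand $(u^{(2)}_\varepsilon(\xi))^2$, and split
$$F_{2,1}=F_{2,1,1}+F_{2,1,2}+F_{2,1,3},$$
where $F_{2,1,1}$ carries the factor $U^2_{a_j}(\frac{\xi-x^{(2)}_{j,\varepsilon}}{\varepsilon})$, $F_{2,1,2}$ carries $2U_{a_j}(\frac{\xi-x^{(2)}_{j,\varepsilon}}{\varepsilon})w^{(2)}_\varepsilon(\xi)$, and $F_{2,1,3}$ collects the remaining terms $W^{(2)}_{j,\varepsilon}(\xi)(2R^{(2)}_\varepsilon(\xi)-W^{(2)}_{j,\varepsilon}(\xi))+2W^{(2)}_{j,\varepsilon}(\xi)w^{(2)}_\varepsilon(\xi)+(w^{(2)}_\varepsilon(\xi))^2$. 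The two explicit terms of \eqref{f13} will come from $F_{2,1,1}$ and $F_{2,1,2}$, while $F_{2,1,3}=o(\varepsilon^4)$. For $F_{2,1,1}$ I would first enlarge the $x$-domain $B_\delta(x^{(1)}_{j,\varepsilon})$ to $\R^3$, an error $O(e^{-\eta/\varepsilon})$ by \eqref{A.1}, and then use the antisymmetry of the Coulomb-gradient kernel $\frac{x^i-\xi^i}{|x-\xi|^3}$ under the relabelling $x\leftrightarrow\xi$ to identify $F_{2,1,1}$ with $G_2$ up to $O(e^{-\eta/\varepsilon})$; this kernel antisymmetry is the ``symmetry observation'' that makes the two volume integrals computable.

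For $F_{2,1,2}$ I would again extend the $x$-integral to $\R^3$, and then replace $U_{a_j}(\frac{x-x^{(2)}_{j,\varepsilon}}{\varepsilon})$ and $U_{a_j}(\frac{\xi-x^{(2)}_{j,\varepsilon}}{\varepsilon})$ by their $x^{(1)}_{j,\varepsilon}$-centred counterparts, using $|x^{(1)}_{j,\varepsilon}-x^{(2)}_{j,\varepsilon}|=o(\varepsilon)$ (Proposition \ref{prop3--1}) together with \eqref{lg1}; each such replacement costs $o(\varepsilon^4)$ by the Hardy--Littlewood--Sobolev type bound \eqref{aa5}, together with \eqref{2.45} and $\|\eta_\varepsilon\|_\varepsilon=O(\varepsilon^{3/2})$. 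Next I would replace $w^{(2)}_\varepsilon$ by $w^{(1)}_\varepsilon$, which by \eqref{aa5} and $\|w^{(1)}_\varepsilon-w^{(2)}_\varepsilon\|_\varepsilon=o(\varepsilon^{7/2})$ (Proposition \ref{prop3--3}) again costs only $o(\varepsilon^4)$. A final relabelling $x\leftrightarrow\xi$ (kernel antisymmetry) then produces precisely $-\frac{1}{2\pi\varepsilon^2}\int_{\R^3}\int_{\R^3}U_{a_j}(\frac{x-x^{(1)}_{j,\varepsilon}}{\varepsilon})w^{(1)}_\varepsilon(x)U_{a_j}(\frac{\xi-x^{(1)}_{j,\varepsilon}}{\varepsilon})\eta_\varepsilon(\xi)\frac{x^i-\xi^i}{|x-\xi|^3}\,d\xi\,dx+o(\varepsilon^4)$, which is the second term of \eqref{f13}.

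For $F_{2,1,3}$ I would argue as follows. The sub-terms containing the product $U_{a_j}(\frac{\xi-x^{(2)}_{j,\varepsilon}}{\varepsilon})W^{(2)}_{j,\varepsilon}(\xi)$ are $O(e^{-\eta/\varepsilon})$, since the supports of $U_{a_j}(\frac{\cdot-x^{(2)}_{j,\varepsilon}}{\varepsilon})$ and $W^{(2)}_{j,\varepsilon}$ are, up to exponentially small tails, disjoint; the sub-terms containing $(w^{(2)}_\varepsilon(\xi))^2$ or $W^{(2)}_{j,\varepsilon}(\xi)w^{(2)}_\varepsilon(\xi)$ are $O(\varepsilon^6)$ by \eqref{aa5}, \eqref{2.45} and $\|\eta_\varepsilon\|_\varepsilon=O(\varepsilon^{3/2})$. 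The one genuinely delicate sub-term carries $(W^{(2)}_{j,\varepsilon}(\xi))^2$: here $x\in B_\delta(x^{(1)}_{j,\varepsilon})$ stays near $a_j$ while $(W^{(2)}_{j,\varepsilon}(\xi))^2$ concentrates near the remaining points $a_l$, $l\neq j$, so $|x-\xi|\geq c>0$ and the kernel $\frac{x^i-\xi^i}{|x-\xi|^3}$ is smooth there. Taylor expanding this kernel, first in $\xi$ about $x^{(2)}_{l,\varepsilon}$ and then in $x$ about $a_j$, and using $\int_{\R^3}U^2_{a_l}<\infty$, reduces this sub-term, modulo errors $O(\varepsilon^5)$ (controlled by $|x^{(m)}_{j,\varepsilon}-a_j|=O(\varepsilon)$ from Proposition \ref{Prop2.7l} and $\int U_{a_j}(\frac{x-x^{(m)}_{j,\varepsilon}}{\varepsilon})|x-a_j|\,dx=O(\varepsilon^4)$), to a bounded multiple of $\varepsilon\int_{B_\delta(x^{(1)}_{j,\varepsilon})}(U_{a_j}(\frac{x-x^{(1)}_{j,\varepsilon}}{\varepsilon})+U_{a_j}(\frac{x-x^{(2)}_{j,\varepsilon}}{\varepsilon}))\eta_\varepsilon(x)\,dx$. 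So it suffices to show $\int_{B_\delta(x^{(1)}_{j,\varepsilon})}U_{a_j}(\frac{x-x^{(m)}_{j,\varepsilon}}{\varepsilon})\eta_\varepsilon(x)\,dx=o(\varepsilon^3)$ for $m=1,2$: after rescaling by $\varepsilon$ (for $m=2$ one first absorbs the vanishing shift $\frac{x^{(1)}_{j,\varepsilon}-x^{(2)}_{j,\varepsilon}}{\varepsilon}$), the local convergence $\eta_{j,\varepsilon}\to\sum_i d_{j,i}\partial_{x^i}U_{a_j}$ from Proposition \ref{l3-2}, the identity $\int_{\R^3}U_{a_j}\,\partial_{x^i}U_{a_j}\,dx=\frac12\int_{\R^3}\partial_{x^i}(U^2_{a_j})\,dx=0$, and the exponential decay of $U_{a_j}$ used to truncate the tail give exactly this. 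Collecting $F_{2,1,1}$, $F_{2,1,2}$ and $F_{2,1,3}$ then yields \eqref{f13}.

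The main obstacle will be the $(W^{(2)}_{j,\varepsilon})^2$-sub-term of $F_{2,1,3}$: because of the nonlocal term it cannot be discarded by a crude estimate --- the naive bound is only $O(\varepsilon^4)$, not $o(\varepsilon^4)$ --- so one must genuinely extract the cancellation coming from the fact that $\int U_{a_j}(\frac{\cdot-x^{(m)}_{j,\varepsilon}}{\varepsilon})\eta_\varepsilon$ is of strictly lower order than $\varepsilon^3$, which in turn hinges on the limiting profile of $\eta_{j,\varepsilon}$ being a linear combination of the derivatives $\partial_{x^i}U_{a_j}$. A secondary, more technical difficulty is the bookkeeping that converts $x^{(2)}_{j,\varepsilon}$ and $w^{(2)}_\varepsilon$ into $x^{(1)}_{j,\varepsilon}$ and $w^{(1)}_\varepsilon$ within the prescribed error $o(\varepsilon^4)$, which is exactly where the precise estimates $|x^{(1)}_{j,\varepsilon}-x^{(2)}_{j,\varepsilon}|=o(\varepsilon)$ and $\|w^{(1)}_\varepsilon-w^{(2)}_\varepsilon\|_\varepsilon=o(\varepsilon^{7/2})$ are indispensable.
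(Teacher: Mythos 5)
Your proposal is correct and, for the bulk of the lemma, follows the paper's own route: the same expansion of $(u^{(2)}_\varepsilon)^2$ (the paper's $F_{2,1,1},\dots,F_{2,1,6}$ are exactly your three groups), the identification of the $U^2_{a_j}$-piece with $G_2$ by extending the domain via \eqref{A.1} and relabelling $x\leftrightarrow\xi$ (note the relabelling shows $G_2$ should carry $\eta_\varepsilon(\xi)$; the $\eta_\varepsilon(x)$ in the displayed $G_2$ is a slip of the paper), the extraction of the second term of \eqref{f13} from the $U_{a_j}w^{(2)}_\varepsilon$-piece using \eqref{lg1}, $|x^{(1)}_{j,\varepsilon}-x^{(2)}_{j,\varepsilon}|=o(\varepsilon)$ and $\|w^{(1)}_\varepsilon-w^{(2)}_\varepsilon\|_\varepsilon=o(\varepsilon^{7/2})$, and the $O(\varepsilon^6)$ resp.\ $O(e^{-\eta/\varepsilon})$ bounds for the easy remainders via \eqref{aa5}, \eqref{2.45} and \eqref{ll3}. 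The one place where you genuinely deviate is the $(W^{(2)}_{j,\varepsilon})^2$ sub-term, which you rightly identify as the delicate one: the paper (see \eqref{f22}--\eqref{f24}, with \eqref{f5}--\eqref{af5} as template) rescales, inserts the limit profile $\eta_{j,\varepsilon}\to\sum_m d_{m,j}\partial_m U_{a_j}$ from Proposition \ref{l3-2}, and exploits $U_{a_j}\partial_m U_{a_j}=\tfrac12\partial_m(U^2_{a_j})$ to integrate by parts onto the kernel, whose $\xi$- or $x$-derivative is $O(|x-\xi|^{-3})$; since the rescaled bumps are separated by $\sim 1/\varepsilon$ this gains one power of $\varepsilon$, giving $O(\varepsilon^5)$ for the profile term and $o(\varepsilon^2)\cdot\varepsilon^2$ for the convergence remainder. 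You instead Taylor-expand the (there smooth) kernel at the concentration points, factorize, and reduce the matter to $\int U_{a_j}(\frac{x-x^{(m)}_{j,\varepsilon}}{\varepsilon})\eta_\varepsilon(x)\,dx=o(\varepsilon^3)$, which you prove from the same limit profile together with $\int_{\R^3}U_{a_j}\partial_i U_{a_j}\,dx=0$ (no circularity, since this is an identity and you never assume $d_{j,i}=0$). Both arguments rest on the same cancellation, namely that $U\partial U$ is a perfect derivative; yours is somewhat more transparent about why the naive $O(\varepsilon^4)$ bound improves, while the paper's reuses verbatim the computation already needed in Lemma \ref{ld1}. The only small point to add when writing yours up is that before invoking smoothness of the kernel you should split the $\xi$-integral into a neighbourhood of the points $a_l$, $l\neq j$, and its complement, where $(W^{(2)}_{j,\varepsilon})^2=O(e^{-\eta/\varepsilon})$ and the integrable singularity at $\xi=x$ causes no harm.
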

\begin{proof}
First, we write $F_{2,1}$ as follows:
\begin{flalign}\label{f14}
F_{2,1}=F_{2,1,1}+F_{2,1,2}+F_{2,1,3}+F_{2,1,4}+F_{2,1,5}+F_{2,1,6},
\end{flalign}
where
\begin{flalign*}
&
F_{2,1,1}=\frac{1}{8\pi \varepsilon^2}
\int_{B_{\delta}(x_{j,\varepsilon}^{(1)})}\int_{\R^3}\big(
U_{a_j}(\frac{x-x^{(1)}_{j,\varepsilon}}{\varepsilon})+
U_{a_j}(\frac{x-x^{(2)}_{j,\varepsilon}}{\varepsilon}) \big)
\eta_{\varepsilon}(x)U^2_{a_j}(\frac{\xi-x^{(2)}_{j,\varepsilon}}{\varepsilon})\frac{x^i-\xi^i}{|x-\xi|^3}d\xi dx,&
\end{flalign*}
\begin{flalign*}
&
F_{2,1,2}=\frac{1}{4\pi \varepsilon^2}
\int_{B_{\delta}(x_{j,\varepsilon}^{(1)})}\int_{\R^3}\big(
U_{a_j}(\frac{x-x^{(1)}_{j,\varepsilon}}{\varepsilon})+
U_{a_j}(\frac{x-x^{(2)}_{j,\varepsilon}}{\varepsilon}) \big)
\eta_{\varepsilon}(x)U_{a_j}(\frac{\xi-x^{(2)}_{j,\varepsilon}}{\varepsilon})
w^{(2)}_{\varepsilon}(\xi)\frac{x^i-\xi^i}{|x-\xi|^3}d\xi dx,
&
\end{flalign*}
\begin{flalign*}
&
F_{2,1,3}=\frac{1}{8\pi \varepsilon^2}
\int_{B_{\delta}(x_{j,\varepsilon}^{(1)})}\int_{\R^3}\big(
U_{a_j}(\frac{x-x^{(1)}_{j,\varepsilon}}{\varepsilon})+
U_{a_j}(\frac{x-x^{(2)}_{j,\varepsilon}}{\varepsilon}) \big)
\eta_{\varepsilon}(x)\big(w^{(2)}_{\varepsilon}(\xi)\big)^2\frac{x^i-\xi^i}{|x-\xi|^3}d\xi dx,
&
\end{flalign*}

\begin{flalign*}
&
F_{2,1,4}=\frac{1}{4\pi \varepsilon^2}
\int_{B_{\delta}(x_{j,\varepsilon}^{(1)})}\int_{\R^3}
\big(
U_{a_j}(\frac{x-x^{(1)}_{j,\varepsilon}}{\varepsilon})+
U_{a_j}(\frac{x-x^{(2)}_{j,\varepsilon}}{\varepsilon}) \big)\eta_{\varepsilon}(x)W^{(2)}_{j,\varepsilon}(\xi)
 U_{a_j}(\frac{\xi-x^{(2)}_{j,\varepsilon}}{\varepsilon})\frac{x^i-\xi^i}{|x-\xi|^3}d\xi dx,
&
\end{flalign*}
\begin{flalign*}
&
F_{2,1,5}=\frac{1}{4\pi \varepsilon^2}
\int_{B_{\delta}(x_{j,\varepsilon}^{(1)})}\int_{\R^3}
\big(
U_{a_j}(\frac{x-x^{(1)}_{j,\varepsilon}}{\varepsilon})+
U_{a_j}(\frac{x-x^{(2)}_{j,\varepsilon}}{\varepsilon}) \big)\eta_{\varepsilon}(x)w^{(2)}_{\varepsilon}(\xi)
 U_{a_j}(\frac{\xi-x^{(2)}_{j,\varepsilon}}{\varepsilon}) \frac{x^i-\xi^i}{|x-\xi|^3}d\xi dx,
&
\end{flalign*}
\begin{flalign*}
&F_{2,1,6}=\frac{1}{8\pi \varepsilon^2}
\int_{B_{\delta}(x_{j,\varepsilon}^{(1)})}\int_{\R^3}\big(
U_{a_j}(\frac{x-x^{(1)}_{j,\varepsilon}}{\varepsilon})+
U_{a_j}(\frac{x-x^{(2)}_{j,\varepsilon}}{\varepsilon}) \big)\eta_{\varepsilon}(x)
\big(W^{(2)}_{j,\varepsilon}(\xi)\big)^2\frac{x^i-\xi^i}{|x-\xi|^3}d\xi dx.&
\end{flalign*}
Now, by \eqref{A.1} and symmetry, we have
\begin{flalign}\label{f15}
\begin{split}
F_{2,1,1}=
G_{2}+O\big(e^{-\eta/\varepsilon}\big).
\end{split}
\end{flalign}
Also similar to \eqref{f10}, we know
\begin{flalign}\label{f16}
\begin{split}
F_{2,1,2}
=&-\frac{1}{2\pi \varepsilon^2}
\int_{\R^3}\int_{\R^3}
U_{a_j}(\frac{x-x^{(1)}_{j,\varepsilon}}{\varepsilon})w_\varepsilon^{(2)}(x)
U_{a_j}(\frac{\xi-x^{(1)}_{j,\varepsilon}}{\varepsilon})
\big)\eta_{\varepsilon}(\xi)
\frac{x^i-\xi^i}{|x-\xi|^3}d\xi dx
+o\big(\varepsilon^4\big)\\=&
-\frac{1}{2\pi \varepsilon^2}
\int_{\R^3}\int_{\R^3}
U_{a_j}(\frac{x-x^{(1)}_{j,\varepsilon}}{\varepsilon})w_\varepsilon^{(1)}(x)
U_{a_j}(\frac{\xi-x^{(1)}_{j,\varepsilon}}{\varepsilon})
\big)\eta_{\varepsilon}(\xi)
\frac{x^i-\xi^i}{|x-\xi|^3}d\xi dx\\
&+O\big(\|U_{a_j}(\frac{\cdot-x^{(1)}_{j,\varepsilon}}{\varepsilon})\|^2_{\varepsilon}
\|w_\varepsilon^{(1)}-w_\varepsilon^{(2)}\|_\varepsilon
\|\eta_{\varepsilon}\|_\varepsilon\big)+o\big(\varepsilon^4\big)\\
=&-\frac{1}{2\pi \varepsilon^2}
\int_{\R^3}\int_{\R^3}
U_{a_j}(\frac{x-x^{(1)}_{j,\varepsilon}}{\varepsilon})w_\varepsilon^{(1)}(x)
U_{a_j}(\frac{\xi-x^{(1)}_{j,\varepsilon}}{\varepsilon})
\big)\eta_{\varepsilon}(\xi)
\frac{x^i-\xi^i}{|x-\xi|^3}d\xi dx
+o\big(\varepsilon^4\big).
\end{split}
\end{flalign}
Next, by \eqref{aa5} and \eqref{2.45}, we get
\begin{flalign}\label{f17}
\begin{split}
F_{2,1,3}&=O\big( \varepsilon^{-4} \|
U_{a_j}(\frac{\cdot-x^{(1)}_{j,\varepsilon}}{\varepsilon})+
U_{a_j}(\frac{\cdot-x^{(2)}_{j,\varepsilon}}{\varepsilon})\|_{\varepsilon}
\|\eta_{\varepsilon}\|_{\varepsilon}\|w^{(2)}_{\varepsilon}\|_{\varepsilon}^2\big)=
O\big(\varepsilon^6\big).
\end{split}
\end{flalign}
And by \eqref{ll3}, we obtain
\begin{flalign}\label{f18}
\begin{split}
F_{2,1,4}&=O\big(e^{-\eta/\varepsilon}\big)~\mbox{and}~
F_{2,1,5}=O\big(\varepsilon^6\big).
\end{split}
\end{flalign}
Also, $l\neq j$,  similar to \eqref{f5} and \eqref{af5}, we have
\begin{flalign}\label{f22}
\begin{split}
\frac{1}{8\pi \varepsilon^2}&
\int_{B_{\delta}(x_{j,\varepsilon}^{(1)})}\int_{\R^3}
U_{a_j}(\frac{x-x^{(1)}_{j,\varepsilon}}{\varepsilon})\eta_{\varepsilon}(x)
U^2_{a_l}(\frac{\xi-x^{(2)}_{l,\varepsilon}}{\varepsilon})\frac{x^i-\xi^i}{|x-\xi|^3}d\xi dx\\=&
\frac{\varepsilon^2}{8\pi}
\int_{\R^3}\int_{\R^3}
U_{a_j}(x)\eta_{j,\varepsilon}(x)
U^2_{a_l}(\xi+\frac{x^{(1)}_{j,\varepsilon}-x^{(2)}_{l,\varepsilon}}{\varepsilon})
\frac{x^i-\xi^i}{|x-\xi|^3}d\xi dx+O\big(e^{-\eta/\varepsilon}\big),
\end{split}
\end{flalign}
and
\begin{equation}\label{f221}
\begin{split}
\int_{\R^3}\int_{\R^3}&
U_{a_j}(x)\eta_{j,\varepsilon}(x)
U^2_{a_l}(\xi+\frac{x^{(1)}_{j,\varepsilon}-x^{(2)}_{l,\varepsilon}}{\varepsilon})
\frac{x^i-\xi^i}{|x-\xi|^3}d\xi dx\\=&
\int_{\R^3}\int_{\R^3}U_{a_j}(x)\big(\sum^3_{m=1}d_{m,j} \frac{\partial U_{a_j}(x)}{\partial x_m}\big)
U^2_{a_l}(\xi+\frac{x^{(1)}_{j,\varepsilon}-x^{(2)}_{l,\varepsilon}}{\varepsilon})
\frac{x^i-\xi^i}{|x-\xi|^3}d\xi dx \\&+
 o\big( \int_{\R^3}\int_{\R^3}
U_{a_j}(x)
U^2_{a_l}(\xi+\frac{x^{(1)}_{j,\varepsilon}-x^{(2)}_{l,\varepsilon}}{\varepsilon})
\frac{1}{|x-\xi|^2}d\xi dx\big)\\
=&
O\big(
\int_{\R^3}\int_{\R^3}
U_{a_j}^2(x)U^2_{a_l}(\xi+\frac{x^{(1)}_{j,\varepsilon}-x^{(2)}_{l,\varepsilon}}{\varepsilon})
|x-\xi|^{-3}d\xi dx\big)+o\big(\varepsilon^2\big)\\=&
O\big(\varepsilon^3\big)+o\big(\varepsilon^2\big)=o\big(\varepsilon^2\big).
\end{split}
\end{equation}
Similar to \eqref{f22} and  \eqref{f221}, we obtain
\begin{flalign}\label{f23}
\begin{split}
\frac{1}{8\pi \varepsilon^2}
\int_{B_{\delta}(x_{j,\varepsilon}^{(1)})}&\int_{\R^3}
U_{a_j}(\frac{x-x^{(2)}_{j,\varepsilon}}{\varepsilon})\eta_{\varepsilon}(x)
U^2_{a_l}(\frac{\xi-x^{(2)}_{l,\varepsilon}}{\varepsilon})\frac{x^i-\xi^i}{|x-\xi|^3}d\xi dx =o\big(\varepsilon^4\big).
\end{split}
\end{flalign}
Then \eqref{f22}, \eqref{f221} and \eqref{f23} imply
\begin{flalign}\label{f24}
\begin{split}
F_{2,1,6}=&o\big(\varepsilon^4\big).
 \end{split}
\end{flalign}
Then \eqref{f14}, \eqref{f15}, \eqref{f16}, \eqref{f17}, \eqref{f18}  and \eqref{f24} imply \eqref{f13}.
\end{proof}
\begin{Lem}\label{ld4}
It holds
\begin{flalign}\label{f25}
\begin{split}
F_{2,3}=-\frac{1}{8\pi \varepsilon^2}
\int_{\R^3}\int_{\R^3}
U^2_{a_j}(\frac{x-x^{(1)}_{j,\varepsilon}}{\varepsilon})
\big(w^{(1)}_{\varepsilon}(\xi)+w^{(2)}_{\varepsilon}(\xi)\big)
\eta_{\varepsilon}(\xi)\frac{x^i-\xi^i}{|x-\xi|^3}d\xi dx+o\big(\varepsilon^{4}\big).
\end{split}
\end{flalign}
\end{Lem}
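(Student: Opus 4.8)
The plan is to treat $F_{2,3}$ in the same spirit as $A_{1,2}$ and $F_{1,2}$: expand the weight $\big(u^{(2)}_\varepsilon(\xi)\big)^2$ around the single bump $U_{a_j}(\frac{\cdot-x^{(2)}_{j,\varepsilon}}{\varepsilon})$, discard all but one piece by means of the $L^2$-type inequality of Lemma \ref{lem2.4}, and then convert the surviving $\varepsilon^4$-term into the claimed expression using the antisymmetry of the kernel $\frac{x^i-\xi^i}{|x-\xi|^3}$ together with the proximity $|x^{(1)}_{j,\varepsilon}-x^{(2)}_{j,\varepsilon}|=o(\varepsilon)$ from Proposition \ref{prop3--1}.

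Concretely, writing $u^{(2)}_\varepsilon(\xi)=U_{a_j}(\frac{\xi-x^{(2)}_{j,\varepsilon}}{\varepsilon})+W^{(2)}_{j,\varepsilon}(\xi)+w^{(2)}_\varepsilon(\xi)$, one has
\begin{flalign*}
\big(u^{(2)}_\varepsilon(\xi)\big)^2=U^2_{a_j}(\frac{\xi-x^{(2)}_{j,\varepsilon}}{\varepsilon})+2U_{a_j}(\frac{\xi-x^{(2)}_{j,\varepsilon}}{\varepsilon})w^{(2)}_\varepsilon(\xi)+\big(w^{(2)}_\varepsilon(\xi)\big)^2+W^{(2)}_{j,\varepsilon}(\xi)\big(2u^{(2)}_\varepsilon(\xi)-W^{(2)}_{j,\varepsilon}(\xi)\big),
\end{flalign*}
which yields the splitting $F_{2,3}=F_{2,3,1}+F_{2,3,2}+F_{2,3,3}+F_{2,3,4}$, the term $F_{2,3,k}$ carrying the $k$-th weight on the right. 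Using $\big|\frac{x^i-\xi^i}{|x-\xi|^3}\big|\le|x-\xi|^{-2}$ and Lemma \ref{lem2.4} with $\lambda=2$, together with $\|w^{(m)}_\varepsilon\|_\varepsilon=O(\varepsilon^{7/2})$ from \eqref{2.45}, $\|\eta_\varepsilon\|_\varepsilon=O(\varepsilon^{3/2})$ from \eqref{3.3} and $\|U_{a_j}(\frac{\cdot-x^{(2)}_{j,\varepsilon}}{\varepsilon})\|_\varepsilon=O(\varepsilon^{3/2})$, one gets at once $F_{2,3,2}=O(\varepsilon^{13/2})$ and $F_{2,3,3}=O(\varepsilon^{8})$. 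For $F_{2,3,4}$ I would use that $W^{(2)}_{j,\varepsilon}$ is exponentially small on $B_{2\delta}(x^{(1)}_{j,\varepsilon})$ by \eqref{A.1}: cutting the inner integral into $\{|x-\xi|\le\delta/2\}$ — where $W^{(2)}_{j,\varepsilon}(\xi)=O(e^{-\eta/\varepsilon})$ and $\int_{|x-\xi|\le\delta/2}|x-\xi|^{-2}d\xi$ is finite — and its complement — where the kernel is bounded and $\int_{\R^3}W^{(2)}_{j,\varepsilon}(2u^{(2)}_\varepsilon-W^{(2)}_{j,\varepsilon})=O(\varepsilon^3)$ — and using $\int_{B_\delta(x^{(1)}_{j,\varepsilon})}|w^{(1)}_\varepsilon+w^{(2)}_\varepsilon|\,|\eta_\varepsilon|\,dx\le\|w^{(1)}_\varepsilon+w^{(2)}_\varepsilon\|_\varepsilon\|\eta_\varepsilon\|_\varepsilon=O(\varepsilon^5)$, one obtains $F_{2,3,4}=O(\varepsilon^6)$.

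It remains to handle the main term, $F_{2,3,1}=\frac{1}{8\pi\varepsilon^2}\int_{B_\delta(x^{(1)}_{j,\varepsilon})}\int_{\R^3}\big(w^{(1)}_\varepsilon+w^{(2)}_\varepsilon\big)(x)\eta_\varepsilon(x)U^2_{a_j}(\frac{\xi-x^{(2)}_{j,\varepsilon}}{\varepsilon})\frac{x^i-\xi^i}{|x-\xi|^3}d\xi dx$. Since the kernel changes sign under the interchange $x\leftrightarrow\xi$, relabelling the integration variables gives
\begin{flalign*}
F_{2,3,1}=-\frac{1}{8\pi\varepsilon^2}\int_{\R^3}\int_{B_\delta(x^{(1)}_{j,\varepsilon})}U^2_{a_j}(\frac{x-x^{(2)}_{j,\varepsilon}}{\varepsilon})\big(w^{(1)}_\varepsilon+w^{(2)}_\varepsilon\big)(\xi)\eta_\varepsilon(\xi)\frac{x^i-\xi^i}{|x-\xi|^3}d\xi dx.
\end{flalign*}
Next I would enlarge the $\xi$-integral from $B_\delta(x^{(1)}_{j,\varepsilon})$ to $\R^3$. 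Since $U^2_{a_j}(\frac{x-x^{(2)}_{j,\varepsilon}}{\varepsilon})$ is exponentially concentrated (in $x$) near $x^{(2)}_{j,\varepsilon}$, which by \eqref{aaaa} is within $o(\varepsilon)$ of $x^{(1)}_{j,\varepsilon}$, the added contribution breaks into a part over $x\in B_{\delta/2}(x^{(2)}_{j,\varepsilon})$ — where $|x-\xi|\ge\delta/4$, hence the kernel is bounded, and the part is controlled by $\big(\int_{\R^3\backslash B_\delta(x^{(1)}_{j,\varepsilon})}|w^{(1)}_\varepsilon+w^{(2)}_\varepsilon|\,|\eta_\varepsilon|\,d\xi\big)\big(\int_{\R^3}U^2_{a_j}(\frac{x-x^{(2)}_{j,\varepsilon}}{\varepsilon})\,dx\big)=O(\varepsilon^5)O(\varepsilon^3)$ — and a part over $x\notin B_{\delta/2}(x^{(2)}_{j,\varepsilon})$, handled via $U^2_{a_j}(\frac{x-x^{(2)}_{j,\varepsilon}}{\varepsilon})\le Ce^{-\eta/\varepsilon}e^{-|x-x^{(2)}_{j,\varepsilon}|/\varepsilon}$ and one more application of Lemma \ref{lem2.4}; both are $O(\varepsilon^6)$ after multiplication by $\varepsilon^{-2}$. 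Finally, replacing $x^{(2)}_{j,\varepsilon}$ by $x^{(1)}_{j,\varepsilon}$ inside $U^2_{a_j}$ via \eqref{lg1} and \eqref{aaaa} introduces a multiplicative factor $1+o(1)$, and the $o(1)$-correction contributes $\frac{o(1)}{\varepsilon^4}\|U_{a_j}(\frac{\cdot-x^{(1)}_{j,\varepsilon}}{\varepsilon})\|^2_\varepsilon\|w^{(1)}_\varepsilon+w^{(2)}_\varepsilon\|_\varepsilon\|\eta_\varepsilon\|_\varepsilon=o(\varepsilon^4)$. Putting these estimates together yields \eqref{f25}.

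The step I expect to be the main obstacle is precisely the passage from $B_\delta(x^{(1)}_{j,\varepsilon})$ to $\R^3$ in the $\xi$-integral of $F_{2,3,1}$, together with the companion replacement of $x^{(2)}_{j,\varepsilon}$ by $x^{(1)}_{j,\varepsilon}$: the term $F_{2,3,1}$ is genuinely of size $\varepsilon^4$, so a crude estimate of it through Lemma \ref{lem2.4} is also $O(\varepsilon^4)$ and would swallow the leading behaviour. One must therefore split carefully between the region near the diagonal of $\frac{x^i-\xi^i}{|x-\xi|^3}$, where the singular kernel is compensated by the exponential concentration of $U^2_{a_j}$, and the region where one variable is far from the bump, where the kernel is bounded; it is exactly the bound $|x^{(1)}_{j,\varepsilon}-x^{(2)}_{j,\varepsilon}|=o(\varepsilon)$ of Proposition \ref{prop3--1} that makes each error term $O(\varepsilon^6)$ (or $o(\varepsilon^4)$) rather than merely $O(\varepsilon^4)$.
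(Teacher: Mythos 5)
Your proposal is correct and follows essentially the same route as the paper's proof of \eqref{f25}: the same expansion of $\big(u^{(2)}_{\varepsilon}\big)^2$ around $U_{a_j}(\frac{\cdot-x^{(2)}_{j,\varepsilon}}{\varepsilon})$ (you merely group the paper's three $W^{(2)}_{j,\varepsilon}$-terms into one), the same use of Lemma \ref{lem2.4} and \eqref{2.45}, \eqref{3.3} for the remainder terms, and the same treatment of the leading term via the antisymmetry of the kernel, the exponential decay \eqref{A.1} to pass to $\R^3$, and \eqref{lg1} with $|x^{(1)}_{j,\varepsilon}-x^{(2)}_{j,\varepsilon}|=o(\varepsilon)$ from \eqref{aaaa} to replace $x^{(2)}_{j,\varepsilon}$ by $x^{(1)}_{j,\varepsilon}$, exactly as in \eqref{f26}--\eqref{f32}. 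Only a trivial slip: the straightforward bound for the cross term $U_{a_j}(\frac{\cdot-x^{(2)}_{j,\varepsilon}}{\varepsilon})w^{(2)}_{\varepsilon}$ is $O(\varepsilon^{6})$ (as in \eqref{f28}) rather than $O(\varepsilon^{13/2})$, which is immaterial since only $o(\varepsilon^{4})$ is needed.
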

\begin{proof}
First, we write $F_{2,3}$ as follows:
\begin{flalign}\label{f26}
\begin{split}
F_{2,3}&=F_{2,3,1}+F_{2,3,2}+F_{2,3,3}+F_{2,3,4}+F_{2,3,5}+F_{2,3,6},
\end{split}
\end{flalign}
where
\begin{flalign*}
&
F_{2,3,1}=\frac{1}{8\pi \varepsilon^2}
\int_{B_{\delta}(x_{j,\varepsilon}^{(1)})}\int_{\R^3}
\big(w^{(1)}_{\varepsilon}(x)+w^{(2)}_{\varepsilon}(x)\big)
\eta_{\varepsilon}(x)U^2_{a_j}(\frac{\xi-x^{(2)}_{j,\varepsilon}}{\varepsilon})\frac{x^i-\xi^i}{|x-\xi|^3}d\xi dx,&
\end{flalign*}
\begin{flalign*}
&
F_{2,3,2}=\frac{1}{4\pi \varepsilon^2}
\int_{B_{\delta}(x_{j,\varepsilon}^{(1)})}\int_{\R^3}
\big(w^{(1)}_{\varepsilon}(x)+w^{(2)}_{\varepsilon}(x)\big)
\eta_{\varepsilon}(x)U_{a_j}(\frac{\xi-x^{(2)}_{j,\varepsilon}}{\varepsilon})
w^{(2)}_{\varepsilon}(\xi)\frac{x^i-\xi^i}{|x-\xi|^3}d\xi dx,
&
\end{flalign*}
\begin{flalign*}
&
F_{2,3,3}=\frac{1}{8\pi \varepsilon^2}
\int_{B_{\delta}(x_{j,\varepsilon}^{(1)})}\int_{\R^3}
\big(w^{(1)}_{\varepsilon}(x)+w^{(2)}_{\varepsilon}(x)\big)
\eta_{\varepsilon}(x)\big(w^{(2)}_{\varepsilon}(\xi)\big)^2\frac{x^i-\xi^i}{|x-\xi|^3}d\xi dx,&
\end{flalign*}
\begin{flalign*}
&F_{2,3,4}=\frac{1}{4\pi \varepsilon^2}
\int_{B_{\delta}(x_{j,\varepsilon}^{(1)})}\int_{\R^3}
\big(w^{(1)}_{\varepsilon}(x)+w^{(2)}_{\varepsilon}(x)\big)\eta_{\varepsilon}(x)
W^{(2)}_{j,\varepsilon}(\xi)
 U_{a_j}(\frac{\xi-x^{(2)}_{j,\varepsilon}}{\varepsilon})\frac{x^i-\xi^i}{|x-\xi|^3}d\xi dx,&
\end{flalign*}

\begin{flalign*}
&F_{2,3,5}=\frac{1}{4\pi \varepsilon^2}
\int_{B_{\delta}(x_{j,\varepsilon}^{(1)})}\int_{\R^3}
\big(w^{(1)}_{\varepsilon}(x)+w^{(2)}_{\varepsilon}(x)\big)\eta_{\varepsilon}(x)
w^{(2)}_{\varepsilon}(\xi)
 U_{a_j}(\frac{\xi-x^{(2)}_{j,\varepsilon}}{\varepsilon})\frac{x^i-\xi^i}{|x-\xi|^3}d\xi dx,&
\end{flalign*}
\begin{flalign*}
&
F_{2,3,6}=\frac{1}{8\pi \varepsilon^2}
\int_{B_{\delta}(x_{j,\varepsilon}^{(1)})}\int_{\R^3}
\big(w^{(1)}_{\varepsilon}(x)+w^{(2)}_{\varepsilon}(x)\big)\eta_{\varepsilon}(x)
\big(W^{(2)}_{j,\varepsilon}(\xi)\big)^2\frac{x^i-\xi^i}{|x-\xi|^3}d\xi dx.
&
\end{flalign*}
Now by \eqref{A.1}, \eqref{lg1} and symmetry, we have
\begin{flalign}\label{f27}
\begin{split}
F_{2,3,1} =-\frac{1}{8\pi \varepsilon^2}
\int_{\R^3}\int_{\R^3}
U^2_{a_j}(\frac{x-x^{(1)}_{j,\varepsilon}}{\varepsilon})
\big(w^{(1)}_{\varepsilon}(\xi)+w^{(2)}_{\varepsilon}(\xi)\big)
\eta_{\varepsilon}(\xi)\frac{x^i-\xi^i}{|x-\xi|^3}d\xi dx+o\big(\varepsilon^{4}\big).
\end{split}
\end{flalign}
Next, by \eqref{aa5} and \eqref{2.45}, we get
\begin{flalign}\label{f28}
\begin{split}
F_{2,3,2}&=O\big( \varepsilon^{-4}\|w^{(1)}_{\varepsilon}+w^{(2)}_{\varepsilon}\|_{\varepsilon}  \|
U_{a_j}(\frac{\cdot-x^{(2)}_{j,\varepsilon}}{\varepsilon})\|_{\varepsilon}
\|\eta_{\varepsilon}\|_{\varepsilon}\|w^{(2)}_{\varepsilon}\|_{\varepsilon}\big)=
O\big(\varepsilon^6\big),
\end{split}
\end{flalign}
and
\begin{flalign}\label{f29}
\begin{split}
F_{2,3,3}&=O\big( \varepsilon^{-4} \|w^{(1)}_{\varepsilon}+w^{(2)}_{\varepsilon}\|_{\varepsilon}
\|\eta_{\varepsilon}\|_{\varepsilon}\|w^{(2)}_{\varepsilon}\|^2_{\varepsilon}\big)=
O\big(\varepsilon^8\big).
\end{split}
\end{flalign}
Also by \eqref{ll3}, we know
\begin{flalign}\label{f30}
\begin{split}
F_{2,3,4}&=O\big(e^{-\eta/\varepsilon}\big)~\mbox{and}~
F_{2,3,5}=O\big(\varepsilon^6\big).
\end{split}
\end{flalign}
Next, similar to \eqref{cll},  we obtain
\begin{flalign}\label{f32}
\begin{split}
F_{2,3,6}=O\big(\varepsilon^{-2}\|w^{(1)}_{\varepsilon}+w^{(2)}_{\varepsilon}\|_{\varepsilon}
\|W^{(2)}_{j,\varepsilon}\|_{\varepsilon}
\|\eta_{\varepsilon}\|_{\varepsilon} \big)+O\big(e^{-\eta/\varepsilon}\big)=O\big(\varepsilon^6 \big).
\end{split}
\end{flalign}
Then \eqref{f26}, \eqref{f27}, \eqref{f28}, \eqref{f29}, \eqref{f30} and \eqref{f32} imply \eqref{f25}.
\end{proof}

%%%%%%%%%%%%%%%%%%%%%%%%%%%%%%%%%%%%%%%%%%%%%%%%%%%%%%%%%%%%%%%%%%%%%%%%%%%%%%%%%%%%%%%%%%%%%%%%%%%%%%%%%%
\noindent{\bf Acknowledgements.}

This work was partially supported by NSFC(No.11571130; No.11671162), self-determined research funds of CCNU from colleges' basic
research and operation of MOE(CCNU16A05011, CCNU17QN0008).

\renewcommand\refname{References}
\renewenvironment{thebibliography}[1]{%
\section*{\refname}
\list{{\arabic{enumi}}}{\def\makelabel##1{\hss{##1}}\topsep=0mm
\parsep=0mm
\partopsep=0mm\itemsep=0mm
\labelsep=1ex\itemindent=0mm
\settowidth\labelwidth{\small[#1]}%
\leftmargin\labelwidth \advance\leftmargin\labelsep
\advance\leftmargin -\itemindent
\usecounter{enumi}}\small
\def\newblock{\ }
\sloppy\clubpenalty4000\widowpenalty4000
\sfcode`\.=1000\relax}{\endlist}
\bibliographystyle{model1b-num-names}
%\bibliography{refs}

\end{document}